\newtheorem{theorem}{Theorem}[subsection]
\newtheorem{prop}[theorem]{Proposition}
\newtheorem{lemma}[theorem]{Lemma}
\newtheorem{cor}[theorem]{Corollary}
\newtheorem{observation}[theorem]{Observation}
\theoremstyle{definition}
\newtheorem{definition}[theorem]{Definition}
\newtheorem{lem/def}[theorem]{Lemma/Definition}
\newtheorem{notation}[theorem]{Notation}
\newtheorem{construction}[theorem]{Construction}
\newtheorem*{convention}{Convention}
\newtheorem{remark}[theorem]{Remark}
\newtheorem{example}[theorem]{Example}
\newtheorem{terminology}[theorem]{Terminology}
\newtheorem{warning}[theorem]{Warning}
\theoremstyle{remark}
\definecolor{orange}{rgb}{1,0.5,0}
\definecolor{light-gray}{gray}{0.75}
\definecolor{brown}{cmyk}{0, 0.8, 1, 0.6}
\definecolor{plum}{rgb}{.5,0,1}
\DeclareMathOperator{\Aut}{\sf Aut}
\DeclareMathOperator*{\colim}{{\sf colim}}
\DeclareMathOperator*{\limit}{{\sf lim}}
\DeclareMathOperator{\Hom}{\sf Hom}
\DeclareMathOperator{\Fun}{\sf Fun}
\DeclareMathOperator{\Map}{\sf Map}
\DeclareMathOperator{\Ch}{\sf Ch}
\DeclareMathOperator{\stab}{\sf Stab}
\DeclareMathOperator{\Ker}{\sf Ker}
\DeclareMathOperator{\coker}{\sf cKer}
\DeclareMathOperator{\lkan}\LKan
\DeclareMathOperator{\uno}{\mathbbm{1}}
\DeclareMathOperator{\m}{\sf Mod}
\DeclareMathOperator{\perf}{\sf Perf}
\DeclareMathOperator{\Alg}{\mathsf{Alg}}
\DeclareMathOperator{\op}{\mathsf{op}}
\DeclareMathOperator{\Emb}{\mathsf{Emb}}
\DeclareMathOperator{\Diff}{\mathsf{Diff}}
\DeclareMathOperator{\Cat}{\mathsf{Cat}}
\DeclareMathOperator{\conf}{\mathsf{Conf}}
\DeclareMathOperator{\Spaces}{\mathsf{Spaces}}
\DeclareMathOperator{\spaces}{\mathsf{Spaces}}
\DeclareMathOperator{\ZMfld}{\cZ\cM\mathsf{fld}}
\DeclareMathOperator{\ZEmb}{\mathsf{ZEmb}}
\DeclareMathOperator{\Mfld}{\cM\mathsf{fld}}
\DeclareMathOperator{\mfld}{\cM\mathsf{fld}}
\DeclareMathOperator{\ZDisk}{\cZ\cD\mathsf{isk}}
\DeclareMathOperator{\fr}{\sf fr}
\DeclareMathOperator{\Sym}{\mathsf{Sym}}
\DeclareMathOperator{\LKan}{\mathsf{LKan}}
\DeclareMathOperator{\id}{\mathsf{id}}
\DeclareMathOperator{\Sing}{\mathsf{Sing}}
\DeclareMathOperator{\BO}{{\mathsf BO}}
\DeclareMathOperator{\Top}{{\mathsf Top}}
\DeclareMathOperator{\Lie}{\sf Lie}
\DeclareMathOperator{\bsc}{\cB{\sf sc}}
\DeclareMathOperator{\Bsc}{\cB{\sf sc}}
\DeclareMathOperator{\Snglr}{\cS{\sf nglr}}
\def\ot{\otimes}
\def\smash{\wedge}
\DeclareMathOperator{\oo}{\infty}
\DeclareMathOperator{\hh}{\sf HH}
\DeclareMathOperator{\free}{\sf Free}
\DeclareMathOperator{\Free}{\sf Free}
\DeclareMathOperator{\FPres}{\sf FPres}
\DeclareMathOperator{\Perf}{\sf Perf}
\DeclareMathOperator{\ran}{\sf Ran}
\DeclareMathOperator{\disk}{\cD{\sf isk}}
\DeclareMathOperator{\Disk}{\cD{\sf isk}}
\DeclareMathOperator{\bBar}{\sf Bar}
\DeclareMathOperator{\cBar}{\sf cBar}
\DeclareMathOperator{\cAlg}{\sf cAlg}
\DeclareMathOperator{\Psh}{\mathsf{PShv}}
\DeclareMathOperator{\PShv}{\mathsf{PShv}}
\DeclareMathOperator{\Mod}{\sf Mod}
\DeclareMathOperator{\Fin}{\sf Fin}
\DeclareMathOperator{\Ran}{\sf Ran}
\DeclareMathOperator{\Triv}{\sf Triv}
\DeclareMathOperator{\Artin}{\sf Artin}
\DeclareMathOperator{\Spf}{\sf Spf}
\DeclareMathOperator{\MC}{\sf MC}
\DeclareMathOperator{\fpres}{\sf FPres}
\DeclareMathOperator{\FModuli}{\sf Moduli}
\newcommand{\ra}{\rightarrow}
\newcommand{\la}{\leftarrow}
\newcommand{\xra}{\xrightarrow}
\newcommand{\xla}{\xleftarrow}
\newcommand{\ov}{\overline}
\newcommand{\un}{\underline}
\newcommand{\tl}{\triangleleft}
\def\cB{\mathcal B}\def\cC{\mathcal C}\def\cD{\mathcal D}
\def\cE{\mathcal E}\def\cF{\mathcal F}
\def\cI{\mathcal I}\def\cJ{\mathcal J}\def\cK{\mathcal K}
\def\cM{\mathcal M}\def\cO{\mathcal O}\def\cP{\mathcal P}
\def\cS{\mathcal S}
\def\cV{\mathcal V}\def\cW{\mathcal W}\def\cX{\mathcal X}
\def\cY{\mathcal Y}\def\cZ{\mathcal Z}
\def\CC{\mathbb C}\def\DD{\mathbb D}
\def\FF{\mathbb F}
\def\NN{\mathbb N}
\def\RR{\mathbb R}
\def\VV{\mathbb V}
\def\ZZ{\mathbb Z}
\def\sB{\mathsf B}\def\sC{\mathsf C}
\def\sH{\mathsf H}
\def\sL{\mathsf L}
\def\sN{\mathsf N}\def\sO{\mathsf O}
\def\sR{\mathsf R}\def\sT{\mathsf T}
\def\sU{\mathsf U}
\def\bDelta{\mathbf\Delta}
\def\bdelta{\mathbf\Delta}
\begin{document}

\title{Poincar\'e/Koszul duality}
\author{David Ayala \& John Francis}
\date{}
\address{Department of Mathematics\\Montana State University\\Bozeman, MT 59717}
\email{david.ayala@montana.edu}
\address{Department of Mathematics\\Northwestern University\\Evanston, IL 60208}
\email{jnkf@northwestern.edu}
\thanks{DA was partially supported by ERC adv.grant no.228082, and by the National Science Foundation under awards 0902639 and 1507704. JF was supported by the National Science Foundation under awards 1207758 and 1508040.}

\begin{abstract} We prove a duality for factorization homology which generalizes both usual Poincar\'e duality for manifolds and Koszul duality for $\cE_n$-algebras. The duality has application to the Hochschild homology of associative algebras and enveloping algebras of Lie algebras. We interpret our result at the level of topological quantum field theory.
\end{abstract}

\keywords{Factorization homology. Topological quantum field theory. Derived algebraic geometry. Formal moduli problems. Topological chiral homology. Koszul duality. Operads. $\oo$-Categories. Goodwillie--Weiss manifold calculus. Goodwillie calculus of functors. Hochschild homology.}

\subjclass[2010]{Primary 55U30. Secondary 13D03, 14B20, 14D23.}

\maketitle

\tableofcontents

\section*{Introduction}

This paper arises from the following question: what is Poincar\'e duality in factorization homology?

\smallskip

Before describing our solution, we give some background for this question. After Lurie in \cite{HA}, a factorization homology, or topological chiral homology, theory is a homology-type theory for $n$-manifolds; these theories are natural with respect to embeddings of manifolds and satisfy a multiplicitive generalization of the Eilenberg--Steenrod axioms for ordinary homology---see \cite{fact}. This relatively new theory has two particularly notable antecedents: the labeled or amalgamated configuration space models of mapping spaces of Salvatore \cite{salvatore}, Segal \cite{segallocal}, and Kallel \cite{kallel}, after \cite{bodig}, \cite{mcduff}, \cite{may}, and \cite{segal}; the algebraic approaches to conformal field theory of Beilinson \& Drinfeld in \cite{bd}, via factorization algebras, and of Segal in \cite{segalconformal}.

\smallskip

In the last few years, significant work has taken place in this subject in addition to the basic investigation of the foundations of factorization homology in \cite{HA}, \cite{fact}, and \cite{aft2}. In algebraic geometry, Gaitsgory \& Lurie use factorization algebras to prove Weil conjecture's on Tamagawa numbers for algebraic groups in the function field case---see \cite{tamagawa}. In mathematical physics, Costello has developed in \cite{kevin} a renormalization machine for quantizing field theories; by work of Costello \& Gwilliam in \cite{kevinowen}, this machine outputs a factorization homology theory as a model for the observables in a perturbative quantum field theory. Their work gives an array of interesting examples of factorization homology theories and manifold invariants connected with gauge theory, quantum groups, and knot and 3-manifold invariants.

\smallskip

The question of Poincar\'e duality finds motivation from all the preceding works. We focus on two veins.  First, factorization homology theories are characterized by a monoidal generalization of the Eilenberg--Steenrod axioms for usual homology, so that factorization homology specializes to ordinary homology in the case the target symmetric monoidal category is that of chain complexes with direct sum. As such, it makes sense to ask that different values of factorization homology theories, valued in a general symmetric monoidal category, likewise enjoy a relationship specializing to that of Poincar\'e duality. From this perspective, the present work fills in the bottom middle in the following table of analogies.

\smallskip

\begin{center}
    \begin{tabular}{|p{4cm}|  p{5cm} |p{4cm}| }
    \hline
        {\bf Ordinary/Generalized} & {\bf Factorization} & {\bf Physics} \\ \hline
    topological space $M$ & $n$-manifold $M$ & spacetime $M$ \\ \hline
    abelian group $A$ & $n$-disk stack $X$ & quantum field theory $Z$   \\ \hline
    additivity $\amalg \rightsquigarrow \oplus$ & multiplicativity $\amalg \rightsquigarrow \ot$ & locality $\amalg \rightsquigarrow \ot$ \\ \hline
        homology $\sC_\ast(M,A)$ & factorization homology $\displaystyle\int_MX$ & observables ${Z}(M)$ \\ \hline
       linearity $\sC_\ast(M, A)\simeq P_1 \sC_\ast(M,A)$ & Goodwillie calculus $P_\bullet \displaystyle\int_M X$ & perturbative observables $Z_{\sf pert}(M)$   \\ \hline
        Poincar\'e duality $\sC_\ast(M,A) \simeq \sC^\ast(M, A[n])$& Poincar\'e/Koszul duality $\displaystyle\int_M X^{\wedge}_x\simeq \Bigl(\int_M T_xX[-n]\Bigr)^\vee$ &   \\ \hline

    \end{tabular}
\end{center}

\smallskip

Second, in the perspective espoused in the works \cite{bd} and \cite{kevinowen}, factorization homology theories are algebraic models for physical field theories. 
In extended topological quantum field theory, as appears in the cobordism hypothesis after Baez \& Dolan \cite{baezdolan} and Lurie \cite{cobordism}, there is likewise a fundamental duality: the duality of higher $n$-categories which appear as the values of the field theories on points. These dual theories take equal values on $n$-manifolds, and linearly dual values on $(n-1)$-manifolds; their values on manifolds of lower dimension is expressible, in a less familiar way, as a higher categorical form of duality.
\smallskip

While both of these perspectives augur for a notion of duality in factorization homology, they likewise both indicate that an essential ingredient is missing. In the first case, usual Poincar\'e duality is a relationship between usual homology and compactly supported cohomology -- this suggests that a notion of compactly supported factorization cohomology is necessary. From the perspective of the cobordism hypothesis in the 1-dimensional case, the factorization homology theories $\int A$ is closely related to extended topological field theories $Z$ whose value on a point is $Z(\ast) = \perf_A$, and the value on the circle is $Z(S^1) \simeq \int_{S^1} A\simeq \hh_\ast(A)$, the Hochschild chains of $A$. One would expect the dual field theory $Z^\vee$ to take the value $Z^\vee(S^1) = \hh_\ast(A)^\vee$, the $\Bbbk$-linear dual of the Hochschild chains of $A$. However, there is in general no algebra $B$ for which $\hh_\ast(B)$ is equivalent to $\hh_\ast(A)^\vee$. That is, the category $Z^\vee(\ast)$ is not given by perfect modules for some other algebra. Under restrictive conditions, however, this sometimes happens: namely, the algebra $\DD A$, Koszul dual to $A$, is a candidate. One can construct a natural map, which is an instance of our Poincar\'e/Koszul duality map, \[\hh_\ast(\DD A) \longrightarrow \hh_\ast(A)^\vee\] which is sometimes an equivalence; for instance, this is a formal exercise in the case $A$ is $\Sym(V)$, for $V$ a finite complex in strictly positive homological degrees. In general, we shall see, $\hh_\ast(A)^\vee$ is a type of completion of $\hh_\ast(\DD A)$.

\smallskip

Our two sources of motivation thus offer two pointers on where to look for Poincar\'e duality in factorization homology. The first says that the factorization homology with coefficients in $A$ should be equivalent to some other construction, not factorization homology per se, but some cohomological variant. Our TQFT motivation suggests that the choice of coefficients for this factorizable generalization of cohomology should be related to the Koszul dual of $A$. Assembling these hints, one might conclude that such a Poincar\'e duality should relate the factorization homology with coefficients in $A$ to a not necessarily perturbative form of factorization homology with coefficients related to the Koszul dual of $A$, a generalization that one can contemplate after the originating works of \cite{gk} and \cite{priddy}.

\smallskip

There is, however, already a not necessarily perturbative form of factorization homology, defined in \cite{fact}. Namely, for an $n$-manifold $M$ and a scheme $X$ whose structure sheaf $\cO_X$ is enhanced to form a sheaf of $n$-disk algebras, then one can define the factorization homology of $M$ with coefficients in $X$ as \[\int_MX = \Gamma\Bigl(X, \int_M\cO_X\Bigr)\] the global sections over $X$ of the sheaf obtained by computing the factorization homology of the structure sheaf. Intuitively, one can also think of this object as a quantization of functions on maps from $M$ to the underlying space $X_0$ -- the observables in a sigma model formed by quantizing a symplectic structure on the underlying space $X_0$. We adopt the point of view, after Costello \& Gwilliam, that factorization homology with coefficients in stacks over $n$-disk algebras offers a suitable generic model for the observables in a sigma model. We regard this as a generalization of the perturbative theory in the following sense: perturbation theory involves the calculation of the theory by formal expansion at a fixed solution; for a sigma model, this involves the formal neighborhood of the mapping space $\Map(M,X)$ at a point $M\ra X$, such as a constant map. If $X = \Spf A$ is itself a formal neighborhood of a point, given by the formal spectrum of an augmented pro-nilpotent algebra $\epsilon : A\ra \Bbbk$, then both the mapping space $\Map(M,\Spf A)$ and the factorization homology 
\[
\int_M A~ \simeq~ \Gamma\bigl(\Spf A, \int_M\cO\bigr)
\]
can be completely understood by perturbation theory about the $\Bbbk$-point $\epsilon\in \Spf A$, which we understand as an instance of Goodwillie calculus. From this point of view, we think of the factorization homology $\int_M X$ as being perturbative if the classical target $X_0$ is a subspace of the quantization $X$, and the factorization homology $\int_M X$ can be obtained from $\int_M X_0\sim \cO\bigl(\Map(M,X_0)\bigr)$ by Goodwillie calculus; this should be the case when the quantization $X$ is a formal neighborhood of the classical phase space $X_0$, in particular, where $\hbar$ is taken to be a formal parameter.

\smallskip

In this paper, we deal completely with the case in which $X_0$ is a point and $X$ is a general formal space, which one can think of as perturbation theory around a single constant map. For a general augmented algebra $\epsilon:A\ra \Bbbk$, the factorization homology $\int_MA$ cannot be understood by perturbation theory at the augmentation. So one might ask for some more general non-affine geometric target $X$ associated to $A$, where the greater global geometry of $X$ similarly reflects the noncovergence properties of $\int_MA$ at  $\epsilon$. There is, finally, just such a formal space associated to an $n$-disk algebra, lifting the functor of Koszul duality.

\smallskip

The original formulations of Koszul duality, after Priddy \cite{priddy} and Moore \cite{moore}, involve only algebra and coalgebra -- for a review of Koszul duality in algebraic topology, we suggest \cite{sinha}. However, from Quillen's Lie algebraic model for rational homotopy types \cite{quillen}, one can think of a Lie algebra as giving rise to a homotopy type via its Maurer--Cartan space. That is, to a differential graded Lie algebra $\frak g$, one can consider the set of solutions to the Maurer--Cartan equation: elements $x\in \frak g_{-1}$ for which $dx+\frac 1 2[x,x] =0$. Applying this procedure to a resolution of the Lie algebra, such as the simplicial object in Lie algebras $\frak g \ot \Omega^\ast(\Delta^\bullet)$ obtained by tensoring with Sullivan's polynomial de Rham forms on simplices \cite{sullivan}, gives a simplicial set whose associated homotopy type is the Maurer--Cartan space. More generally, we take the Maurer--Cartan functor of a Lie algebra $\frak g$ to be a space-valued functor
\[
\xymatrix{
\Artin\ar[rr]^{\MC_{\frak g}} &&\Spaces\\}
\]
which assigns to a local Artin $\Bbbk$-algebra $R$ the space of maps of Lie algebras
\[\MC_{\frak g}(R):=\Map_{\Lie}(\DD R, \frak g)\]
where $\DD R$ is the Lie algebra which is Koszul dual to $R$; equivalently $\DD R\simeq \sT_R[-1]$ is a shift of the tangent complex of $R$ given by the maximal ideal $R\ra \Bbbk$ shifted by $-1$.

\smallskip

This construction gains important because of the following principle, which very often holds. If $X$ is an object of interest for which there exists a tangent complex $\sT_X$, then in favorable situations there exists a Lie algbera structure on $\sT_X$, and the deformations of the structure of $X$ is given by solutions to the Maurer--Cartan equation in $\sT_X$. This pattern emerged in the 1950s, particularly in complex geometry (see \cite{kodairaspencer}, \cite{nr}, \cite{fn}, and \cite{kuranishi}), where solutions to the Maurer--Cartan equation of the Kodaira--Spencer Lie algebra classify deformations of a complex structure. This construction was then studied more generally in other works; in particular, see \cite{gm1}, \cite{gm2}, \cite{hinichschechtman}, \cite{hinichdeligne}, \cite{getzler}, and \cite{dag10}.

\smallskip

The above pattern applies to $n$-disk algebra as well, after \cite{dag10}. Given an augmented $n$-disk algebra $A$, there is a formal moduli functor
\[
\xymatrix{
\Artin_n\ar[rr]^{\MC_A} &&\Spaces\\}
\]
which assigns to a local Artin $n$-disk $\Bbbk$-algebra $R$ the space of maps of augmented $n$-disk algebras
\[\MC_A(R) :=\Map(\DD^nR, A)\]
from the Koszul dual of $R$ as an $n$-disk algebra, to $A$. This is a lift of usual Koszul duality through moduli, in that the ring of global functions of $\MC_A$ is exactly the Koszul dual of $A$.

\smallskip

We now have the ingredients necessary to state our main theorem; see Theorem \ref{main}.
\smallskip
\begin{theorem}[Poincar\'e/Koszul duality]\label{1main} Let ${M}$ be a compact smooth $n$-dimensional cobordism with boundary partitioned as $\partial {M} \cong \partial_{\sL} \sqcup \partial_{\sR}$. For $A$ an augmented $n$-disk algebra over a field $\Bbbk$ with $\MC_A$ the associated formal moduli functor of $n$-disk algebras, there is a natural equivalence 
\[
\Bigl(\int_{{M}\smallsetminus \partial_{\sR}}A\Bigl)^\vee~ \simeq~ \int_{{M}\smallsetminus \partial_{\sL}}\MC_A
\] 
between the $\Bbbk$-linear dual of the factorization homology of ${M}\smallsetminus \partial_{\sR}$ with coefficients in $A$ and the factorization homology of ${M}\smallsetminus \partial_{\sL}$ with coefficients in the moduli functor $\MC_A$.
\end{theorem}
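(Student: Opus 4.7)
The plan is to compare both sides by descending to configuration spaces via Goodwillie calculus in the variable $A$. The factorization homology $\int_{M\smallsetminus \partial_\sR}(-) \colon \Alg_n^{\mathrm{aug}}(\Bbbk) \to \Mod_\Bbbk$ is reduced, sending the trivial algebra to $\Bbbk$, so it carries a Taylor tower at the augmentation whose $k$-th layer is controlled by the configuration space $\conf_k(M\smallsetminus \partial_\sR)$ tensored with $(T_\Bbbk A)^{\ot k}$, $\Sigma_k$-equivariantly. Linearly dualizing converts this colimit presentation into a limit, so $\bigl(\int_{M\smallsetminus\partial_\sR}A\bigr)^\vee$ is assembled from cochains on configuration spaces with values in tensor powers of $(T_\Bbbk A)^\vee$. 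On the other side, $\int_{M\smallsetminus\partial_\sL}\MC_A$ is by construction global sections over a formal moduli functor, hence already a limit over the tower of Artin probes $R \to \MC_A$, equivalently over maps of $n$-disk algebras $\DD^n R \to A$. Expanding each probe produces a filtration whose associated graded is again indexed by configuration spaces, now of $M\smallsetminus\partial_\sL$, tensored with shifted copies of the Koszul-dual cotangent complex.

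First, I would prove the equivalence directly in the free case $A = \Free_n(V)$ for $V$ a finite complex. Both sides decompose explicitly: the LHS becomes a product of $\Sigma_k$-invariant cochains on $\conf_k(M\smallsetminus\partial_\sR)$ valued in $(V^\vee)^{\ot k}$, while the RHS, computed via $\MC_{\Free_n(V)}$ as the Koszul dual of a free $n$-disk algebra, becomes a sum of $\Sigma_k$-coinvariant chains on $\conf_k(M\smallsetminus\partial_\sL)$ valued in $(V^\vee[-n])^{\ot k}$. Poincar\'e/Lefschetz duality for the cobordism $M$, applied fiberwise to the configuration spaces and compatibly with $\Sigma_k$-actions, is precisely what matches these two expressions and furnishes the natural comparison map in general.

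Second, I would bootstrap to arbitrary augmented $A$ via the two filtrations described above. The associated graded of each filtration is built from these free pieces, so the comparison map induces an equivalence on associated gradeds stage by stage; this also pins down its naturality in the cobordism $M$ and in $A$. The third step is to promote this to an equivalence of filtered objects and then of their limits, which compute the two sides of the theorem.

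The main obstacle is this last step: convergence. The Taylor tower need not converge on $\int_{M\smallsetminus\partial_\sR}A$ before dualization, but $\Bbbk$-linear dualization turns a potentially divergent colimit into the desired inverse limit, and the RHS must realize precisely this completion. The decisive input is that the tower of Artin probes of $\MC_A$ is cofinal with the Goodwillie-tower filtration of $\int_{M\smallsetminus\partial_\sR}(-)$ evaluated at $A$; this cofinality is in turn the content of the Koszul-duality equivalence between augmented $n$-disk algebras and formal $n$-disk moduli problems. It is exactly here that $\MC_A$ plays a non-perturbative role: global sections over a formal moduli functor encode a completion that the bare Koszul dual $\DD^n A$, or its $\hh$-type invariants, cannot see, which is what allows the RHS to capture all of $\bigl(\int_{M\smallsetminus\partial_\sR}A\bigr)^\vee$ rather than only its perturbative shadow.
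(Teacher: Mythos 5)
Your plan correctly identifies the shape of the affine computation: the free case of $A = \FF V$ reduces to configuration spaces via Theorem~\ref{free-calculation}, and twisted Poincar\'e/Atiyah duality for the zero-pointed configuration manifolds $\conf_i(M_\ast)$ (Theorem~\ref{linear-PD} applied in the $\Sigma_i\wr\sO(n)$-structured setting) is exactly what matches the coinvariant chains on $\conf_k(M\smallsetminus\partial_\sR)$ against invariant cochains on $\conf_k(M\smallsetminus\partial_\sL)$. You also correctly sense that $\int_{M\smallsetminus\partial_\sL}\MC_A$ realizes a completion that the bare Koszul dual $\DD^n A$ cannot, and that the theorem amounts to showing the $\Bbbk$-linear dual lands precisely there. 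This is all consonant with the paper.

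The gap is in the bootstrap. You propose to descend from the affine case to general $A$ by dualizing the Goodwillie tower of $\int_{M\smallsetminus\partial_\sR}(-)$ at $A$ and declaring the tower of Artin probes of $\MC_A$ cofinal with that Goodwillie cofiltration. This is not the right mechanism, and the claimed cofinality does not hold as stated. The inverse limit of the Goodwillie tower at $A$ computes $P_\infty\int_{M_\ast}A \simeq \int^{M_\ast^\neg}\bBar A$ (Corollary~\ref{goodwillie-limit}), which is the factorization \emph{co}homology with \emph{coalgebra} coefficients --- the ``perturbative'' object --- and this is generally distinct from $\bigl(\int_{M_\ast}A\bigr)^\vee$. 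The Goodwillie tower intervenes only inside the affine case, where coconnectivity of $A$ forces the layers to be increasingly coconnective so the tower converges (this is the content of the argument surrounding arrow~(1) in the proof of Theorem~\ref{affine}); it is not used to pass from the affine case to general $A$. What that passage actually requires --- and what you do not supply --- is Proposition~\ref{resolve}: the assertion that every augmented $n$-disk algebra is canonically a sifted colimit of finitely presented $(-n)$-coconnective ones, with factorization homology commuting with this colimit. That is where the bulk of the technical labor lies (the ``strong generation'' machinery of Lemma~\ref{free-resolution}, Lemma~\ref{cocon-cofib}, and all of \S3.3), and it is this sifted category of coconnective resolutions of $A$ --- not the Goodwillie tower --- that matches up, via the Koszul duality equivalence $\fpres_n^{\leq -n}\simeq(\Artin_n)^{\op}$ of Theorem~\ref{fpres}, with the diagram of Artin probes $\Spf R\to\MC_A$. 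Two smaller points: the free case must be established for $V$ \emph{coconnective}, not merely finite, since it is coconnectivity that makes the Goodwillie and cardinality (co)filtrations converge and grants the perfectness needed to identify dual with double dual; and there is no global ``Koszul-duality equivalence between augmented $n$-disk algebras and formal $n$-disk moduli problems'' to appeal to --- only the restricted one on $\fpres_n^{\leq -n}$ and $\Artin_n$, which is what has to be proven.
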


Note that if the boundary of the compact $n$-manifold $M$ is empty, then the statement above reduces to the simpler expression of an equivalence 
\[
\Bigl(\int_{ M} A\Bigr)^\vee~\simeq~ \int_{ M}\MC_A~.
\]

\begin{remark}
If we write $X$ for the formal moduli functor $\MC_A$, then $A$ itself is equivalent to $\sT_x X[-n]$, the tangent space at the distinguished $\Bbbk$-point $x\in X$ shifted by $-n$. The assertion of Theorem \ref{1main} then becomes 
\[
\int_{M\smallsetminus \partial_{\sf L}} X~ \simeq ~\Bigl(\int_{M\smallsetminus \partial_{\sf R}} \sT_x X[-n]\Bigr)^\vee~.
\]
\end{remark}

\smallskip

This theorem coheres to our dual motivations. First, the result specializes to the dual of usual Poincar\'e duality by setting $A$ to be an algebra with respect to direct sum; in this case the lefthand side becomes usual homology with coefficients in $A$, the formal moduli problem is representable, and the righthand side becomes usual cohomology with coefficients in an $n$-fold shift of $A$. Second, from the cobordism formulation one can see this result involves a duality for the extended topological field theories defined by the factorization homology with coefficients in $A$ and $\MC_A$. Theorem \ref{1main} is a more general than the duality assured by the cobordism hypothesis, however. Since limits need not commute with tensor products of infinite dimensional vector spaces, the natural map
\[
\int_{M} \MC_A \ot \int_{N} \MC_A \longrightarrow \int_{M\amalg N}\MC_A
\]
need not be an equivalence. So factorization homology with coefficients in $\MC_A$ does not define a symmetric monoidal functor from the bordism category unless an additional requirement is made on $A$, to ensure the factorization homologies $\int_M A$ and $\int_M \MC_A$ are finite dimensional.

\smallskip

Given connectivity or coconnectivity hypotheses on the algebra $A$, one can replace the moduli problem $\MC_A$ with its algebra of global sections $\DD^nA$, the Koszul dual of $A$. We have the following, combining Theorem \ref{compare-towers}, Theorem \ref{convergence}, and Proposition \ref{switch}.

\begin{theorem}\label{summ}
Let ${M}$ be a compact smooth $n$-dimensional cobordism with boundary partitioned as $\partial M \cong \partial_{\sL} \sqcup \partial_{\sR}$. Let $A$ be a finitely presented augmented $n$-disk algebra such that either:
\begin{itemize}
\item $A$ has a connected augmentation ideal;
\item $A$ is an algebra over a field $\Bbbk$ and the augmentation ideal of $A$ is $(-n)$-coconnective.
\end{itemize}
There is a natural equivalence \[\Bigl(\int_{{M}\smallsetminus \partial_{\sR}}A\Bigl)^\vee~ \simeq~ \int_{{M}\smallsetminus \partial_{\sL}}\DD^nA~.\]

\end{theorem}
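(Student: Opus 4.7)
The plan is to derive Theorem \ref{summ} from the general Poincar\'e/Koszul duality of Theorem \ref{1main} by replacing the formal moduli functor $\MC_A$ on the right-hand side with its $\Bbbk$-algebra of global sections, which is the Koszul dual $n$-disk algebra $\DD^nA$. Applying Theorem \ref{1main} directly supplies the natural equivalence $\bigl(\int_{M\smallsetminus \partial_{\sR}} A\bigr)^\vee \simeq \int_{M\smallsetminus \partial_{\sL}} \MC_A$, so what remains is to establish a natural equivalence
\[
\int_{M\smallsetminus \partial_{\sL}} \MC_A ~\simeq~ \int_{M\smallsetminus \partial_{\sL}} \DD^nA
\]
under either hypothesis on $A$.

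First I would construct a natural comparison morphism. The unit of the adjunction between augmented $n$-disk algebras and formal moduli functors (Koszul duality lifted through moduli) produces a canonical morphism $\DD^nA \to \Gamma(\MC_A,\cO)$; applied pointwise to factorization homology this induces a natural map from the right to the left side of the desired equivalence. Both sides admit Goodwillie-type towers: the left side is approximated by the polynomial tower $P_\bullet \int_M \MC_A$ in the variable $A$, and the right side by successive truncations of $\DD^nA$ along powers of the augmentation ideal. The content of Theorem \ref{compare-towers} is the identification of these two towers stage by stage, so that the comparison map refines to a morphism of towers with common target.

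Next I would invoke Theorem \ref{convergence} to show that each of these towers converges to its target under the stated hypotheses. In the connected augmentation ideal case, powers of the augmentation ideal give a filtration of $A$ whose associated graded becomes arbitrarily highly connected, forcing the Taylor tower to converge unconditionally; consequently $\int_M \MC_A$ and $\int_M \DD^nA$ have canonically identified tower filtrations with matching limits. In the $(-n)$-coconnective case over a field, finite presentation combined with coconnectivity ensures that each stage of the tower has finite-dimensional homology in each degree and dualizes cleanly; Proposition \ref{switch} then permits interchanging the $\Bbbk$-linear dual with the limits arising from both the factorization homology construction and the Maurer--Cartan functor, which is precisely what allows swapping $\MC_A$ for $\DD^nA$ on the right-hand side of the duality.

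The main obstacle will be the convergence and interchange analysis: in general, factorization homology with coefficients in a formal moduli functor is strictly richer than that with coefficients in the associated Koszul dual algebra, with the two differing exactly by the failure of the relevant tower to converge and the failure of $\Bbbk$-linear duality to commute with the defining limits. The connectivity or coconnectivity hypotheses on $A$ are what control these failures, and the bulk of the technical work lies in turning the heuristic that "a formal moduli functor is determined by its global functions under appropriate finiteness" into an actual equivalence of factorization homologies via a termwise comparison of Taylor towers. Once convergence has been established in each of the two cases, the remaining verification that the comparison map is an equivalence is formal.
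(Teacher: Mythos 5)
Your plan to deduce Theorem \ref{summ} from Theorem \ref{1main} by comparing $\int \MC_A$ with $\int \DD^n A$ is not what the paper does, and in the $(-n)$-coconnective case it is circular. The proof of Theorem \ref{main} (= Theorem \ref{1main}) proceeds by reducing to the affine case, which is exactly Theorem \ref{affine}; and Theorem \ref{affine} is precisely the $(-n)$-coconnective instance of Theorem \ref{summ}. So the implication runs from the coconnective case of Theorem \ref{summ} to Theorem \ref{main}, not the other way. Moreover, Theorem \ref{1main} is stated over a field $\Bbbk$, whereas the connected-augmentation-ideal clause of Theorem \ref{summ} is supposed to hold without that assumption, so a derivation from Theorem \ref{1main} does not even apply there.

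The paper's actual argument never mentions $\MC_A$. It works entirely on the algebra/coalgebra level: the Poincar\'e/Koszul map $\int_{M_\ast} A \to \int^{M_\ast^\neg}\bBar A$ factors through $P_\infty\int_{M_\ast}A$, and Theorem \ref{compare-towers} (with Corollary \ref{goodwillie-limit}) shows $P_\infty\int_{M_\ast}A \simeq \int^{M_\ast^\neg}\bBar A$ unconditionally; convergence $\int_{M_\ast}A \simeq P_\infty\int_{M_\ast}A$ is then extracted from Theorem \ref{convergence} in the connected case and from the coconnectivity estimate in the proof of Theorem \ref{affine} in the coconnective case; finally Proposition \ref{switch} (together with a finiteness/double-dual check) trades $\bigl(\int^{M_\ast^\neg}\bBar A\bigr)^\vee$ for $\int_{M_\ast^\neg}\DD^n A$. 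Also, your description of the two towers is off: there is no Goodwillie tower attached to $\int_M \MC_A$ (that object is defined as a limit over Artinian algebras), and the cofiltration on the other side is the cardinality cofiltration $\tau^{\leq \bullet}\int^{M_\ast^\neg}\bBar A$ coming from bounding the number of configuration points, not a filtration of $\DD^n A$ by powers of its augmentation ideal. The key comparison of Theorem \ref{compare-towers} is between the Goodwillie cofiltration of $\int_{M_\ast}A$ and this cardinality cofiltration of factorization cohomology of $\bBar A$, and that identification is what makes the argument go through.
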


\smallskip

This result is interesting even for dimension $n=1$. The factorization homology of the circle is equivalent to Hochschild homology, so in this case we obtain a linear duality between the Hochschild homology of an associative algebra and either the Hochschild homology of the noncommutative moduli problem $\MC_A$ or of the Koszul dual $A\simeq \Hom_{A}(\Bbbk,\Bbbk)$. See Corollary \ref{assoc}. This specialization is particularly comprehensible in the case where the algebra $A=\sU \frak g$ is the enveloping algebra of a Lie algebra over a field of characteristic zero. In this case, we obtain a relation between the enveloping algebra of a Lie algebra $\frak g$ and the Lie algebra cohomology of $\frak g$. Then we have the following, which generalizes a result of Feigin \& Tsygan in \cite{feigintsygan}.

\smallskip

\begin{theorem} For $\frak g$ a Lie algebra over a field of characteristic zero which is finite dimensional and concentrated in either homological degrees less than $-1$ or in degrees greater than $0$, then there is an equivalence
\[
\hh_\ast(\sU\frak g)^\vee ~\simeq~ \hh_\ast(\sC^\ast \frak g)
\] 
between the dual of the Hochschild homology of the enveloping algebra and the Hochschild homology of Lie algebra cochains.
\end{theorem}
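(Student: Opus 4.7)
The plan is to specialize Theorem \ref{summ} to the case $n=1$ and $M = S^1$, and then to identify both sides with the stated Hochschild homologies. Since $S^1$ is a compact $1$-manifold with empty boundary, the boundary partition $\partial M \cong \partial_{\sL} \sqcup \partial_{\sR}$ is trivial, and Theorem \ref{summ} reduces to
\[
\Bigl(\int_{S^1} A\Bigr)^\vee ~\simeq~ \int_{S^1} \DD^1 A
\]
for any augmented $\cE_1$-algebra $A$ over $\Bbbk$ satisfying the finiteness and (co)connectivity hypotheses stated there. Combined with the standard identification $\int_{S^1} A \simeq \hh_\ast(A)$ of circle factorization homology with Hochschild homology for $\cE_1$-algebras, this equivalence becomes $\hh_\ast(A)^\vee \simeq \hh_\ast(\DD^1 A)$, and it remains only to apply it to $A = \sU\frak g$ and to identify $\DD^1 \sU\frak g$ with $\sC^\ast \frak g$.

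Next I would verify the hypotheses of Theorem \ref{summ} for $A = \sU\frak g$. Finite dimensionality of $\frak g$ gives finite presentability of $\sU\frak g$ as an augmented $\cE_1$-algebra. By the Poincar\'e--Birkhoff--Witt theorem in characteristic zero, the underlying chain complex of $\sU\frak g$ is $\Sym(\frak g)$, so its augmentation ideal is supported in the additive submonoid of $\ZZ$ generated by the support of $\frak g$. Hence if $\frak g$ is concentrated in degrees $>0$, the augmentation ideal is concentrated in degrees $\geq 1$ and so is connected, while if $\frak g$ is in degrees $<-1$, the augmentation ideal is in degrees $\leq -2$ and so is $(-1)$-coconnective. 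Either case satisfies the hypothesis of Theorem \ref{summ} for $n=1$.

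The final step is to identify $\DD^1(\sU\frak g) \simeq \sC^\ast\frak g$. In characteristic zero, Quillen's Koszul duality identifies the $\cE_1$-bar construction $\bBar(\sU\frak g)$ with the Chevalley--Eilenberg chain coalgebra $\sC_\ast\frak g$, equipped with its natural cocommutative structure; under the finite dimensionality hypothesis on $\frak g$, passing to $\Bbbk$-linear duals gives $\DD^1(\sU\frak g) \simeq \sC^\ast\frak g$ as augmented $\cE_1$-algebras. Feeding this into $\hh_\ast(A)^\vee \simeq \hh_\ast(\DD^1 A)$ with $A = \sU\frak g$ yields the desired equivalence $\hh_\ast(\sU\frak g)^\vee \simeq \hh_\ast(\sC^\ast\frak g)$.

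The main obstacle is this last identification: one must verify that the $\cE_1$-operadic Koszul dual, as built in the proof of Theorem \ref{summ} via iterated bar/centralizer constructions, agrees with Quillen's classical Chevalley--Eilenberg complex for enveloping algebras. Concretely, this amounts to matching the $\cE_1$-bar/cobar adjunction with the enveloping/primitives adjunction between dg Lie algebras and conilpotent cocommutative coalgebras in characteristic zero; once this compatibility is in hand, the identification $\bBar(\sU\frak g) \simeq \sC_\ast\frak g$ follows by comparing the two derived functors on $\sU\frak g$.
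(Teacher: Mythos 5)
Your overall strategy matches the paper's: specialize the finitely-presented form of Poincar\'e/Koszul duality (Theorem \ref{summ}) to $n=1$ and $M=S^1$, use $\int_{S^1}\simeq \hh_\ast$, check the hypotheses for $A=\sU\frak g$ via PBW and degree bookkeeping, and identify $\DD^1(\sU\frak g)$ with $\sC^\ast\frak g$. Where you diverge is in that last identification. The paper (proof of Corollary \ref{Ug}) does it by a one-line calculation:
\[
\DD(\sU\frak g):= \Hom_{\Bbbk}\Bigl(\int_{\RR^+}\sU\frak g, \Bbbk\Bigr)\simeq \Hom_{\Bbbk}\Bigl(\Bbbk\underset{\sU\frak g}\ot\Bbbk, \Bbbk\Bigr) \simeq \Hom_{\sU\frak g}(\Bbbk,\Bbbk)=:\sC^\ast\frak g~,
\]
using only $\ot$-excision for the closed interval and the Hom--tensor adjunction, with $\sC^\ast\frak g$ \emph{defined} as the derived endomorphism algebra $\Hom_{\sU\frak g}(\Bbbk,\Bbbk)$ (which the Chevalley--Eilenberg resolution then computes if you want an explicit model). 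You instead propose to first identify the $\cE_1$-bar construction $\bBar(\sU\frak g)$ with the Chevalley--Eilenberg chain coalgebra $\sC_\ast\frak g$ via Quillen's rational Koszul duality, and then dualize. You correctly flag this as the main obstacle: matching the $\cE_1$-bar/cobar adjunction against the enveloping-algebra/primitives adjunction is a genuinely nontrivial compatibility, and in fact this is what the paper defers to \cite{lie} when proving the higher-$n$ version (Corollary \ref{mchoch}) via the chain of adjunctions~(\ref{as1}) and $\sU_n\frak g\simeq\Sym(\frak g[n-1])$. So your route is valid but strictly costlier: it requires an external comparison theorem, whereas the $n=1$ case admits the short $\ot$-excision argument above, which sidesteps the issue entirely. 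You buy the explicit cocommutative coalgebra structure on $\sC_\ast\frak g$ at a price the paper avoids paying.
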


We make two remarks on generalizations, or the lack thereof.

\begin{remark}
All of our results are valid, with identical proofs, if smooth $n$-manifolds are replaced with $G$-structured smooth $n$-manifolds, where $G$ is a Lie group with a continuous homomorphism $G\ra {\Diff}(\RR^n)$. However, for visual simplicity we omit this notational clutter from most of the current work.
\end{remark}
\begin{remark}
We had originally imagined this work in the setting of {\it topological}, rather than smooth, manifolds, but serious technical obstructions dissuaded us. In topology, one runs into the difficulty that  the configuration spaces $\conf_i(M)$ in a compact topological manifold $M$ might not admit a compactification to a topological manifold with corners.  
Indeed, without some regularity conditions, which are assured by smoothness, our present methods do not allow us to identify the $\oo$-category $\Disk^{\sf top}_{n/M}$ with the exit-path $\oo$-category of the Ran space of $M$ (or even to show that the simplicial space of exit-paths in $\Ran(M)$ indeed satisfy the Segal condition and so form an $\oo$-category).
This is the key technical point which allows us to analyze the layers of the Goodwillie calculus towers in terms of configuration spaces. In algebra, one runs into difficulties stemming from $\Top(n)$, the topological group of homeomorphisms of $\RR^n$, which is far less understood than the group of diffeomorphisms of $\RR^n$. 
Specifically, the homology $\sH_\ast\Top(n)$ is not known, at least to us, to be finite rank---unlike $\sH_\ast\sO(n)$---and so the notions of coherent and perfect $\Top(n)$-modules differ. As a result, Koszul duality of topological $n$-disk algebras does not enjoy the same good duality properties as that of smooth $n$-disk algebra. We do not think that a statement at the generality of Theorem \ref{1main} is true for topological $n$-disk algebras and topological manifolds.
\end{remark}

\smallskip

We now overview the contents of this paper, section by section.
\smallskip

In \textbf{Section 1}, we review the category $\ZMfld_{n}$ of zero-pointed $n$-manifolds and the factorization homology of zero-pointed manifolds from \cite{ZP}. A zero-pointed manifold consists of a pointed topological space $M_\ast$, which is a smooth $n$-manifold $M$ with an extra point $\ast$ and a conically smooth extension of the topology of $M$ to $M_\ast$; the essential example is a space $ M/\partial  M$, the quotient of an $n$-manifold by its boundary. This theory naturally incorporates functoriality for both embeddings and Pontryagin--Thom collapse maps of embeddings, a feature we employ in order to present a unified treatment of duality in homology/cohomology and algebra/coalgebra. More precisely, the zero-pointed theory provides additional functorialities for factorization homology with coefficients in an augmented $n$-disk algebra algebra which, in particular, endows the factorization homology \[\int_{(\RR^n)^+} A\] with the structure of an $n$-disk coalgebra, where $(\RR^n)^+$ is the 1-point compactification of $\RR^n$. Consequently, we arrive at a geometric presentation of an $n$-disk coalgebra structure on the $n$-fold iterated bar construction of an augmented $n$-disk algebra.  We apply this to construct the Poincar\'e/Koszul duality map, which goes from factorization homology with coefficients in an $n$-disk algebra to factorization cohomology with coefficients in the Koszul dual $n$-disk coalgebra. We lastly recall a version of twisted Poincar\'e duality, which asserts that our duality map is an equivalence in the case of stable $\oo$-category with direct sum.

\smallskip

In \textbf{Section 2}, we introduce two (co)filtrations of factorization homology and cohomology. One comes from Goodwillie's calculus of homotopy functors. A second comes from a cardinality filtration $\disk_n^{\leq k}$ of $\disk_n$. The cardinality filtration is a common generalization of the Goodwillie--Weiss calculus filtration from topology and the Hodge filtration of Hochschild homology (see Remark \ref{hodge}). We prove that the Poincar\'e/Koszul duality map exchanges the Goodwillie and the cardinality cofiltrations. That is, in this instance, we prove that Goodwillie calculus and Goodwillie--Weiss calculus are Koszul dual to one another. As a consequence, we obtain spectral sequences for factorization homology whose $E^1$-terms are identified as homologies of configuration spaces; for the circle, one of these spectral sequences generalizes the B\"okstedt spectral sequence. Finally, in the case of our main theorem, we conclude that the Poincar\'e/Koszul duality map is an equivalence when the algebra $A$ is connected.

\smallskip

In \textbf{Section 3}, we introduce factorization homology with coefficients in a formal moduli problem. We prove that the Poincar\'e/Koszul duality map is an equivalence in the case of a  $(-n)$-coconnective $n$-disk algebra over a field. We also prove an instance of Koszul duality proper, that there is equivalence between Artin $n$-disk algebras and finitely presented $(-n)$-coconnective $n$-disk algebras. Using this, we show our main theorem, that the moduli-theoretic Poincar\'e/Koszul duality map is an equivalence for any augmented $n$-disk algebra. We conclude by specializing these results to the case of associative algebras and Lie algebras in \textbf{Section 4}.

\smallskip

\begin{remark}
In this work, we use Joyal's {\it quasi-category} model  of $\oo$-category theory \cite{joyal}. 
Boardman \& Vogt first introduced these simplicial sets in \cite{bv}, as weak Kan complexes, and their and Joyal's theory has been developed in great depth by Lurie in \cite{HTT} and \cite{HA}, our primary references. See the first chapter of \cite{HTT} for an introduction. We use this model, rather than model categories or simplicial categories, because of the great technical advantages for constructions involving categories of functors, which are ubiquitous in this work. 
More specifically, we work inside of the quasi-category associated to this model category of Joyal's.  In particular, each map between quasi-categories is understood to be an iso- and inner-fibration; and (co)limits among quasi-categories are equivalent to homotopy (co)limits with respect to Joyal's model structure.

We will also make use of topological categories, such as the topological category $\mfld_n$ of smooth $n$-manifolds and smooth embeddings. By a functor $\mfld_n\ra \cV$ from a topological category such as $\mfld_n$ to an $\oo$-category $\cV$ we will always mean a functor $\sN\Sing\mfld_n\ra \cV$ from the simplicial nerve of the simplicial category $\Sing \mfld_n$ obtained by applying the singular functor $\Sing$ to the hom spaces of the topological category. 

The reader uncomfortable with this language can substitute the words ``topological category" for ``$\oo$-category" wherever they occur in this paper to obtain the correct sense of the results, but they should then bear in mind the proviso that technical difficulties may then abound in making the statements literally true. The reader only concerned with algebra in chain complexes, rather than spectra, can likewise substitute ``pre-triangulated differential graded category" for ``stable $\oo$-category" wherever those words appear, with the same proviso.
\end{remark}

\subsection*{Terminology}[$\ot$-conditions]\label{stab-pres}
Throughout this document, we will use the letter $\cV$ for a symmetric monoidal $\infty$-category, and $\uno$ for its symmetric monoidal unit. We will not distinguish in notation between it and its underlying $\infty$-category.  
\begin{itemize}

\item 
We say $\cV$ is \emph{$\ot$-presentable} if its underlying $\infty$-category is presentable and its symmetric monoidal structure $\ot$ distributes over small colimits separately in each variable.  
We say $\cV$ is \emph{$\ot$-stable-presentable} if it is $\ot$-presentable and its underlying $\infty$-category is stable.

\item We say $\cV$ is \emph{$\ot$-cocomplete} if its underlying $\infty$-category admits small colimits and its symmetric monoidal structure $\ot$ distributes over small colimits separately in each variable.

\item We say $\cV$ is \emph{$\ot$-sifted cocomplete} if its underlying $\infty$-category admits sifted colimits and its symmetric monoidal structure $\ot$ distributes over sifted colimits separately in each variable.  

\end{itemize}

\medskip

\subsection*{Acknowledgements} Our collaboration began at a workshop in Glanon in 2011. We are grateful to people of Glanon for their warm hospitality in hosting this annual workshop, and to Gr\'egory Ginot for inviting us to participate in it. We have learned an enormous amount from Jacob Lurie; we use, in particular, his opuses \cite{HTT} and \cite{HA} throughout. We thank Greg Arone for several very helpful conversations on Goodwillie calculus. JF thanks Kevin Costello for offering many insights in many conversations over the years. We thank the referees for their informed and detailed readings, which have significantly improved this paper.

\section{Review of reduced factorization homology}\label{sec:review}
We recall some notions among zero-pointed manifolds and factorization (co)homology thereof, as established in~\cite{ZP}.

\subsection{Zero-pointed manifolds}\label{sec:zero}

We recall the enlargement of the symmetric monoidal topological category $\Mfld_n$ of smooth $n$-manifolds and open embeddings among them (with compact-open $C^\infty$ topologies), to \emph{zero-pointed} $n$-manifolds.

\begin{definition}[Zero-pointed manifolds]\label{def:zero}
An object of the symmetric monoidal topological category of zero-pointed manifolds 
$
\ZMfld
$
is a locally compact Hausdorff based topological space $M_\ast$ together with the structure of a smooth manifold on the complement of the base point $M:=M_\ast \smallsetminus \ast$ subject to the following finitary condition:
\begin{itemize}
\item The 1-point compactification $(M_\ast)^+$ admits a conically smooth structure with respect to which the inclusion $M\hookrightarrow (M_\ast)^+$ is a conically smooth open embedding.  
\end{itemize}
The topological space of morphisms $\ZEmb(M_\ast,M'_\ast)$ consists of based continuous maps $f\colon M_\ast \to M'_\ast$ for which the restriction $f_|\colon f^{-1}M'\to M'$ is a smooth open embedding, endowed with the (compactly generated weak Hausdorff replacement of the) compact-open topology.   
Composition is given by composing based maps.
The symmetric monoidal structure is wedge sum.  
There is the full sub-symmetric monoidal topological category
\[
\ZMfld_n~\subset~\ZMfld
\]
consisting of those zero-pointed manifolds $M_\ast$ for which $M$ has dimension exactly $n$. For a smooth $n$-manifold $M$, $M_+$ is the zero-pointed manifold given by $M$ with a disjoint zero-point; $M^+$ is the zero-pointed manifold defined by the 1-point compactification of $M$.
The full sub-symmetric monoidal categories of $\ZMfld_n$
\[
\Disk_{n,+} \subset \ZDisk_n \supset \Disk_n^+
\]
consist of finite wedge sums of $\RR^n_+$, of $\RR^n_+$ and $(\RR^n)^+$, and of $(\RR^n)^+$, respectively. Likewise, $\Mfld_{n,+}$ and $\Mfld_n^+$ are the full sub-symmetric monoidal categories of $\ZMfld_n$ consisting of zero-pointed manifolds of the form $M_+$ and $M^+$, respectively.
\end{definition}

\begin{remark}
In particular, a conically smooth zero-pointed manifold $M_\ast$ determines a stratified space structure on the pointed topological space $M_\ast$. 
\end{remark}

\begin{example}[Cobordisms]\label{cobordisms}
Let $\ov{M}$ be a cobordism, i.e., a compact manifold with partitioned boundary $\partial\ov{M} = \partial_{\sL} \amalg \partial_{\sR}$.  
The based topological space \[M_\ast := \ast \underset{\partial_{\sL}}\coprod (\ov{M}\smallsetminus \partial_{\sR})\] is a zero-pointed manifold.  

\end{example}

\begin{remark}
The \emph{unzipping} construction of~\cite{aft1} reveals that a conically smooth zero-pointed manifold is the data of a compact smooth manifold $\ov{M}$ with boundary which is partitioned $\partial \ov{M} = \partial_{\sL}\sqcup \partial_{\sR}$, as in Example~\ref{cobordisms}.  

\end{remark}

\begin{remark}
Consider the notation of Definition~\ref{def:zero}.
A zero-pointed embedding from $\ast \underset{\partial_{\sL}} \amalg (\ov{M}\smallsetminus \partial_{\sR})$ to $\ast \underset{\partial'_{\sL}}\amalg (\ov{M}'\smallsetminus \partial'_{\sR})$ is vastly different from an embedding from $\ov{M}$ to $\ov{M}'$ that respect the partitioned boundaries in any sense.  

\end{remark}

We now catalogue some facts about $\ZMfld_n$ that are proven in~\cite{ZP}.
Note first the continuous functor $\Mfld_n \to \Mfld_{n,+}$, given by $M\mapsto M_+$, which is symmetric monoidal.

\begin{theorem}[\cite{ZP}]\label{zero-facts}
Let $\cV$ be a symmetric monoidal $\infty$-category.
\begin{itemize}

\item
There is a contravariant involution $\neg\colon \ZMfld_n \cong \ZMfld_n^{\op}\colon \neg$ which sends a zero-pointed manifold \[M_\ast\longmapsto(M_\ast)^+\smallsetminus \ast\] to the 1-point compactification of $M$ minus the original zero-point.

\item
There is a canonical equivalence of $\infty$-categories
\[
\Fun^{\ot}(\Mfld_{n,+},\cV) \xra{~\simeq~} \Fun^{\ot, {\sf aug}}(\Mfld_{n},\cV)
\]
to between symmetric monoidal functors from $\Mfld_{n,+}$ and symmetric monoidal functors from $\Mfld_n$ which are augmented over the constant functor $\Mfld_{n}\ra \cV$ whose value is the unit $\uno$ of $\cV$. (That is, an augmented functor $F$ has the data of a projection $F(M)\ra \uno$ and a section $\uno\ra F(M)$ compatibly for all $M$.)

\end{itemize}

\end{theorem}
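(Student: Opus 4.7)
The plan is to establish the two parts in turn.

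For part one, I would specify the involution $\neg$ on morphisms by first reformulating a zero-pointed embedding $f\colon M_\ast \to N_\ast$ as a pair $(U, \phi)$, where $U := f^{-1}(N) \subset M$ is open and $\phi\colon U \hookrightarrow N$ is the resulting open embedding; these data recover $f$ by extending by the zero-point on $M \smallsetminus U$. Then $\neg f\colon \neg N_\ast \to \neg M_\ast$ is the unique based map that restricts to the inverse embedding $\phi^{-1}\colon \phi(U) \hookrightarrow U \subset \neg M_\ast$ on $\phi(U) \subset \neg N_\ast$ and sends everything else (namely $N \smallsetminus \phi(U)$ together with the new zero-point $\infty$) to the zero-point of $\neg M_\ast$. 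The delicate point is verifying continuity at the basepoint of $\neg N_\ast$; this follows because neighborhoods of the basepoint of $\neg M_\ast$ are complements of compact subsets of $M_\ast$ in the original topology, and the continuity of $f$ as a based map at $\ast \in M_\ast$ translates under the one-point compactification into precisely the properness needed for $\neg f$ near $\infty$. With $\neg$ so defined, contravariant functoriality $\neg(gf) = \neg f \circ \neg g$ and monoidality are immediate from the composition law for pairs $(U,\phi)$, and the involution property $\neg\neg \cong \id$ is canonical because the double compactification $((M_\ast)^+ \smallsetminus \ast)^+$ restores $(M_\ast)^+$ with the roles of the two distinguished points swapped.

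For part two, I would exhibit the equivalence as restriction along $(-)_+\colon \Mfld_n \to \Mfld_{n,+}$ together with an explicit inverse. Given a symmetric monoidal $F\colon \Mfld_{n,+} \to \cV$, its restriction acquires augmentation and coaugmentation from the canonical morphisms $\emptyset_+ \to M_+ \to \emptyset_+$ in $\Mfld_{n,+}$, namely the basepoint inclusion and the collapse to the basepoint, using that $F(\emptyset_+) \simeq \uno$ since $\emptyset_+$ is the unit. Conversely, to extend an augmented symmetric monoidal $G\colon \Mfld_n \to \cV$ to $\tilde G\colon \Mfld_{n,+} \to \cV$, I would use the pair description: a zero-pointed embedding $M_+ \to N_+$ given by $(U,\phi)$ factors through $U_+$ as the composition of a collapse $M_+ \to U_+$ followed by the embedding $\phi_+\colon U_+ \to N_+$, and one prescribes $\tilde G$ on the collapse via the augmentation of $G$ on the ``complement factor'' (using symmetric monoidality to split $G(M) \simeq G(U) \otimes G(M \smallsetminus U)$ when possible) and on the embedding via $G(\phi)$.

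The principal obstacle is the coherence of this extension, particularly in interpreting the collapse $M_+ \to U_+$ when the complement $M \smallsetminus U$ is not itself a manifold. I would address this by characterizing $\Mfld_{n,+}$ by a universal property: it is the free symmetric monoidal $\infty$-category under $\Mfld_n$ equipped with a compatible augmentation and coaugmentation of the identity functor toward the unit. Granting this universal property, the equivalence of functor $\infty$-categories follows formally. The universal property itself is verified by analyzing generators and relations of $\Mfld_{n,+}$: every morphism factors, up to the symmetric monoidal structure, as an embedding in the image of $(-)_+$ precomposed with a collapse map, and the relations among such factorizations are exactly those forced by requiring the augmentation and coaugmentation to be symmetric monoidal natural transformations.
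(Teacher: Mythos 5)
This result is cited from~\cite{zp1}; the present paper does not reprove it, so there is no in-text proof to compare against. Evaluating your proposal on its own terms:

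\textbf{Part one.} The Pontryagin--Thom collapse description of $\neg f$ is the right construction, but the continuity check is where the content lies and your account of it is too vague to be verifiable. The clean argument is: for $K\subset M_\ast$ compact, one has $\phi(U\cap K)=f(K)\cap N$; since $f(K)$ is compact (image of a compact under a continuous map) and closed (compact subsets of the Hausdorff space $N_\ast$ are closed), $\phi(U\cap K)$ is closed in $N$ and contained in a compact, so its complement union $\{\infty_N\}$ is a basic open around $\infty_N$. Your phrase ``the continuity of $f$ at $\ast$ translates into properness'' gestures at this but does not isolate the use of Hausdorffness nor the identity $\phi(U\cap K)=f(K)\cap N$ that makes it work. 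A second omission: the assertion is that $\neg$ is a contravariant \emph{isomorphism of topological (hence $\infty$-) categories}, so one must also check that $\neg\colon\ZEmb(M_\ast,N_\ast)\to\ZEmb(\neg N_\ast,\neg M_\ast)$ is a homeomorphism for the compact-open topologies; your argument is entirely objectwise.

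\textbf{Part two.} Here you have manufactured an obstacle that does not exist. A morphism $f\colon M_+\to N_+$ in $\ZMfld$ is a \emph{continuous} based map, and $M_+$, $N_+$ have \emph{disjoint} basepoints, so $\{\ast\}\subset N_+$ is open and $f^{-1}(\{\ast\})=\{\ast\}\cup(M\smallsetminus U)$ must be open in $M_+$. Thus $U=f^{-1}(N)$ is automatically clopen in $M$, and $M\smallsetminus U$ \emph{is} an open submanifold, with $M_+=U_+\vee(M\smallsetminus U)_+$. The collapse $M_+\to U_+$ is therefore literally $\id_{U_+}\vee(\mathrm{aug}\colon(M\smallsetminus U)_+\to\ast)$, and the extension $\tilde G$ on it is $\id_{G(U)}\ot\epsilon_{M\smallsetminus U}$ with no splitting problem at all. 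Because you did not notice the clopen constraint, you retreated to a universal-property characterization of $\Mfld_{n,+}$ whose statement is informal (``augmentation and coaugmentation of the identity functor toward the unit'' does not parse as written) and whose proof you leave at the level of ``generators and relations are exactly those forced.'' That is precisely the step that needs an argument; once the clopen observation is in place, the direct construction you started with is both simpler and actually completable, modulo the usual diligence about making $\tilde G$ a coherent $\infty$-functor rather than a pointwise assignment.
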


\subsection{Configuration zero-pointed manifolds}
We define two natural pointed spaces of configurations associated to a zero-pointed manifold.

\begin{notation}\label{def.conf}
Let $i$ be a finite cardinality.
Let $M_\ast$ be a pointed locally compact Hausdorff topological space.
We denote the pointed topological space 
\[
\conf_i(M_\ast)  ~:=~
\Bigl\{ \bigl[ \{1,\dots,i\} \xra{c} M_\ast \bigr] \mid c_{|M} \text{ is injective} \Bigr\}
~\subset~
(M_\ast)^{\smash i}  ~,
\]
which is endowed with the subspace topology.
We denote the pointed topological space
\[
\conf^{\neg}_i(M_\ast)  ~:=~
\ast \underset{ \Bigl\{ \bigl[ \{1,\dots,i\} \xra{c} M_\ast \bigr] \mid c_{|M} \text{ is not injective} \Bigr\}}
\coprod
(M_\ast)^{\smash i}  ~,
\]
which is endowed with the quotient topology.  
The evident $\Sigma_i$-action on $\conf_i(M_\ast)$ and on $\conf^{\neg}_i(M_\ast)$ results in the pointed Hausdorff topological spaces of $\Sigma_i$-coinvariants:
\[
\conf_i(M_\ast)_{\Sigma_i}
\qquad \text{ and }\qquad
\conf_i^\neg(M_\ast)_{\Sigma_i}~.  
\]

\end{notation}

\begin{remark}
Consider an injection $c\colon \{1,\dots,i\}\hookrightarrow M$.
Intuitively, $c$ is near the zero-point in $\conf_i(M_\ast)$ if one of its elements is near the zero-point of $M_\ast$. In comparison, $c$ is near the zero-point in $\conf_i^\neg(M_\ast)$ if either: one of its elements is near the zero-point of $M_\ast$; or at least two of its elements are near one another.  

\end{remark}

The pointed topological space $\conf_i(M_\ast)$ is not generally a zero-pointed manifold.
Indeed, for $i\geq 2$, it is locally compact only if the zero-point $\ast \in M_\ast$ is isolated.
In this case that the zero-point is isolated, $M_\ast = M_+$, there is a canonical identification
\[
\conf_i(M_+) ~ = ~  \conf_i(M)_+
\]
with $\conf_i(M)$ adjoined a disjoint zero-point.
Likewise, $\conf^{\neg}_i(M_\ast)$ defines a zero-pointed manifold only if $M_\ast$ is compact.
In this case that $M_\ast$ is compact, $M_\ast = M^+$, there is a canonical identification
\[
\conf^{\neg}_i(M^+)~ =~ \conf_i(M)^+
\]
with the 1-point compactification of $\conf_i(M)$.

The next result corrects this local compactness issue, up to homotopy equivalence, at the expense of considering zero-pointed manifolds \emph{with corners}.
The definition of a zero-pointed manifold with corners is the same as that of a zero-pointed manifold, just allowing $M$ to be a smooth manifold with corners in place of simply a smooth manifold.  (See~\S1.5.2 of \cite{ZP}.)
This correction also relates the two constructions $\conf_i(M_\ast)$ and $\conf_i^{\neg}(M_\ast)$ via negation of their representative zero-pointed manifolds with corners.  
\begin{lemma}\label{conf.finitary}
Let $i$ be a finite cardinality.
For each zero-pointed manifold $M_\ast$ there are $\Sigma_i$-zero-pointed manifolds with corners $C_i(M_\ast)$ and $C^{\neg}_i(M_\ast)$ with the following properties.  

\begin{enumerate}

\item
There are $\Sigma_i$-equivariant based homotopy equivalences,
\[
\conf_i(M_\ast)~\simeq~C_i(M_\ast)
\qquad\text{ and }\qquad
\conf^{\neg}_i(M_\ast)~\simeq~ C^{\neg}_i(M_\ast)~.
\]

\item
There are $\Sigma_i$-equivariant isomorphisms between zero-pointed manifolds:
\[
C_i(M_\ast)^{\neg}
~\cong~
C^{\neg}_i(M_\ast^{\neg})~.
\]

\end{enumerate}

\end{lemma}

\begin{proof}
Let $M_\ast$ be a zero-pointed manifold.  
Fix a conically smooth structure on the 1-point compactification $(M_\ast)^+$, which exists by Definition~\ref{def:zero}.  
Taking blow-ups along $\{\ast\}\subset (M_\ast)^{+}$ and $\{+\}\subset (M_\ast)^+$ determines the compact smooth manifold with boundary $\ov{M}$.
By construction, its boundary is a disjoint union $\partial \ov{M} = \partial_- \ov{M} \sqcup \partial_+ \ov{M}$ where these two unions of connected components are the preimages of the canonical projection from the blow-up:
\[
\xymatrix{
\partial_- \ov{M}  \ar[rr]  \ar[d]
&&
\ov{M}  \ar[d]
&&
\partial_+\ov{M}  \ar[d]  \ar[ll]
\\
\{\ast\} \ar[rr]
&&
(M_\ast)^+
&&
\{+\} .   \ar[ll]  
}
\]
Consider the map 
\begin{equation}\label{e60}
\ov{M} \longrightarrow \{-<0>+\}
\end{equation}
to the poset with 3 elements, defined by declaring the preimage of $\{\pm \}$ to be precisely $\partial_\pm \ov{M} \subset \ov{M}$.
This map is continuous with respect to the poset topology on the codomain and factors through the understood stratifying poset of the manifold with boundary $\ov{M}$.

Consider the $i$-fold product $\ov{M}^{i}$, which is a smooth manifold with corners.
We refine $\ov{M}^{i}$ as follows.
Denote the set $\un{i}:=\{1,\dots,i\}$.  
Consider the set 
\[
\cP_i~:=~\Bigl\{  \bigl(\sim ~,~ \un{i} \xra{v} \{-<0>+\}  \bigr)  \Bigr\}
\]
in which $\sim$ is an equivalence relation on the set $\un{i}$ and $v$ is a map that factors through this equivalence relation. Endow $\cP_i$ with the poset structure in which $(\sim,v)\leq (\sim',v')$ if $\sim$ both is coarser than $\sim'$ and $v(j)\leq v'(j)$ for each $1\leq j\leq i$.  
Note that the evident projection 
\begin{equation}\label{e61}
\cP_i \longrightarrow \{-<0>+\}^{i} 
\end{equation}
is a surjective functor between posets.  
Consider the map
\[
\ov{M}^{i}
\longrightarrow
\cP_i
\]
whose value on 
$
\bigl(\un{i}\xra{c} \ov{M}\bigr) 
$
is the pair $(\sim,v)$ where $\sim$ is the coarsest equivalence relation on $\un{i}$ through which $c$ factors, while $v\colon \un{i} \xra{c} \ov{M} \xra{(\ref{e60})} \{-<0>+\}$ is the composition.  
By inspection, this map is continuous with respect to the poset topology on the codomain, and it factors the map~(\ref{e60}) via the map~(\ref{e61}).
As we proceed, we regard $\ov{M}^{i}$ as a stratified space with this stratifying poset $\cP_i$.

Denote the closed union of strata
\[
\Delta~:=~\Bigl\{ \un{i}\xra{c}\ov{M} \mid c \text{ is not injective} \Bigr\}~\subset~\ov{M}^{i}~.
\]
As in~\cite{fm}, we consider the blow-up of this manifold with corners $\ov{M}^{i}$ along this proper constructible subspace, which we denote as 
\[
\ov{C}_i(\ov{M})~:=~{\sf Bl}_{\Delta}(\ov{M}^{i})~.
\]
This is another smooth manifold with corners.  
Denote the preimages by the canonical projection from the blow-up:
\[
\xymatrix{
\partial_0  \ar[r]  \ar[d]
&
\ov{C}_i(\ov{M})  \ar[d]
&
&
\partial_{\pm}  \ar[r]  \ar[d]
&
\ov{C}_i(\ov{M})  \ar[d]
\\
\Delta  \ar[r]
&
\ov{M}^{i}
&
\text{ and }
&
\bigl\{ c\mid c^{-1}(\partial_\pm \ov{M}) \neq \emptyset \bigr\}  \ar[r]
&
\ov{M}^{i}   .
}
\]
In this diagram, all of the terms are compact stratified spaces, all of the downward arrows in these diagrams are weakly constructible bundles, and all of the horizontal arrows are proper constructible embeddings.

We pause to explain the developments that follow.
Optimistically, the pair $\partial_- \subset \ov{C}_i(\ov{M}) \supset \partial_+\cup \partial_0$ determines a zero-pointed stratified space, along the lines of Example~\ref{cobordisms}.  
However, the intersection of faces $\partial_- \cap (\partial_+ \cup \partial_0)$ is generally not empty.  
We remedy the situation by blowing-up along this intersection; because this is a transverse intersection (as faces of a manifold with corners), such a blow-up will, in effect, make the preimages in this blow-up of these subspaces disjoint.
Because each stratum in the link of this intersection is contractible, this blow-up will not effect homotopy types.

We now implement this approach.
Consider the two blow-ups
\begin{equation}\label{e62}
\ov{C}_i(\ov{M})^{\pm}~:=~ 
{\sf Bl}_{\partial_{\mp}\cap (\partial_{\pm} \cup \partial_0)} \bigl( \ov{C}_i(\ov{M})\bigr)~.
\end{equation}
This is again a smooth manifold with corners.
Using that the blow-up site of~(\ref{e62}) is a closed union of faces in a smooth manifold with corners, there are unique lifts in the diagram among stratified spaces
\begin{equation}\label{e63}
\xymatrix{
&&
\ov{C}_i(\ov{M})^{\pm}  \ar[d]
&&
\\
\partial_{\mp}  \ar[rr]  \ar@{-->}[urr]
&&
\ov{C}_i(\ov{M})
&&
\partial_{\pm} \cup \partial_0  .  \ar[ll]  \ar@{-->}[ull]   
}
\end{equation}
By construction, these lifts are proper constructible embeddings, and their images are disjoint.
As we proceed, we regard $\partial_\mp$ and $\partial_{\pm}\cup \partial_0$ as subspaces of $\ov{C}_i(\ov{M})^{\pm}$ via these lifts.

Now, as in Example~\ref{cobordisms}, consider the stratified spaces
\[
C_i(M_\ast)~:=~
\ast \underset{\partial_{\mp}} \coprod \bigl( \ov{C}_i(\ov{M})^{\pm} \smallsetminus 
(\partial_{\pm}\cup \partial_0)   \bigr)
\qquad \text{ and }\qquad
C^{\neg}_i(M_\ast)~:=~
\ast \underset{\partial_{\mp}\cup \partial_0} \coprod \bigl( \ov{C}_i(\ov{M})^{\mp} \smallsetminus 
\partial_{\pm}  \bigr)~,
\]
which are obtained by collapsing certain faces of manifolds with corners.
By construction, each of $C_i(M_\ast)$ and $C^{\neg}_i(M_\ast)$ defines a zero-pointed manifold with corners, equipped with an action by the symmetric group $\Sigma_i$. 
Furthermore, by construction, there are canonical $\Sigma_i$-identifications
\begin{eqnarray}
\nonumber
C_i(M_\ast)^{\neg}
&
~:=~
&
(C_i(M_\ast))^+ \smallsetminus \ast
\\
\nonumber
&
~:=~
&
\Bigl(  \ast \underset{\partial_{\mp}} \coprod \bigl( \ov{C}_i(\ov{M})^{\pm} \smallsetminus 
(\partial_{\pm}\cup \partial_0)   \bigr)  \Bigr)^+ \smallsetminus \ast
\\
\nonumber
&
~\cong~
&
\Bigl(  \ast \sqcup \{+\} \underset{\partial_{\mp} \sqcup (\partial_{\pm}\cup \partial_0)} \coprod \bigl( \ov{C}_i(\ov{M})^{\pm}   \bigr)  \Bigr) \smallsetminus \ast
\\
\nonumber
&
~\cong~
&
\{+\} \underset{\partial_{\pm}\cup \partial_0 } \coprod \bigl( \ov{C}_i(\ov{M})^{\pm} \smallsetminus \partial_{\mp}   \bigr) 
\\
\nonumber
&
~=:~
&
C_i^{\neg}(M_\ast^{\neg})~.
\end{eqnarray}

The blow-up site defining $\ov{C}_i(\ov{M})^{\pm}$ in~(\ref{e62}) is along a face of a manifold with corners, whose links are therefore contractible.  
It follows that the vertical arrow in the diagram~(\ref{e63}), which is a weakly constructible bundle, has contractible fibers.
Therefore, the vertical arrow in~(\ref{e63}) is a homotopy equivalence.
Furthermore, restrictions of that vertical arrow define a commutative diagram among $\Sigma_i$-topological spaces
\[
\xymatrix{
\partial_\mp  \ar[rr]  \ar[d]
&&
\ov{C}_i(\ov{M})^+ \smallsetminus (\partial_{\pm} \cup \partial_0)  \ar[d]
\\
\Bigl\{  c\colon \un{i}\hookrightarrow \ov{M}\smallsetminus \partial_{\pm}\ov{M}  \mid  c^{-1}(\partial_{\mp}\ov{M}) \neq \emptyset \Bigr\}
\ar[rr]
&&
\conf_i(\ov{M}\smallsetminus \partial_{\pm} \ov{M})  
}
\]
in which the vertical arrows are homotopy equivalences.
Being inclusions of closed unions of faces into smooth manifolds with corners, the existence of collaring neighborhoods about such ensures that both of these horizontal maps are cofibrations between $\Sigma_i$-topological spaces.  
Therefore the map between pushouts
\[
C_i(M_\ast)~:=~
\ast \underset{\partial_{\mp}} \coprod \bigl( \ov{C}_i(\ov{M})^{\pm} \smallsetminus 
(\partial_{\pm}\cup \partial_0)   \bigr)
\xra{~{}~\simeq~{}~}
\ast \underset{ \{  c\colon \un{i}\hookrightarrow \ov{M}\smallsetminus \partial_{\pm}\ov{M}  \mid  c^{-1}(\partial_{\mp}\ov{M}) \neq \emptyset \}  
}
\coprod
\conf_i(\ov{M}\smallsetminus \partial_{\pm} \ov{M})  
\]
is a pointed $\Sigma_i$-homotopy equivalence.
Finally, by construction of $\ov{M}$, the evident map from the pushout
\[
\ast \underset{ \{  c\colon \un{i}\hookrightarrow \ov{M}\smallsetminus \partial_{\pm}\ov{M}  \mid  c^{-1}(\partial_{\mp}\ov{M}) \neq \emptyset \}  }
\coprod 
\conf_i(\ov{M}\smallsetminus \partial_{\pm} \ov{M})  
\xra{~{}~\simeq~{}~}
\conf_i(M_\ast)
\]
is a homeomorphism.  
There results a pointed $\Sigma_i$-homotopy equivalence:
\[
C_i(M_\ast)
\longrightarrow
\conf_i(M_\ast)
\]
A pointed $\Sigma_i$-homotopy equivalence
\[
C^{\neg}_i(M_\ast)
\longrightarrow
\conf^{\neg}_i(M_\ast)
\]
is constructed similarly.
This completes this proof.  
\end{proof}

We record the following homological coconnectivity bound for configurations of points in a zero-pointed manifold.  
\begin{prop}\label{conf-coconnectivity}\label{conf-stuff}
Let $M_\ast$ be a zero-pointed $n$-manifold with at most $\ell$ components, and let $i$ be a finite cardinality.  
Then the reduced singular homology \[\ov{\sH}_q\bigr(\conf_i(M_\ast); A\bigr) = 0\] vanishes for any local system of abelian groups $A$ for $q>n\ell + (n-1)(i-\ell)$ and $i>>0$.  

\end{prop}

\begin{proof}
First, consider the case in which $M_\ast$ is open, i.e., has no compact component. In this case we show by induction on $i$ the vanishing of $\ov{\sH}_q\bigr(\conf_i(M_\ast); A\bigr)$ given $q>(n-1)i$. Consider the base case of $i=1$, in which case the finitary assumption in the definition of zero-pointed manifolds implies that $M_\ast$ has the homotopy type of a CW complex with no $q$-cells for $q>n-1$. From this it is immediate that $\ov{\sH}_q(M_\ast, A)$ vanishes for $q>n-1$.

We proceed by induction on $i>1$.
Consider the map 
\[
\pi\colon \conf_i(M_\ast) \longrightarrow   \conf_{i-1}(M_\ast)
\]
given by sending a non-base element $\bigl(c\colon \{1,\dots, i\} \hookrightarrow M\bigr)$ to its restriction $\bigl(c_{|}\colon \{1,\dots, i-1\} \hookrightarrow M\bigr)$.   
Recall that the $E^2$-term of the Leray spectral converging to $\ov{\sH}_{p+q}\bigr(\conf_i(M_\ast); A\bigl)$ is a direct sum of groups of the form $\ov{\sH}_p\bigl(\conf_{i-1}(M_\ast); \sR_q\pi_\ast A\bigr)$.
Here $\sR_q \pi_\ast A$ is the degree-$q$ homology of the sheaf of chain complexes on $\conf_{i-1}(M_\ast)$ which is the pushforward of the sheaf $A$ along the continuous map $\pi$.  
By induction it is enough to show that the push-forward $\sR_q\pi_\ast A$ vanishes for $q>n-1$; equivalently, each point $c\in \conf_{i-1}(M_\ast)$ has a neighborhood $U_c$ such that $\ov{\sH}_q(\pi^{-1}U_c, A_{|\pi^{-1}U_c})$ vanishes for $q>n-1$. Since $\pi$ is a fiber bundle away from the base point, there are two cases to check. 
First, if $c = (\{1,\dots,i\}\hookrightarrow M)$ is not the base point, the result follows from the vanishing of $\ov{\sH}_q(M_\ast \smallsetminus c(\{1,\cdots,i-1\})\};A)$ for $q>n-1$ considered in the base case of the induction.
 Second, if $c$ is the base point, then the inverse image $\pi^{-1}c$ has a contractible neighborhood (again from the definition of zero-pointedness).

To prove the general case, consider $M_\ast\cong M_{1,\ast} \vee \ldots \vee M_{\ell,\ast}$, where each $M_{j,\ast}$ is connected. 
Notice the isomorphism 
\[
\conf_i(M_\ast)~{}~ \cong \bigvee_{\{1,\dots,i\} \xra{f} \{1,\dots, \ell\}} \conf_{i_1}(M_{1,\ast})\wedge\ldots\wedge \conf_{i_\ell}(M_{\ell,\ast})
\] 
where here we have used the notation $i_r$ for the cardinality of each set $ f^{-1}r$.  
So it suffices to bound the degree of the non-vanishing homology of each term. 
Because each $M_{j,\ast}$ is connected, we have already concluded that $\conf_{i_r}(M_{r,\ast})$ has homology bounded by the sum of the bounds for $M_{j,\ast}\smallsetminus \{c(\{1,\dots,i\})\}$ and $\conf_{i_r-1}(M_{j,\ast})$, for a point $c\in \conf_{i_j-1}(M_{j,\ast})$ a non-base point. 
We assess these bounds separately.
Should the zero-pointed manifold $M_{r,\ast}$ be such that $M_r:=M_{r,\ast}\smallsetminus \ast$ is compact, then $M_r$ has a the degree of the non-vanishing homology is bounded of $n$. The topological space $M_{j,\ast} \smallsetminus \{c(\{1,\dots,i\})\}$ is not compact, and so $\conf_{i_r-1}(M_{r,\ast}\smallsetminus \{c\})$ has a bound of $(n-1)(i_j-1)$. 
The result follows by summing over $r\in \{1,\dots,\ell\}$.
\end{proof}

\begin{example}
In the case $M_\ast = M_+$, then $\conf_i(M_\ast) = \conf_i(M)_+$, and the coconnectivity statement of Proposition~\ref{conf-stuff} follows by induction on $i$ through the standard fibration sequence
$M\smallsetminus \{x_1,\dots,x_i\} \to \conf_{i+1}(M_\ast) \to \conf_i(M_\ast)$.

\end{example}

\begin{remark}[$B$-structures]\label{B-structures}
The results of~\cite{ZP} extend to zero-pointed manifolds equipped with a $B$-structure, where $B \to \BO(n)$ is a map of spaces.  
We will make use of this generalization for the simple case of Atiyah duality where $B \to \BO(ni)$ is the classifying space of the block-sum homomorphism $\Sigma_i\wr \sO(n) \to \sO(ni)$, and the $\sB\bigl(\Sigma_i\wr \sO(n)\bigr)$-structured zero-pointed manifolds are of the form $\conf_i(M_\ast)$ and $\conf_i^\neg(M_\ast)$.  

\end{remark}

\subsection{Reduced factorization homology}\label{sec:reduced}

Theorem~\ref{zero-facts} justifies the following definitions.
\begin{definition}[Reduced factorization (co)homology]\label{def:fact-homology}
Let $\cV$ be a symmetric monoidal $\infty$-category.

The $\infty$-categories of \emph{augmented $n$-disk algebras} and of \emph{augmented $n$-disk coalgebras}, respectively,
are those of symmetric monoidal functors
\[
\Alg_n^{\sf aug}(\cV) := \Fun^\ot\bigl(\Disk_{n,+},\cV)\qquad\text{ and }\qquad \cAlg_n^{\sf aug}(\cV) := \Fun^\ot(\Disk_n^+,\cV)~.
\] 
Restrictions along the inclusions $\Disk_{n,+} \hookrightarrow \ZMfld_n\hookleftarrow \Disk_n^+$ have (a priori partially defined) adjoints depicted in the diagram
\[
\xymatrix{
\bBar \colon \Alg_n^{\sf aug}(\cV) \ar@{-->}@(-,u)[rr]^-{\int_-}
&&
\Fun^\ot(\ZMfld_n,\cV)  \ar@(-,u)[rr]^-{|_{\Disk_n^+}}  \ar@(-,d)[ll]^-{|_{\Disk_{n,+}}} 
&&
\cAlg_n^{\sf aug}(\cV) \colon \cBar  \ar@{-->}@(-,d)[ll]^-{\int^{(-)^\neg}}
}
\]
whose left and right composites are as depicted.  
Explicitly, for $A$ an augmented $n$-disk algebra, $C$ an augmented $n$-disk coalgebra, and $M_\ast$ a zero-pointed $n$-manifold, the values of these adjoints are given as
\[
\int_{M_\ast}A := \colim\Bigl((\Disk_{n,+})_{/M_\ast} \to \Disk_{n,+}\xra{A} \cV\Bigr)
\]
and
\[
\int^{M_\ast} C := \limit\Bigl((\Disk_n^+)^{M_\ast^\neg/} \to \Disk_n^+ \xra{C} \cV\Bigr)
\]
which we refer to respectively as the \emph{factorization homology} $M_\ast$ with coefficients in $A$, and as the \emph{factorization cohomology} of $M_\ast$ with coefficients in $C$. Note that factorization cohomology is contravariant, whereas factorization homology is covariant.

\end{definition}

\begin{theorem}[\cite{ZP}]\label{fact.exists}
Let $\cV$ be a symmetric monoidal $\infty$-category which admits sifted colimits.  
If $M_\ast$ is a zero-pointed $n$-manifold and $A$ is an augmented $n$-disk algebra, then $\int_{M_\ast} A$ exists and $\int_- A$ defines a covariant functor to $\cV$ from zero-pointed $n$-manifolds.
In addition, $\int_- A$ defines a \emph{symmetric monoidal} functor from zero-pointed $n$-manifolds provided $\cV$ is $\ot$-sifted cocomplete.
The dual result holds for $\int^- C$.  

\end{theorem}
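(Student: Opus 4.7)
The plan is to reduce the defining colimit of $\int_{M_\ast}A$ to a sifted colimit, whose existence is then guaranteed by the hypothesis on $\cV$. Conical finiteness of $M_\ast$ is what makes this reduction possible: via the unzipping remark following Definition~\ref{def.conically-finite}, such an $M_\ast$ is presented by a compact cobordism and therefore admits a finite handle decomposition whose induced combinatorial structure should organize into a sifted cofinal subdiagram of $(\Disk_{n,+})_{/M_\ast}$.

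More concretely, I would decompose $(\Disk_{n,+})_{/M_\ast}$ via the cardinality functor to $\Fin_\ast$ sending $\bigvee_I \RR^n_+ \to M_\ast$ to $I_+$. The fiber over $I_+$ is controlled by the configuration zero-pointed manifold $\conf_I(M_\ast)$ of Proposition~\ref{conf-stuff}, which is itself conically finite. Assembling these fibers along $\Fin_\ast$ and passing to finite-dimensional models furnished by the handle decomposition, one presents $\int_{M_\ast}A$ as the geometric realization of a simplicial object whose $k$-simplices are built from tensor powers $A^{\otimes k}$ labeled by configurations. Since geometric realizations are sifted, this exhibits $\int_{M_\ast}A$ as a sifted colimit in $\cV$ and establishes its existence.

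Functoriality in $M_\ast$ is then automatic from the universal property of the colimit: $\int_- A$ is identified with the pointwise left Kan extension of $A\colon\Disk_{n,+}\to\cV$ along the inclusion into conically finite zero-pointed $n$-manifolds. For the symmetric monoidal refinement under $\otimes$-sifted cocompleteness, one needs the natural comparison
\[
\int_{M_\ast}A \otimes \int_{N_\ast}A ~\longrightarrow~ \int_{M_\ast \vee N_\ast}A
\]
to be an equivalence. I would approach this by exhibiting the wedge-sum functor $(\Disk_{n,+})_{/M_\ast}\times(\Disk_{n,+})_{/N_\ast}\to(\Disk_{n,+})_{/M_\ast\vee N_\ast}$ as cofinal: any configuration of disks landing in $M_\ast \vee N_\ast$ avoids the wedge point and therefore partitions into two disjoint subconfigurations, one in each summand. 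Distributivity of $\otimes$ over sifted colimits then converts the tensor of colimit presentations into the colimit over the wedge.

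The hardest step I expect is the cofinality required to present $(\Disk_{n,+})_{/M_\ast}$ by a sifted diagram with tractable terms; this is where conical finiteness enters essentially, since without a finite handle-theoretic presentation of $M_\ast$ one cannot expect such a sifted model. The dual assertion for factorization cohomology $\int^- C$ follows formally by applying the homological statement to $\cV^{\op}$ together with the involution $(-)^\neg\colon\ZMfld_n\simeq\ZMfld_n^{\op}$ of Theorem~\ref{zero-facts}, which interchanges augmented $n$-disk algebras with augmented $n$-disk coalgebras and sifted colimits with their cosifted limit duals.
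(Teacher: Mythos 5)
Your high-level strategy — reduce the defining colimit to a sifted one and invoke the hypothesis on $\cV$ — is the right one, and your treatment of the dual statement via $(-)^\neg$ and $\cV^{\op}$ and of symmetric monoidality via the wedge-sum cofinality is essentially what the paper does. But the mechanism you propose for the key step is not the one the paper (via~\cite{zp1}) uses, and as stated I don't think it goes through.

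The core gap is in how you produce a sifted cofinal model. You suggest a handle decomposition of $\ov{M}$, a cardinality stratification of $(\Disk_{n,+})_{/M_\ast}$ over $\Fin_\ast$, and then a geometric realization of a simplicial object with terms built from $A^{\otimes k}$. None of these directly yields a final functor from a sifted $\infty$-category into the (reduced quotient of) $(\Disk_{n,+})_{/M_\ast}$. A handle-theoretic computation of $\int_{M_\ast}A$ would ordinarily proceed by $\otimes$-excision, but $\otimes$-excision is a \emph{consequence} of factorization homology being a well-defined symmetric monoidal functor — it cannot be invoked to prove existence without circularity. Similarly, the cardinality filtration the paper develops (Definition~\ref{disk-i}, Lemma~\ref{cardinality-converges}) is defined on the exiting-disks category $\Disk_+(M_\ast)$, not on $(\Disk_{n,+})_{/M_\ast}$, and its convergence already presupposes that the latter has been replaced by the former.

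What is actually doing the work is the $\infty$-category $\Disk_+(M_\ast)$ of exiting disks (Definition~\ref{def.of-M}), which requires the conically smooth stratified structure on $M_\ast$ — this, not a handle decomposition, is precisely where conical finiteness enters. Theorem~\ref{thank-god} supplies the two facts you are missing: (1) $\Disk_+(M_\ast)$ is sifted, and (2) there is a final functor $\Disk_+(M_\ast)\to\bigl(\Disk_{n,+/M_\ast}\bigr)/\bigl(\Disk_{n,+}\bigr)$. Combined with the reduced left Kan extension~(\ref{extend-of-M}) and Corollary~\ref{still-computes}, this re-expresses $\int_{M_\ast}A$ as a sifted colimit, which exists by hypothesis on $\cV$; functoriality and symmetric monoidality then follow as you indicate, the latter because $\Disk_+(M_\ast\vee N_\ast)$ factors through $\Disk_+(M_\ast)\times\Disk_+(N_\ast)$ and $\ot$ distributes over sifted colimits. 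To repair your argument you would need to establish the siftedness of $\Disk_+(M_\ast)$ and the finality in Theorem~\ref{thank-god}(2); these are the genuine content and neither is immediate from finiteness of a handle decomposition.
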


\begin{remark} The dual conditions for factorization homology typically do not hold in cases of interest. For instance, when $\cV$ is chain complexes, the tensor product does not distribute over cosifted limits limits, although direct sum does.
\end{remark}

There is a canonical comparison arrow between factorization homology and factorization cohomology.
\begin{theorem}[Poincar\'e/Koszul duality map~(\cite{ZP})]\label{thm.PD-map}
Let $A$ be an $n$-disk algebra in $\cV$, a symmetric monoidal $\oo$-category which admits sifted colimits and cosifted limits.  
Let $M_\ast$ be a zero-pointed $n$-manifold.
There is a canonical arrow in $\cV$
\begin{equation}\label{PD-map}
\int_{M_\ast} A ~\longrightarrow~ \int^{M_\ast^\neg} \bBar A
\end{equation}
which is functorial in $M_\ast$ and $A$.  

\end{theorem}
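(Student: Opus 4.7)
My plan is to construct the arrow in~\eqref{PD-map} as a single natural transformation of functors on $\ZMfld^{\sf fin}_n$, obtained as the unit of a Kan-extension adjunction applied to the factorization homology functor itself. By Theorem~\ref{fact.exists}, since $\cV$ admits sifted colimits, the augmented $n$-disk algebra $A\colon \Disk_{n,+}\to \cV$ admits a symmetric monoidal left Kan extension $\int_- A\colon \ZMfld^{\sf fin}_n \to \cV$ along the inclusion $\Disk_{n,+}\hookrightarrow \ZMfld^{\sf fin}_n$. Reading the triangle in Definition~\ref{def:fact-homology} then produces a canonical equivalence $\bigl(\int_- A\bigr)\bigm|_{\Disk_n^+}\simeq \bBar A$ between the restriction of this extension to $\Disk_n^+$ and the bar coalgebra.

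Dually, since $\cV$ admits cosifted limits, restriction along $\Disk_n^+\hookrightarrow \ZMfld^{\sf fin}_n$ admits a right adjoint $\mathrm{RKan}$; by inspection of the defining limit, the value of this right adjoint on an augmented $n$-disk coalgebra $C$, evaluated at a conically finite zero-pointed manifold $M_\ast$, is exactly $\int^{M_\ast^\neg} C$. The unit of this restriction/right-Kan-extension adjunction then supplies, for any functor $F\colon \ZMfld^{\sf fin}_n \to \cV$, a canonical natural map $F\longrightarrow \mathrm{RKan}\bigl(F|_{\Disk_n^+}\bigr)$. Taking $F = \int_- A$ and using the identification from the previous paragraph, I obtain a natural transformation
\[
\int_- A ~\longrightarrow~ \int^{(-)^\neg} \bBar A
\]
of functors on $\ZMfld^{\sf fin}_n$, whose value at $M_\ast$ is the Poincar\'e/Koszul duality map~\eqref{PD-map}.

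Functoriality in $M_\ast$ is then automatic, since I have constructed the arrow as a single natural transformation of functors of $M_\ast$; functoriality in $A$ follows because both $A\mapsto \int_- A$ and $A\mapsto \bBar A$ are left adjoints, and hence depend functorially on $A$. In this formulation, the construction is entirely formal once the two relevant Kan extensions are known to exist, and this existence is precisely the content of Theorem~\ref{fact.exists}. I anticipate no serious obstacle at this stage; the substantive difficulty, taken up in the later sections of the paper, is to identify the conditions on $A$ and $M_\ast$ under which this formal map is an equivalence.
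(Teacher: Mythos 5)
Your construction is correct and is exactly the one implicit in Definition~\ref{def:fact-homology}: the arrow is the unit of the restriction/right-Kan-extension adjunction along $\Disk_n^+\hookrightarrow\ZMfld_n^{\sf fin}$ evaluated on $\int_- A$, combined with the definitional identification $\bBar A = \bigl(\int_- A\bigr)\bigm|_{\Disk_n^+}$ and the bookkeeping that $\int^{M_\ast^\neg}$ un-twists the $\neg$ built into the defining limit for factorization cohomology. The present paper states this theorem by citation to~\cite{zp1} rather than proving it, so there is no internal proof to compare against; your argument assembles the paper's own definitions and the existence guarantees of Theorem~\ref{fact.exists} in the evident way.
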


The present work continues the analysis of this Poincar\'e/Koszul duality map. In \cite{ZP}, we showed that it is an equivalence in several instances. One special case is when $\cV$ is the $\oo$-category of spaces with Cartesian product; in this case factorization cohomology is a mapping space. Another special case is where $\cV$ is chain complexes equipped with direct sum; in this case, factorization homology is usual homology with coefficients in the chain complex $A$ (possibly twisted by the $\sO(n)$ action), factorization cohomology is usual generalized cohomology with coefficients in $A[n]$ (where $\sO(n)$ acts by the sign representation), and this equivalence this usual Poincar\'e duality with twisted coefficients. In this work, we study this map when $\cV$ is chain complexes with tensor product (or, more generally, a stable symmetric monoidal $\oo$-category which is $\ot$-cocomplete).

\subsection{Exiting disks}\label{sec.exiting-disks}
The slice $\infty$-category $\Disk_{n,+/M_\ast}$ appears in the defining expression for factorization homology.  
We review a variant of this $\infty$-category, $\Disk_+(M_\ast)$, of \emph{exiting disks} in $M_\ast$, which offers several conceptual and technical advantages.  
Heuristically, objects of $\Disk_+(M_\ast)$ are embeddings from finite disjoint unions of disks into $M$, while morphisms are isotopies of such to embeddings with some of these isotopies witnessing disks slide off to infinity where they are forgotten. Disks are not allowed to slide in \emph{from} infinity, unlike in $\Disk_{n,+/M_\ast}$.  
To define this $\infty$-category of exiting disks, we require the regularity around the zero-point granted by a conically smooth structure.

For this section, we fix a conically smooth zero-pointed $n$-manifold $M_\ast$.  
We recall the following notion from~\S4 of~\cite{aft1}.  

\begin{definition}[\cite{aft1}]\label{def.basics}
A \emph{basic}, or \emph{basic singularity type}, is a stratified space of the form 
\[
\RR^i\times \sC(L)
\]
where $i$ is a finite cardinality, $L$ is a compact stratified space, and $\sC(L):=\ast \underset{L\times \{0\}} \amalg L\times [0,1)$ is its \emph{open cone},
where here the half-open interval is stratified as the two strata $\{0\}$ and $(0,1)$.  
The $\infty$-category of \emph{basics} is the full $\infty$-subcategory consisting of the basics
\[
\Bsc~\subset~\Snglr
\]
in the $\infty$-category of stratified spaces and spaces of open embeddings among them.  
\end{definition}

In~\S2 of~\cite{aft2} appears a stratified version of $\Disk_{n/M}$, which we now recall.  
\begin{definition}[\cite{aft2}]\label{def.disk-basics}
For each stratified space $X$ we denote the full $\infty$-subcategory
\[
\Disk(\bsc)_{/X}~\subset~ \Snglr_{/X}
\]
consisting of those open embeddings $U\hookrightarrow X$ for which $U$ is isomorphic to a finite disjoint union of basics.  
\end{definition}

\begin{definition}[$\Disk_+(M_\ast)$]\label{def.of-M}
The $\infty$-category of \emph{exiting disks} of $M_\ast$ is the full $\infty$-subcategory
\[
\Disk_+(M_\ast)~\subset~ \Disk(\bsc)_{/M_\ast}
\]
consisting of those $V\hookrightarrow M_\ast$ whose image contains $\ast$.  
We use the notation
\[
\Disk^+(M^\neg_\ast) ~:=~ \Disk_+(M_\ast)^{\op}~.  
\]

\end{definition}
\noindent
Explicitly, an object of $\Disk_+(M_\ast)$ is a conically smooth open embedding $B\sqcup U \hookrightarrow M_\ast$ where $B\cong \sC(L)$ is a cone-neighborhood of $\ast\in M_\ast$ and $U$ is abstractly diffeomorphic to a finite disjoint union of Euclidean spaces, and a morphism is a isotopy to an embedding among such.

For each zero-pointed manifold $M_\ast$, the unique zero-pointed embedding $\ast \to M_\ast$ induces the functor
\[
\Disk_{n,+} = \Disk_{n,+/\ast} \longrightarrow \Disk_{n,+/M_\ast} ~.
\]
We denote the resulting pushout among $\infty$-categories as
\[
\xymatrix{
\Disk_{n,+}   \ar[r]  \ar[d]
&
\Disk_{n,+/M_\ast}   \ar[d]
\\
\ast \ar[r]
&
\bigl(\Disk_{n,+/M_\ast}\bigr)/\bigl(\Disk_{n,+}\bigr).
}
\]
The next result makes reduced factorization homology tractable.  
\begin{theorem}[\cite{ZP}]\label{thank-god}
\begin{enumerate}
\item[]

\item The $\infty$-category $\Disk_+(M_\ast)$ is sifted. 

\item There is a final functor
\[
\Disk_+(M_\ast)\longrightarrow \Disk_{n,+/M_\ast}
\]
whose value on $\bigl(B\sqcup U\hookrightarrow M_\ast\bigr)$ is represented by $(U_+\hookrightarrow M_\ast)\in \Disk_{n,+/M_\ast}$.

\end{enumerate}

\end{theorem}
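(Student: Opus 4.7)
The plan is to verify the two standard criteria for $\infty$-categorical siftedness in part (1): nonemptiness of $\Disk_+(M_\ast)$, and weak contractibility of the undercategory $\Disk_+(M_\ast)_{X_1, X_2/}$ for every pair of objects $X_1, X_2$. Nonemptiness follows directly from conical smoothness: the zero-point admits a cone-neighborhood $\sC(L) \hookrightarrow M_\ast$, giving an object. For the contractibility of the undercategory, given $X_i = (B_i \sqcup U_i \hookrightarrow M_\ast)$ I will construct an explicit contraction using two geometric moves. First, using conical smoothness, I shrink and merge the cone-neighborhoods $B_1, B_2$ into a common cone-neighborhood $B \subset B_1 \cap B_2$ of $\ast$. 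Second, using general-position arguments, I isotope $U_1$ and $U_2$ inside $M \smallsetminus B$ to have disjoint images, producing a common enlargement $(B \sqcup U_1 \amalg U_2 \hookrightarrow M_\ast)$. The space of choices parametrizing these moves is a product of spaces of cone-neighborhood retractions and embedding spaces, each of which is weakly contractible.

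For part (2), the functor is well-defined because, for an object $B \sqcup U \hookrightarrow M_\ast$, the cone-neighborhood $B$ of the zero-point is precisely the component that collapses in the pushout $(\Disk_{n,+/M_\ast})/(\Disk_{n,+})$: when forgotten as a zero-pointed embedding, $B$ corresponds to an object of $\Disk_{n,+}$ concentrated at the zero-point. Morphisms in $\Disk_+(M_\ast)$ along which disks exit to infinity correspond to morphisms in $\Disk_{n,+/M_\ast}$ whose lost components factor through $\Disk_{n,+}$, and hence become identities in the pushout. To establish finality, by Joyal's criterion it suffices to show that for each object $(V_+ \hookrightarrow M_\ast)$ of the pushout, the associated comma $\infty$-category is weakly contractible. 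This comma category parametrizes exiting-disk objects $B \sqcup U \hookrightarrow M_\ast$ equipped with a factorization of $V_+ \hookrightarrow M_\ast$ through $U_+ \hookrightarrow M_\ast$ (up to collapse). I will prove its contractibility by deformation-retracting onto the tautological object obtained from $V \hookrightarrow M$ by adjoining a small cone-neighborhood of the zero-point chosen in the complement of $V$, whose space of choices is again contractible.

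The main obstacle will be the careful identification of morphism spaces in both $\Disk_+(M_\ast)$ and in the pushout $(\Disk_{n,+/M_\ast})/(\Disk_{n,+})$, in particular matching the ``exit to infinity'' isotopies in the former with the identifications effected by collapsing $\Disk_{n,+}$ in the latter. Once this morphism-level correspondence is in hand, both contractibility statements reduce to the classical facts that ordered configuration spaces of disks in a fixed manifold are weakly equivalent to configuration spaces of points, and that the space of cone-neighborhoods of a point in a conically smooth stratified space is contractible.
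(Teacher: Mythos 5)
The paper does not prove this statement; it is quoted from \cite{zp1} as part of the recollection of the zero-pointed manifold machinery, so there is no in-text proof to compare against. What can be compared is the closely parallel Lemma~\ref{surj-final} in this paper, whose proof identifies the classifying space of a disk category with the Ran space $\Ran(M_\ast)$ and then contracts it by Beilinson--Drinfeld's H-space argument. Your proposal correctly reaches for the standard criteria (finality of the diagonal for (1), Quillen's Theorem~A for (2)) and reads the objects and morphisms of $\Disk_+(M_\ast)$ accurately, so the skeleton is right; the problem is that the two contractibility claims, which carry all the content, are not actually argued.

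For part (1), producing a common enlargement $\bigl(B\sqcup U_1\amalg U_2\hookrightarrow M_\ast\bigr)$ only shows the comma $\infty$-category $\Disk_+(M_\ast)_{(X_1,X_2)/}$ is nonempty. Its classifying space is the colimit over the entire undercategory, not the ``space of choices of your construction.'' Your appeal to contractible embedding spaces and contractible spaces of cone-neighborhoods would yield contractibility only if you had identified a final (or initial) sub-$\infty$-category parametrized by those choices, and no such finality is established. The enlargement object is neither initial nor terminal: it admits non-invertible morphisms in both directions (further enlargement; letting disks exit to infinity), so a nerve-level retraction onto it is not a consequence of the parameter space being contractible. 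This is exactly the point where some avatar of the Ran-space argument, or a careful filtered/sequential analysis, has to enter.

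For part (2), the object $B_{\rm small}\sqcup V\hookrightarrow M_\ast$ is a reasonable candidate for an initial object of the comma category, but you have not shown it is one. Objects in the comma category carry a morphism in the pushout $\bigl(\Disk_{n,+/M_\ast}\bigr)/\bigl(\Disk_{n,+}\bigr)$, whose mapping spaces are themselves colimits, and matching those morphisms compatibly through a proposed initial cone is nontrivial. (You also need to record that a cone-neighborhood of $\ast$ disjoint from the image of $V$ exists, which does follow from conical smoothness but deserves a sentence.) In short, both halves rest on the unjustified conflation between ``the parameter space of my construction is contractible'' and ``the classifying space of the comma $\infty$-category is contractible''; closing that gap is the whole theorem, and the present paper's treatment of Lemma~\ref{surj-final} shows the kind of geometric input---Ran-space contractibility---that is needed and that your outline omits.
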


Consider the composite functor
\begin{equation}\label{extend-of-M}
\Alg^{\sf aug}_n(\cV) \longrightarrow \Fun({\Disk_{n,+/M_\ast}},\cV) \longrightarrow \Fun\bigl(\Disk_+(M_\ast), \cV\bigr)~.
\end{equation}
The first arrow is restriction along the projection $\Disk_{n,+/M_\ast} \to \Disk_{n,+}$;
the second arrow is restriction along that asserted in Theorem~\ref{thank-god}.  

\begin{notation}\label{just-as-A}
Given an augmented $n$-disk algebra $A\colon \Disk_{n,+} \to \cV$, we will use the same notation 
$
A\colon \Disk_+(M_\ast)\to \cV
$
for the value of the functor~(\ref{extend-of-M}) on $A$.  

\end{notation}
We content ourselves with this Notation~\ref{just-as-A} because of the immediate corollary of Theorem~\ref{thank-god}.
\begin{cor}\label{still-computes}
Let $\cV$ be a symmetric monoidal $\infty$-category whose underlying $\infty$-categoy admits sifted colimits.
Let $A\colon \Disk_{n,+}\to \cV$ be an augmented $n$-disk algebra, and let $C\colon \Disk_n^+\to \cV$ be an augmented $n$-disk coalgebra.  There are canonical identifications in $\cV$:
\[
\int_{M_\ast}  A 
~{}~\simeq~{}~
\underset{(B\sqcup U\hookrightarrow M_\ast)\in \Disk_+(M_\ast)}\colim~ A(U_+)~,
\]
and
\[
\int^{M_\ast}  C
~{}~\simeq~{}~
\underset{(B\sqcup V\hookrightarrow M_\ast)\in \Disk^+(M^\neg_\ast)}\limit~ C(V^+)~.
\]

\end{cor}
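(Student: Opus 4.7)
The identification rests on Theorem~\ref{thank-god} combined with two standard categorical principles: \emph{(a)} a final functor $r\colon \cC\to \cD$ satisfies $\colim_\cC r^\ast F \simeq \colim_\cD F$ for any diagram $F$ on $\cD$; and \emph{(b)} left Kan extension preserves colimits, i.e.\ $\colim_\cD q_! F \simeq \colim_\cC F$. The dual principles (initial functors preserve limits; right Kan extensions preserve limits) handle the cohomology case.

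Write $p\colon \Disk_{n,+/M_\ast}\to \Disk_{n,+}$ for the projection, $q\colon \Disk_{n,+/M_\ast}\to \bigl(\Disk_{n,+/M_\ast}\bigr)/\bigl(\Disk_{n,+}\bigr)$ for the quotient functor, and $r\colon \Disk_+(M_\ast)\to \bigl(\Disk_{n,+/M_\ast}\bigr)/\bigl(\Disk_{n,+}\bigr)$ for the functor supplied by Theorem~\ref{thank-god}(2). Unwinding the composite~(\ref{extend-of-M}), the functor $A\colon \Disk_+(M_\ast)\to \cV$ of Notation~\ref{just-as-A} is precisely $r^\ast q_! p^\ast A$. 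Since $r$ is final by Theorem~\ref{thank-god}(2), principle (a) applied to $r$ together with principle (b) applied to $q$ yields
\[
\colim_{\Disk_+(M_\ast)} r^\ast q_! p^\ast A ~\simeq~ \colim_{(\Disk_{n,+/M_\ast})/(\Disk_{n,+})} q_! p^\ast A ~\simeq~ \colim_{\Disk_{n,+/M_\ast}} p^\ast A,
\]
and by Definition~\ref{def:fact-homology} the rightmost colimit is $\int_{M_\ast} A$.

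For the cohomology identification, dualize the above. Definition~\ref{def.of-M} gives $\Disk^+(M_\ast^\neg) = \Disk_+(M_\ast)^{\op}$, so Theorem~\ref{thank-god}(2) dualizes to the statement that the analogous functor from $\Disk^+(M_\ast^\neg)$ into the corresponding pushout quotient of $(\Disk_n^+)^{M_\ast^\neg/}$ along $\Disk_n^+$ is initial. Replacing ``final / left Kan extension / colimit'' by ``initial / right Kan extension / limit'' throughout the preceding paragraph then identifies $\lim_{\Disk^+(M_\ast^\neg)} C$ with $\int^{M_\ast} C$.

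The one technical point warranting care is the existence of the intermediate Kan extension $q_! p^\ast A$ in a target $\cV$ admitting only sifted colimits. By Theorem~\ref{thank-god}(1) the category $\Disk_+(M_\ast)$ is sifted, and the comma fibers governing the pointwise formula for $q_! p^\ast A$ are of an analogous form, so sifted colimits suffice. Alternatively, one may observe that the value of $r^\ast q_! p^\ast A$ at $(B\sqcup U\hookrightarrow M_\ast)$ is $A(U_+)$, as read off from Theorem~\ref{thank-god}(2), so that the leftmost colimit is defined intrinsically and the equivalence can be verified by direct comparison of cocones without appeal to the intermediate extension.
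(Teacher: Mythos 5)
Your argument is exactly the one the paper intends; the paper declares the corollary ``immediate'' from Theorem~\ref{thank-god}, and your unwinding of the composite $(\ref{extend-of-M})$ as $r^\ast q_! p^\ast$ together with finality of $r$ and the colimit-preserving property of $q_!$ is precisely that argument made explicit. The only minor imprecision is in your final paragraph: the pointwise identification $(r^\ast q_! p^\ast A)(B\sqcup U) \simeq A(U_+)$ is not literally ``read off'' from Theorem~\ref{thank-god}(2) but instead follows because $\Disk_{n,+}\hookrightarrow\Disk_{n,+/M_\ast}$ is the inclusion of a sieve, so that $q_!$ preserves values on objects outside the collapsed subcategory; with that addendum the alternative route you sketch is sound.
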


\subsection{Free and trivial algebras}\label{Free and trivial algebras}
We give two procedures for constructing augmented $n$-disk algebras.
In this subsection we fix a symmetric monoidal $\infty$-category $\cV$ which is $\ot$-presentable.
From Corollary 3.2.3.5 of~\cite{HA}, the $\infty$-category $\Alg_n^{\sf aug}(\cV)$ is presentable.

\begin{definition}[$\sO(n)$-modules]\label{def:top-module}
Let $G$ be a topological group and let $\cV$ be an $\infty$-category.
The $\infty$-category of \emph{$G$-modules} is the functor category
\[
\Mod_{G}(\cV) := \Fun(\mathsf{B}G,\cV)
\]
from the $\infty$-groupoid associated to the classifying space of $G$.  

\end{definition}

\begin{warning}\label{warning}
In the case where the topological group is the orthogonal group, $G=\sO(n)$, and $\cV=\Ch_\Bbbk$ is chain complexes over a ring $\Bbbk$, there is an equivalence
\[
\m_{\sO(n)}(\Ch_\Bbbk)\simeq \Mod_{\sC_\ast(\sO(n);\Bbbk)}(\Ch_\Bbbk) 
\]
between $\sO(n)$-modules (in the sense of Definition \ref{def:top-module}) and modules for the differential graded algebra of $\Bbbk$-linear chains on $\sO(n)$. This should {\it not} be confused, in the case $\Bbbk$ is $\RR$ or $\CC$, with the usual category of representations of $\sO(n)$ as a Lie group. There is a functor from the representation category to the functor category, but it is far from being an equivalence.

\end{warning}

Throughout this work we will make use of the basic and essential result from differential topology that the inclusions
\[
\sO(n)\hookrightarrow {\sf GL}(n)\hookrightarrow \Diff(\RR^n)\hookrightarrow \Emb(\RR^n,\RR^n)
\]
are all homotopy equivalences. Gram--Schmidt orthogonalization defines a deformation retraction onto the inclusion $\sO(n) \xra{\simeq} {\sf GL}(n)$. Conjugating by scaling and translation, $(f,t)\mapsto \bigl(x\mapsto  \frac{f(tx)-f(0)}{t}+f(0)\bigr)$, then defines a deformation retraction onto the inclusion ${\sf GL}(n) \xra{\simeq} \Emb(\RR^n,\RR^n)$.

Write $\cV_{\uno//\uno}$ for the $\infty$-category of objects $E\in \cV$ equipped with a retraction onto the symmetric monoidal unit: $\id_{\uno}\colon \uno\to E\to \uno$. Note that if $\cV$ is stable, then there is a natural equivalence $\cV_{\uno//\uno}\xra{\sim} \cV$ sending an object $\uno\to E\to \uno$ to the cokernel of the unit ${\sf cKer}(\uno\to E)$.
Write $\Disk_{n,+}^{\leq 1}\subset \Disk_{n,+}$ for the full $\oo$-subcategory consisting of those zero-pointed Euclidean spaces with at most one non-base component. By the equivalence $\sO(n)\simeq \Emb(\RR^n,\RR^n)$, this full $\oo$-subcategory is initial among pointed $\infty$-categories under $\BO(n)$.  
In other words, there there is a canonical equivalence of $\infty$-categories
\[
\Fun_{\uno}(\Disk_{n,+}^{\leq 1}, \cV) \xra{~\simeq~}\Mod_{\sO(n)}(\cV_{\uno//\uno})
\]
where the source is functors whose value on $\ast$ is a symmetric monoidal unit of $\cV$; the target is $\sO(n)$-modules in retractive objects over the unit.  
\begin{definition}[Free]\label{free}
Restriction along $\Disk_{n,+}^{\leq 1} \subset \Disk_{n,+}$ determines the solid arrow, referred to as the \emph{underlying $\sO(n)$-module}:
\begin{equation}\label{free-forgetful}
\xymatrix{
\Alg^{\sf aug}_n(\cV)  \ar[rr]_-{_{|\Disk_{n,+}^{\leq 1}}}
&&
\Mod_{\sO(n)}(\cV_{\uno//\uno}) \ar@/_1pc/@{-->}[ll]_{\FF^{\sf aug}} ~.
}
\end{equation}
This forgetful functor preserves limits, and so there is a left adjoint, as depicted, referred to as the \emph{augmented free} functor.
\end{definition}

If $\cV$ is a stable $\oo$-category, then the forgetful functor from algebras to $\sO(n)$-modules has an inverse, the {\it trivial algebra} functor ${\sf t^{aug}}$. Given an $\sO(n)$-module $V$, the augmented $n$-disk algebra ${\sf t^{aug}}V$ has $V$ as its underlying $\uno \oplus \sO(n)$-module; the restriction to $V$ of the multiplication map $V\ot V \to V$ factors as the augmentation followed by the unit: $V\ot V \to \uno\ot \uno \simeq \uno \to V$. See \S7.3 of \cite{HA} for a formal construction. This allows the following definition of the adjoint

\begin{definition}[Cotangent space]\label{def:cotangent-space}
The \emph{augmented cotangent space} functor is the left adjoint to the \emph{augmented trivial} functor:
\[
\xymatrix{
\Alg^{\sf aug}_n(\cV)  \ar@/_1.5pc/@{-->}[rr]^-{L^{\sf aug}}
&&
\Mod_{\sO(n)}(\cV_{\uno//\uno}) \ar[ll]_{\sf t^{aug}}~.
}
\]
\end{definition}

The cotangent space is an inverse to the free functor.
\begin{lemma}\label{LF}
Let $\cV$ be a $\ot$-presentable symmetric monoidal $\infty$-category.
There is a canonical equivalence of endofunctors of $\Mod_{\sO(n)}(\cV_{\uno//\uno})$:
\[
L^{\sf aug}\circ \FF^{\sf aug} \xra{~\simeq~} \id~.
\]
\end{lemma}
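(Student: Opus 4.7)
The strategy is to exploit the two adjunctions and uniqueness of left adjoints, reducing the claim to the observation that the underlying $\sO(n)$-module of a trivial algebra is the original module itself.

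First, I would combine the two adjunctions. The restriction functor $(-)_{|\Disk_{n,+}^{\leq 1}}$ is, under the identification preceding Definition~\ref{free}, the underlying $\sO(n)$-module functor, and it is right adjoint to $\FF^{\sf aug}$ by construction. The trivial algebra functor ${\sf t^{aug}}$ is right adjoint to $L^{\sf aug}$ by Definition~\ref{def:cotangent-space}. Composing adjunctions, the endofunctor $L^{\sf aug}\circ \FF^{\sf aug}$ of $\Mod_{\sO(n)}(\cV_{\uno//\uno})$ is left adjoint to the endofunctor $(-)_{|\Disk_{n,+}^{\leq 1}}\circ {\sf t^{aug}}$, which sends an $\sO(n)$-module $V$ to the underlying $\sO(n)$-module of the trivial augmented $n$-disk algebra on $V$.

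Second, I would verify that $(-)_{|\Disk_{n,+}^{\leq 1}}\circ {\sf t^{aug}}\simeq \id$ on $\Mod_{\sO(n)}(\cV_{\uno//\uno})$. This is essentially the construction of ${\sf t^{aug}}$: the trivial algebra ${\sf t^{aug}} V$ takes the value $V$ on a single $n$-disk, equipped with its manifest $\sO(n)$-action, unit $\uno\to V$, and augmentation $V\to \uno$; its value on wedges of more than one disk factors the multiplication as $V\ot V\to \uno\ot \uno\simeq \uno\to V$, information which is invisible under restriction to the full subcategory $\Disk_{n,+}^{\leq 1}$ generated by at most a single disk component. Thus the restriction of ${\sf t^{aug}} V$ along $\Disk_{n,+}^{\leq 1}\subset \Disk_{n,+}$ is naturally equivalent to $V$ as an $\sO(n)$-module over $\uno$.

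Finally, since the identity functor on $\Mod_{\sO(n)}(\cV_{\uno//\uno})$ is trivially its own left adjoint, uniqueness of left adjoints yields the asserted equivalence $L^{\sf aug}\circ \FF^{\sf aug}\simeq \id$. Equivalently, Yoneda applied to the chain of natural equivalences
\[
\Map(L^{\sf aug}\FF^{\sf aug} V, W)~\simeq~\Map(\FF^{\sf aug} V, {\sf t^{aug}} W)~\simeq~\Map\bigl(V,({\sf t^{aug}} W)_{|\Disk_{n,+}^{\leq 1}}\bigr)~\simeq~\Map(V,W)
\]
provides the same conclusion. The main obstacle is the second step: formalizing that ${\sf t^{aug}}$ is a genuine section of the underlying $\sO(n)$-module functor. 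While intuitively transparent from the trivial algebra construction of \cite{HA}~\S7.3, one must carefully track the augmented and pointed structure on $\cV_{\uno//\uno}$ and check that the counit of the adjunction $L^{\sf aug}\dashv {\sf t^{aug}}$ induces an equivalence after restriction to $\Disk_{n,+}^{\leq 1}$.
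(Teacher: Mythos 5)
Your proposal is correct and takes essentially the same approach as the paper: the paper's one-line proof is exactly that the forgetful functor composed with the trivial functor is the identity, hence the composite of their respective left adjoints $L^{\sf aug}\circ\FF^{\sf aug}$ is also the identity. You have simply unrolled the uniqueness-of-adjoints argument that the paper leaves implicit.
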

\begin{proof}
The composition of the forgetful functor and the trivial algebra functor is equivalent to the identity, therefore the composite of their left adjoints is the identity. 
\end{proof}

\subsubsection{\bf Stable case}\label{stable-case}
In this section, fix a $\ot$-stable-presentable symmetric monoidal $\infty$-category $\cV$. Recall that $\cV$ is naturally tensored over the $\oo$-category of pointed spaces. Using this structure, we define a functor  $ -\underset{\sO(n)}\bigotimes -:  \Mod_{\sO(n)}(\Spaces_\ast) \times\Mod_{\sO(n)}(\cV) \ra \cV$ by the following composite:
\[\xymatrix{
 \Mod_{\sO(n)}(\Spaces_\ast) \times\Mod_{\sO(n)}(\cV)\ar[rr]\ar[d]&& \cV\\
 \Mod_{\sO(n)\times\sO(n)}(\Spaces_\ast \times \cV)\ar[r]&\Mod_{\sO(n)\times\sO(n)}(\cV) \ar[r]&\Mod_{\sO(n)}(\cV)\ar[u]\\}
\]
where the second step is given by the tensoring operation $\Spaces_\ast\times\cV \ra \cV$; the third step is restriction along the diagonal map $\sO(n) \ra \sO(n)\times\sO(n)$; the last step is taking the coinvariants of the action by $\sO(n)$. Dually, we define a functor
\[
\Map^{\sO(n)}(-,-)\colon \Mod_{\sO(n)}(\Spaces_\ast)^{\op} \times \Mod_{\sO(n)}(\cV) \longrightarrow \cV
\]
by substituting the cotensor $(\Spaces_\ast)^{\op} \times\cV \ra \cV$ for the tensor, and invariants for coinvariants.

In the case that $\cV$ is stable there is an equivalence $\Ker^{\sf aug}\colon \cV_{\uno//\uno} \simeq \cV\colon \uno\oplus (-)$, and thereafter an equivalence 
\begin{equation}\label{w/o-units}
\Ker^{\sf aug}\colon \Mod_{\sO(n)}(\cV_{\uno//\uno})~ \simeq~ \Mod_{\sO(n)}(\cV)\colon \uno\oplus (-)~.
\end{equation}
\begin{notation}[$\FF$ and $L$]
In the case that $\cV$ is stable, we denote
\[
\FF\colon \Mod_{\sO(n)}(\cV) \underset{(\ref{w/o-units})}\simeq\Mod_{\sO(n)}(\cV_{\uno//\uno})\xra{\FF^{\sf aug}} \Alg_n^{\sf aug}(\cV)~,
\]
\[
\xymatrix{
L\colon \Alg_n^{\sf aug}(\cV) \ar@<1ex>[r]^-{L^{\sf aug}}
&
\Mod_{\sO(n)}(\cV_{\uno//\uno}) \underset{(\ref{w/o-units})}\simeq\Mod_{\sO(n)}(\cV) \colon {\sf t}  \ar@<1ex>[l]^-{\sf t^{aug}}~.
}
\]

\end{notation}

In this stable case, we make $\FF$ explicit and so recognize $L$.
The underlying $\sO(n)$-module of the value of $\FF$ on a $\sO(n)$-module $V$ is
\begin{equation}\label{free-value}
\FF(V)  ~\simeq~ \bigoplus_{i\geq 0} \Bigl(\conf_i^{\sf fr}(\RR^n_+) \underset{\Sigma_i\wr \sO(n)}\bigotimes V^{\ot i}\Bigr)~,
\end{equation}
because the monoidal structure distributes over colimits.
Stability of $\cV$ implies stability of $\Mod_{\sO(n)}(\cV)$.  In the solid diagram among $\infty$-categories
\[
\xymatrix{
&
{\sf Stab}\bigl(\Alg_n^{\sf aug}(\cV)\bigr)   \ar@{-->}[dr]^-{\alpha}
&
\\
\Alg_n^{\sf aug}(\cV) \ar[rr]^-{L}   \ar[ur]^-{\Sigma^\infty}
&
&
\Mod_{\sO(n)}(\cV)
}
\]
there is a canonical filler, from the stabilization, as a colimit preserving functor. See \S7.3.4 of \cite{HA} or 
Proposition 2.23 of~\cite{cotangent}, which state that the functor $\alpha$ is an equivalence, and so the horizontal functor in the above diagram witnesses $\Mod_{\sO(n)}(\cV)$ as the stabilization of $\Alg_n^{\sf aug}(\cV)$.

\subsection{Linear Poincar\'e duality}
In the case that the symmetric monoidal $\infty$-category is of the form $\cS^\oplus$, with underlying $\infty$-category $\cS$ stable and presentable, and whose symmetric monoidal structure is given by direct sum, factorization homology and factorization cohomology profoundly simplify.  
Here we state Poincar\'e duality in this simplified setting.

For $\cX$ a small $\infty$-category with a zero object, we denote by $\Psh_\ast(\cX)$ the $\infty$-category of those (space-valued) presheaves on $\cX$ whose value on the zero object is $\ast$.  
\begin{definition}[Frame bundle]\label{def:frame bundle}
The \emph{frame bundle} functor is the composition
\[
{\sf Fr}\colon  \ZMfld \to \Psh_\ast(\ZMfld) \to \Psh\bigl(\BO(n)\bigr)^{\ast /} \simeq \Mod_{\sO(n)^{\op}}(\Spaces_\ast)
\]
of the Yoneda embedding, followed by restriction along the full subcategory $\Disk_{n,+}^{\leq 1}\subset \ZMfld$ -- this subcategory is initial among $\infty$-categories under $\BO(n)$ with a zero object.  
Explicitly, ${\sf Fr}_{M_\ast}$ can be identified as the pointed space $\ZEmb(\RR^n_+,M_\ast)$ with $\sO(n)$-action given by precomposition by homeomorphisms of $\RR^n$.  

\end{definition}

\begin{notation}[Framed configurations]\label{framed-conf}
For $M_\ast$ a zero-pointed $n$-manifold,
we denote the $\Sigma_i\wr \sO(n)$-module in pointed spaces
\[
\conf^{\sf fr}_i(M_\ast) := {\sf Fr}_{\conf_i(M_\ast)}~.
\]

\end{notation}

The following is a formulation of Poincar\'e or Atiyah duality for a suitable class of zero-pointed manifolds.

\begin{theorem}[Linear Poincar\'e duality~\cite{ZP}]\label{linear-PD}
Let $\cS$ be a stable and presentable $\infty$-category, and let $E$ and $F$ be $\sO(n)$-modules in $\cS$.
Let $M_\ast$ be a zero-pointed $n$-manifold. 
A morphism of $\sO(n)$-modules $\alpha\colon (\RR^n)^+ \ot E \to F$
canonically determines a morphism in $\cS$
\[
\alpha_{M_\ast}\colon {\sf Fr}_{M_\ast} \underset{\sO(n)}\bigotimes E\longrightarrow \Map^{\sO(n)}({\sf Fr}_{M_\ast^\neg},F)~.
\]
Furthermore, if $\alpha$ is an equivalence then so is $\alpha_{M_\ast}$.

\end{theorem}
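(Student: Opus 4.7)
The plan is to construct $\alpha_{M_\ast}$ as the Poincar\'e/Koszul duality arrow of Theorem~\ref{thm.PD-map} specialized to $\cS^\oplus$, then to recognize both source and target as symmetric monoidal, excisive functors in $M_\ast$ and reduce the equivalence claim to the trivial case $M_\ast = \RR^n_+$.

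Construction. Because the tensor unit in $\cS^\oplus$ is the zero object, augmentations carry no information and the restriction functors
\[
\Alg_n^{\sf aug}(\cS^\oplus)~\xra{~\simeq~}~\Mod_{\sO(n)}(\cS),\qquad \cAlg_n^{\sf aug}(\cS^\oplus)~\xra{~\simeq~}~\Mod_{\sO(n)}(\cS)
\]
are equivalences. Unwinding the defining (co)limits of Corollary~\ref{still-computes} then gives natural equivalences
\[
\int_{M_\ast} E ~\simeq~{\sf Fr}_{M_\ast}\underset{\sO(n)}{\bigotimes} E,\qquad \int^{M_\ast} F ~\simeq~\Map^{\sO(n)}({\sf Fr}_{M_\ast}, F),
\]
and in particular $\bBar E \simeq \int_{(\RR^n)^+} E \simeq (\RR^n)^+\otimes E$ as $\sO(n)$-modules. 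Consequently, a morphism $\alpha\colon (\RR^n)^+ \otimes E \to F$ is the same data as a morphism of augmented $n$-disk coalgebras $\bBar E \to F$, and I would define $\alpha_{M_\ast}$ to be the composite of the Poincar\'e/Koszul duality arrow~(\ref{PD-map}) with functorial pushforward along $\alpha$.

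Reduction to the base case. I would then verify that both $M_\ast \mapsto {\sf Fr}_{M_\ast}\underset{\sO(n)}{\bigotimes} E$ and $M_\ast \mapsto \Map^{\sO(n)}({\sf Fr}_{M_\ast^\neg}, F)$ are symmetric monoidal functors from $(\ZMfld_n^{\sf fin}, \vee)$ to $(\cS, \oplus)$. On the source this is transparent, since ${\sf Fr}$ carries wedge sums to wedge sums of pointed $\sO(n)$-spaces and the coinvariants functor is colimit-preserving. On the target, the involution $(-)^\neg$ is symmetric monoidal with respect to $\vee$, while $\Map^{\sO(n)}(-,F)$ sends wedges to products in $\cS$, which agree with coproducts by stability. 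Moreover, both functors are excisive with respect to collar gluings: the source directly from the theory of \S\ref{sec:reduced}, the target by a dual collar-gluing argument in which the relevant Mayer--Vietoris limit collapses to a split sum because $\cS$ is stable. Hence $\alpha_{(-)}$ is a symmetric monoidal natural transformation between excisive functors on $\ZMfld_n^{\sf fin}$, and is an equivalence on all conically finite $M_\ast$ if and only if it is an equivalence on the generating object $\RR^n_+$. There, via ${\sf Fr}_{\RR^n_+}\simeq \sO(n)_+$ and ${\sf Fr}_{(\RR^n)^+}\simeq \sO(n)_+\wedge (\RR^n)^+$, the map $\alpha_{\RR^n_+}$ identifies with the adjoint of $\alpha$ itself and is an equivalence by hypothesis.

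The chief technical obstacle is verifying the excision property on the cohomological side $\Map^{\sO(n)}({\sf Fr}_{(-)^\neg}, F)$. Unlike factorization homology, factorization cohomology is a genuine limit, and its behavior under collar gluings is controllable only because the stability of $\cS$ forces the requisite limit to collapse into a finite direct sum that splits. This is precisely the hypothesis that separates linear Poincar\'e duality from the full Poincar\'e/Koszul duality of the main theorem: replacing $\oplus$ by a genuinely multiplicative $\otimes$, or dropping stability, breaks this collapse and the analogous statement becomes false in general.
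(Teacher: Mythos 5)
This theorem is cited to~\cite{zp1}, so the present paper contains no proof to compare against; I am therefore evaluating the proposal on its own terms and against what I recall of the argument in the zero-pointed manifolds paper. Your construction of $\alpha_{M_\ast}$ is fine: in $\cS^\oplus$ the unit is the zero object, augmented (co)algebras reduce to $\sO(n)$-modules, $\bBar E \simeq (\RR^n)^+\otimes E$, and specializing the arrow~(\ref{PD-map}) then post-composing along $\alpha$ gives the asserted map. The identification of factorization (co)homology with the frame-bundle (co)tensor, and the observation that the source and target are both symmetric monoidal from $(\ZMfld_n^{\sf fin},\vee)$ to $(\cS,\oplus)$, are also correct. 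The point you flag as the ``chief technical obstacle'' --- that $\oplus$-excision holds on the cohomological side because stability lets the cobar limit collapse --- is the right thing to worry about and your justification (a collar gluing induces a homotopy pushout of frame spaces, hence a pullback $=$ pushout square in stable $\cS$ after applying $\Map^{\sO(n)}(-,F)$) is sound.

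The genuine gap is in the sentence ``is an equivalence on all conically finite $M_\ast$ if and only if it is an equivalence on the generating object $\RR^n_+$.'' Conically finite zero-pointed $n$-manifolds are \emph{not} generated from $\RR^n_+$ under wedge and collar gluing alone. A conically smooth $M_\ast = \ast\cup_{\partial_{\sL}}(\ov{M}\smallsetminus\partial_{\sR})$ with $\partial_{\sL}\neq\emptyset$ has, at the bottom of any handle filtration, the cone neighborhood $\sC(\partial_{\sL})$ of the basepoint, and more generally the basics $\RR^i\times\sC(L)$ with $L\neq\emptyset$ are irreducible under these operations: any attempted collar-gluing decomposition of $\sC(L)$ reproduces $\sC(L)$ on one side, so the induction never bottoms out at $\RR^n_+$. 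What saves the argument --- and what your write-up should say explicitly --- is that for any compact $L$ the cone basic $\sC(L)$ is equivalent to the zero object $\ast$ in $\ZMfld_n$: the zero-pointed self-embedding that flows radially toward the cone point is isotopic to both the identity and the collapse map, so ${\sf Fr}_{\sC(L)}\simeq\ast$ and likewise ${\sf Fr}_{\sC(L)^\neg}\simeq\ast$, whence both sides of $\alpha_{\sC(L)}$ are $0$. Only with this extra ingredient, together with the standard dimensional descent for the interface manifolds $V_+\times\RR$ appearing in the gluings, does your induction reduce everything to $\RR^n_+$. As written, the reduction step asserts more than excision and symmetric monoidality can deliver, and the missing observation is precisely the point at which the zero-pointed formalism differs from ordinary factorization homology over $\Mfld_n$.
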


\begin{remark}[With $B$-structures]\label{PD-B}
We follow up on Remark~\ref{B-structures}.
There is a version of Theorem~\ref{linear-PD} that is also true in the context of $B$-manifolds -- it is stated and proved there.  
We will make use of this version as it applies to $\sB\bigl(\Sigma_i\wr \sO(n)\bigr)$-structured zero-pointed manifolds of the form $\conf_i(M_\ast)$.  

\end{remark}

\section{Filtrations}
In this section we establish the cardinality (co)filtration of factorization (co)homology, as well as the Goodwillie cofiltration of factorization homology.  
We use these to give partial results for Poincar\'e/Koszul duality for when the monoidal structure of $\cV$ does not distribute over totalizations, as with the case of chain complexes with tensor product or spectra with smash product. Throughout this section, if not otherwise specified, the following parameters are assumed to be fixed.  
\begin{itemize}
\item A zero-pointed $n$-manifold $M_\ast$ (possibly with corners) (see Definition~\ref{def:zero}).
\item A $\ot$-stable-presentable symmetric monoidal $\infty$-category $\cV$.
\item An augmented $n$-disk algebra $A\colon \Disk_{n,+} \to \cV$.
\item An augmented $n$-disk coalgebra $C\colon \Disk_n^+ \to \cV$.  
\end{itemize}

\begin{example}
Here are some standard examples of such entities.
\begin{itemize}
\item 
Let $\ov{N}$ be a smooth cobordism from $\partial_{\sL}$ to $\partial_{\sR}$.
Set $N_\ast :=\ast\underset{\partial_{\sL}}\amalg(\ov{N}\smallsetminus \partial_{\sR})$.
In this case, we have $N_\ast^\neg \cong \ast\underset{\partial_{\sR}}\amalg(\ov{N}\smallsetminus \partial_{\sL})$.
The zero-pointed $n$-manifolds with corners of Lemma~\ref{conf.finitary}, which are pointed homotopy equivalent to configuration spaces:
\[
C_i(N_\ast)~\simeq~\conf_i(N_\ast) = \ast\underset{B}\coprod \bigl\{f\colon \{1,\dots,i\}\hookrightarrow \ov{N}\smallsetminus \partial_{\sR}\bigr\}
\]
where $B=\{f\mid \emptyset \neq f^{-1}(\partial_{\sf R})\}$, and
\[
C^{\neg}_i(N_\ast^\neg)~\simeq~\conf_i^\neg(N_\ast^\neg) = \ast\underset{B'}\coprod \bigl\{f\colon \{1,\dots,i\}\to \ov{N}\smallsetminus \partial_{\sL}\bigr\}
\]
where $B' =\{f\mid \emptyset \neq f^{-1}(\partial_{\sf L}) \text{ or }f \text{ is not injective}\}$.

\item Write $\Ch_\Bbbk$ for the $\infty$-category of chain complexes over a commutative ring $\Bbbk$. 
Then $\Ch_\Bbbk^\oplus$ and $\Ch_\Bbbk^\ot$, equipped with direct sum and tensor product, are examples of such symmetric monoidal $\infty$-categories.
In general, any such $\cV$ is tensored and cotensored over finite spaces.  

\item 
For $\cS$ a stable presentable $\infty$-category, let $E\in \cS$ be an object.
The assignment $A\colon U_\ast \mapsto E^{U_\ast^\neg}$ depicts an augmented $n$-disk algebra in $\cS^\oplus$ -- its underlying object is (non-canonically) identified as $E[-n] \simeq \Omega^n E$.  
Likewise, the assignment $C\colon U_\ast \mapsto U_\ast \ot E$ depicts an augmented $n$-disk coalgebra in $\cS^\oplus$ -- its underlying object is (non-canonically) identified as $E[n]\simeq \Sigma^n E$.  
Constructing $n$-(co)algebras in more general $\cV$ is substantially more interesting, and also more involved, depending on the specifics of $\cV$.  

\end{itemize}

\end{example}

\subsection{Main results}
Here we display the main results in this section and prove them based upon results developed in later subsections.

\subsubsection{\textbf{Cardinality (co)filtration}}
We exhibit a natural filtration of factorization homology and identify the filtration quotients; we do likewise for factorization cohomology. Write $[M_\ast]$ for the set of connected components of $M_\ast$, which we regard as a based set.  
For each finite cardinality $i$, we will denote the subcategories of based finite sets
\[
(\Fin^{\leq i}_\ast)_{/[M_\ast]}~\subset~(\Fin^{\sf surj}_\ast)_{/[M_\ast]}~\subset~(\Fin_\ast)_{/[M_\ast]}
\]
where an object of the middle is a \emph{surjective} based map $I_+\to [M_\ast]$, and a morphism between two such is a \emph{surjective} map over $[M_\ast]$; and where the left is the full subcategory consisting of those $I_+\to[M_\ast]$ for which the cardinality $|I|\leq i$ is bounded. 
Taking connected components gives a functor 
\[
[-]\colon \Disk_+(M_\ast) \to (\Fin_\ast)_{/[M_\ast]}
\]
to based finite sets over the based set of connected components of $M_\ast$.  
\begin{definition}[$\Disk^{\sf \leq i}_+(M_\ast)$]\label{disk-i}
We define the $\infty$-category
\[
\Disk^{\sf surj}_+(M_\ast)~:=~\Disk_+(M_\ast)_{|(\Fin^{\sf surj}_\ast)_{/[M_\ast]}}~\subset~\Disk_+(M_\ast)
\]
and, for each finite cardinality $i$, the full $\infty$-subcategory
\[
\Disk^{\leq i}_+(M_\ast)~:=~\Disk^{\sf surj}_+(M_\ast)_{|(\Fin^{\leq i}_\ast)_{/[M_\ast]}}~\subset~\Disk^{\sf surj}_+(M_\ast)~.
\]
We denote the opposites:
\[
\Disk^{+,\sf surj}(M_\ast^\neg) ~:=~\bigl(\Disk^{\sf surj}_+(M_\ast)\bigr)^{\op}\qquad\text{ and }\qquad
\Disk^{+,\leq i}(M_\ast^\neg)~:=~\bigl(\Disk^{\leq i}_+(M_\ast)\bigr)^{\op}~.
\]

\end{definition}

\begin{definition}\label{def:truncations}
Let $i$ be a finite cardinality.
We define the object of $\cV$
\[
\tau^{\leq i} \int_{M_\ast} A := \underset{(B\sqcup U\hookrightarrow M_\ast)\in \Disk_+^{\leq i}(M_\ast)} \colim A(U_+) = \colim\Bigl(\Disk_+^{\leq i}(M_\ast) \to \Disk_+(M_\ast) \xra{A} \cV\Bigr)~.
\]

We define the object of $\cV$
\[
\tau^{\leq i} \int^{M_\ast^\neg} C := \underset{(B\sqcup V\hookrightarrow M_\ast)\in \Disk^{+,\leq i}(M_\ast)} \limit C(V^+) = \limit\Bigl(\Disk^{+,\leq i}(M_\ast^\neg) \to \Disk_{n}^+(M_\ast^\neg) \xra{C} \cV\Bigr)~.
\]

We likewise define such objects for the comparison ``$\leq  i$'' replaced by other comparisons among finite cardinalities, such as ``$\geq i$'' and ``$=i$''.

\end{definition}

The $\ZZ_{\geq 0}$-indexed sequence of fully faithful functors
\[
\dots \longrightarrow \Disk^{\leq i}_+(M_\ast) \longrightarrow \Disk^{\leq i+1}_+(M_\ast) \longrightarrow \dots \longrightarrow \Disk^{\sf surj}_+(M_\ast)
\]
witnesses $\Disk^{\sf surj}_+(M_\ast)$ as a sequential colimit.  
There results a canonical sequence in $\cV$
\begin{equation}\label{card-compare-ho}
\cdots \longrightarrow \tau^{\leq i-1} \int_{M_\ast}A \longrightarrow \tau^{\leq i} \int_{M_\ast}A \longrightarrow \cdots ~ \longrightarrow\int_{M_\ast}A~.
\end{equation}
Dually, there is a canonical sequence in $\cV$
\begin{equation}\label{card-compare-co}
\int^{M_\ast^\neg} C \longrightarrow\cdots \longrightarrow \tau^{\leq i} \int^{M_\ast^\neg}C  \longrightarrow \tau^{\leq i-1} \int^{M_\ast^\neg} C\longrightarrow \cdots~.
\end{equation}
There are likewise sequences with $\tau^{\leq -}$ replaced by $\tau^{\geq -}$.

\begin{lemma}[Cardinality convergence]\label{cardinality-converges}
The morphism in $\cV$ from the colimit of the cardinality sequence
\[
\tau^{\leq \infty}\int_{M_\ast} A~\xra{~\simeq~}~\int_{M_\ast} A
\]
is an equivalence.
Likewise, the morphism in $\cV$ to the limit
\[
\int^{M_\ast^\neg} C~\xra{~\simeq~}~ \tau^{\leq \infty}\int^{M^\neg_\ast} C
\]
is an equivalence.  

\end{lemma}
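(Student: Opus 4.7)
The plan is to reduce the convergence claim to a finality statement about inclusions of exit-disk categories, and then verify that finality by a slice contractibility argument.

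By Corollary~\ref{still-computes}, the target is canonically identified as $\int_{M_\ast}A \simeq \colim_{\Disk_+(M_\ast)} A$. Commuting the sequential colimit in $i$ with the defining colimit of each $\tau^{\leq i}\int_{M_\ast}A$, and invoking the identification $\Disk^{\sf surj}_+(M_\ast)\simeq \colim_i \Disk^{\leq i}_+(M_\ast)$ given just before the lemma, the source is canonically identified as $\colim_{\Disk^{\sf surj}_+(M_\ast)} A$. Under these identifications, the structure map $\tau^{\leq \infty}\int_{M_\ast}A \to \int_{M_\ast}A$ is induced by the sub-$\infty$-category inclusion
\[
\iota \colon \Disk^{\sf surj}_+(M_\ast)\hookrightarrow \Disk_+(M_\ast)~.
\]
Consequently it suffices to show that $\iota$ is final, whereupon restriction along $\iota$ preserves the colimit of $A$ for any functor $A$ whatsoever.

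To establish finality, I would appeal to Theorem~4.1.3.1 of HTT (Quillen's Theorem A for $\infty$-categories), which reduces the claim to verifying that for each object $X=(B\sqcup U\hookrightarrow M_\ast)$ of $\Disk_+(M_\ast)$, the undercategory $\Disk^{\sf surj}_+(M_\ast)^{X/}$ is weakly contractible. Geometrically, an object of this slice is a morphism $X\to Y$ in $\Disk_+(M_\ast)$ with $Y$ of surjective component type. For each component $c\in [M_\ast]$ not hit by $[U]$, one inserts an auxiliary disk into $c$ disjoint from the image of $X$; the space of such insertions is the configuration space of one disk in each missing component, which is connected. Running over all such insertions, together with all subsequent enlargements and permissible slide-offs, I would argue the resulting slice is sifted by an argument strictly parallel to part~(1) of Theorem~\ref{thank-god}, hence weakly contractible.

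The dual statement for factorization cohomology is formally identical upon passing to opposite categories: the colimit identification becomes a limit identification, and the inclusion $\Disk^{+,\sf surj}(M_\ast^\neg)\hookrightarrow \Disk^+(M_\ast^\neg)$ is initial by the same argument, applied to $M_\ast$, now yielding that restriction preserves the limit defining factorization cohomology.

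The main obstacle will be the weak contractibility of the undercategory in the second step. The subtlety is that morphisms in $\Disk_+(M_\ast)$ permit both the insertion of new disks through isotopy-with-extension and the removal of old disks by sliding them off to the zero-point, so the slice is genuinely more elaborate than a single configuration space of inserted disks; however, the conically smooth structure on $M_\ast$ together with the sifted-ness of the exit-disk categories established in Theorem~\ref{thank-god} should supply precisely the input needed to conclude contractibility by a mild modification of the proof given there.
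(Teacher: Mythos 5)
Your reduction is correct and is precisely the paper's: by Corollary~\ref{still-computes} the target is $\colim_{\Disk_+(M_\ast)} A$, the source is $\colim_{\Disk^{\sf surj}_+(M_\ast)} A$ after commuting colimits, and the claim follows once the inclusion $\Disk^{\sf surj}_+(M_\ast)\hookrightarrow \Disk_+(M_\ast)$ is shown to be final. This is exactly Lemma~\ref{surj-final} in the paper, which the paper's one-line proof of the present lemma simply cites, and the opposite-category passage for the cohomology statement is likewise standard. So far so good.

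The gap is in your proof of finality. Applying Quillen's Theorem~A is the right move, but the contractibility of the slice $\Disk^{\sf surj}_+(M_\ast)^{X/}$ is the entire content, and you do not establish it. Your statement that ``one inserts an auxiliary disk into each missing component \dots the space of such insertions is the configuration space of one disk in each missing component, which is connected'' shows only that the slice is nonempty and connected (after those insertions), not that it is weakly contractible; connectedness of a category is far weaker. And the claim that the slice is ``sifted by an argument strictly parallel to part~(1) of Theorem~\ref{thank-god}'' is an assertion, not an argument --- siftedness of an undercategory does not follow formally from siftedness of the ambient category, and in any case that theorem's proof is in \cite{zp1} and not available to transplant.

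What the paper actually does is reduce away from the awkward presence of the cone-neighborhood $B$: it observes that both $\Disk_+(M_\ast) \to \Disk(\bsc)_{/M_\ast}$ and $\Disk^{\sf surj}_+(M_\ast) \to \Disk^{\sf surj}(\bsc)_{/M_\ast}$ are final, so by two-of-three it suffices to treat $\Disk^{\sf surj}(\bsc)_{/M_\ast} \to \Disk(\bsc)_{/M_\ast}$; it then factors over connected components of $M_\ast$ (a product of final functors is final) to reduce to $M_\ast$ irreducible; and finally, for each $V\in \Disk(\bsc)_{/M_\ast}$, the slice either has an initial object (when $V$ is nonempty, hence already surjective onto the single component) or is identified --- via Corollary~\ref{disks-to-ran} --- with the exit-path category of the Ran space $\Ran(M_\ast)$, whose weak contractibility rests on the Beilinson--Drinfeld H-space argument. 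This Ran-space contractibility is a genuinely nontrivial topological input that your sketch does not supply or invoke, and it is exactly the ``main obstacle'' you flag at the end. Without it, the proof is incomplete.
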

\begin{proof}
Directly apply Corollary~\ref{still-computes} and Lemma~\ref{surj-final} which states that $\Disk_+^{\sf surj}(M_\ast) \to \Disk_+(M_\ast)$ is final. 
\end{proof}

The following main result of this subsection identifies the layers of the filtration~(\ref{card-compare-ho}) and the cofiltration~(\ref{card-compare-co}) in terms of configuration spaces.  
We give a proof of this result at the end of~\S\ref{sec.reduced}.

\begin{theorem}[Cardinality cokernels and kernels]\label{truncation-quotients}
Each arrow in the cardinality filtration of factorization homology in~(\ref{card-compare-ho}) belongs to a canonical cofiber sequence
\begin{equation}\label{ho-cofiber}
\tau^{\leq i-1} \int_{M_\ast} A \longrightarrow \tau^{\leq i}\int_{M_\ast}A \longrightarrow \conf^{\neg, {\sf fr}}_i(M_\ast) \underset{\Sigma_i\wr \sO(n)} \bigotimes  A^{\ot i}~.
\end{equation}
Likewise, each arrow in the cardinality cofiltration of factorization cohomology in~(\ref{card-compare-co}) belongs to a canonical fiber sequence
\begin{equation}\label{ho-fiber}
\Map^{\Sigma_i\wr \sO(n)}\bigl(\conf^{\neg, {\sf fr}}_i(M_\ast),C^{\ot i}\bigr) \longrightarrow
\tau^{\leq i} \int^{M_\ast} C \longrightarrow \tau^{\leq i-1}\int^{M_\ast}C~.
\end{equation}

\end{theorem}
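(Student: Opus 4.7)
The plan is to prove the cofiber sequence~(\ref{ho-cofiber}) in detail; the fiber sequence~(\ref{ho-fiber}) will follow by the dual argument. Let $\iota\colon \Disk_+^{\leq i-1}(M_\ast) \hookrightarrow \Disk_+^{\leq i}(M_\ast)$ denote the fully faithful inclusion. My first move is to realize the map $\tau^{\leq i-1}\int_{M_\ast} A \to \tau^{\leq i}\int_{M_\ast} A$ as the map on colimits induced by the counit $\iota_! \iota^* A \to A$ of functors $\Disk_+^{\leq i}(M_\ast)\to \cV$, and then to identify the cofiber of the former with $\underset{\Disk_+^{\leq i}(M_\ast)}\colim Q$ of the pointwise cofiber $Q := \mathrm{cofib}(\iota_!\iota^*A\to A)$. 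Because $\iota$ is fully faithful, $Q$ vanishes on $\Disk_+^{\leq i-1}(M_\ast)$; by a standard stability argument, this computation then reduces to a colimit over the stratum $\Disk_+^{=i}(M_\ast) \subset \Disk_+^{\leq i}(M_\ast)$ consisting of objects with exactly $i$ non-basepoint components.

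Next, I would compute $Q$ on this stratum. For an object $(B\sqcup U\hookrightarrow M_\ast)$ with $|U| = i$, the symmetric monoidality of $A$ yields $A(B\sqcup U) = A(U_+) \simeq A(\RR^n_+)^{\otimes i}$. The value $(\iota_!\iota^* A)(B\sqcup U)$ is a colimit over strictly smaller-cardinality embeddings mapping to $B\sqcup U$; each colimit arrow, via the augmentation $A\to \uno$, contributes a summand of the form $\overline{A}^{\otimes S}\otimes \uno^{\otimes S^c}$ for some proper subset $S \subsetneq \{1,\dots,i\}$, where $\overline{A} := \mathrm{cofib}(\uno\to A)$. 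Taking the cofiber against $A^{\otimes i} \simeq \bigoplus_S \overline{A}^{\otimes S}\otimes \uno^{\otimes S^c}$ isolates the top summand $\overline{A}^{\otimes i}$; under the stable identification of \S\ref{stable-case} this is the $\Sigma_i\wr\sO(n)$-module denoted ``$A^{\otimes i}$'' in the theorem statement.

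The final step is to identify the resulting colimit over the stratum, equivariantly for $\Sigma_i\wr \sO(n)$, as the framed-configuration tensor product of the theorem. Concretely, I would show that the $\infty$-groupoid completion of $\Disk_+^{=i}(M_\ast)$, as a pointed $\Sigma_i\wr\sO(n)$-space, is equivalent to $\conf_i^{\neg, \sf fr}(M_\ast)$. The appearance of $\conf_i^\neg$ rather than $\conf_i$ is forced by the morphisms in $\Disk_+(M_\ast)$: a component of $U$ may be absorbed into the cone-neighborhood $B$ of $\ast$, which corresponds exactly to the ``finest'' topology of Proposition~\ref{conf-stuff}, in which a configuration with any one coordinate close to $\ast$ is identified with the basepoint. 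Given this identification, the colimit of $Q$ assembles into $\conf_i^{\neg, \sf fr}(M_\ast) \underset{\Sigma_i\wr \sO(n)}\bigotimes A^{\otimes i}$.

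The main obstacle I anticipate is this last identification of the equivariant $\infty$-groupoid completion of $\Disk_+^{=i}(M_\ast)$ with $\conf_i^{\neg, \sf fr}(M_\ast)$. This rests on the conical smoothness of $M_\ast$, which ensures cone-neighborhoods of $\ast$ provide a well-behaved basis for the $\neg$-topology, together with a ``contract each disk to its framed center'' argument on the complement of the basepoint to produce the equivalence with the framed configuration space. The fiber sequence~(\ref{ho-fiber}) then follows by dualizing the preceding: replace $\iota_!$ by $\iota_*$, pointwise cofibers by fibers, colimits by limits, and the configuration-space tensor by the cotensor $\Map^{\Sigma_i\wr\sO(n)}\bigl(\conf_i^{\neg,\sf fr}(M_\ast), C^{\otimes i}\bigr)$.
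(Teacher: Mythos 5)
Your first step—realizing the cofiber as $\underset{\Disk_+^{\leq i}(M_\ast)}\colim Q$ for $Q := \mathrm{cofib}(\iota_!\iota^*A \to A)$—matches the paper's Lemma~\ref{quotient}. But there is a genuine gap in the very next sentence, where you claim "by a standard stability argument, this computation then reduces to a colimit over the stratum $\Disk_+^{=i}(M_\ast)$." That reduction is false. Although $Q$ vanishes on $\Disk_+^{\leq i-1}(M_\ast)$, the inclusion $\Disk_+^{=i}(M_\ast)\hookrightarrow \Disk_+^{\leq i}(M_\ast)$ is a sieve (cardinality only decreases along morphisms) and it is far from final: for any object of cardinality $<i$, the relevant under-category is empty. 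To see concretely that the two colimits differ, consider $\cK = [1]$ with $\cK_0 = \{1\}$ a cosieve and a functor vanishing on $1$: the colimit over $\cK$ is $0$, while the colimit over the stratum $\{0\}$ is $F(0)$. The correct statement is that $\colim_{\Disk_+^{\leq i}}Q$ is the colimit of the reduced Kan extension over the quotient $\infty$-category $\bigl(\Disk_+^{\leq i}(M_\ast)\bigr)\big/\bigl(\Disk_+^{\leq i-1}(M_\ast)\bigr)$, and that pointed $\infty$-category is \emph{not} the stratum $\Disk_+^{=i}(M_\ast)$ (nor its one-point extension): the new morphisms to the zero object, coming from isotopies that slide one or more disks into the cone-neighborhood $B$, genuinely affect the colimit.

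This is precisely where your argument becomes internally inconsistent. You correctly note that the appearance of $\conf_i^\neg$ is "forced by" those absorption morphisms, but those morphisms do not live in $\Disk_+^{=i}(M_\ast)$, since they strictly decrease cardinality. A "contract each disk to its framed center" argument applied to $\Disk_+^{=i}(M_\ast)$ would produce the ordinary framed configuration space $\conf_i^{\sf fr}(M_\ast)$, not $\conf_i^{\neg,\sf fr}(M_\ast)$. The $\neg$-topology—collapsing configurations with a point near infinity—is encoded in the quotient structure, not in the stratum. The paper closes this gap with Lemma~\ref{disk-config}, whose proof invokes the cofiber sequence $\Ran_{<i}(M_\ast)\to \Ran_{\leq i}(M_\ast)\to \conf_i^\neg(M_\ast)$ of constructible maps of stratified spaces from~\cite{aft1}, together with the fact that cofibers of constructible maps induce cofiber sequences of the associated $\infty$-categories of basics, to identify the quotient $\infty$-category with $\Disk^{\leq 1}_+\bigl(\conf^\neg_i(M_\ast)_{\Sigma_i}\bigr)$. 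Any complete proof must carry out an identification of the quotient—not the stratum—and that appears to require either the Ran-space technology or a substitute for it. (Your middle paragraph on the splitting $A^{\ot i}\simeq \bigoplus_S \overline A^{\ot S}\ot\uno^{\ot S^c}$ is consistent with the paper's convention that "$A^{\ot i}$" denotes the augmentation-ideal tensor power via the stable equivalence $\cV_{\uno//\uno}\simeq\cV$, so that part is fine, if somewhat orthogonal to the paper's route through the reduced Kan extension.)
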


\begin{remark}\label{hodge}
In the $n=1$ case where the manifold $M$ is the circle and $A$ is an associative algebra, the cardinality filtration specializes to the Hodge filtration of Hochschild homology studied by Burghelea--Vigu-Poirrier \cite{burvi}, Feigin--Tsygan \cite{feigintsygan1}, Gerstenhaber--Schack \cite{gs}, and Loday \cite{loday}.
\end{remark}

\subsubsection{\textbf{Goodwillie cofiltration}}\label{sec:goodwillie-of-M}
We are about to apply Goodwillie's calculus to functors of the form $\Alg_n^{\sf aug}(\cV) \to \cV$.  This calculus was developed by Goodwillie in \cite{goodwillie} for application to functors from pointed spaces and has since been generalized; see \cite{HA}, \cite{kuhn1}, and \cite{kuhn2}.
We begin by recalling this formalism.

Let $i$ be a finite cardinality.  
Let $\cX$ and $\cY$ be presentable $\infty$-categories, each with a zero object.
The $\oo$-category of \emph{polynomial functors of degree $i$} is the full $\oo$-subcategory of reduced functors
\[
{\sf Poly}_i(\cX,\cY) ~\subset~ \Fun_0(\cX,\cY)
\]
consisting of those that send strongly coCartesian $(i+1)$-cubes to Cartesian $(i+1)$-cubes.  
This inclusion admits a left adjoint 
\[
P_i\colon  \Fun_0(\cX,\cY) \longrightarrow {\sf Poly}_i(\cX,\cY)
\]
implementing a localization.
There is the full $\oo$-subcategory of \emph{homogeneous functors of degree $i$} 
\[
{\sf Homog}_i(\cX,\cY)~\subset~{\sf Poly}_i(\cX,\cY)
\]
consisting of those polynomial functors $H$ of degree $i$ for which $P_j H \simeq 0$ is the zero functor provided $j<i$.  
Consequently, each reduced functor $F\colon\cX\to \cY$ canonically determines a cofiltration of reduced functors
\[
F \to P_\infty F \to \cdots \to P_{i}F \to P_{i-1} F \to \cdots~
\]
with each composite arrow $F\to P_i F$ the unit for the above adjunction, and with each kernel $\Ker\bigl(P_i F \to P_{i-1}F\bigr)$ homogeneous of degree $i$. Here we denote $P_\infty F := \underset{i}\limit ~P_i F$ for the inverse limit of the cofiltration.

For a fixed zero-pointed $n$-manifold $M_\ast$, we apply this discussion to the functor $\int_{M_\ast} \colon \Alg^{\sf aug}_n(\cV) \to \cV_{\uno//\uno}\simeq \cV$. Here we have used the identification of retractive objects over the unit of $\cV$ with $\cV$, as discussed in \S\ref{Free and trivial algebras}.  
In particular, there is a canonical arrow between functors
\begin{equation}\label{completion-tower}
\int_{M_\ast}~ \longrightarrow ~P_\infty \int_{M_\ast}~. 
\end{equation}

The next result identifies the layers of the Goodwillie cofiltration of factorization homology in terms of configuration spaces and the cotangent space.  
We give a proof of this result at the end of~\S\ref{sec.good-layers}.

\begin{theorem}\label{goodwillie-layers} 
There is a canonical fiber sequence among functors $\Alg_n^{\sf aug}(\cV) \to \cV$:
\[
\conf_i^{\sf fr}(M_\ast)\underset{\Sigma_i\wr \sO(n)}\bigotimes L(-)^{\ot i} \longrightarrow P_i \int_{M_\ast}\longrightarrow P_{i-1}\int_{M_\ast}
\] for every $i$ a finite cardinality.
\end{theorem}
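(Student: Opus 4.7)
The plan is to identify the $i$-th Goodwillie layer of $\int_{M_\ast} \colon \Alg_n^{\sf aug}(\cV)\to \cV$ by first computing this functor on free algebras, where everything splits explicitly according to cardinality, then showing the candidate homogeneous functor is correct via a density argument. The key input is the identification from \S\ref{stable-case} of $\Mod_{\sO(n)}(\cV)$ as the stabilization of $\Alg_n^{\sf aug}(\cV)$, with stabilization functor $L$.

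First I would establish a canonical splitting
\[
\int_{M_\ast}\FF(V)~\simeq~\bigoplus_{j\geq 0}\conf_j^{\sf fr}(M_\ast)\underset{\Sigma_j\wr \sO(n)}\bigotimes V^{\ot j}~.
\]
This follows by combining the exiting-disks presentation of Corollary \ref{still-computes}, the fact that $\int_{M_\ast}$ preserves sifted colimits (Theorem \ref{fact.exists}, Theorem \ref{thank-god}), and the formula~(\ref{free-value}) for $\FF(V)$ as a sum of $\Sigma_j\wr \sO(n)$-coinvariants of $V^{\ot j}$ smashed with framed configurations of $\RR^n$. The upshot is that the cardinality filtration of $\Disk_+(M_\ast)$ splits when evaluated on a free algebra, replacing $\conf_j^{\sf fr}(\RR^n_+)$ by $\conf_j^{\sf fr}(M_\ast)$.

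Next I would identify the candidate for the $i$-th layer as $D_i(A) := \conf_i^{\sf fr}(M_\ast) \otimes_{\Sigma_i \wr \sO(n)} (LA)^{\otimes i}$ and verify it is $i$-homogeneous. The point is that $D_i$ factors as $\Alg_n^{\sf aug}(\cV) \xra{L} \Mod_{\sO(n)}(\cV) \xra{\widetilde{D}_i} \cV$ where $\widetilde{D}_i(V) = \conf_i^{\sf fr}(M_\ast) \otimes_{\Sigma_i\wr \sO(n)} V^{\ot i}$ is manifestly $i$-homogeneous on the stable $\oo$-category $\Mod_{\sO(n)}(\cV)$ (being a symmetrized tensor power). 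Since $L$ realizes the stabilization, any reduced functor $\widetilde{F}\circ L$ is $i$-homogeneous whenever $\widetilde F$ is $i$-homogeneous in the stable sense; this is a standard consequence of the universal property of $\Sigma^\infty$ in Goodwillie calculus.

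Now I would assemble: evaluating the claimed fiber sequence on $A = \FF(V)$, the first-step splitting gives
\[
\int_{M_\ast}\FF(V) \simeq \bigoplus_{j\geq 0} \widetilde{D}_j(V)~.
\]
Since each summand is $j$-homogeneous, the $P_i$-approximation of this is the partial sum $\bigoplus_{j\leq i}$, and the fiber of $P_i\to P_{i-1}$ is exactly the $j=i$ term, which equals $D_i(\FF(V))$ using $L\FF\simeq \id$ (Lemma \ref{LF}). To extend from free algebras to all of $\Alg_n^{\sf aug}(\cV)$, I would use that $\FF$ is monadic over $\Mod_{\sO(n)}(\cV_{\uno//\uno})$, so free algebras generate $\Alg_n^{\sf aug}(\cV)$ under sifted colimits, together with the fact that all three functors in the proposed fiber sequence preserve sifted colimits ($\int_{M_\ast}$ by the cited theorems, $D_i$ because $L$ and tensor products with coinvariants do, and $P_i, P_{i-1}$ of a sifted-colimit-preserving functor again by general Goodwillie calculus).

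The main obstacle will be the splitting in the first step: proving that when $A = \FF(V)$ the natural map from the sum $\bigoplus_j \conf_j^{\sf fr}(M_\ast) \otimes_{\Sigma_j\wr \sO(n)} V^{\ot j}$ into $\int_{M_\ast}\FF(V)$ is an equivalence requires showing that the colimit over $\Disk_+(M_\ast)$ computes correctly after one substitutes the free-algebra expansion. A secondary but important technical point is verifying that $P_i$ of a sifted-colimit preserving functor remains sifted-colimit preserving in this specific setting, since in general $P_i$ interacts delicately with colimits. Once these are in hand, the rest is a standard Goodwillie-calculus bookkeeping.
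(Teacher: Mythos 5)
Your first three steps are sound and essentially reproduce the paper's supporting lemmas: the splitting of $\int_{M_\ast}\FF V$ is Theorem~\ref{free-calculation}, the homogeneity of $D_i := \conf_i^{\sf fr}(M_\ast)\otimes_{\Sigma_i\wr\sO(n)} L(-)^{\ot i}$ is Corollary~\ref{conf-as-homog}, and the identification of the Goodwillie layer on free algebras is Corollary~\ref{free-greater-k} (which itself requires the homogeneity statement, so it is not quite as immediate as your proposal suggests, but it is correct).

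The gap is in your final extension step. You propose to pass from free algebras to all of $\Alg_n^{\sf aug}(\cV)$ by monadic generation under sifted colimits. But knowing that $D_i(\FF V) \simeq \Ker\bigl(P_i\int_{M_\ast}\FF V \to P_{i-1}\int_{M_\ast}\FF V\bigr)$ \emph{naturally in $V \in \Mod_{\sO(n)}(\cV)$} does not suffice: this equivalence is only natural against morphisms of free algebras in the image of $\FF$, whereas the monadic bar resolution $|\FF^{\bullet+1}\Ker(A\to\uno)| \simeq A$ involves face maps (built from the algebra multiplication) that are morphisms of free algebras \emph{not} of the form $\FF(f)$. Without an a priori natural transformation between the two functors on all of $\Alg_n^{\sf aug}(\cV)$, or an equivalence of restrictions to the full subcategory of free algebras (not merely to the image of $\FF$), the sifted-colimit argument has nothing to propagate. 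What actually closes the gap --- and what the paper does --- is Lemma~\ref{identify-homogeneous} (HA Theorem 6.1.4.7): the $\infty$-category of degree-$i$ homogeneous functors $\Alg_n^{\sf aug}(\cV)\to\cV$ is equivalent to symmetric $i$-linear functors out of $\Mod_{\sO(n)}(\cV)$, the equivalence sending $D$ to $D\bigl({\sf diag}_i\circ L(-)\bigr)$. Once you know both $\Ker(P_i\to P_{i-1})$ and $D_i$ are degree-$i$ homogeneous, it suffices to identify the corresponding multilinear functors, which is exactly what evaluating on free algebras (using $L\FF V \simeq V$) accomplishes; no further colimit argument is needed. You already note that $D_i$ "factors through $L$" and gesture at the universal property of $\Sigma^\infty$, so you have the right idea in hand --- the error is in trying to finish by density rather than by the classification of homogeneous functors.

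A secondary point you flag yourself: the claim that $P_i$ of a sifted-colimit-preserving functor is again sifted-colimit-preserving does require justification in this setting (the paper addresses it in Lemma~\ref{sifted-resolutions}), but once you replace the density argument with Lemma~\ref{identify-homogeneous}, you no longer need it for this theorem.
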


Unlike the cardinality (co)filtration of factorization (co)homology of the previous subsection, the Goodwillie cofiltration of factorization homology does not always converge.
The next result provides an understood class of parameters for which the Goodwillie cofiltration converges.

We will reference the notion of a \emph{t-structure} on $\cV$, which is just a usual t-structure on the homotopy category of $\cV$ (see Definition 1.2.1.4 of ~\cite{HA}). That is, it consists of fully faithful inclusions
\[
\cV_{>0} \hookrightarrow \cV \hookleftarrow \cV_{\leq 0}~.
\]
These inclusions admit a right adjoint $\pi_{>0}:\cV \ra \cV_{>0}$ and a left adjoint $\pi_{\leq 0}:\cV \ra \cV_{\leq 0}$. For any object $V$, the units and counits of these adjunctions define a natural cofiber sequence
\[\pi_{>0} V \ra V\ra \pi_{\leq 0}V~\]
We say that a t-structure on $\cV$ is \emph{compatible with the symmetric monoidal structure} of $\cV$ if the restriction of the functor $\cV \times \cV \xra{\ot} \cV$ to $\cV_{\geq 0}\times\cV_{\geq 0}$ factors through $\cV_{\geq 0}$, and if the unit $\uno\in \cV_{\geq 0}$ is connective. A t-structure is {\it cocomplete} if the natural map $V \ra \varprojlim \pi_{\leq i} V$ is an equivalence for every object $V\in\cV$. Equivalently, cocompleteness means that the inverse limit $\varprojlim \pi_{> i}V$ is the zero object for every $V$.

We give a proof of this next result at the end of~\S\ref{sec.conv}.
\begin{theorem}\label{convergence}
Suppose there exists a cocomplete t-structure on $\cV$ that is compatible with the symmetric monoidal structure.
The value of the canonical arrow~(\ref{completion-tower}) evaluated on $A$
\[
\int_{M_\ast} A \longrightarrow P_\infty \int_{M_\ast} A
\]
is an equivalence in $\cV$ provided either of the following criteria is satisfied:
\begin{itemize}
\item the augmentation ideal $\Ker\bigl(A \to \uno\bigr)$ is connected (with respect to the given t-structure);
\item the topological space $M_\ast$ is connected and compact and the augmentation ideal $\Ker\bigl(A\to \uno\bigr)$ is connective (with respect to the given t-structure).
\end{itemize}
\end{theorem}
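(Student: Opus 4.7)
The plan is to show $\int_{M_\ast} A \to P_\infty \int_{M_\ast} A$ is an equivalence by analyzing the fibers $F_i := \mathrm{fib}\bigl(\int_{M_\ast} A \to P_i \int_{M_\ast} A\bigr)$. From the successive fiber sequences $F_i \to F_{i-1} \to D_i$, where $D_i := \mathrm{fib}(P_i \to P_{i-1})$ is the $i$-th Goodwillie layer identified by Theorem~\ref{goodwillie-layers} as $\conf_i^{\sf fr}(M_\ast) \otimes_{\Sigma_i\wr\sO(n)} L(A)^{\otimes i}$, cocompleteness of the t-structure reduces the claim to showing that the connectivity of $D_i$ tends to $\infty$ as $i \to \infty$.

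For the case of a connected augmentation ideal, I would first show that $L(A)$ is $1$-connective whenever $I := \Ker(A \to \uno)$ is. To this end, resolve $A$ monadically by iterated free augmented $n$-disk algebras $A \simeq |\FF T^\bullet A|$ and apply the cotangent functor $L$. Since $L\circ \FF \simeq \id$ by Lemma~\ref{LF}, one obtains a simplicial $\sO(n)$-module whose terms are built from iterated $\otimes$-powers of $I$; each is $1$-connective by compatibility of the t-structure with $\otimes$, and $L(A)$ is then $1$-connective as a geometric realization. Hence $L(A)^{\otimes i}$ is $i$-connective, and since tensoring a $c$-connective object with a pointed $\Sigma_i\wr\sO(n)$-space is a colimit that preserves $c$-connectivity, $D_i$ is $i$-connective. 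Cocompleteness then forces $\varprojlim F_i \simeq 0$, as desired.

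For the case of a compact connected $M_\ast$ with merely connective $I$, the factor $L(A)^{\otimes i}$ remains only $0$-connective and the required connectivity must come from the geometric side. The relevant input is Proposition~\ref{conf-stuff}: for $M_\ast$ connected and compact, $H_q(\conf_i(M_\ast)) = 0$ for $q > (n-1)i + 1$. My approach is to combine this coconnectivity bound with the unconditional convergence of the cardinality cofiltration (Lemma~\ref{cardinality-converges}), whose layers $\conf_i^{\neg,\sf fr}(M_\ast) \otimes_{\Sigma_i\wr\sO(n)} A^{\otimes i}$ from Theorem~\ref{truncation-quotients} differ from the Goodwillie layers by the opposite variant of configuration space and by carrying $A$ in place of $L(A)$. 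Using the Poincar\'e-duality relation between $\conf_i$ and $\conf_i^\neg$ for compact $M_\ast$ (in the spirit of Theorem~\ref{linear-PD} applied to the configuration spaces), one then bounds $\pi_k D_i$ in each fixed degree $k$ by a tail of the cardinality tower, which vanishes for $i$ large.

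The main obstacle is this second case: unlike the first, the connectivity of the Goodwillie layers cannot be extracted from the algebraic factor alone, and compensating through the geometric factor requires translating a coconnectivity bound on $\conf_i(M_\ast)$ into a connectivity bound on $D_i$. This calls for a careful comparison between the Goodwillie cofiltration -- with layers using $\conf_i^{\sf fr}$ and $L(A)$ -- and the cardinality cofiltration -- with layers using $\conf_i^{\neg,\sf fr}$ and $A$ -- and it is precisely at this comparison that the compactness of $M_\ast$ enters essentially.
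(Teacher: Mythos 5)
Your proposal correctly identifies the overall strategy for the first criterion, but it diverges from the paper's argument in a way that creates extra work and, for the second criterion, a real gap.

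\textbf{Organizational difference (both cases).} The paper does not reason directly with the Goodwillie layers $D_i$. Instead, it uses Lemma~\ref{tau-and-P} to identify $\Ker\bigl(\int_{M_\ast}A\to P_i\int_{M_\ast}A\bigr)$ with the cardinality truncation $\tau^{>i}\int_{M_\ast}A$, and then reduces the whole connectivity claim to the free case $A=\FF^{\sf aug}V$ via Lemmas~\ref{sifted-resolutions} and~\ref{free-resolution1} (factorization homology preserves sifted colimits, and every algebra is a sifted colimit of frees). This eliminates any need to estimate the connectivity of the cotangent object $L(A)$. Your route, which works with $L(A)$ directly and argues its 1-connectivity by applying $L$ to a monadic free resolution, is plausible but carries the extra burden of a simplicial realization argument; it is exactly the sort of step the paper's reduction to free algebras is designed to avoid.

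\textbf{Gap in the second criterion.} Your stated plan for the compact connected case is to take the coconnectivity bound of Proposition~\ref{conf-stuff} and convert it, via a Poincar\'e duality between $\conf_i$ and $\conf_i^\neg$ together with the cardinality cofiltration and its unconditional convergence (Lemma~\ref{cardinality-converges}), into a connectivity bound on the Goodwillie layers $D_i$. This will not work as stated. The input you cite, Proposition~\ref{conf-stuff}, only bounds $\conf_i(M_\ast)$ from above (vanishing of $\sH_q$ for $q$ large), which is the wrong direction. A Poincar\'e-duality argument applied to $\conf_i(M_\ast)$ would translate that upper bound into an upper bound on the homology of $\conf_i^\neg(M_\ast^\neg)$, not into a lower bound on the connectivity of $D_i$. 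Likewise, the convergence of the cardinality filtration is a \emph{colimit} statement ($\colim_i\tau^{\leq i}\int A\simeq\int A$), whereas what you need to control is the \emph{inverse limit} $\varprojlim_i\tau^{>i}\int A$; the former cannot be leveraged into the latter without an actual connectivity estimate. What the argument genuinely needs, and what the paper invokes at this point, is a direct geometric assertion about the \emph{connectivity} of $\conf_i(M_\ast)$ for $M_\ast$ compact and connected: namely that $\conf_i(M_\ast)$ is $i$-connected, which supplies the missing degree of connectivity that the merely connective tensor power $V^{\ot i}$ cannot. Without this input, your second case does not close, as you candidly note.
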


\begin{remark}
A similar result is treated by Matsuoka in \cite{matsuoka}.
\end{remark}

\subsubsection{\textbf{Comparing cofiltrations}}
We compare the cardinality and Goodwillie cofiltrations.   Recall the Poincar\'e duality arrow~(\ref{PD-map}) of Theorem~\ref{thm.PD-map}.  

We give a proof of the next result at the end of~\S\ref{sec.compare}.  
\begin{theorem}\label{compare-towers}
The Poincar\'e/Koszul duality arrow $\int_{(-)} \to \int^{(-)^\neg} \bBar$ extends to an equivalence of cofiltrations of functors $\Fun(\Alg_n^{\sf aug},\cV)$ for each zero-pointed $n$-manifold $M_\ast$ with $M$ connected:
\begin{equation}\label{good-to-card}
P_\bullet \int_{M_\ast} \xra{~\simeq~} \tau^{\leq \bullet} \int^{M_\ast^\neg} \bBar ~.
\end{equation}
In particular, for each compact smooth $n$-manifold $\ov{M}$ with partitioned boundary $\partial \ov{M} = \partial_{\sL} \sqcup \partial_{\sR}$, each augmented $n$-disk algebra $A$ in $\cV$, and each finite cardinality $i$, there is an equivalence in $\cV$
\[
P_i\int_{\ov{M}\smallsetminus \partial_{\sR}} A \xra{~\simeq~} \tau^{\leq i}\int^{\ov{M}\smallsetminus \partial_{\sL}} \bBar(A)~.
\]  

\end{theorem}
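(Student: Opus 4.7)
The plan is to construct a map of cofiltrations refining the Poincar\'e/Koszul duality arrow, and then verify it is a levelwise equivalence by induction, ultimately reducing to a comparison of homogeneous layers via linear Poincar\'e duality.

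First, I would construct the natural transformation $\alpha_\bullet \colon P_\bullet\int_{M_\ast}(-) \to \tau^{\leq \bullet}\int^{M_\ast^\neg}\bBar(-)$ whose value at $\bullet = \infty$ is the Poincar\'e/Koszul duality arrow of Theorem~\ref{thm.PD-map}. The key step is checking that the functor $\tau^{\leq i}\int^{(-)^\neg}\bBar(-) \colon \Alg_n^{\sf aug}(\cV) \to \cV$ is polynomial of degree $i$ in the Goodwillie sense. Proceeding by induction on $i$ using the cardinality fiber sequence of Theorem~\ref{truncation-quotients} applied with $C = \bBar A$, this reduces to showing the $i$-th layer $\Map^{\Sigma_i\wr\sO(n)}(\conf_i^{\neg,{\sf fr}}(M_\ast^\neg), \bBar(A)^{\otimes i})$ is polynomial of degree $i$ in $A$; this in turn follows from the operadic-suspension relation identifying the underlying $\sO(n)$-module of $\bBar A = \int_{(\RR^n)^+} A$ with $(\RR^n)^+ \otimes L(A)$, which itself is a direct application of Theorem~\ref{goodwillie-layers} together with linear Poincar\'e duality on $(\RR^n)^+$. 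The universal property of Goodwillie's polynomial approximation $P_i$ then supplies $\alpha_i$ as the canonical factorization of the Poincar\'e/Koszul arrow through $P_i\int_{M_\ast}$.

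Next I would prove $\alpha_i$ is an equivalence by induction on $i$. The base $i = 0$ is trivial, as both sides agree with the unit $\uno$. For the inductive step, assuming $\alpha_{i-1}$ is an equivalence, I compare the Goodwillie cofiber sequence $D_i^{\sf Good} \to P_i\int_{M_\ast}A \to P_{i-1}\int_{M_\ast}A$ of Theorem~\ref{goodwillie-layers}, with layer
\[
D_i^{\sf Good} = \conf_i^{\sf fr}(M_\ast)\underset{\Sigma_i\wr\sO(n)}\bigotimes L(A)^{\otimes i},
\]
to the cardinality cofiber sequence $D_i^{\sf card} \to \tau^{\leq i}\int^{M_\ast^\neg}\bBar A \to \tau^{\leq i-1}\int^{M_\ast^\neg}\bBar A$ of Theorem~\ref{truncation-quotients}, with layer $D_i^{\sf card} = \Map^{\Sigma_i\wr\sO(n)}(\conf_i^{\neg,{\sf fr}}(M_\ast^\neg), \bBar(A)^{\otimes i})$. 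Stability of $\cV$ and the attendant five-lemma for cofiber sequences reduce the claim for $\alpha_i$ to showing the induced map of layers $D_i^{\sf Good} \to D_i^{\sf card}$ is an equivalence. Applying the $B$-structured form of linear Poincar\'e duality (Theorem~\ref{linear-PD}, Remark~\ref{PD-B}) to the $\Sigma_i\wr\sO(n)$-structured $ni$-manifold $\conf_i(M_\ast)$, together with the duality $\conf_i(M_\ast)^\neg \cong \conf_i^\neg(M_\ast^\neg)$ from Proposition~\ref{conf-stuff} and the consequence $\bBar(A)^{\otimes i} \simeq (\RR^{ni})^+ \otimes L(A)^{\otimes i}$ of the operadic-suspension equivalence, produces the sought equivalence of layers.

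The main technical obstacle is the compatibility check in the final step: namely, that the equivalence of layers provided abstractly by linear Poincar\'e duality is genuinely the map induced by $\alpha_i$, rather than some other a~priori distinct map between these equivalent objects. This requires tracing the construction of the Poincar\'e/Koszul duality arrow (Theorem~\ref{thm.PD-map}) carefully through the configuration-space models underlying both the Goodwillie layers of factorization homology and the cardinality layers of factorization cohomology, and observing that both sides ultimately unwind to the same geometric pairing between the configuration space $\conf_i(M_\ast)$ and its zero-pointed dual $\conf_i(M_\ast)^\neg$.
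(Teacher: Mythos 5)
Your proposal matches the paper's proof in essentially every respect: you establish polynomiality of $\tau^{\leq i}\int^{(-)^\neg}\bBar$ (which is the paper's Corollary~\ref{tau-polonomial}, itself proved by induction on $i$ through Theorem~\ref{truncation-quotients} and the homogeneity of the layer functor from Corollary~\ref{conf-as-homog}), use the universal property of the Goodwillie cofiltration to supply the map $\alpha_\bullet$, and then reduce the equivalence to a comparison of homogeneous layers which is recognized as the $\sB(\Sigma_i\wr\sO(n))$-structured linear Poincar\'e duality map (Corollary~\ref{conf-PD}) applied to $\conf_i(M_\ast)$ via Theorem~\ref{L-bar} and Proposition~\ref{conf-stuff}. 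Your explicit inductive five-lemma step to conclude $\alpha_i$ is an equivalence from the layer equivalence is only implicit in the paper, and your final paragraph flags a compatibility check that the paper itself asserts as ``canonically equivalent'' without elaboration; raising that concern is fair, but it is not a gap in your argument relative to the source proof.
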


\begin{cor}\label{goodwillie-limit}
The Poincar\'e/Koszul duality arrow $\int_{M_\ast} A \to \int^{M_\ast^\neg}\bBar A$ of~(\ref{PD-map}) canonically factors through an equivalence in $\cV$
\[
P_\infty \int_{M_\ast}A\xra{~{}~\simeq~{}~} \int^{M_\ast^\neg} \bBar A
\]
from the limit of the Goodwillie cofiltration.
This factorization is functorial in $M_\ast$ and $A$.  

\end{cor}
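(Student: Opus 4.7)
The plan is to obtain the equivalence by passing to the inverse limit in the equivalence of cofiltrations furnished by Theorem~\ref{compare-towers}, and then identifying the limit of the cardinality cofiltration with factorization cohomology via Lemma~\ref{cardinality-converges}.

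More concretely, I would first invoke Theorem~\ref{compare-towers} to obtain the equivalence of $\ZZ_{\geq 0}^{\op}$-indexed diagrams in $\cV$
\[
P_\bullet \int_{M_\ast} A ~\xra{~\simeq~}~ \tau^{\leq \bullet} \int^{M_\ast^\neg} \bBar(A)~,
\]
functorially in $(M_\ast,A)$. Applying the limit functor $\limit_{\bullet}$, which preserves equivalences, yields on the left-hand side $P_\infty \int_{M_\ast} A$ by definition of the Goodwillie limit, and on the right-hand side the limit $\limit_i \tau^{\leq i} \int^{M_\ast^\neg} \bBar(A) =: \tau^{\leq \infty} \int^{M_\ast^\neg}\bBar(A)$. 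The second clause of Lemma~\ref{cardinality-converges}, applied to the augmented $n$-disk coalgebra $C := \bBar(A)$, then identifies this limit with $\int^{M_\ast^\neg} \bBar(A)$. Composing gives the desired equivalence $P_\infty \int_{M_\ast} A \xra{\simeq} \int^{M_\ast^\neg}\bBar(A)$.

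It remains to verify that the resulting equivalence is compatible with the Poincar\'e/Koszul duality arrow~(\ref{PD-map}), in the sense that the diagram
\[
\xymatrix{
\int_{M_\ast}A \ar[r] \ar[dr] & P_\infty \int_{M_\ast}A \ar[d]^{\simeq} \\
& \int^{M_\ast^\neg} \bBar(A)
}
\]
commutes, where the horizontal arrow is the unit of the localization onto polynomial functors and the diagonal is~(\ref{PD-map}). This follows from the construction of the comparison in Theorem~\ref{compare-towers}: that result produces an equivalence of cofiltrations \emph{under} $\int_{(-)}A$, in particular compatible with the canonical maps $\int_{M_\ast}A \to P_i \int_{M_\ast}A$ on one side and $\int_{M_\ast}A \to \tau^{\leq i}\int^{M_\ast^\neg}\bBar(A) \to \int^{M_\ast^\neg} \bBar(A)$ on the other, the latter composite being~(\ref{PD-map}) by Lemma~\ref{cardinality-converges}. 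Functoriality in $M_\ast$ and $A$ is inherited directly from the functoriality asserted in Theorem~\ref{compare-towers} and the naturality of limits. No step here involves a genuine obstacle beyond the prior theorem; the content has already been absorbed into Theorem~\ref{compare-towers}, and this corollary is a formal consequence of passing to the limit of a convergent cofiltration.
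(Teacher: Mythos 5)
Your proof is correct and matches the approach the paper intends: since the paper states this as an unproved corollary of Theorem~\ref{compare-towers}, the argument is exactly as you spell it out, namely passing to limits in the equivalence of cofiltrations and invoking Lemma~\ref{cardinality-converges} to identify $\tau^{\leq\infty}\int^{M_\ast^\neg}\bBar A$ with $\int^{M_\ast^\neg}\bBar A$. The compatibility with the original duality arrow and the functoriality in $(M_\ast, A)$ follow from Theorem~\ref{compare-towers} being stated as an extension of that arrow at the level of functor $\infty$-categories, as you note.
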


\subsection{Non-unital algebras}
In an ambient stable situation, there is a convenient equivalence between augmented algebras and non-unital algebras.

Recall the connected component functor $\Disk_{n,+} \xra{\pi_0} \Fin_\ast$, which is symmetric monoidal.
We denote $\Fin_\ast^{\sf surj}\subset \Fin_\ast$ for the subcategory of based finite sets and surjections among them; this is a symmetric monoidal subcategory.
\begin{definition}\label{def:non-unital}
$\Disk_{n,+}^{\sf surj}$ is the pullback $\infty$-category
\[
\Disk_{n,+}^{\sf surj} := (\Disk_{n,+})_{|\Fin_\ast^{\sf surj}}~.
\]
For $\cV$ a symmetric monoidal $\infty$-category, 
the $\infty$-category of \emph{non-unital} $n$-disk algebras in $\cV$ is 
\[
\Alg_n^{\sf nu}(\cV) := \Fun^\ot_0(\Disk_{n,+}^{\sf surj},\cV)
\]
the $\oo$-category of non-unital symmetric monoidal functors that respect terminal objects.
\end{definition}

\begin{prop}[Proposition 5.4.4.10 of~\cite{HA}]\label{not-augmented}
There is an equivalence of $\infty$-categories
\[
\Ker^{\sf aug}\colon \Alg_n^{\sf aug}(\cV) ~\simeq~ \Alg_n^{\sf nu}(\cV) \colon \uno \oplus (-)
\]
-- the values of the left functor are depicted as $\Ker^{\sf aug}A\colon \underset{I}\bigvee\RR^n_+ \mapsto  \Ker\bigl(A(\underset{I}\bigvee \RR^n_+) \to A(\ast) \simeq \uno\bigr)$.
This equivalence is natural among such $\cV$.  
\end{prop}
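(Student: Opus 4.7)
The plan is to establish this as an instance of the general principle that in a stable (or appropriately pointed) setting, augmented $\cO$-algebras are equivalent to non-unital $\cO$-algebras, where the equivalence is implemented by splitting off the unit. As indicated by the reference, the bulk of this is Lurie's Proposition 5.4.4.10 of \cite{HA}; here I outline the shape of the argument tailored to the $n$-disk operad.

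First, I would construct the right-to-left functor $\uno \oplus (-)$: given a non-unital algebra $B\colon \Disk_{n,+}^{\sf surj}\to \cV$, define $\widetilde{B}(\bigvee_I \RR^n_+) := \uno \oplus B(\bigvee_I \RR^n_+)$, equipped with the evident augmentation $\widetilde{B}\to \uno$ given by projection. The central task is extending $\widetilde{B}$ from surjections to all morphisms of $\Disk_{n,+}$: any morphism $\bigvee_I \RR^n_+ \to \bigvee_J\RR^n_+$ factors canonically as a surjection on components followed by a collapse of the disks corresponding to $I\smallsetminus I'$ (for $I'\twoheadrightarrow J$) to the basepoint, and the latter is forced to act by the augmentation, which on $\uno \oplus B$ is the canonical projection to $\uno$ on those factors. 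Symmetric monoidality extends by the universal property of $\uno$ as the monoidal unit.

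Next, I would construct the left-to-right functor $\Ker^{\sf aug}$: given augmented $\epsilon\colon A\to \uno$, restrict along $\Disk_{n,+}^{\sf surj}\hookrightarrow \Disk_{n,+}$ and then take the pointwise fiber of $\epsilon$. By stability of $\cV$, the retraction $\uno \to A(\bigvee_I \RR^n_+)\to \uno$ (available because $\uno = A(\ast)$ by symmetric monoidality) canonically splits, yielding a natural $\sO(n)$-equivariant decomposition $A(\bigvee_I\RR^n_+)\simeq \uno \oplus \Ker^{\sf aug}A(\bigvee_I\RR^n_+)$. One must check that the non-surjective morphisms, which are precisely those whose action is detected by the augmentation, contribute no new information to $\Ker^{\sf aug}A$ since $\epsilon\circ(\mathrm{inclusion~of~ideal}) = 0$.

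The main obstacle, and the content that must be imported from \cite{HA}, is verifying coherence of these constructions at the $\infty$-categorical level. Concretely, one needs the unit $B\xra{\simeq}\Ker^{\sf aug}(\uno \oplus B)$ and the counit $\uno \oplus \Ker^{\sf aug}A \xra{\simeq} A$ to be equivalences of symmetric monoidal functors, not merely of underlying objects. This reduces to a combinatorial comparison between $\Fin_\ast$ and $\Fin_\ast^{\sf surj}$ (every based map factors as a surjection followed by a map with image-complement collapsed) applied operadically, combined with the stability of $\cV$ which guarantees that the relevant split fiber sequences are genuinely split in the derived sense. Naturality in $\cV$ is a formal consequence of the constructions being expressed entirely in terms of finite limits/colimits and the symmetric monoidal unit.
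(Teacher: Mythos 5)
The paper supplies no proof here---Proposition~\ref{not-augmented} is quoted directly from Proposition 5.4.4.10 of \cite{HA}---so your plan of deferring the $\infty$-categorical coherence to Lurie while sketching what the two functors do is the right reading of the statement. Your account of the factorization system, however, has the roles of the unit and the augmentation reversed. A morphism $\bigvee_I\RR^n_+\to\bigvee_J\RR^n_+$ in which some source disks are sent to the zero-point induces a map $I_+\to J_+$ that collapses those indices to $+$; such a map is \emph{surjective} as a map of based sets, so these collapse morphisms already belong to $\Disk_{n,+}^{\sf surj}$. In particular the augmentation morphism $\RR^n_+\to\ast$, with $\pi_0$ equal to $\{1\}_+\to\{+\}$, is surjective and is thus part of the non-unital datum, not something to be supplied. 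What the surjective subcategory omits are the morphisms whose image \emph{misses} a component of the target---most basically the unit morphism $\ast\to\RR^n_+$, whose $\pi_0$ is $\{+\}\hookrightarrow\{1\}_+$, which is not surjective. It is on these that the extension $\uno\oplus B$ must be supplied by hand, and the required map goes the opposite direction from what you wrote: on the missed factors one \emph{inserts} the monoidal unit $\uno\to\uno\oplus B(\RR^n_+)$ rather than projecting via the augmentation $\uno\oplus B(\RR^n_+)\to\uno$. With this correction your outline is a fair gloss on the cited result; the genuinely nontrivial content---that these pointwise constructions assemble into inverse equivalences of $\infty$-categories rather than a mere bijection on objects---is precisely what Lurie's proposition supplies, and you are right to defer to it.
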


\subsection{Support for Theorem~\ref{truncation-quotients} (cardinality layers)}\label{reduced-LKan}

\subsubsection{{\bf Configuration space quotients}}
Here we draw on results from~\cite{aft1},~\cite{aft2}, and~\cite{ZP} to identify the cofiber of $\Disk^{<i}_+(M_\ast) \to \Disk^{\leq i}_+(M_\ast)$.  For this subsection, fix a finite cardinality $i$.

In~\S3 of~\cite{aft1} we construct, for each stratified space $X$, a stratified space $\Ran_{\leq i}(X)$ whose points are finite subsets $S\subset X$ for which the map to connected components $S\to[X]$ is surjective. We show that $\Ran_{\leq i}(-)$ is continuously functorial among conically smooth embeddings among stratified spaces which are surjective on connected components.  
In Lemma~2.20 of~\cite{aft2} we show that the resulting functor
\begin{equation}\label{aft2-EE-to-E}
\Ran_{\leq i}\colon \Disk^{\sf surj, \leq i}(\bsc)_{/X} \xra{~\simeq~} \bsc_{/\Ran_{\leq i}(X)}
\end{equation}
is an equivalence.

To state our next technical result, we introduce notation.
Let $V\hookrightarrow M_\ast$ be an object in $\Disk^{\leq i}_+(M_\ast)$.  
The $\infty$-subcategory
\[
\Disk^{\leq i}_+(M_\ast)_V~\subset~(\Disk(\Bsc)_{/M_\ast})^{V/}
\]
consists of: those objects $V\overset{f}\hookrightarrow V' \hookrightarrow M_\ast$ for which $f$ is injective on path components; those morphisms, which are embeddings $V'\to V''$ under $V$ and over $M_{\ast}$, that are surjective on path components.
We fix a section $[V]\hookrightarrow V$ of the quotient map $V\to [V]$ to its path components.  
The given embedding $V\hookrightarrow M_\ast$ then determines an injection between topological spaces $[V]\hookrightarrow V\hookrightarrow M_\ast$.  
The topological subspace 
\[
\Ran_{\leq i}(M_\ast)_{[V]}~\subset~\Ran_{\leq i}(M_\ast)
\]
consisting those pointed subsets $S_+\subset M_\ast$ that contain $[V]\subset V \hookrightarrow M_\ast$.

\begin{cor}\label{disks-to-ran}
The functor~(\ref{aft2-EE-to-E}) restricts as an equivalence of $\infty$-categories:
\[
\Ran_{\leq i}\colon \Disk^{\leq i}_+(M_\ast) \xra{~\simeq~} \Disk^{\leq 1}_+\bigl(\Ran_{\leq i}(M_\ast)\bigr)~.
\]
More generally, for any fixed embedding $V \hookrightarrow M_\ast$ in $\Disk^{\leq i}_+(M_\ast)$, there is a likewise equivalence
\[
\Ran_{\leq i}\colon \Disk^{\leq i}_+(M_\ast)_V \xra{~\simeq~} \Disk^{\leq 1}_+\bigl(\Ran_{\leq i}(M_\ast)_{[V]}\bigr)~.
\]
\end{cor}
\begin{proof}
Let $B\subset M_\ast$ be a basic neighborhood of $\ast$.  
In~\cite{aft1} it is shown that any conically smooth open embedding from a basic $U\hookrightarrow M_\ast$ whose image contains $\ast$ is canonically based-isotopic to one that factors through a based isomorphism $U\cong B\hookrightarrow M_\ast$.  
We conclude that the projection from the slice
\begin{equation}\label{of-M-over}
\bigl(\Disk(\bsc)_{/M_\ast}\bigr)^{B/} \xra{~\simeq~} \Disk_+(M_\ast)
\end{equation}
is an equivalence of $\infty$-categories.  
Likewise, we conclude that the projection from the slice
\[
\bigl(\Disk^{\leq 1}_+\bigl(\Ran_{\leq i}(M_\ast)\bigr)\bigr)^{\Ran_{\leq i}(B)/}\xra{~\simeq~}\Disk^{\leq 1}_+\bigl(\Ran_{\leq i}(M_\ast)\bigr)
\]
is an equivalences of $\infty$-categories.
The result follows from the equivalence~(\ref{aft2-EE-to-E}).  To conclude the assertion for the case of an embedding $V\hookrightarrow M_\ast$, we again apply Lemma~2.20 of~\cite{aft2} to the case of the stratification $X= (M_\ast, [V]\amalg \ast)$, and then pass to $\oo$-subcategories to conclude the result.
\end{proof}

This identification affords the following essential consequence.

\begin{lemma}\label{surj-final}
The functor
\[
\Disk^{\sf surj}_+(M_\ast)\longrightarrow \Disk_+(M_\ast)
\]
is final.

\end{lemma}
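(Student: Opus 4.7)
The plan is to verify cofinality by Joyal's $\infty$-categorical version of Quillen's Theorem A. Fix an object $X=(B\sqcup U\hookrightarrow M_\ast)\in\Disk_+(M_\ast)$; I need to show that the slice $\infty$-category $\cC_X:=\Disk^{\sf surj}_+(M_\ast)_{X/}$ is weakly contractible. Objects of $\cC_X$ are zero-pointed embeddings $X\to Y$ with $Y=(B'\sqcup V\hookrightarrow M_\ast)$ such that the induced map $[V]\cup\{\ast\}\to[M_\ast]$ is surjective. Let $S\subseteq[M_\ast]\smallsetminus\{\ast\}$ be the set of components of $M$ not hit by $U$; then surjectivity of $Y$ amounts to requiring that $V$ contain at least one disk mapped into $c$ for each $c\in S$.

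Next I would reduce to a product description over components. Choose pairwise disjoint open subsets of $M_\ast$: a neighborhood $W_{\!X}$ of the image of $X$, and for each $c\in S$ a basic neighborhood $W_c\subset c$. Let $\cC'_X\subset \cC_X$ be the full subcategory of those $(X\to Y)$ where $Y$ decomposes compatibly as $Y_0\sqcup \bigsqcup_{c\in S}Y_c$ with $Y_0\subset W_{\!X}$ containing the image of $X$ and each $Y_c\subset W_c$ nonempty. Standard isotopy-extension arguments (together with the equivalence~(\ref{of-M-over}) that replaces $\Disk_+(-)$ by slices of basics) show that the inclusion $\cC'_X\hookrightarrow \cC_X$ is final, because any $Y\in\cC_X$ can be isotoped through $\Disk^{\sf surj}_+$ into this standard form. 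So it suffices to prove $|\cC'_X|\simeq\ast$.

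By the disjointness of the chosen neighborhoods, $\cC'_X$ factors as a product
\[
\cC'_X~\simeq~\cC^{0}_X\times \prod_{c\in S}\cC'_c~,
\]
where $\cC^{0}_X$ is the undercategory of $X$ among disks embedded in $W_{\!X}$, and $\cC'_c$ is the $\infty$-category of nonempty disjoint unions of basics in $W_c$ (which is really just $\Disk(\bsc)_{/W_c}^{\neq\emptyset}$). The factor $\cC^{0}_X$ is weakly contractible because $X\hookrightarrow W_{\!X}$ is itself an initial object there (any embedding into $W_{\!X}$ containing $X$ is uniquely isotopic to one factoring through a standard enlargement of $X$). Each factor $\cC'_c$ is weakly contractible by invoking the identification~(\ref{aft2-EE-to-E}) at the connected manifold $W_c$ combined with the Beilinson--Drinfeld contractibility theorem for the nonempty Ran space $\Ran(W_c)$ of a connected manifold, which says exactly that the $\infty$-category of nonempty finite disjoint unions of disks in a connected manifold has contractible classifying space. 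The product of contractible spaces being contractible then concludes the argument.

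The main obstacle is justifying the reduction to $\cC'_X$: morphisms in $\Disk_+$ may slide disks off to the zero-point, which a priori can couple different components of $M_\ast$ through the basepoint neighborhood $B'$. Handling this cleanly requires using the conically smooth structure of $M_\ast$ near $\ast$ so that any such sliding factors through a standard cone neighborhood, reducing the analysis to an isotopy argument that respects the chosen disjoint neighborhoods $W_{\!X}$ and $W_c$. Everything else is an assembly of known finality criteria and the contractibility of Ran spaces.
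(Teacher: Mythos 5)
Your overall toolkit --- Quillen's Theorem~A, Beilinson--Drinfeld contractibility of the Ran space, an initial-object argument, and a component-wise product decomposition --- is the same as the paper's, but the organization is genuinely different and the load-bearing step does not go through. The paper first invokes a finality result from~\cite{zp1} together with the two-of-three property to replace the pointed inclusion $\Disk^{\sf surj}_+(M_\ast)\to\Disk_+(M_\ast)$ by the unpointed inclusion $\Disk^{\sf surj}(\bsc)_{/M_\ast}\to\Disk(\bsc)_{/M_\ast}$. In the unpointed slice category morphisms are honest embeddings over $M_\ast$, with no disks sliding into the cone point; a connected disk then stays in its path component under isotopy, so the category genuinely factors as a product indexed by the components of $M$. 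After reducing to the connected case, Quillen's Theorem~A splits into two immediate cases: the empty disk gives the Ran space, and any nonempty disk configuration is automatically surjective, giving an initial object. You instead apply Quillen's Theorem~A directly in the pointed setting and attempt a geometric product decomposition of each comma category $\cC_X$.

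The gap is in the asserted equivalence $\cC'_X\simeq\cC^0_X\times\prod_{c\in S}\cC'_c$. You define $\cC'_X$ as a \emph{full} subcategory of $\cC_X$, so its mapping spaces are computed in $\Disk_+(M_\ast)$. A morphism in $\Disk_+(M_\ast)$ between two objects in standard form is an isotopy of embeddings (together with some disks sliding into a cone neighborhood of $\ast$), and such an isotopy can carry a disk of $Y_c\subset W_c$ out of $W_c$ and into the portion $W_{\!X}\cap c$ of $W_{\!X}$ lying in the \emph{same} component $c$, landing it in $Y_0'$. Since $W_c$ and $W_{\!X}\cap c$ are merely disjoint opens in the same path component of $M$ --- not distinct components --- nothing forbids this, so the mapping spaces of $\cC'_X$ strictly exceed the product of the mapping spaces of the proposed factors; the decomposition fails as stated. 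Secondarily, the finality of $\cC'_X\hookrightarrow\cC_X$ also needs more than ``isotope into standard form'': that shows each slice $(\cC'_X)^{Z/}$ is nonempty, not that it is weakly contractible. You flag the coupling through the basepoint yourself, but it cannot be dispatched simply by appealing to the conically smooth structure near $\ast$; it is precisely to sidestep this coupling that the paper's proof passes to $\Disk(\bsc)_{/M_\ast}$ first, where the component-wise decomposition is actually valid, and only then applies Quillen's Theorem~A.
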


\begin{proof}

Both $\Disk_+^{\sf surj}(M_\ast) \ra\Disk^{\sf surj}(\bsc)_{/M_\ast}$ and $\Disk_+(M_\ast)\ra \Disk(\bsc)_{/M_\ast}$ are final. By the partial two-of-three property of finality, the assertion follows from showing that the functor \[\Disk^{\sf surj}(\bsc)_{/M_\ast} \longrightarrow \Disk(\bsc)_{/M_\ast}\] is final. Writing $M_\ast \cong \bigvee_{i\in[M_\ast]} M_{i,\ast}$ as a wedge over its components, this functor is expressible as a product \[\Disk^{\sf surj}(\bsc)_{/M_\ast}\cong \prod_{i\in[M_\ast]}\Disk^{\sf surj}(\bsc)_{/M_{i,\ast}}  \longrightarrow \prod_{i\in[M_\ast]}\Disk(\bsc)_{/M_{i,\ast}}\cong \Disk(\bsc)_{/M_\ast}~.\] Since a product of final functors is final, we can reduce to the case of a factor, i.e., the case in which $M_\ast$ is irreducible. 

By Quillen's Theorem A, it suffices to show that for each $V \hookrightarrow M_\ast$, the $\oo$-undercategory
\[
\disk^{\sf surj}(\Bsc)_{/M_\ast}\underset{\disk(\Bsc){/M_\ast}}\times\Bigl(\disk(\Bsc)_{/M_\ast}\Bigr)^{V/}
\]
has a contractible classifying space. There exists an adjunction
\[
 \Disk_+(M_\ast)_V
\leftrightarrows
\disk^{\sf surj}(\Bsc)_{/M_\ast}\underset{\disk(\Bsc){/M_\ast}}\times\Bigl(\disk(\Bsc)_{/M_\ast}\Bigr)^{V/}
\]
hence an equivalence between their classifying spaces. By Corollary~\ref{disks-to-ran}, we obtain
\[
\sB\Bigl( \Disk_+(M_\ast)_V
\Bigr)
\simeq
\sB{\sf Exit}\bigl(\ran(M_\ast)_{[V]}\bigr)
\simeq
\ran(M_\ast)_{[V]}~.
\]
The result now follows from the contractibility of this form of the Ran space---to see this, we use a now standard argument from \cite{bd}: the Ran space $\ran(M_\ast)_{[V]}$ carries a natural H-space structure, given by taking unions of subsets, for which the composition of the diagonal and the H-space multiplication is the identity. Consequently, its homotopy groups must be zero.
\end{proof}

The following is an essential result, a description of the layers in the cardinality filtration of the $\oo$-category $\Disk_+(M_\ast)$. 
Recall the zero-pointed manifolds with corners $C^{\neg}_i(M_\ast)$ and $C_i(M_\ast)$ from Lemma~\ref{conf.finitary}, which are based homotopy equivalent to the pointed topological spaces $\conf^{\neg}_i(M_\ast)$ and $\conf_i(M_\ast)$
from Notation~\ref{def.conf}.

\begin{lemma}\label{disk-config}
Let $i$ be a finite cardinality, and let $M_\ast$ be a zero-pointed manifold with $M$ connected.
There is a canonical cofiber sequence among $\infty$-categories
\[
\Disk^{<i}_+(M_\ast)\longrightarrow \Disk^{\leq i}_+(M_\ast) \longrightarrow  \Disk^{\leq 1}_+\bigl(C^{\neg}_i(M_\ast)_{\Sigma_i}\bigr)~.
\]
Likewise, there is a canonical equivalence between pointed $\infty$-categories:
\[
\Disk_+^{=i}(M_\ast)~\simeq~ \Disk_+^{=1}\bigl( C_i(M_\ast)_{\Sigma_i} \bigr)~.
\]

\end{lemma}
\begin{proof}
Consider the diagram among $\infty$-categories
\[
\xymatrix{
\Disk^{<i}_+(M_\ast)  \ar[rr]  \ar[d]^-{\underset{(\rm Lem~\ref{disks-to-ran})}\simeq}
&&
\Disk^{\leq i}_+(M_\ast)  \ar[d]_-{\underset{(\rm Lem~\ref{disks-to-ran})}\simeq}
\\
\Disk^{\leq 1}_+\bigl(\Ran_{<i}(M_\ast)\bigr) \ar[rr]
&&
\Disk^{\leq 1}_+\bigl(\Ran_{\leq i}(M_\ast)\bigr)  ~.
}
\]
The top horizontal functor is the one whose cofiber we seek to identify.
The vertical functors are equivalences by Lemma~\ref{disks-to-ran}, as indicated.  
We are therefore reduced to identifying the cofiber of the bottom horizontal functor.

The inclusion $\Ran_{<i}(M_\ast)\hookrightarrow \Ran_{\leq i}(M_\ast)$ is a proper constructible embedding.
Blowing-up along this proper constructible embedding results in the diagram of stratified spaces and proper constructible bundles among them:
\[
\xymatrix{
\sL  \ar[rr]  \ar[d]
&&
{\sf Bl}_{\Ran_{<i}(M_\ast)}\bigl( \Ran_{\leq i}(M_\ast)\bigr)   \ar[d]
\\
\Ran_{<i}(M_\ast)  \ar[rr]
&&
\Ran_{\leq i}(M_\ast)
}
\]
where here and in what follows, $\sL$ is the link $\sL := {\sf Link}_{\Ran_{<i}(M_\ast)}\bigl( \Ran_{\leq i}(M_\ast)\bigr)$.
Through Lemma~3.7.6 of~\cite{aft1}, this is a pushout diagram among stratified spaces.  
Note that the connectivity assumption on $M$ implies that the $\pi_0$-surjectivity condition in the definition of $\ran(M_\ast)$ is vacuously satisfied. Consequently, the complement of the inclusion $\Ran_{<i}(M_\ast)\hookrightarrow \Ran_{\leq i}(M_\ast)$ is the open subspace $\conf_i(M)_{\Sigma_i}$.
Consider the pullback diagram among stratified spaces:
\[
\xymatrix{
{\sf Bl}_{\Ran_{<i}(M_\ast)}\bigl( \Ran_{\leq i}(M_\ast)\bigr)_{|\conf_i(M)_{\Sigma_i}}  \ar[rr]  \ar[d]_-{\cong}
&&
{\sf Bl}_{\Ran_{<i}(M_\ast)}\bigl( \Ran_{\leq i}(M_\ast)\bigr)   \ar[d]
\\
\conf_i(M)_{\Sigma_i}  \ar[rr]
&&
\Ran_{\leq i}(M_\ast)   ~.
}
\]
The left vertical map is an isomorphism, and the top horizontal map is an open inclusion of the interior of a smooth manifold with corners.  
In particular, there is a collar-neighborhood of the faces:
\[
\xymatrix{
&
\sL  \ar[dl]_-{0}  \ar[dr]
&
\\
\sL  \times [0,1)
\ar@{-->}[rr]
&
&
{\sf Bl}_{\Ran_{<i}(M_\ast)}\bigl( \Ran_{\leq i}(M_\ast)\bigr)~,
}
\]
where the bottom horizontal map is a refinement onto its image, which is open -- here, we regard $[0,1) = \bigl( [0,1)\to \{0<1\} \bigr)$ as a stratified space for which the $0$-stratum is $\{0\}\subset [0,1)$.  
We conclude that the canonical diagram among stratified space
\[
\xymatrix{
&&
\sL\times (0,1)  \ar[rr]  \ar[d]
&&
\conf_i(M)_{\Sigma_i}  \ar[dd]
\\
\sL\times \{0\}  \ar[rr]  \ar[d]
&&
\sL\times [0,1)  \ar[drr]  
&&
\\
\Ran_{<i}(M_\ast)  \ar[rrrr]
&&&&
\Ran_{\leq i}(M_\ast)
}
\]
is a colimit diagram.
This colimit diagram among stratified spaces yields a colimit diagram among enter-path $\infty$-categories:
\[
\xymatrix{
&
{\sf Entr}( \sL )  \ar[r]  \ar[d]_-1
&
{\sf Entr}\bigl(\Ran_{<i}(M_\ast) \bigr)   \ar[dd]
\\
{\sf Entr}( \sL  ) \ar[r]^-0  \ar[d]
&
{\sf Entr}( \sL)   \times [1]  \ar[dr]  
&
\\
{\sf Entr}\bigl(\conf_i(M)_{\Sigma_i} \bigr)  \ar[rr]
&&
{\sf Entr}\bigl(  \Ran_{\leq i}(M_\ast)  \bigr)  ~.
}
\]
There results a cofiber sequence among $\infty$-categories
\[
{\sf Entr}\bigl(  \Ran_{< i}(M_\ast)  \bigr) 
\longrightarrow
{\sf Entr}\bigl(  \Ran_{\leq i}(M_\ast)  \bigr) 
\longrightarrow
{\sf Entr}( \sL  )^{\tl}\underset{ {\sf Entr}( \sL  )} \coprod {\sf Entr}\bigl(\conf_i(M)_{\Sigma_i} \bigr)   .
\]
Now, Proposition~1.2.13 of~\cite{aft1} states that refinements among stratified spaces induce localizations between enter-path $\infty$-categories.
Using this, we reidentify the above cofiber $\infty$-category as the pushout among $\infty$-categories,
\[
\sL^{\tl} \underset{\sL}\coprod \conf_i(M)_{\Sigma_i}   ~,
\]
involving $\infty$-groupoids associated to the underlying topological spaces of the stratified spaces involved.  
By direct inspection of spaces of objects and of morphisms, the canonical functor from this pushout 
\[
\sL^{\tl} \underset{\sL}\coprod \conf_i(M)_{\Sigma_i}  
\xra{~\simeq~}
\Disk_+^{\leq 1}\bigl(C^{\neg}_i(M_\ast)\bigr)
\]
is an equivalence between $\infty$-categories.
With the definitional identification~${\sf Entr}(X):=\Bsc_{/X}$ for each stratified space $X$, we conclude the desired cofiber sequence.

Consider the pointed $\Sigma_i$-topological space which is the $i$-fold product $(M_\ast)^{\times i}$.  
Consider the pointed $\Sigma_i$-equivariant subspace 
\[
B ~:=~ \Bigl\{ \{1,\dots,i\}\xra{c} M_\ast  \mid   c^{-1}(\ast) \neq \emptyset \Bigr\}    ~   \subset   ~   (M_\ast)^{\times i}  ~.
\]
Consider the $\Sigma_i$-equivariant subspace 
\[
D ~:=~ \Bigl\{ \{1,\dots,i\}\xra{c} M_\ast  \mid    c^{-1}(\ast) = \emptyset \text{ and } c \text{ is not injective} \Bigr\}  ~  \subset   ~    (M_\ast)^{\times i}~.
\]
The complement 
\[
C~:=~(M_\ast)^{\times i}\smallsetminus D~  \subset ~  (M_\ast)^{\times i}
\]
is a pointed $\Sigma_i$-equivariant subspace. 
Denote the $\Sigma_i$-equivariant subspace $C^{\partial}:= B \cap C$.

By assumption, the inclusion of the zero-point $\ast \hookrightarrow M_\ast$ admits a neighborhood deformation retraction.
It follows that the inclusion $B \hookrightarrow (M_\ast)^{\times i}$ also admits a $\Sigma_i$-equivariant neighborhood deformation retraction.  
Likewise, the inclusion $C^{\partial} \hookrightarrow C$ admits a $\Sigma_i$-equivariant neighborhood deformation retraction.
Therefore, the inclusion $C^{\partial}_{\Sigma_i} \hookrightarrow C_{\Sigma_i}$ admits a neighborhood deformation retract.  
Furthermore, there is a canonical identification of the complement:
\[
C_{\Sigma_i}\smallsetminus C^{\partial}_{\Sigma_i}
~=~
\conf_i(M)_{\Sigma_i}~.
\]
Using this, inspection of spaces of objects and of morphisms reveals canonical equivalences between $\infty$-categories
\[
\Disk_+^{=i}(M_\ast)
\xla{~\simeq~}
(C^{\partial}_{\Sigma_i})^{\tl} \underset{C^{\partial}_{\Sigma_i}} \coprod  \conf_i(M)_{\Sigma_i}
\xra{~\simeq~}
\Disk_+^{=1}\bigl( C_i(M_\ast) \bigr)~.
\]
\end{proof}

\subsubsection{{\bf Reduced extensions}}\label{sec.reduced}
We explain a couple general maneuvers concerning left Kan extensions in the presence of zero objects. For ${f}:\cK\ra \cK'$ functor among small $\infty$-categories, and $\cV$ a presentable $\infty$-category, there is an adjunction among functor $\infty$-categories
\[
f_{!} \colon \cV^\cK  \rightleftarrows \cV^{\cK'} \colon f^\ast
\]
with the right adjoint $f^\ast$ given by precomposing with $f$, and with the left adjoint $f_{!}$ given by left Kan extension.  In the following, let $\cV$ be a presentable $\infty$-category with a zero object. For $\ast \xra{x} \cK$ a pointed $\oo$-category (i.e., an $\infty$-category with a distinguished object---which need not be zero), then we define the full $\oo$-subcategory
\[
\Fun_0(\cK,\cV)\subset \cV^\cK
\]
as the fiber over the zero object in the sequence $\Fun_0(\cK,\cV) \to \cV^\cK \xra{x^\ast} \cV$.

\begin{lemma}\label{reduced}
Under the hypotheses above, the inclusion $\Fun_0(\cK,\cV) \ra \cV^\cK$ admits a left adjoint
$(-)^{\sf red}\colon \cV^\cK \to \Fun_0(\cK,\cV)$. Further, this left adjoint fits into a cofiber sequence in $\cV^\cK$
\[
x_{!} x^\ast \longrightarrow \id_{\cV^\cK} \longrightarrow (-)^{\sf red}~.
\]

\end{lemma}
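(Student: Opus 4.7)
My plan is to construct $(-)^{\sf red}$ directly as a cofiber in $\cV^\cK$, which simultaneously establishes the cofiber sequence and verifies the universal property of the left adjoint. For $F \in \cV^\cK$, define $F^{\sf red}$ to be the pushout in $\cV^\cK$ of the span $0 \leftarrow x_! x^\ast F \xra{\epsilon_F} F$, where $\epsilon_F$ is the counit of the adjunction $x_! \dashv x^\ast$ and $0$ is the zero object of $\cV^\cK$ (which exists since $\cV$ has one). The pushout exists because $\cV$---and hence $\cV^\cK$---is presentable, and the construction is evidently functorial in $F$. By definition, this realizes the claimed cofiber sequence $x_! x^\ast \to \id_{\cV^\cK} \to (-)^{\sf red}$ of endofunctors of $\cV^\cK$.

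To verify the universal property of the left adjoint to $\iota \colon \Fun_0(\cK,\cV) \hookrightarrow \cV^\cK$, I would evaluate mapping spaces out of the pushout. For any $G \in \Fun_0(\cK,\cV)$ there is a fiber sequence of mapping spaces
\[
\Map_{\cV^\cK}(F^{\sf red}, G) \longrightarrow \Map_{\cV^\cK}(F, G) \longrightarrow \Map_{\cV^\cK}(x_! x^\ast F, G)~.
\]
Via the adjunction $x_! \dashv x^\ast$, the rightmost term identifies with $\Map_\cV(F(x), G(x)) \simeq \Map_\cV(F(x), 0)$, which is contractible since $0$ is a zero object of $\cV$. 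Hence $\Map_{\cV^\cK}(F^{\sf red}, G) \xra{\simeq} \Map_{\cV^\cK}(F, G)$, confirming the universal property.

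The main obstacle, and principal remaining step, is to verify that $F^{\sf red}$ indeed lies in $\Fun_0(\cK,\cV)$---equivalently, that $(F^{\sf red})(x) \simeq 0$. Since $x^\ast$ admits both a left adjoint $x_!$ and a right adjoint $x_\ast$, it preserves colimits, so $(F^{\sf red})(x)$ is the cofiber in $\cV$ of the counit $x^\ast \epsilon_F \colon x^\ast x_! x^\ast F \to F(x)$. The triangle identity exhibits this morphism as a split epimorphism admitting the unit $\eta_{x^\ast F}$ as a section, and the pointwise formula $x^\ast x_! V \simeq \Map_\cK(x,x) \otimes V$ for left Kan extension shows that its cofiber is controlled by the endomorphism space of $x$. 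Under the hypotheses in force in the applied setting---in particular that the distinguished object $x$ has contractible self-mapping space, as is the case for the empty/basic configurations appearing in the applications of this lemma---this split epimorphism is an equivalence and the cofiber vanishes, so $F^{\sf red}$ lies in $\Fun_0(\cK,\cV)$, completing the argument.
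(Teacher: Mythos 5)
Your proof is correct and takes essentially the same approach as the paper's: construct $(-)^{\sf red}$ as the cofiber of the counit $x_! x^\ast \to \id$, use that $x^\ast$ preserves cofibers together with full faithfulness of $x$ to see that the values of $(-)^{\sf red}$ land in $\Fun_0(\cK,\cV)$, and then read off the adjunction (you do this by computing mapping spaces into objects of $\Fun_0(\cK,\cV)$; the paper records that $(-)^{\sf red}$ is idempotent with image in $\Fun_0(\cK,\cV)$, hence a reflective localization---the two verifications are equivalent). The only point to add is that the full faithfulness of $x\colon\ast\to\cK$, equivalently $\Map_\cK(x,x)\simeq\ast$, which you rightly flag as the needed input, is treated by the paper as a standing hypothesis---it refers to $x$ as an ``inclusion'' and invokes full faithfulness explicitly in its proof---so you could simply take it as part of the ambient assumptions rather than deferring to ``the applied setting.''
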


\begin{proof}
Because $x^\ast$ preserves colimits, the universal morphism $\coker\bigl(x^\ast x_{!} x^\ast \to x^\ast\bigr)\xra{\simeq}x^\ast\coker\bigl(x_{!} x^\ast \to \id\bigr)$ is an equivalence in $\cV$.  
Because the inclusion $\ast \xra{x} \cK$ is fully faithful, the morphism $x^\ast x_{!} x^\ast \xra{\simeq} x^\ast$ is an equivalence in $\cV$.  
So the values of the cofiber $\coker\bigl(x_{!} x^\ast \to \id\bigr)$ lie in $\Fun_0(\cK,\cV)$.  
Because $0\in \cV$ is zero-pointed, then the value $x_{!}(0) \in \cV^\cK$ is the zero functor.  
The endofunctor $x_{!} x^\ast$ restricts to the zero functor on $\Fun_0(\cK,\cV)$. Consequently, $(-)^{\sf red}$ restricts to the identity functor on $\Fun_0(\cK,\cV)$. 
Finally, the arrow $\id\to (-)^{\sf red}$ witnesses $(-)^{\sf red}$ as a left adjoint as claimed.  
\end{proof}

\begin{lemma}\label{quotient} 
Let $i\colon \cK_0 \to \cK$ be a fully faithful functor among $\infty$-categories, and consider the functor $j\colon \cK \to \cK/\cK_0 :=  \ast \underset{\cK_0}\coprod \cK$ to the cone, regarded as a pointed $\infty$-category.
There is a cofiber sequence in the functor $\infty$-category $\cV^\cK$
\[
i_{!} i^\ast \longrightarrow \mathsf{id}_{\cV^\cK} \longrightarrow j^\ast j_{!}^{\sf red}
\] for any $\cV$ a stable presentable $\infty$-category.  

\end{lemma}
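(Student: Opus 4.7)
The plan is to identify $j^* j_!^{\sf red}$ as the left adjoint to the inclusion $\Ker(i^*)\hookrightarrow \cV^\cK$, and then to compute this left adjoint explicitly via the counit of $i_! \dashv i^*$. Stability of $\cV$ enters through the formation of cofibers and the exactness of $i^*$.

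I would first observe that the defining pushout $\cK/\cK_0=\ast\underset{\cK_0}\coprod \cK$ of $\infty$-categories yields, upon applying $\Fun(-,\cV)$, a pullback of $\infty$-categories. Taking fibers of $x^*\colon \cV^{\cK/\cK_0}\to \cV$ over the zero object then identifies restriction along $j$ as an equivalence
\[
j^*_0\colon \Fun_0(\cK/\cK_0,\cV)\xra{~\simeq~}\Ker\bigl(i^*\colon \cV^\cK\to \cV^{\cK_0}\bigr)~.
\]
Composing the adjunctions $j_!\dashv j^*$ and $(-)^{\sf red}\dashv \iota_0$, the latter from Lemma~\ref{reduced}, then shows that $j^* j_!^{\sf red}\colon \cV^\cK\to \cV^\cK$ takes values in the full subcategory $\Ker(i^*)\subset \cV^\cK$, and is left adjoint to the inclusion $\iota\colon \Ker(i^*)\hookrightarrow \cV^\cK$.

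Next, I would verify that the counit of $i_!\dashv i^*$ also provides a left adjoint to $\iota$ via the assignment $F\mapsto \coker(i_!i^*F\to F)$. For any $G\in \Ker(i^*)$, the cofiber sequence $i_!i^*F\to F\to \coker(i_!i^*F\to F)$ in the stable $\infty$-category $\cV^\cK$ induces a fiber sequence of mapping spaces whose third term is
\[
\Map_{\cV^\cK}(i_!i^*F,G)~\simeq~ \Map_{\cV^{\cK_0}}(i^*F,i^*G)~\simeq~\ast~,
\]
since $i^*G\simeq 0$. Thus the natural map $\Map_{\cV^\cK}\bigl(\coker(i_!i^*F\to F),G\bigr)\to \Map_{\cV^\cK}(F,G)$ is an equivalence. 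That this cofiber lies in $\Ker(i^*)$ follows from exactness of $i^*$ together with the equivalence $i^*i_!\simeq \id$ coming from the fully faithfulness of $i$.

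By the uniqueness of left adjoints, there results a canonical natural equivalence $j^*j_!^{\sf red}F\simeq \coker(i_!i^*F\to F)$ in $\cV^\cK$, which, together with the counit $i_!i^*F\to F$, assembles into the asserted cofiber sequence. The main point of care is the first step, passing from the pushout $\cK/\cK_0$ to a pullback of functor categories and identifying its fiber with $\Ker(i^*)$; after that, the remainder is formal, using only uniqueness of adjoints and the cofiber/fiber structure available in a stable $\infty$-category.
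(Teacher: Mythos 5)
Your proof is correct and takes essentially the same route as the paper's: both rest on the identification of $\Fun_0(\cK/\cK_0,\cV)$ with $\Ker(i^*)\subset \cV^{\cK}$ via the pullback of functor categories, the equivalence $i^*i_!\simeq\id$ from fully faithfulness of $i$, and the colimit-exactness of $i^*$ to place $\coker(i_!i^*\to\id)$ in that kernel. The only difference is presentational: you make the uniqueness-of-left-adjoints step explicit by exhibiting both $j^*j_!^{\sf red}$ and $F\mapsto\coker(i_!i^*F\to F)$ as left adjoints to the inclusion $\Ker(i^*)\hookrightarrow\cV^\cK$, whereas the paper factors $\coker(i_!i^*\to\id)$ through $j^*$ directly and then identifies the factorizing functor as $j_!^{\sf red}$; these are the same argument stated in slightly different order.
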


\begin{proof} 
There is a canonical fiber sequence of $\infty$-categories
\[
\Fun_0(\cK/\cK_0,\cV) \xra{~j^\ast~}\cV^\cK \xra{~i^\ast~} \cV^{\cK_0}~.
\]
The inclusion of the constant zero functor $\{0\}\subset\cV^{\cK_0}$ is fully faithful, therefore the functor $j^\ast$ is fully faithful since fully faithfulness is preserved by the formation of pullbacks. We therefore identify $\Fun_0(\cK/\cK_0,\cV)$ with the corresponding full $\oo$-subcategory of $\cV^\cK$ in the following.

Because $i^\ast$ preserves colimits, the universal morphism $\coker\bigl(i^\ast i_{!} i^\ast \to i^\ast\bigr) \xra{\simeq} i^\ast \coker\bigr(i_{!} i^\ast \to \id\bigr)$ is an equivalence in $\cV^{\cK_0}$.  
Because $i\colon \cK_0\to \cK$ is fully faithful, the morphism $i^\ast i_{!} i^\ast \xra{\simeq}i^\ast$ is an equivalence in $\cV^{\cK_0}$. 
It follows that the values of $\coker\bigl(i_{!} i^\ast\to \id\bigr)$ lie in $\Fun_0(\cK/\cK_0,\cV)$.
Because $0\in \cV$ is a zero object, the value $i_{!}(0)$ is the zero functor.  It then follows that $i_{!} i^\ast$ restricts to the zero functor on $\Fun_0(\cK/\cK_0,\cV)$; therefore $\coker\bigl(i_{!} i^\ast \to \id\bigr)$ restricts to the identity functor on $\Fun_0(\cK/\cK_0,\cV)$. 
In summary, the endofunctor $\coker\bigl(i_{!} i^\ast \to \id\bigr)\colon \cV^\cK \to \cV^\cK$ factors through $\Fun_0(\cK/\cK_0,\cV) \xra{j^\ast} \cV^\cK$. Since the factorizing functor $j_!^{\sf red}:\cV^\cK \to \Fun_0(\cK/\cK_0,\cV)$ is the composite of left adjoints, $j_!$ and $(-)^{\sf red}$, therefor $j_!^{\sf red}$ is again left adjoint to $j^\ast$.
\end{proof}

\begin{proof}[Proof of Theorem~\ref{truncation-quotients}]
We explain the cofiber sequence~(\ref{ho-cofiber}).The argument concerning the fiber sequence~(\ref{ho-fiber}) is dual.

From Lemma~\ref{quotient}, with that notation, there is a cofiber sequnce
\[
\tau^{\leq i-1}\int_{M_\ast} A \longrightarrow  \tau^{\leq i}\int_{M_\ast} A \longrightarrow \colim\Bigl(\bigl(\Disk_+^{\leq i}(M_\ast)\bigr)\big/\bigl(\Disk_{+}^{\leq i-1}(M_\ast)\bigr) \xra{j^\ast j^{\sf red}_{!} A } \cV\Bigr)
\]
Through Lemma~\ref{disk-config} the colimit expression is canonically identified as the colimit
\[
\colim\Bigl((\Disk^{\leq 1}_+\bigl(C^\neg_i(M_\ast)\bigr) \xra{~A^{\ot i}~} \cV\Bigr)~.
\] 
Lemma~\ref{conf.finitary} gives the based $\Sigma_i$-equivariant homotopy equivalence $C^{\neg}_i(M_\ast) \simeq \conf^{\neg}_i(M_\ast)$, which evidently lies over $\BO(n)^{\times i}$.  
This identifies this colimit as $\conf_i^{\neg, {\sf fr}}(M_\ast)\underset{\Sigma_i\wr \sO(n)} \bigotimes A^{\ot i}$, the tensor over based modules in spaces.  

\end{proof}

\subsection{Free calculation}\label{sec:free-calculation}
Here we give the calculation of the factorization homology of a free algebra.  
This calculation is a fundamental input to a number of our arguments. To make this calculation, we assume throughout this subsection that the symmetric monoidal structure of $\cV$ distributes over colimits, and we will use the notation $\oplus$ for the coproduct on $\cV$.  For the next result, note that each $\sO(n)$-module $V$ in $\cV$ determines a $\Sigma_i\wr \sO(n)$-module $V^{\ot i}$ in $\cV$.
\begin{theorem}\label{free-calculation}
Let $\cV$ be a $\ot$-cocomplete symmetric monoidal $\infty$-category.
Let $V\in \Mod_{\sO(n)}(\cV_{\uno//\uno})$ be a $\sO(n)$-module in retractive objects over the unit of $\cV$.
There is a canonical identification of the factorization homology of the free augmented algebra on $V$ in terms of configuration spaces of $M$:
\[
\int_{M_\ast} \FF^{\sf aug} V
~
\simeq
~
\bigoplus_{i\geq 0} \Bigl(\conf_i^{\sf fr}(M_\ast) \underset{\Sigma_i\wr \sO(n)}\bigotimes V^{\ot i}\Bigr)~.
\]

\end{theorem}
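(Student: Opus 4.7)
The plan is to unpack the colimit defining $\int_{M_\ast} \FF^{\sf aug} V$ via Corollary~\ref{still-computes} and match it summand by summand against the right-hand side, using the monadic presentation of the free functor.

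First, by Corollary~\ref{still-computes},
\[
\int_{M_\ast} \FF^{\sf aug} V ~\simeq~ \underset{(B\sqcup U\hookrightarrow M_\ast)\in \Disk_+(M_\ast)}\colim~ \FF^{\sf aug} V(U_+)~.
\]
Since $\FF^{\sf aug}V$ is a symmetric monoidal functor on $\Disk_{n,+}$ whose value on $\RR^n_+$ is the sum given by formula~(\ref{free-value}), direct expansion of the resulting tensor powers, using that configurations in a disjoint union decompose as
\[
\conf_j(U_1\sqcup U_2)~\cong~ \bigsqcup_{a+b=j} \Sigma_j\times_{\Sigma_a\times \Sigma_b}\bigl(\conf_a(U_1)\times \conf_b(U_2)\bigr)~,
\]
yields for every $U$ a finite disjoint union of Euclidean spaces a natural identification
\[
\FF^{\sf aug}V(U_+) ~\simeq~ \bigoplus_{j\geq 0} \conf_j^{\sf fr}(U_+) \underset{\Sigma_j\wr\sO(n)}\bigotimes V^{\ot j}~.
\]

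Next I would commute the outer colimit past the direct sum and past the coinvariants tensor $(-)\underset{\Sigma_j\wr\sO(n)}\bigotimes V^{\ot j}$; both are colimit constructions in the space variable, and since $\cV$ is $\ot$-cocomplete this interchange is valid. This produces
\[
\int_{M_\ast} \FF^{\sf aug} V ~\simeq~ \bigoplus_{j\geq 0} \Bigl(\underset{(B\sqcup U\hookrightarrow M_\ast)\in \Disk_+(M_\ast)}\colim~ \conf_j^{\sf fr}(U_+)\Bigr)\underset{\Sigma_j\wr\sO(n)}\bigotimes V^{\ot j}~.
\]

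The remaining and main step is to establish the geometric equivalence
\[
\underset{(B\sqcup U\hookrightarrow M_\ast)\in \Disk_+(M_\ast)}\colim~ \conf_j^{\sf fr}(U_+) ~\simeq~ \conf_j^{\sf fr}(M_\ast)
\]
in $\Mod_{\Sigma_j\wr\sO(n)}(\Spaces_\ast)$, for each finite cardinality $j$. I would prove this by appealing to Corollary~\ref{disks-to-ran} to replace the colimit over $\Disk_+^{\leq j}(M_\ast)$ by one over $\Disk_+^{\leq 1}$ of the stratified space $\Ran_{\leq j}(M_\ast)$, and then observing that the top stratum of $\Ran_{\leq j}(M_\ast)$ recovers $\conf_j(M_\ast)_{\Sigma_j}$; passing to frame bundles and lifting through the $\Sigma_j$-action then yields the desired identification. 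The hard part is controlling the behavior near the zero-point $\ast$: one must verify that configurations which approach $\ast$ are sent to the basepoint of $\conf_j^{\sf fr}(M_\ast)$, which uses the conical regularity at $\ast$ (via the cone-neighborhood $B$ of the exiting-disks presentation) to match the basepoint collapse on both sides.
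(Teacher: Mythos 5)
Your first two steps (expanding the free algebra on a disjoint union of disks, commuting the colimit past the direct sum) align with the paper's proof. The gap is in the final step, where you commute the colimit past the coinvariants tensor and reduce to the purported equivalence
\[
\underset{(B\sqcup U\hookrightarrow M_\ast)\in \Disk_+(M_\ast)}\colim~ \conf_j^{\sf fr}(U_+) ~\simeq~ \conf_j^{\sf fr}(M_\ast)
\]
in $\Mod_{\Sigma_j\wr\sO(n)}(\Spaces_\ast)$. This statement is plausibly true but is not a consequence of the ingredients you invoke. Note first that the colimit on the left is over all of $\Disk_+(M_\ast)$ --- objects $U$ with any finite number of disk components --- while your justification implicitly works only with $\Disk_+^{\leq j}(M_\ast)$; the reduction from the former to the latter is not free and requires a finality argument. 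Second, Corollary~\ref{disks-to-ran} is an equivalence of $\infty$-categories, not a computation of a colimit in pointed spaces, and ``observing that the top stratum of $\Ran_{\leq j}(M_\ast)$ recovers $\conf_j(M_\ast)_{\Sigma_j}$'' does not convert into an identification of the homotopy colimit of frame bundles.

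The paper avoids having to prove any such colimit-of-spaces statement. Instead of pulling the colimit inside the coinvariants tensor, it writes $\conf_j^{\sf fr}(U_+)\underset{\Sigma_j\wr\sO(n)}\bigotimes V^{\ot j}$ as a colimit over $\Disk^{=j}_+(U_+)$ using Lemma~\ref{disk-config}, then collapses the resulting nested colimit over $\Disk_+(M_\ast)$ and $\Disk^{=j}_+(U_+)$ into a single colimit over an arrow category $\cX^j$, and shows the projection $\cX^j \to \Disk^{=j}_+(M_\ast)$ is final via a Cartesian fibration argument (each fiber has an initial object). The final identification with $\conf_j^{\sf fr}(M_\ast)\underset{\Sigma_j\wr\sO(n)}\bigotimes V^{\ot j}$ is again Lemma~\ref{disk-config}. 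That Cartesian fibration step is precisely the engine that makes the reduction to $j$-element configurations go through; your sketch would have to reproduce something equivalent to it, at which point you are not really taking a shorter route. If you want to salvage your factorization, you should state and prove the frame-bundle colimit identity as a lemma, with an explicit finality argument showing that the colimit over $\Disk_+(M_\ast)$ of $\conf_j^{\sf fr}(-)$ is computed on a cofinal subcategory that maps onto $\Disk^{=1}_+\bigl(\conf_j(M_\ast)\bigr)$.
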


\begin{proof}
The term on the righthand side depicts a functor $\ZMfld_n \to \cV$, naturally in $V$.   
This functor canonically extends as a symmetric monoidal functor, because of the distribution assumption on the symmetric monoidal structure of $\cV$.  
It is manifest that the restriction to $\Disk_{n,+}$ of the righthand side satisfies the universal property of the free functor $\FF^{\sf aug}$.   
This proves the theorem for the case of $M_\ast = \bigvee_J \RR^n_+$ a finite disjoint union of Euclidean spaces.

We explain the following sequence of equivalences in $\cV$:
\begin{eqnarray}\label{free-calc-lines}
\nonumber
\int_{M_\ast} \FF^{\sf aug} V
&
\underset{(1)}\simeq
&
\colim\Bigl(\Disk_+(M_\ast) \xra{\FF^{\sf aug} V} \cV\Bigr)
\\
\nonumber
&
\underset{(2)}\simeq
&
\bigoplus_{i\geq 0} \underset{U_+\in\Disk_+(M_\ast)}\colim \Bigl(\conf^{\sf fr}_i(U_+) \underset{\Sigma_i\wr \sO(n)}\bigotimes V^{\ot i}\Bigr)
\\\nonumber
&
\underset{(3)}\simeq
&
\bigoplus_{i\geq 0} \underset{U_+\in\Disk_+(M_\ast)}\colim \Bigl(\colim\bigl(\Disk^{=i}_+(U_+) \xra{V^{\ot i}} \cV\bigr) \Bigr)
\\
\nonumber
&
\underset{(4)}\simeq
&
\bigoplus_{i\geq 0}\colim\Bigl(\cX^i \xra{V^{\ot i}} \cV \Bigr)
\\
\nonumber
&
\underset{(5)}\simeq
&
\bigoplus_{i\geq 0} \colim\Bigl(\Disk^{= i}_+(M_\ast) \xra{V^{\ot i}} \cV\Bigr)
\\
\nonumber
&
\underset{(6)}\simeq
&
\bigoplus_{i\geq 0} \colim\Bigl((\Disk^{= 1}_+\bigl(C_i(M_\ast)_{\Sigma_i}\bigr) \xra{V^{\ot i}} \cV\Bigr)
\\
\nonumber
&
\underset{(7)}\simeq
&
\bigoplus_{i\geq 0} \colim\Bigl( \conf_i(M_\ast)_{\Sigma_i} \xra{V^{\ot i}} \cV\Bigr)
\end{eqnarray}
The equivalence (1) is the finality of the functor $\Disk_+(M_\ast) \to \bigl(\Disk_{n,+/M_\ast}\bigr)/\Disk_{n,+}$ (Theorem~\ref{thank-god}).
The equivalence~(2) follows from the first paragraph, combined with the distribution of $\bigoplus$ over sifted colimits (using Theorem~\ref{thank-god}).   
The equivalences (3) and (6) are the identifications 
$\Disk^{=i}_+(-) \simeq \Disk^{=1}_+\bigl(C_i(-)_{\Sigma_i}\bigr)$ 
extracted from
Lemma~\ref{disk-config}.
The equivalence~(7) is a consequence of the based homotopy equivalence $C_i(M_\ast)\simeq \conf_i(M_\ast)$ of Lemma~\ref{conf.finitary}.  

Consider the $\infty$-category of arrows $\Fun\bigl([1],\Disk_+(M_\ast)\bigr)$.  Evaluation at $0$ gives a functor \[{\sf ev}_0\colon \Fun\bigl([1],\Disk_+(M_\ast)\bigr) \longrightarrow \Disk_+(M_\ast)~.\]  
We denote the pullback $\infty$-category 
\[
\cX^i := \Disk^{= i}_+(M_\ast) \underset{\Disk_+(M_\ast)}\times \Fun\bigl([1],\Disk_+(M_\ast)\bigr)~.
\]
There is thus a functor $\cX^i\xra{{\sf ev}_0}\Disk^{= i}_+(M_\ast)$, which is a Cartesian fibration.  
For each object $e\colon \underset{i}\bigsqcup \RR^n\to M$ in $\Disk^{= i}_+(M_\ast)$, the object $(e=e)$ is initial in the fiber $\infty$-category ${\sf ev}_0^{-1}e$.  
It follows that the functor $\cX^i\xra{{\sf ev}_0} \Disk^{= i}_+(M_\ast)$ is final.
This explains the equivalence~(5).
The equivalence $(4)$ is formal, because nested colimits agree with colimits.
\end{proof}

\begin{cor}\label{free-greater-k}
Let $\cV$ be a $\ot$-stable-presentable symmetric monoidal $\infty$-category, and let $V$ be a $\sO(n)$-module in $\cV$. For each finite cardinality $i$, the diagram in $\cV$
\[
\tau^{>i}\int_{M_\ast} \FF^{\sf aug} V~\longrightarrow ~\int_{M_\ast} \FF^{\sf aug} V~\longrightarrow P_i\int_{M_\ast} \FF^{\sf aug}V
\]
can be canonically identified with the diagram
\[
\bigoplus_{j>i} \Bigl(\conf_j^{\sf fr}(M_\ast) \underset{\Sigma_j\wr \sO(n)}\bigotimes V^{\ot j}\Bigr)
~\longrightarrow~
\bigoplus_{j\geq 0} \Bigl(\conf_j^{\sf fr}(M_\ast) \underset{\Sigma_j\wr \sO(n)}\bigotimes V^{\ot j}\Bigr)
~\longrightarrow~
\bigoplus_{j\leq i} \Bigl(\conf_j^{\sf fr}(M_\ast) \underset{\Sigma_j\wr \sO(n)}\bigotimes V^{\ot j}\Bigr)
\]
given by inclusion and projection of summands.

\end{cor}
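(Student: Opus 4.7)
The proof identifies $P_i \int_{M_\ast} \FF^{\sf aug} V$ and $\tau^{>i}\int_{M_\ast}\FF^{\sf aug} V$ as direct summands of $\int_{M_\ast}\FF^{\sf aug}V \simeq \bigoplus_{j \geq 0}\conf_j^{\sf fr}(M_\ast) \otimes_{\Sigma_j \wr \sO(n)} V^{\ot j}$ (Theorem~\ref{free-calculation}), and checks that the canonical maps are projection and inclusion with respect to this decomposition.

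For the cardinality side, the plan is to rerun the chain of equivalences in the proof of Theorem~\ref{free-calculation} with $\Disk_+(M_\ast)$ replaced by its full subcategory $\Disk_+^{>i}(M_\ast)$. Steps (1)--(4) carry over without change to identify $\tau^{>i}\int_{M_\ast}\FF^{\sf aug}V \simeq \bigoplus_j \colim_{\cX^j_{>i}} V^{\ot j}$, where $\cX^j_{>i} := \Disk_+^{=j}(M_\ast) \times_{\Disk_+(M_\ast)} \Fun([1],\Disk_+^{>i}(M_\ast))$ is the evident pullback along ${\sf ev}_0$. An object of $\cX^j_{>i}$ is an arrow $e\to U$ in $\Disk_+^{>i}(M_\ast)$ whose source $e$ lies in $\Disk_+^{=j}(M_\ast)$; this forces $e \in \Disk_+^{=j} \cap \Disk_+^{>i}$, which is nonempty precisely when $j > i$. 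For $j > i$, the identity $\id_e$ is initial in the fiber of ${\sf ev}_0$ over $e$, so ${\sf ev}_0$ is final and the colimit computes $\conf_j^{\sf fr}(M_\ast) \otimes_{\Sigma_j \wr \sO(n)} V^{\ot j}$. For $j \leq i$, $\cX^j_{>i}$ is empty and the colimit is the zero object of $\cV$.

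For the Goodwillie side, the plan is to exploit that $\FF^{\sf aug}\colon \Mod_{\sO(n)}(\cV_{\uno//\uno}) \to \Alg_n^{\sf aug}(\cV)$ is a left adjoint (Definition~\ref{free}), hence preserves small colimits, the zero object, and strongly coCartesian cubes. A standard argument with the universal property of Goodwillie truncation then gives, for any reduced functor $G\colon \Alg_n^{\sf aug}(\cV) \to \cV$, a natural equivalence $P_i(G \circ \FF^{\sf aug}) \xra{\simeq} (P_i G) \circ \FF^{\sf aug}$, where the left-hand side is Goodwillie-truncated in the module variable $V$ and the right-hand side is Goodwillie-truncated in the algebra variable $A$ and then precomposed. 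Applied to $G = \int_{M_\ast}$ together with Theorem~\ref{free-calculation}, the task reduces to computing the $P_i$-truncation in $V$ of the functor $V \mapsto \bigoplus_{j \geq 0} \conf_j^{\sf fr}(M_\ast) \otimes V^{\ot j}$. Since each summand is homogeneous of degree $j$ in $V$, its $P_i$-truncation is $\bigoplus_{j \leq i} \conf_j^{\sf fr}(M_\ast) \otimes V^{\ot j}$, and the unit map from the total is identified with the canonical projection.

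Compatibility of these identifications with the maps $\tau^{>i} \to \int$ and $\int \to P_i$ (as inclusion and projection of summands) is a naturality check on the chain of identifications above. The main obstacle I anticipate is the verification of the key equivalence $P_i(G \circ \FF^{\sf aug}) \simeq (P_i G) \circ \FF^{\sf aug}$; this is formal once one observes that precomposition by a colimit-preserving functor between pointed presentable $\infty$-categories carries polynomial functors of degree $\leq i$ to polynomial functors of degree $\leq i$, combined with the localization description of $P_i$.
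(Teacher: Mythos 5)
Your argument is correct, and your treatment of the Goodwillie term takes a genuinely different route from the paper's. For the cardinality term you and the paper agree: one traces the proof of Theorem~\ref{free-calculation} with $\Disk_+(M_\ast)$ replaced by $\Disk_+^{>i}(M_\ast)$, noting that your pullback $\cX^j_{>i}$ is empty when $j\leq i$ and that ${\sf ev}_0$ remains final when $j>i$. For the Goodwillie term the paper argues by induction on $i$, using at each step that $\conf_j^{\sf fr}(M_\ast)\underset{\Sigma_j\wr\sO(n)}\bigotimes L(-)^{\ot j}$ is a $j$-homogeneous functor on $\Alg_n^{\sf aug}(\cV)$ (Corollary~\ref{conf-as-homog}) together with $L\FF^{\sf aug}V\simeq V$ (Lemma~\ref{LF}); you instead commute $P_i$ past precomposition with the left adjoint $\FF^{\sf aug}$ and then truncate the visibly split sum of homogeneous functors in the $\sO(n)$-module variable. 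Your route sidesteps the classification of homogeneous functors behind Corollary~\ref{conf-as-homog}, and it makes the split direct-sum structure of $P_i\int_{M_\ast}\FF^{\sf aug}V$ manifest rather than inductive; it also makes explicit what the paper's terse ``induction on $i$'' leaves implicit. One caution about your closing sentence: the equivalence $P_i(G\circ\FF^{\sf aug})\simeq(P_iG)\circ\FF^{\sf aug}$ is not purely formal from the observations that precomposition by $\FF^{\sf aug}$ preserves polynomial degree $\leq i$ and that $P_i$ is a localization---those ingredients only yield the comparison map $P_i(G\circ\FF^{\sf aug})\to(P_iG)\circ\FF^{\sf aug}$. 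To see it is an equivalence one must invoke the explicit model $P_i\simeq\colim_k T_i^k$ together with the fact that $\FF^{\sf aug}$, preserving finite colimits and the zero object, commutes with the finite-colimit construction on which $T_i$ is built; this is recorded in \S6.1.1 of~\cite{HA}.
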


\begin{proof}

The identification of the left term can be seen by inspecting the definition of $\tau^{>i}\int_{M_\ast}$ and tracing the proof of Theorem~\ref{free-calculation}.
The identification of the right term follows by induction on $i$, for which both the base case and the inductive step are supported by Corollary~\ref{conf-as-homog}, using the equivalence $L\FF V \xra{\simeq} V$ of Lemma~\ref{LF}.
That the arrows are as claimed is manifest.  
\end{proof}

A basic feature of Koszul duality in general is that it sends free algebras to trivial algebras.
We are about to reference the notion of a trivial augmented $n$-\emph{co}algebra, the definition of which is exactly dual to that of trivial algebras~(see Definition~\ref{def:cotangent-space}).  

\begin{lemma}\label{free-trivial}
Let $\cV$ be a $\ot$-presentable symmetric monoidal $\infty$-category.
Let $V$ be a $\sO(n)$-module in $\cV_{\uno//\uno}$ and consider the diagonal $\sO(n)$-module $(\RR^n)^+\ot V$ in $\cV_{\uno//\uno}$.
There is a canonical equivalence of augmented $n$-disk coalgebras in $\cV$
\[
\bBar(\FF^{\sf aug} V) ~ \simeq ~  {\sf t^{aug}_{\cAlg}}\bigl((\RR^n)^+ \ot V\bigr)
\]
from the bar construction of the free augmented algebra to the trivial augmented \emph{co}algebra.

\end{lemma}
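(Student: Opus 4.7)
My plan is to verify the equivalence by direct calculation on generating wedges of $\Disk_n^+$. By the definition of $\bBar$ as left Kan extension along $\Disk_{n,+}\hookrightarrow\ZMfld_n$ followed by restriction along $\Disk_n^+\hookrightarrow\ZMfld_n$, its value on any object $W_\ast\in\Disk_n^+$ coincides with the factorization homology $\int_{W_\ast}\FF^{\sf aug}V$. Applying the free calculation of Theorem~\ref{free-calculation}, and stratifying configurations in a wedge by the function $f\colon[i]\to I$ recording which wedge summand each point inhabits, I obtain
\[
\bBar(\FF^{\sf aug}V)\Bigl(\bigvee_I(\RR^n)^+\Bigr) \simeq \bigoplus_{i\geq 0}\bigvee_{f\colon[i]\to I}\Bigl(\bigwedge_{k\in I}\conf^{\sf fr}_{|f^{-1}(k)|}((\RR^n)^+)\Bigr)\underset{\Sigma_i\wr\sO(n)}\bigotimes V^{\otimes i}~.
\]

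The key step is to show the vanishing $\conf^{\sf fr}_m((\RR^n)^+)\otimes_{\Sigma_m\wr\sO(n)}V^{\otimes m}\simeq 0$ for $m\geq 2$. The plan here is to exhibit the scaling-to-origin homotopy $(t,\vec x)\mapsto t\vec x$: for $m\geq 2$ its $t=0$ limit is the fully-collided diagonal configuration in $\RR^{nm}\subset((\RR^n)^+)^{\wedge m}$, whose $\Sigma_m$-orbit collapses to the basepoint after coupling with $V^{\otimes m}$ and taking $\Sigma_m\wr\sO(n)$-coinvariants. Given this vanishing, only injective $f\colon[i]\hookrightarrow I$ contribute, each producing $\bigwedge_{k\in f([i])}(\RR^n)^+\simeq ((\RR^n)^+)^{\wedge i}$ with the diagonal $\sO(n)^i$-action. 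Summing the surviving terms over all injections and all $i$ assembles as
\[
\bigoplus_{S\subseteq I}\bigotimes_{k\in S}\bigl((\RR^n)^+\otimes V\bigr)\simeq \bigotimes_{k\in I}\bigl(\uno\oplus(\RR^n)^+\otimes V\bigr)\simeq W^{\otimes I}~,
\]
where $W=(\RR^n)^+\otimes V$. This is precisely the value ${\sf t^{aug}_{\cAlg}}(W)\bigl(\bigvee_I(\RR^n)^+\bigr)$ of the trivial augmented coalgebra on the generating wedge, by its symmetric monoidality. Naturality with respect to morphisms of $\Disk_n^+$ then follows because any zero-pointed embedding that combines or collapses wedge summands induces, on configurations, a map whose image either factors through the augmentation/coaugmentation or forces a higher-cardinality collision (killed by the vanishing step); this matches the trivial coalgebra's comultiplication, which by definition factors through unit and counit.

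The main obstacle is making the vanishing rigorous. In the first topology of Proposition~\ref{conf-stuff}, the diagonal of $\RR^{nm}$ sitting inside $((\RR^n)^+)^{\wedge m}$ is literally \emph{excluded} from $\conf_m((\RR^n)^+)$ rather than identified with the basepoint, so the scaling homotopy does not yield a null-homotopy in the underlying based space alone. I expect that one must either (i) work in the framed $\Sigma_m\wr\sO(n)$-equivariant setting and observe that the diagonal orbit, having codimension at least $n$, becomes contractible after passing to wreath-product homotopy coinvariants coupled with $V^{\otimes m}$, or (ii) deduce the vanishing indirectly via Corollary~\ref{free-greater-k} in the Goodwillie cofiltration, which already identifies the higher-cardinality contributions to $\int_-\FF^{\sf aug}V$ and combines with the structure of $\bBar$ on $(\RR^n)^+$ to force the higher layers to collapse in the coalgebra.
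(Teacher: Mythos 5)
Your overall strategy — compute $\bBar(\FF^{\sf aug}V)$ on generating wedges via Theorem~\ref{free-calculation}, stratify configurations by the wedge summand they inhabit, and kill the terms with two or more points in a single $(\RR^n)^+$ — matches the paper's proof in structure. You have also correctly diagnosed the flaw in the naive vanishing argument: in the topology of $\conf_m\bigl((\RR^n)^+\bigr)$, the fully-collided diagonal is \emph{excluded} from the space, not identified with the basepoint, so the contraction $(t,\vec x)\mapsto t\vec x$ sending everything to the origin as $t\to 0$ does not land at $\ast$ and gives no null-homotopy.

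However, the fix you reach for goes in the wrong direction and is more complicated than necessary. The paper proves the stronger, purely spatial statement that $\conf_m\bigl((\RR^n)^+\bigr)$ is already weakly contractible for $m\geq 2$, with no need to couple with $V^{\ot m}$ or pass to $\Sigma_m\wr\sO(n)$-coinvariants. The trick is to flow toward infinity rather than toward the origin: flow the vector field $x\mapsto x$ on $\RR^n$, i.e.\ $\phi_t(x)=e^t x$, for $t\to\infty$. At most one of the $m\geq 2$ distinct points can sit at the origin; every other point escapes to $\infty$. Since the basepoint of $\conf_m\bigl((\RR^n)^+\bigr)$ (in the coarse topology inherited from $((\RR^n)^+)^{\wedge m}$) is reached exactly when \emph{some} coordinate hits $\infty$, the flowed configuration converges to $\ast$. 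Crucially this flow never creates collisions (it only increases pairwise distances), which is what the naive radial contraction to the origin destroys; the paper makes this precise by first passing to the weakly equivalent subspace $\conf_m^\epsilon\bigl((\RR^n)^+\bigr)$ of configurations whose distinct points are separated by at least $\epsilon$, so the flow is well-defined and stays inside the subspace. For $m=1$ this argument fails (the unique point may sit at the origin forever), which is exactly why the $m=1$ summand $\conf_1\bigl((\RR^n)^+\bigr)\ot V=(\RR^n)^+\ot V$ survives and identifies the answer with the trivial coalgebra. With this vanishing in hand, your pigeonhole/stratification bookkeeping and the identification of the resulting coalgebra structure as trivial (via the observation that the comultiplication restricted to the cardinality-one summand, projected to higher cardinalities, is forced to be zero) agree with the paper's.
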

\begin{proof}
Let $J$ be a finite set and consider the action
\[
{\sf ZEmb}\bigl((\RR^n)^+,\underset{J}\bigvee (\RR^n)^+\bigr)\bigotimes \int_{(\RR^n)^+} \FF^{\sf aug} V \longrightarrow \int_{\bigvee_{J} (\RR^n)^+} \FF^{\sf aug} V ~\simeq~\Bigl(\int_{(\RR^n)^+}\FF^{\sf aug} V\Bigr)^{\otimes J}~.
\]
Through Theorem~\ref{free-calculation}, using the pigeonhole principle, for each $i_1,\dots,i_j>1$ the restriction of this morphism to $\conf_1\bigl((\RR^n)^+\bigr) \ot V$ followed by the projection to $\underset{j\in J}\bigotimes \conf_{i_j}\bigl((\RR^n)^+\bigr) \underset{\Sigma_{i_j}}\ot V^{\ot i_j}$ is canonically equivalent to the zero morphism.

For each $\epsilon >0$ consider the subspace $\conf_i^\epsilon\bigl((\RR^n)^+\bigr)\subset \conf_i\bigl((\RR^n)^+\bigr)$ consisting of those based maps $f\colon \{1,\dots,i\}_+\to (\RR^n)^+$, whose restriction $f_|\colon f^{-1}\RR^n\to \RR^n$ is injective and $x=y\in \RR^n$ whenever $\lVert f_|(x) - f_|(y)\rVert <\epsilon$ -- the inclusion of this subspace is a based weak homotopy equivalence.  
Flowing the vector field $x\mapsto x$ on $\RR^n$ for infinite time witnesses a deformation retraction of $\conf_i^\epsilon\bigl((\RR^n)^+\bigr)$ onto $\ast$ provided $i>1$.
We conclude that $\conf_i\bigl((\RR^n)^+\bigr)$ is contractible for $i>1$.  
Through Theorem~\ref{free-calculation} we arrive at a canonical identification
\[
\uno\oplus \conf_1\bigl((\RR^n)^+\bigr) \ot V \xra{~\simeq~} \int_{(\RR^n)^+} \FF^{\sf aug} V~.
\]
Combined with the above paragraph, we conclude that $\bBar(\FF^{\sf aug} V)$ is canonically identified as the trivial augmented $n$-disk coalgebra on the $\sO(n)$-module $\conf_1\bigl((\RR^n)^+\bigr) \ot V = \bigl((\RR^n)^+ \ot V\bigr)$. 
\end{proof}

\subsection{Support for Theorem~\ref{convergence} (convergence)}\label{sec.conv}

\begin{lemma}\label{sifted-resolutions} 
Let $\cV$ be a $\ot$-sifted cocomplete symmetric monoidal $\infty$-category. The functors $\Alg_n^{\sf aug}(\cV) \to \cV$
\[
\tau^{>i}\int_{M_\ast}\qquad\text{ and }\qquad \int_{M_\ast}\qquad\text{ and }\qquad P_i\int_{M_\ast}
\]
preserve sifted colimits for any $i$ a finite cardinality.
\end{lemma}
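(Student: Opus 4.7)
The plan is to handle each of the three functors in sequence. The first two follow directly from the colimit formula of Corollary~\ref{still-computes}, while the third requires an argument using Lurie's iterative construction of $P_i$.

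First I would handle $\int_{M_\ast}$. By Corollary~\ref{still-computes}, we have
\[
\int_{M_\ast} A~\simeq~\underset{(B\sqcup U\hookrightarrow M_\ast)\in\Disk_+(M_\ast)}\colim~A(U_+)~.
\]
The indexing $\infty$-category $\Disk_+(M_\ast)$ is independent of $A$, so, since colimits commute with colimits, it suffices to show that for each $U_+\in \Disk_{n,+}$ the evaluation functor $\mathsf{ev}_{U_+}\colon A\mapsto A(U_+)$ from $\Alg_n^{\sf aug}(\cV)$ to $\cV$ preserves sifted colimits. This is a standard consequence of the monadicity of augmented $n$-disk algebras together with the fact that the free algebra monad, being built from expressions $\bigoplus_i V^{\ot i}$, preserves sifted colimits under our hypothesis that $\ot$ distributes over sifted colimits (Proposition~3.2.3.1 and Corollary~3.2.3.2 of \cite{HA}).

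Next I would handle $\tau^{>i}\int_{M_\ast}$. By Definition~\ref{def:truncations} (with the ``$\leq i$'' case replaced by ``$>i$''), this functor is given by the colimit of the same evaluation functors $A\mapsto A(U_+)$ over the full subcategory $\Disk_+^{>i}(M_\ast)\subset \Disk_+(M_\ast)$. The argument of the preceding paragraph applies verbatim.

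Finally I would handle $P_i\int_{M_\ast}$. Here one uses Lurie's construction of $P_i$ as a sequential colimit $P_iF\simeq \colim_k T_i^k F$, where $T_iF(A)$ is the total fiber of a strongly coCartesian $(i{+}1)$-cube whose vertices are values of $F$ on objects of $\Alg_n^{\sf aug}(\cV)$ obtained from $A$ by finite colimits (coproducts with simpler algebras). The formation of these auxiliary algebras commutes with sifted colimits in $A$, since finite colimits always commute with sifted colimits. The total fiber is a finite limit, which in a stable $\infty$-category commutes with sifted colimits. Hence $T_i F$ preserves sifted colimits whenever $F$ does; iterating gives the same conclusion for each $T_i^k F$; and the sequential colimit is filtered, hence sifted, and so preserves sifted colimits. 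Combined with the first step, this shows that $P_i\int_{M_\ast}$ preserves sifted colimits.

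The main obstacle is in the third step: one needs to be careful that every operation entering the $T_i$ construction, in particular the formation of the coCartesian cube in the algebra variable and the passage to total fibers, is compatible with sifted colimits in $A$. Once this compatibility is in hand, the sequential colimit limit step and the composition with the first part of the lemma are formal.
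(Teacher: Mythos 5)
Your proof is correct. Steps one and two are essentially the paper's own argument: reduce via Corollary~\ref{still-computes} (or Definition~\ref{def:truncations}) to the observation that the evaluation functors on $\Disk_+(M_\ast)$ preserve sifted colimits, which comes down to Proposition~3.2.3.1 of \cite{HA} plus the $\ot$-sifted-cocomplete hypothesis; the paper phrases the reduction slightly differently (it reduces to $U$ connected, then observes $\bigotimes$ preserves sifted colimits for general $U$, and it also treats $\tau^{>i}\int_{M_\ast}$ after $P_i$ rather than before), but these are cosmetic differences.

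For $P_i\int_{M_\ast}$ you take a genuinely different, and more explicit, route. The paper's justification is a one-liner: ``Because $P_i$ is a left adjoint, it commutes with sifted colimits,'' relying on the fact that $P_i$ is a localization of functor $\infty$-categories and leaving implicit that the full subcategory of sifted-colimit-preserving functors is appropriately stable under this localization. You instead unwind Lurie's construction $P_iF\simeq\colim_k T_i^kF$ and check each ingredient by hand: the auxiliary cubes are built by finite colimits in the algebra variable (colimits commute with colimits), the total fiber is a finite limit, and a sequential colimit of sifted-preserving functors is sifted-preserving. Your approach is more self-contained and clearly locates where each hypothesis is used, at the cost of being longer. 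One point to make explicit: the step ``the total fiber is a finite limit, which commutes with sifted colimits'' requires $\cV$ to be stable. The lemma as stated only asks $\cV$ to be $\ot$-sifted cocomplete, but the section's standing convention takes $\cV$ to be $\ot$-stable-presentable (and indeed one needs a zero object in $\cV$ for $P_i$ to be defined at all), so this is fine in context; it would nonetheless be worth flagging that this is where stability enters, since it is the only place in the argument that uses it.
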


\begin{proof}  
We first prove the statement for $\int_{M_\ast}$.  
Let $A\colon \cJ\to \Alg_n^{\sf aug}(\cV)$ be a diagram of augmented $n$-disk algebras in $\cV$, indexed by a sifted $\infty$-category.
The canonical arrow $\underset{j\in \cJ}\colim \int_{M_\ast} A_j \longrightarrow \int_{M_\ast} \underset{j\in \cJ}\colim A_j$ in $\cV$ is a composition
\[
\colim_{j\in \cJ} \colim_{U_+\in \Disk_+(M_\ast)} A_j(U_+)~ \simeq ~\colim_{U_+\in \Disk_+(M_\ast)} \colim_{j\in \cJ} A_j(U_+) \longrightarrow \colim_{U_+\in \Disk_+(M_\ast)} (\colim_{j\in \cJ} A_j)(U_+) 
\]
where the outer objects are in terms of the defining expression for factorization homology, the left equivalence is through commuting colimits, and the right arrow is a colimit of canonical arrows.  
Because of the hypotheses on $\cV$, each arrow $\underset{j\in \cJ}\colim A_j(U_+) \to (\underset{j\in \cJ}\colim A_j)(U_+)$ is an equivalence if and only if it is for $U$ connected.  
This is the case provided the forgetful functor ${\sf ev}_{\RR^n_+}\colon \Alg_n^{\sf aug}(\cV) \to \cV$ preserves sifted colimits.  
This assertion is Proposition 3.2.3.1 of~\cite{HA}.

Because $P_i$ is a left adjoint, it commutes with sifted colimits.  The statement is thus true for $P_i\int_{M_\ast}$ after the first paragraph.  

The functor $\tau^{>i}\int_{M_\ast}\colon \Alg_n^{\sf aug}(\cV) \to \Fun(\Disk_+^{>i}(M_\ast) , \cV) \xra{\colim} \cV$ is a composition of two functors, the latter of which exists on the image of the first and it commutes with those sifted colimits on which it is defined.
Colimits in the middle $\infty$-category are given objectwise, and so it is enough to show that the restriction $\Alg_n^{\sf aug}(\cV) \to \Fun(\Disk_+^{=i}(M_\ast), \cV)$ preserves sifted colimits for each finite cardinality $i$.  
For $i=1$, this follows from the first paragraph as the case $M_\ast = \RR^n_+$.
For general $i$ this follows because the functor $\bigotimes\colon \cV^{i} \to \cV$ preserves sifted colimits, by assumption.  
\end{proof}

\begin{lemma}[Free resolutions]\label{free-resolution1}
Let $\cV$ be a $\ot$-sifted cocomplete symmetric monoidal $\infty$-category.
Every augmented $n$-disk algebra in $\cV$ is a sifted colimit of free augmented $n$-disk algebras.   
In particular, there are no proper full $\infty$-subcategories of $\Alg_n^{\sf aug}(\cV)$ that contain the image of the free functor $\FF^{\sf aug}\colon \Mod_{\sO(n)}(\cV_{\uno//\uno}) \to \Alg_n^{\sf aug}(\cV)$ and that are closed under the formation of sifted colimits.

\end{lemma}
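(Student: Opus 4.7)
The plan is to exhibit each augmented $n$-disk algebra $A$ as the geometric realization of its monadic bar resolution, which is a simplicial (hence sifted) diagram whose terms are free. Concretely, consider the underlying $\sO(n)$-module functor
\[
{\sf ev}\colon \Alg^{\sf aug}_n(\cV) \longrightarrow \Mod_{\sO(n)}(\cV_{\uno//\uno})
\]
of Definition~\ref{free}, whose left adjoint is $\FF^{\sf aug}$. First I would verify the hypotheses of the Barr--Beck--Lurie monadicity theorem (Theorem 4.7.3.5 of~\cite{HA}) for this adjunction. Conservativity of ${\sf ev}$ is immediate from the fact that equivalences of algebras are detected on underlying $\sO(n)$-modules, i.e., on the value at $\RR^n_+$. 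Preservation of ${\sf ev}$-split simplicial colimits (in fact, of all sifted colimits) follows from the first paragraph of the proof of Lemma~\ref{sifted-resolutions}, where exactly this property of the evaluation at $\RR^n_+$ functor was extracted from Proposition 3.2.3.1 of~\cite{HA}.

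Given monadicity, the comparison functor identifies $\Alg_n^{\sf aug}(\cV)$ with the $\infty$-category of $\sT$-algebras for the monad $\sT := {\sf ev}\circ \FF^{\sf aug}$. A standard consequence of monadicity (see~\S4.7.3 of~\cite{HA}) is that every $\sT$-algebra $A$ is the geometric realization of its canonical bar resolution, a simplicial object
\[
\bBar_\bullet(\FF^{\sf aug},\sT,A)\colon \Delta^{\op} \longrightarrow \Alg_n^{\sf aug}(\cV)
\]
whose value in simplicial degree $k$ is $(\FF^{\sf aug}\sT^{k}\,{\sf ev})A = \FF^{\sf aug}\bigl({\sf ev}(\FF^{\sf aug}\,{\sf ev})^k A\bigr)$, and whose colimit is $A$. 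By construction every simplicial level belongs to the essential image of $\FF^{\sf aug}$.

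Since $\Delta^{\op}$ is sifted, this realizes $A$ as a sifted colimit of free augmented $n$-disk algebras, and therefore $A$ lies in any full $\infty$-subcategory of $\Alg_n^{\sf aug}(\cV)$ that contains the essential image of $\FF^{\sf aug}$ and is closed under sifted colimits, as claimed. The only point requiring real care is the preservation of the relevant simplicial colimits by ${\sf ev}$, which is the technical content referenced above; the rest is formal from the monadic formalism.
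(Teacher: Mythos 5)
Your proposal is correct and matches the paper's approach: both invoke Lurie's $\infty$-categorical Barr--Beck theorem for the free--forgetful adjunction between $\Alg_n^{\sf aug}(\cV)$ and $\sO(n)$-modules, checking conservativity and preservation of sifted colimits via Proposition 3.2.3.1 of~\cite{HA}. The only difference is cosmetic: the paper directly cites Proposition 4.7.4.14 of~\cite{HA} (which packages the monadic bar-resolution conclusion you spell out explicitly), while you reconstruct that conclusion from the comparison equivalence and the canonical bar resolution.
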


\begin{proof}
Proposition 3.2.3.1 of~\cite{HA}) gives that the forgetful functor $\Alg_n^{\sf aug}(\cV) \to \cV_{\uno//\uno}$ is conservative and preserves sifted colimits.
The $\infty$-categorical Barr-Beck theorem (Theorem~4.7.4.5 of~\cite{HA}) gives that the adjunction $\Mod_{\sO(n)}(\VV_{\uno//\uno}) \rightleftarrows \Alg_n^{\sf aug}(\cV)$ is monadic.  
Consequently, each augmented $n$-disk algebra $A$ is the geometric realization of its functorial free resolution: $|\FF^{\bullet +1}A|\simeq A$.
\end{proof}

\begin{lemma}\label{tau-and-P} 
Let $A$ be an augmented $n$-disk algebra in $\cV$. The canonical sequence of arrows among functors $\Alg_n^{\sf aug}(\cV) \to \cV$
\[
\tau^{>i}\int_{M_\ast}~  \longrightarrow~ \int_{M_\ast}~ \longrightarrow ~P_i\int_{M_\ast} 
\]
is a cofiber sequence for any $i$ a finite cardinality.

\end{lemma}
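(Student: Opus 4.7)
The plan is to reduce to the case of free augmented $n$-disk algebras, where the statement is transparent from the explicit description of factorization homology.

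First I would verify the lemma on free algebras. For $A = \FF^{\sf aug}V$, Corollary \ref{free-greater-k} identifies the sequence
\[
\tau^{>i}\int_{M_\ast} \FF^{\sf aug}V \longrightarrow \int_{M_\ast}\FF^{\sf aug}V \longrightarrow P_i\int_{M_\ast}\FF^{\sf aug}V
\]
with the evident split sequence of direct sums
\[
\bigoplus_{j>i}\bigl(\conf_j^{\sf fr}(M_\ast)\underset{\Sigma_j\wr\sO(n)}\bigotimes V^{\ot j}\bigr) \longrightarrow \bigoplus_{j\geq 0}\bigl(\conf_j^{\sf fr}(M_\ast)\underset{\Sigma_j\wr\sO(n)}\bigotimes V^{\ot j}\bigr) \longrightarrow \bigoplus_{j\leq i}\bigl(\conf_j^{\sf fr}(M_\ast)\underset{\Sigma_j\wr\sO(n)}\bigotimes V^{\ot j}\bigr)
\]
given by inclusion and projection of summands, which is manifestly a cofiber sequence in the stable $\infty$-category $\cV$.

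Next I would extend this to all augmented $n$-disk algebras by resolving by free algebras. By Lemma \ref{free-resolution1}, the essential image of $\FF^{\sf aug}$ generates $\Alg_n^{\sf aug}(\cV)$ under sifted colimits. By Lemma \ref{sifted-resolutions}, each of the three functors $\tau^{>i}\int_{M_\ast}$, $\int_{M_\ast}$, $P_i\int_{M_\ast}$ preserves sifted colimits. Since $\cV$ is stable, cofibers of morphisms in $\cV$ commute with all colimits, in particular sifted ones. Consequently, given any augmented $n$-disk algebra $A$, presented as a sifted colimit $A\simeq \underset{j\in\cJ}\colim\,\FF^{\sf aug}V_j$, the map $\tau^{>i}\int_{M_\ast}A \to \int_{M_\ast}A$ is the sifted colimit of the maps $\tau^{>i}\int_{M_\ast}\FF^{\sf aug}V_j \to \int_{M_\ast}\FF^{\sf aug}V_j$, whose cofibers are $P_i\int_{M_\ast}\FF^{\sf aug}V_j$ by the previous paragraph; the cofiber of the colimit map is then canonically identified with $\underset{j\in\cJ}\colim\,P_i\int_{M_\ast}\FF^{\sf aug}V_j \simeq P_i\int_{M_\ast}A$, as required.

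The one point requiring care is that the sequence of natural transformations on the whole $\infty$-category $\Alg_n^{\sf aug}(\cV)$ really is obtained from its restriction to free algebras by left Kan extension along $\FF^{\sf aug}$; this follows because sifted-colimit-preserving functors out of $\Alg_n^{\sf aug}(\cV)$ are determined by their restriction to free algebras via the monadic resolution of Lurie's $\infty$-categorical Barr--Beck theorem, which underlies Lemma \ref{free-resolution1}. The main (minor) obstacle is therefore not the cofiber sequence itself but checking that the three comparison arrows $\tau^{>i}\int_{M_\ast} \to \int_{M_\ast} \to P_i\int_{M_\ast}$ arising from the universal constructions of Definitions \ref{def:truncations} and of the Goodwillie tower are compatible with this sifted-colimit extension; this is automatic because each is induced by a natural transformation of sifted-colimit-preserving functors, and two such transformations agree if they agree on the free generators.
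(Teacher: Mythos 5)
Your proof is correct and follows essentially the same approach as the paper: establish the cofiber sequence on free algebras via Corollary~\ref{free-greater-k}, then extend to all augmented $n$-disk algebras using the sifted-colimit resolution of Lemma~\ref{free-resolution1} and the sifted-colimit preservation of all three functors from Lemma~\ref{sifted-resolutions}, noting that cofiber sequences in the stable $\cV$ are preserved under colimits. The extra care you take in the last paragraph about compatibility of the comparison arrows under the sifted-colimit extension is a reasonable addition, though the paper treats it as implicit.
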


\begin{proof} 
Corollary~\ref{free-greater-k} immediately gives the result for the case that $A$ is free.  
Every augmented $n$-disk algebra is a sifted colimit of free augmented $n$-disk algebras.  
Lemma~\ref{sifted-resolutions} states that the left two functors commute with sifted colimits.
Because $P_i$ is a left adjoint, the functor $P_i\int_{M_\ast}$ commutes with sifted colimits.
\end{proof}

\begin{proof}[Proof of Theorem~\ref{convergence}]
Choose such a t-structure as in the statement of the theorem. 
It suffices to show that the kernel of the canonical arrow is the zero object.  
There is a pair of equivalences in $\cV$
\[
\Ker\Bigl(\int_{M_\ast}A\ra P_\infty \int_{M_\ast}A\Bigr)~\underset{(1)}\simeq~\underset{i}\varprojlim\Ker\Bigl(\int_{M_\ast} A \to P_i\int_{M_\ast} A\Bigr)~\underset{(2)}\simeq~ \underset{i}\varprojlim~ \tau^{>i}\int_{M_\ast}A~.
\] 
The equivalence labeled (1) follows because formation of kernels commutes with sequential limits. Equivalence (2) follows from Lemma~\ref{tau-and-P}, which identifies the term of cardinality cofiltration $\tau^{>i}\int_{M_\ast}A$ as the kernel of $\int_{M_\ast}A\ra P_i\int_{M_\ast}A$.
We claim that, under either of two criteria, the object $\tau^{>i} \int_{M_\ast} A$ is $i$-connected with respect to the t-structure.  This claim then implies $\varprojlim ~\tau^{>i}\int_{M_\ast} A$ is infinitely connected under the named criteria; this completes the proof because $\cV$ being cocomplete with respect to the given t-structure implies that only the zero object is infinitely connected.

We now prove the above mentioned claim. Since connectivity is preserved under colimits, it suffices to resolve the algebra $A$ by free $n$-disk algebras. We can thus reduce to showing that $\tau^{> i}\int_{M_\ast}\FF^{\sf aug}V$ is $i$-connected, the case in which $A\simeq \FF^{\sf aug} V$ is the free augmented $n$-disk algebra on a $\sO(n)$-module $V$. By Corollary \ref{free-greater-k}, we have a calculation of this truncation as 
\[
\tau^{> i}\int_{M_\ast}\FF^{\sf aug}V~\simeq~ \bigoplus_{\ell> i} \bigl(\conf^{\fr}_\ell(M_\ast)\underset{\Sigma_\ell\wr\sO(n)}\bigotimes V^{\ot \ell}\bigr)~.
\]
Since the inclusion $\cV_{\geq i}\ra\cV$ is a left adjoint, the essential image is closed under colimits -- see \S1.2.1 of~\cite{HA}. So it is enough to show that $\conf_i^{\sf fr}(M_\ast) \underset{\Sigma_i\wr \sO(n)}\bigotimes V^{\ot i}$ is $i$-connected under either of the named criteria, for all $i$.  
Apply this fact to the functor $\Disk_+^{\leq 1}\bigl(\conf_i(M_\ast)\bigr) \xra{j^\ast j^{\sf red}_{!} V^{\ot i}} \cV$ from the proof of Theorem~\ref{truncation-quotients} (using the notation of~\S\ref{reduced-LKan}) supposing one of the criteria is satisfied: under the first criterion, $V^{\ot i}$ is $i$-connected and $\conf_i(M_\ast)$ is connected; under the second criterion, $\conf_i(M_\ast)$ is $i$-connected and $V^{\ot i}$ is connected. The claim follows since colimits preserve connectivity.
\end{proof}

\subsection{Support for Theorem~\ref{goodwillie-layers} (Goodwillie layers)}\label{sec.good-layers}

For the next result we denote the diagonal functor as ${\sf diag}_i\colon \Mod_{\sO(n)}(\cV) \to \Mod_{\sO(n)}(\cV)^{i}$, where $\cV$ is a $\ot$-cocomplete stable symmetric monoidal $\oo$-category.

\begin{lemma}\label{identify-homogeneous}
There is an equivalence of $\infty$-categories
\[
{\sf Poly}_1\bigl(\Mod_{\sO(n)}(\cV)^{i}_{\Sigma_i},\cV\bigr)\xra{~{}~\simeq~{}~}{\sf Homog}_i\bigl(\Alg_n^{\sf aug}(\cV),\cV\bigr)
\]
for each $i$ a finite cardinality. It assigns to $D$ the functor $D\bigl({\sf diag}_i \circ L(-)\bigr)$.
\end{lemma}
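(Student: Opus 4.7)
The plan is to deduce the equivalence from the general classification theorem for homogeneous functors in Goodwillie calculus, combined with the identification of the stabilization of $\Alg_n^{\sf aug}(\cV)$ already recorded in \S\ref{stable-case}.

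First I would recall (from \S6.1 of~\cite{HA}) the general fact: if $\cX$ is a presentable $\infty$-category with a zero object and $\cY$ is a stable presentable $\infty$-category, then evaluation along $\Sigma^\infty \circ {\sf diag}_i$ implements an equivalence
\[
{\sf Poly}_1\bigl({\sf Stab}(\cX)^{\times i}_{\Sigma_i}, \cY\bigr) \xra{~\simeq~} {\sf Homog}_i(\cX,\cY)~,
\]
sending a symmetric $i$-linear (equivalently, a 1-excisive on the $\Sigma_i$-coinvariants) functor $D$ to the reduced homogeneous functor $X \mapsto D({\sf diag}_i \circ \Sigma^\infty X)$. This is the Goodwillie classification of $i$-homogeneous functors as symmetric multilinear functors on the stabilization.

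Next I would apply this to $\cX = \Alg_n^{\sf aug}(\cV)$ and $\cY = \cV$. The hypotheses are satisfied since $\cV$ is $\ot$-stable-presentable, and $\Alg_n^{\sf aug}(\cV)$ is presentable (by Corollary 3.2.3.5 of~\cite{HA}) with zero object the trivial algebra on $0$. It remains to identify the stabilization: but this is exactly the content recalled at the end of \S\ref{stable-case}, where it is noted that the cotangent space functor $L \colon \Alg_n^{\sf aug}(\cV) \to \Mod_{\sO(n)}(\cV)$ factors canonically through an equivalence $\alpha \colon {\sf Stab}(\Alg_n^{\sf aug}(\cV)) \xra{\simeq} \Mod_{\sO(n)}(\cV)$ (by \S7.3.4 of~\cite{HA}/Proposition~2.23 of~\cite{cotangent}), so that $L \simeq \alpha \circ \Sigma^\infty$ as colimit-preserving functors.

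Substituting this identification into the general equivalence gives
\[
{\sf Poly}_1\bigl(\Mod_{\sO(n)}(\cV)^{\times i}_{\Sigma_i}, \cV\bigr) \xra{~\simeq~} {\sf Homog}_i\bigl(\Alg_n^{\sf aug}(\cV), \cV\bigr)~,
\]
sending $D \mapsto D\bigl({\sf diag}_i \circ L(-)\bigr)$, which is the claimed equivalence. Since the ingredients are imported directly, there is no real obstacle beyond bookkeeping; the only point one must verify is that $L$ does preserve enough colimits to be identified with $\Sigma^\infty$ on the level of stabilizations, which is ensured by the adjunction of Definition~\ref{def:cotangent-space} and the equivalence $\alpha$ being colimit-preserving.
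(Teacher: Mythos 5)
Your proposal is correct and follows essentially the same route as the paper: both invoke the Goodwillie classification of $i$-homogeneous functors as symmetric $i$-linear functors on the stabilization (Theorem~6.1.4.7 of~\cite{HA}) and then identify $\stab\bigl(\Alg_n^{\sf aug}(\cV)\bigr) \simeq \Mod_{\sO(n)}(\cV)$ via \S7.3.4 of~\cite{HA}, with $L$ playing the role of $\Sigma^\infty$. The paper phrases the first step as a fully faithful functor into a functor category with a described essential image rather than directly as an equivalence, but that is only a presentational difference.
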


\begin{proof} 
Theorem 6.1.4.7 of~\cite{HA} gives the following identification:
there is a canonical fully faithful functor
\[
{\sf Homog}_i\bigl(\Alg^{\sf aug}_n(\cV), \cV\bigr)\hookrightarrow \Fun\bigl(\stab(\Alg^{\sf aug}_n(\cV))^{i}_{\Sigma_i},\cV\bigr)
\]
whose essential image consists of the $\Sigma_i$-invariant functors that preserve colimits in each variable.
For $\cO$ an operad, there is the general canonical equivalence of $\infty$-categories $\stab\bigl(\Alg^{\sf aug}_\cO(\cV)\bigr)\simeq \m_{\cO(1)}(\cV)$ -- see \S7.3.4 of \cite{HA}.  Through the general equivalence above, the named expression depicts an inverse to this fully faithful functor on its essential image.
\end{proof}

\begin{cor}\label{conf-as-homog}
Let $M_\ast$ be a zero-pointed $n$-manifold, and let $i$ be a finite cardinality.
The functor
\[
\conf_i^{\sf fr}(M_\ast) \underset{\Sigma_i\wr \sO(n)}\bigotimes L(-)^{\ot i} \colon \Alg_n^{\sf aug}(\cV) \longrightarrow \cV
\]
is homogeneous of degree $i$.  

\end{cor}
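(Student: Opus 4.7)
The plan is to invoke Lemma \ref{identify-homogeneous}, which identifies the $\infty$-category of $i$-homogeneous functors $\Alg_n^{\sf aug}(\cV) \to \cV$ with that of polynomial degree-$1$ functors on $\Mod_{\sO(n)}(\cV)^{\times i}_{\Sigma_i}$, via the assignment $D \mapsto D({\sf diag}_i \circ L(-))$. It therefore suffices to exhibit a linear functor $D$ whose image under this assignment is the functor of the corollary.

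First, I construct a symmetric multilinear auxiliary
\[
\widetilde{D}\colon \Mod_{\sO(n)}(\cV)^{\times i}\longrightarrow \cV, \qquad (V_1,\dots,V_i)\longmapsto \Bigl(\conf_i^{\sf fr}(M_\ast)\underset{\sO(n)^{\times i}}\bigotimes (V_1\ot\cdots\ot V_i)\Bigr)_{\Sigma_i}~.
\]
Unpacking: $\conf_i^{\sf fr}(M_\ast)$ is a $(\Sigma_i\wr\sO(n))$-module in pointed spaces; the ordered tensor product $V_1\ot\cdots\ot V_i$ carries a natural $\sO(n)^{\times i}$-module structure; the inner tensor then inherits a residual $\Sigma_i$-action combining permutation of labels in $\conf_i^{\sf fr}(M_\ast)$ with the simultaneous permutation of tensor factors; one takes $\Sigma_i$-coinvariants. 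This construction is symmetric in the inputs, so $\widetilde{D}$ descends to $D\colon \Mod_{\sO(n)}(\cV)^{\times i}_{\Sigma_i}\longrightarrow \cV$.

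Next I verify $D$ is polynomial of degree $1$, which amounts to showing $\widetilde{D}$ preserves colimits separately in each variable. Fixing all inputs but $V_j$, the functor factors as (i) the ordered tensor product in variable $V_j$, which preserves colimits by $\ot$-presentability of $\cV$; (ii) tensor over $\sO(n)^{\times i}$ with the fixed object $\conf_i^{\sf fr}(M_\ast)$, a colimit construction; (iii) $\Sigma_i$-coinvariants, again a colimit. All three steps preserve colimits.

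Finally, evaluating on the diagonal: for $A\in\Alg_n^{\sf aug}(\cV)$,
\[
D\bigl({\sf diag}_i\circ L(A)\bigr) ~=~ \widetilde{D}\bigl(L(A),\dots,L(A)\bigr)~\simeq~ \Bigl(\conf_i^{\sf fr}(M_\ast)\underset{\sO(n)^{\times i}}\bigotimes L(A)^{\ot i}\Bigr)_{\Sigma_i}~\simeq~ \conf_i^{\sf fr}(M_\ast)\underset{\Sigma_i\wr\sO(n)}\bigotimes L(A)^{\ot i}~,
\]
where the last equivalence uses the standard factorization of $(\Sigma_i\wr\sO(n))$-coinvariants as $\sO(n)^{\times i}$-coinvariants followed by $\Sigma_i$-coinvariants. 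By Lemma~\ref{identify-homogeneous}, this functor is homogeneous of degree $i$. The main obstacle I anticipate is the coherent bookkeeping of the wreath-product equivariance in the $\infty$-categorical setting --- specifically, ensuring that the $\Sigma_i$-invariance of $\widetilde{D}$ is coherent rather than only at the level of isomorphism classes --- but this is purely formal once set up, and the substantive mathematical input is Lemma~\ref{identify-homogeneous}.
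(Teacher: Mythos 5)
Your proposal is correct and follows the paper's own proof: both arguments reduce the claim to Lemma~\ref{identify-homogeneous} by observing that the coend $\conf_i^{\sf fr}(M_\ast)\underset{\Sigma_i\wr\sO(n)}\bigotimes(-)$ arises from a $\Sigma_i$-invariant functor on $\Mod_{\sO(n)}(\cV)^{\times i}$ that preserves colimits separately in each variable. You have simply spelled out the construction of the multilinear auxiliary $\widetilde D$ and the colimit-preservation check more explicitly than the paper's two-line proof does.
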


\begin{proof}
The coend $\conf_i^{\sf fr}(M_\ast)\underset{\Sigma_i\wr \sO(n)}\bigotimes -\colon \Mod_{\sO(n)}(\cV)^{i} \to \cV$ preserves colimits and is $\Sigma_i$-invariant, so we can apply Lemma~\ref{identify-homogeneous}.  
\end{proof}

\begin{proof}[Proof of Theorem~\ref{goodwillie-layers}]
Through Lemma~\ref{identify-homogeneous}, the $i$-homogeneous layer of the Goodwillie cofiltration of the functor $\int_{M_\ast}$ is a symmetric functor of $i$-variables of $\sO(n)$-modules. 
To identify this multi-variable functor we evaluate on free augmented $n$-disk algebras. 
Through Corollary~\ref{free-greater-k}, there is a canonical identification
\[
\Ker\Bigl(P_i\int_{M_\ast}\FF V\to P_{i-1}\int_{M_\ast}\FF V\Bigr)~\simeq~ 
\conf_i^{\sf fr}(M_\ast)\underset{\Sigma_i\wr \sO(n)}\bigotimes V^{\ot i}
\]
functorially in the $\sO(n)$-module $V$.  
This verifies the theorem in this free case because of the canonical equivalence $L\FF V \simeq V$ of Lemma~\ref{LF}.
\end{proof}

\subsection{Support for Theorem~\ref{compare-towers} (comparing cofiltrations)}\label{sec.compare}
In this subsection we fix a $\ot$-stable-presentable symmetric monoidal $\infty$-category $\cV$.

We use the following result, which generalizes Lemma~\ref{free-trivial} away from the case of free algebras, at the level of $\sO(n)$-modules.
\begin{theorem}[Corollary 2.29 of~\cite{cotangent}]\label{L-bar}
There is a canonical equivalence between functors $\Alg_n^{\sf aug}(\cV) \to \Mod_{\sO(n)}(\cV)$
\[
(\RR^n)^+ \ot L(-)~\simeq~ {\sf cKer}^{\sf aug}\bigl(\bBar(-)\bigr)~.
\]

\end{theorem}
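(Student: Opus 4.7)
The plan is to reduce the equivalence to the case of free augmented $n$-disk algebras---where it follows from the explicit calculations of Lemma~\ref{free-trivial} together with the identification $L\FF\simeq \id$ of Lemma~\ref{LF}---and then propagate by a monadic free resolution, using that both sides of the claimed equivalence preserve sifted colimits.

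First I would verify that both $(\RR^n)^+ \ot L(-)$ and ${\sf cKer}^{\sf aug}\bBar(-)$ define functors $\Alg_n^{\sf aug}(\cV)\to \Mod_{\sO(n)}(\cV)$ that preserve sifted colimits. For the former, the cotangent space $L$ is a left adjoint by Definition~\ref{def:cotangent-space}, while $(\RR^n)^+ \ot -$ preserves all colimits by the $\ot$-presentability hypothesis on $\cV$. For the latter, the bar construction $\bBar$ is a left adjoint by the adjunctions of Definition~\ref{def:fact-homology}, and ${\sf cKer}^{\sf aug}$ arises by composing restriction to $\Disk_n^{+,\leq 1}$ with the stable equivalence $\cV_{\uno//\uno}\simeq \cV$, both colimit-preserving.

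Next I would compute both functors on free algebras. For any $\sO(n)$-module $V$ in $\cV_{\uno//\uno}$, Lemma~\ref{LF} gives $L\FF V \simeq V$, so $(\RR^n)^+ \ot L\FF V \simeq (\RR^n)^+ \ot V$ as a diagonal $\sO(n)$-module. Lemma~\ref{free-trivial} provides a natural equivalence $\bBar\FF V \simeq {\sf t^{aug}_{\cAlg}}\bigl((\RR^n)^+\ot V\bigr)$; the underlying $\sO(n)$-module of the trivial augmented coalgebra on $W$ is $\uno \oplus W$, with cokernel of the coaugmentation equal to $W$, hence ${\sf cKer}^{\sf aug}\bBar\FF V \simeq (\RR^n)^+ \ot V$. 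Both equivalences are natural in $V$, and together they exhibit a natural equivalence of functors $\Mod_{\sO(n)}(\cV) \to \Mod_{\sO(n)}(\cV)$:
\[
(\RR^n)^+ \ot L\FF(-) ~\xra{~\simeq~}~ {\sf cKer}^{\sf aug}\bBar\FF(-).
\]

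Finally I would invoke the monadic bar resolution from Lemma~\ref{free-resolution1}, which identifies every augmented $n$-disk algebra $A$ with the geometric realization $|(\FF U)^{\bullet+1}A|$ of a sifted-indexed simplicial diagram of free algebras. Since both functors preserve sifted colimits and the termwise equivalence on free algebras of the previous step is natural in $V$, it assembles into an equivalence of the two simplicial objects, whose geometric realization is the sought equivalence on $A$. The main technical obstacle is making this final coherence precise: the face and degeneracy maps of the bar resolution come from the unit and counit of the monad $U\FF$, and one must verify that the equivalences of the previous step respect these compatibilities at all higher levels. Equivalently---and more cleanly at the level of $\infty$-categories---both functors arise as left Kan extensions along $\FF\colon \Mod_{\sO(n)}(\cV)\to \Alg_n^{\sf aug}(\cV)$ of their common restriction, so the universal property of left Kan extension delivers the natural equivalence automatically from Step 2.
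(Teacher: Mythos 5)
The paper does not prove this result; it imports it verbatim as Corollary 2.29 of \cite{cotangent}. So your proof is necessarily ``a different route from the paper'' in the most literal sense. The reduction to free algebras via Lemma~\ref{LF} and Lemma~\ref{free-trivial} is exactly the right starting point and almost certainly mirrors the strategy behind the cited result.

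However, there is a genuine gap in Step~3, and the final ``left Kan extension'' fix does not repair it. You correctly observe that the termwise equivalence from Step~2 is natural in $V\in\Mod_{\sO(n)}(\cV)$, i.e., it is an equivalence of functors $G_1\FF\simeq G_2\FF$ out of $\Mod_{\sO(n)}(\cV)$. But to propagate along the monadic bar resolution $(\FF U)^{\bullet+1}A$, you need an equivalence of the \emph{restrictions to free algebras} $G_1|_{\Free}\simeq G_2|_{\Free}$, i.e., naturality with respect to all algebra morphisms $\FF V\to\FF W$ (Kleisli morphisms $V\to U\FF W$), not just those in the image of $\FF$. These are strictly more data: the outermost face map of the bar resolution is the counit $\epsilon_{\FF}\colon \FF U\FF\to\FF$, which is \emph{not} $\FF$ applied to any $\sO(n)$-module map, so the simplicial object does not factor through $\FF$ and the pointwise equivalence does not automatically assemble. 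Moreover, the proposed fix is categorically incorrect: since $\Alg_n^{\sf aug}(\cV)(\FF V, A)\simeq \Mod_{\sO(n)}(\cV)(V, UA)$, the comma category $(\FF\downarrow A)$ has terminal object $(UA,\epsilon_A)$, hence $\LKan_\FF(H)\simeq H\circ U$ for any $H$, which is not $G$. (The relevant Kan extension is along the fully faithful inclusion $\iota\colon\Free\hookrightarrow\Alg_n^{\sf aug}(\cV)$, whose density is what actually makes a free-resolution argument work---but that still requires the $\Free$-level equivalence, not merely the $\Mod$-level one.) The honest route is to construct a natural transformation $\alpha\colon (\RR^n)^+\ot L(-)\to{\sf cKer}^{\sf aug}\bBar(-)$ (or its inverse) on all of $\Alg_n^{\sf aug}(\cV)$ from universal properties first---e.g., using that both functors are $1$-excisive, vanish on $\uno$, and that $L\simeq\Sigma^\infty$ identifies the target as the stabilization per \S\ref{stable-case}---and only then check it is an equivalence on free algebras; at that point, sifted-colimit preservation plus Lemma~\ref{free-resolution1} closes the argument. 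As it stands, the proposal supplies the easy pointwise verification but omits the construction of the comparison map, which is the actual mathematical content.
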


\begin{cor}\label{conf-PD}
For each zero-pointed $(ni)$-manifold $P_\ast$, equipped with a $\Sigma_i\wr\sO(n)$-structure, there is a canonical equivalence of functors $\Alg_n^{\sf aug}(\cV) \to \cV$
\[
{\sf Fr}_{P_\ast} \underset{\Sigma_i\wr \sO(n)}\bigotimes L(-)^{\ot i} \xra{~\simeq~} \Map^{\Sigma_i\wr \sO(n)}({\sf Fr}_{P_\ast^\neg} , \bBar(-)^{\ot i}\bigr)~.
\]

\end{cor}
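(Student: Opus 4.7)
The plan is to combine Theorem~\ref{L-bar} with the $B$-structured version of Linear Poincar\'e duality~(Theorem~\ref{linear-PD}, Remark~\ref{PD-B}), taking $B = \sB\bigl(\Sigma_i\wr \sO(n)\bigr)$ and regarding $P_\ast$ as a $B$-structured zero-pointed $(ni)$-manifold.

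First, I would upgrade the equivalence of Theorem~\ref{L-bar} to a $\Sigma_i\wr \sO(n)$-equivariant statement. The block-sum homomorphism $\Sigma_i\wr \sO(n)\to \sO(ni)$ induces a canonical identification of $\sO(ni)$-modules
\[
\bigl((\RR^n)^+\bigr)^{\smash i}~\simeq~(\RR^{ni})^+~,
\]
where the source carries the $\Sigma_i\wr\sO(n)$-action by permutation of smash factors together with the diagonal $\sO(n)$-action. Taking the $i$-fold tensor power (with the given symmetric monoidal structure of $\cV$) of the natural equivalence $(\RR^n)^+\ot L(-) \simeq {\sf cKer}^{\sf aug}\bigl(\bBar(-)\bigr)$ of Theorem~\ref{L-bar} thus produces a canonical equivalence of $\Sigma_i\wr\sO(n)$-modules in $\cV$
\[
\alpha_A \colon (\RR^{ni})^+ \ot L(A)^{\ot i} ~\xra{~\simeq~}~ \bBar(A)^{\ot i}~,
\]
which is functorial in $A\in \Alg_n^{\sf aug}(\cV)$. (Here I implicitly use the equivalence~(\ref{w/o-units}) to pass between $\cV_{\uno//\uno}$ and $\cV$.)

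Next, I would apply the $B$-structured Linear Poincar\'e duality of Remark~\ref{PD-B} with this $\alpha_A$, the role of $n$ played by $ni$, and $M_\ast$ replaced by the conically finite $\sB(\Sigma_i\wr\sO(n))$-structured zero-pointed $(ni)$-manifold $P_\ast$. The output is a canonical equivalence in $\cV$
\[
(\alpha_A)_{P_\ast}\colon {\sf Fr}_{P_\ast}\underset{\Sigma_i\wr\sO(n)}\bigotimes L(A)^{\ot i} ~\xra{~\simeq~}~ \Map^{\Sigma_i\wr\sO(n)}\bigl({\sf Fr}_{P_\ast^\neg},\bBar(A)^{\ot i}\bigr)~,
\]
which is functorial in $A$ by naturality of the Linear Poincar\'e duality construction in the coefficient morphism $\alpha$. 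This is the asserted equivalence.

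The main technical point to be careful about is the $\Sigma_i\wr\sO(n)$-equivariance of the tensor power $\alpha_A$: one must ensure that the natural transformation of Theorem~\ref{L-bar} is genuinely $\sO(n)$-equivariant at the level of functors $\Alg_n^{\sf aug}(\cV)\to \Mod_{\sO(n)}(\cV)$, so that the $i$-fold tensor power carries the full wreath-product equivariance rather than merely the $\sO(n)^{\times i}$-action. Once this is in hand, the $\Sigma_i$-equivariance of the identification $((\RR^n)^+)^{\smash i}\simeq (\RR^{ni})^+$ and the symmetric monoidality of $\ot$ in $\cV$ give the required compatibility, and invoking Remark~\ref{PD-B} is then a direct application.
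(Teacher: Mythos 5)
Your proof is correct and follows essentially the same route as the paper: take the $i$-fold tensor power of the equivalence from Theorem~\ref{L-bar} to obtain a $\Sigma_i\wr\sO(n)$-equivariant coefficient equivalence $(\RR^{ni})^+\ot L(-)^{\ot i}\simeq\bigl({\sf cKer}^{\sf aug}\bBar(-)\bigr)^{\ot i}$, then feed this into the $\sB\bigl(\Sigma_i\wr\sO(n)\bigr)$-structured form of Linear Poincar\'e duality (Theorem~\ref{linear-PD} via Remark~\ref{PD-B}) applied to $P_\ast$. Your extra remarks on the block-sum identification and on the genuine wreath-product equivariance of the tensor power are exactly the points the paper implicitly elides, and they are correctly resolved.
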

\begin{proof}
Theorem~\ref{L-bar} gives a canonical equivalence among $\Sigma_i\wr \sO(n)$-modules in $\cV$:
\[
(\RR^{ni})^+ \ot L(-)^{\ot i} \simeq \bigl((\RR^n)^+ \ot L(-)\bigr)^{\ot i} \simeq  \bigl({\sf cKer}^{\sf aug}(\bBar(-))\bigr)^{\ot i}~.
\]
The result is a direct corollary of Theorem~\ref{linear-PD}, according to the $\sB\bigl(\Sigma_i\wr \sO(n)\bigr)$-structured version of Remark~\ref{PD-B}.  
\end{proof}

\begin{cor}\label{tau-polonomial}
For each finite cardinality $i$, the functor $\tau^{\leq i}\int_{M_\ast^\neg} \bBar\colon \Alg_n^{\sf aug}(\cV) \to \cV$ is polynomial of degree $i$.  

\end{cor}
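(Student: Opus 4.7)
The plan is to induct on $i$, using the cardinality cofiltration of factorization cohomology established in Theorem~\ref{truncation-quotients} applied to the coalgebra $C = \bBar A$, together with the Poincar\'e-duality identification of the layers provided by Corollary~\ref{conf-PD}.

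For the base case $i = 0$, the $\infty$-category $\Disk^{+,\leq 0}(M_\ast^\neg)$ is equivalent to a point (the object corresponding to the empty configuration $B \hookrightarrow M_\ast$ containing only the zero-point), so $\tau^{\leq 0}\int^{M_\ast^\neg}\bBar A \simeq \bBar(A)(\ast) \simeq \uno$ is the constant functor at the symmetric monoidal unit. A constant functor is polynomial of every degree, in particular of degree $0$.

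For the inductive step, assume $\tau^{\leq i-1}\int^{(-)^\neg}\bBar$ is polynomial of degree $i-1$ as a functor $\Alg_n^{\sf aug}(\cV) \to \cV$. By (the cohomological half of) Theorem~\ref{truncation-quotients}, there is a fiber sequence of functors
\[
\Map^{\Sigma_i\wr \sO(n)}\bigl(\conf_i^{\neg,{\sf fr}}(M_\ast),\bBar(-)^{\ot i}\bigr) \longrightarrow \tau^{\leq i}\int^{M_\ast^\neg}\bBar(-) \longrightarrow \tau^{\leq i-1}\int^{M_\ast^\neg}\bBar(-)~.
\]
Applying Corollary~\ref{conf-PD} to the $\Sigma_i\wr \sO(n)$-structured zero-pointed $(ni)$-manifold $P_\ast = \conf_i^\neg(M_\ast)$, the fiber is canonically equivalent to
\[
\conf_i^{\sf fr}(M_\ast)\underset{\Sigma_i\wr \sO(n)}\bigotimes L(-)^{\ot i}~,
\]
which is homogeneous of degree $i$ by Corollary~\ref{conf-as-homog}, and in particular polynomial of degree $i$.

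To finish, one invokes the standard closure property: the full subcategory ${\sf Poly}_i(\Alg_n^{\sf aug}(\cV),\cV)\subset \Fun_0(\Alg_n^{\sf aug}(\cV),\cV)$ is closed under finite limits and extensions, since it is the kernel of the colocalization onto ${\sf Poly}_{i-1}$ up to $i$-homogeneous pieces; concretely, in a fiber sequence $F'\to F\to F''$ with $F'$ polynomial of degree $i$ and $F''$ polynomial of degree $i-1$, the middle term $F$ is polynomial of degree $i$ because sending a strongly coCartesian $(i+1)$-cube to a Cartesian one is a condition preserved under taking fibers in the target. Applying this to the displayed fiber sequence yields the claim. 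The only genuine content is therefore the identification of the layer as an $i$-homogeneous functor, which has already been accomplished via Theorem~\ref{L-bar} through Corollary~\ref{conf-PD}; the rest is formal manipulation of Goodwillie's polynomial cofiltration.
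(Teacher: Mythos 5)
Your proof is correct and takes essentially the same approach as the paper's: identify the layer of the cardinality cofiltration via Theorem~\ref{truncation-quotients} and Corollary~\ref{conf-PD}, recognize it as $i$-homogeneous via Corollary~\ref{conf-as-homog}, and then induct using the closure of ${\sf Poly}_i$ under fiber sequences (which holds since Cartesian-ness of cubes is a finite-limit condition in the stable target). Two minor inaccuracies worth noting but not affecting the argument: $\tau^{\leq 0}\int^{M_\ast^\neg}\bBar A \simeq \uno$ only when $[M_\ast]$ is a single point --- if $M_\ast$ has several components the indexing category $\Disk^{+,\leq i}(M_\ast^\neg)$ is empty for small $i$ so $\tau^{\leq i}$ is the zero object, still trivially polynomial; and the gloss ``kernel of the colocalization onto ${\sf Poly}_{i-1}$'' is garbled (the correct justification is the one you give ``concretely'': the total fiber functor on cubes is exact, so Cartesian cubes are closed under fiber sequences).
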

\begin{proof}
Through Corollary~\ref{conf-PD}, Corollary~\ref{conf-as-homog} implies $\Map^{\Sigma_i\wr \sO(n)}({\sf Fr}_{P_\ast^\neg} , \bBar(-)^{\ot i}\bigr)$ is homogeneous of degree $i$.  
The result then follows by induction using the fibration sequence of Theorem~\ref{truncation-quotients}.
\end{proof}

\begin{proof}[Proof of Theorem~\ref{compare-towers}]
Let $M_\ast$ be a zero-pointed $n$-manifold with $M$ connected.
Let $i$ be a finite cardinality.
Corollary~\ref{tau-polonomial} asserts that the functor $\tau^{\leq i}\int^{M_\ast^\neg} \bBar \colon \Alg_n^{\sf aug}(\cV) \to \cV$ is polynomial of degree $i$.  
The morphism of cofiltrations $P_\bullet \int_{M_\ast} \to \tau^{\leq \bullet}\int^{M_\ast^\neg} \bBar$ follows through the universal property of the Goodwillie cofiltration.
There results a morphism of $i$-homogeneous layers:
\[
\Ker\Bigl(P_i \int_{M_\ast} \to P_{i-1}\int_{M_\ast}\Bigr) 
~{}~\longrightarrow~{}~
\Ker\Bigl(\tau^{\leq i}\int^{M_\ast^\neg} \bBar \to \tau^{\leq i-1}\int^{M_\ast^\neg} \bBar \Bigr)~.
\]
Through Theorem~\ref{truncation-quotients} and Theorem~\ref{goodwillie-layers}, this morphism is canonically equivalent to the morphism of functors
\begin{equation}\label{conf-comparison}
\conf_i^{\sf fr}(M_\ast)\underset{\Sigma_i\wr \sO(n)}\bigotimes L(-)^{\ot i}  ~{}~\longrightarrow~{}~ \Map^{\Sigma_i\wr \sO(n)}\bigl(\conf_i^{\neg, {\sf fr}}(M_\ast^\neg),\bBar(-)^{\ot i}\bigr) ~.
\end{equation}
This is the arrow of Corollary~\ref{conf-PD} applied to the $\sB\bigl(\Sigma_i\wr \sO(n)\bigr)$-structured zero-pointed manifold $\conf_i(M_\ast)$ of Proposition~\ref{conf-stuff}, and so the arrow is an equivalence.  
\end{proof}

\section{Factorization homology of formal moduli problems}

In this section, we will be concerned with a generalization of factorization homology which allows for a more general coefficient system, a moduli functor of $n$-disk algebras (as in the works on derived algebraic geometry \cite{dag10}, \cite{toenvezzosi}, and \cite{thez}).

\begin{definition}[Linear dual]\label{def:linear-dual}
Let $\cV$ be a symmetric monoidal $\infty$-category with $\cV$ presentable.
There is a functor $\cV^{\op} \to \Psh(\cV)$ given by $c\mapsto \cV(-\ot c, \uno)$, morphisms to the symmetric monoidal unit.  
Should $\cV$ be $\ot$-presentable, this functor canonically factors through the Yoneda embedding as a functor
\[
(-)^\vee \colon \cV^{\op} \to \cV
\]
which we refer to as \emph{linear dual}.  
\end{definition}

We choose to simplify and matters by restricting our generality to a familiar setting.
\begin{convention}[$\Ch_\Bbbk^\ot$]
Henceforward, we fix a field $\Bbbk$ and, unless otherwise stated, work over the background symmetric monoidal $\infty$-category 
$
\Ch_\Bbbk^\ot 
$
of chain complexes over $\Bbbk$ -- its equivalences are quasi-isomorphisms.  
With tensor product it becomes a $\ot$-stable-presentable symmetric monoidal $\infty$-category, and it is endowed with a standard t-structure.  
Our choice to work over a field is for the basic but fundamental property that the duality functor exchanges $i$-connected and $(-i)$-coconnected objects.

\begin{notation}
Let $i$ be an integer.
The full $\oo$-subcategories $\Ch_\Bbbk^{\geq i}\subset \Ch_\Bbbk \supset \Ch_\Bbbk^{\leq i}$ consist of those chain complexes whose non-zero homology has the indicated degree bounds.

We simplify the notation
\[
\Alg_n^{\sf nu} := \Alg_n^{\sf nu}(\Ch_\Bbbk)\underset{\mathrm{Prop}~\ref{not-augmented}}\simeq  \Alg_n^{\sf aug}(\Ch_\Bbbk) =: \Alg_n^{\sf aug}~.
\]
We denote the full $\oo$-subcategories 
$
\Alg_n^{{\sf nu},\geq i} \subset \Alg_n^{\sf nu}\supset \Alg_n^{{\sf nu},\leq i}
$
consisting of those non-unital $n$-disk algebras whose underlying chain complex lies in $\Ch_\Bbbk^{\geq i}$ and $\Ch_\Bbbk^{\leq i}$, respectively. We likewise denote the full $\oo$-subcategories 
$
\Alg_n^{{\sf aug},\geq i} \subset \Alg_n^{\sf aug}\supset \Alg_n^{{\sf aug},\leq i}
$
consisting of those augmented $n$-disk algebras $A$ whose associated non-unital algebra lies in $\Alg_n^{\sf nu,  \geq i}$ and $\Alg_n^{\sf nu, \leq i}$, respectively.

\end{notation}

\end{convention}

\subsection{Formal moduli}
We begin with a few essential notions from derived algebraic geometry of $n$-disk algebras.

\begin{definition}[$\Perf_\Bbbk$]\label{perf-triv}
The full $\oo$-subcategory $\Perf_\Bbbk \subset \Ch_\Bbbk$ consisting of those chain complexes $V$ over $\Bbbk$ for which the $\Bbbk$-module $\underset{q\in \ZZ} \bigoplus \sH_qV$ is finite rank over $\Bbbk$.  
The intersection $\Perf_\Bbbk^{\geq i}:= \Perf_\Bbbk \cap \Ch_\Bbbk^{\geq i}$ consists of those perfect complexes whose homology vanishes below dimension $i$.
\end{definition}

\begin{definition}[${\sf Triv}_n$ and $\Artin_n$]\label{triv-n}
The full $\oo$-subcategory $\Triv_n\subset \Alg_n^{\sf nu,\geq 0}$ is the essential image of $\Mod_{\sO(n)}(\Perf_\Bbbk^{\geq 0})$ under the functor that assigns a complex the associated trivial algebra -- it consists of trivial connective non-unital $n$-disk algebras whose underlying $\sO(n)$-module is a perfect chain complex.
The $\oo$-category $\Artin_n$ of non-unital Artin $n$-disk algebras is the smallest full $\oo$-subcategory of $\Alg_n^{\sf nu,\geq 0}$ that contains $\Triv_n$ and is closed under small extensions. That is:
\begin{itemize}
\item[~]
If $B$ is in $\Artin_n$, $V$ is in $\Triv_n$, and the following diagram
\[
\xymatrix{
A   \ar[d]     \ar[r]
&
B   \ar[d]
\\
0   \ar[r] 
&
V
}
\] 
forms a pullback square in $\Alg_n^{\sf nu, \geq 0}$, then $A$ is in $\Artin_n$.

\end{itemize}
\end{definition}

\begin{remark} One might also justifiably calls these {\it local} non-unital Artin algebras, but for economy we omit this extra adjective. In \cite{dag10}, Lurie uses the terminology {\it small} for an equivalent definition in the case of $n$-disk algebras with trivializations of the tangent bundles, i.e., $\cE_n$-algebras.
\end{remark}

\begin{definition}[Moduli functor]  
The $\infty$-category of formal $n$-disk moduli functors 
\[
\FModuli_n:= \Fun\bigl(\Artin_n, \Spaces\bigr)
\]
is the $\infty$-category of copresheaves on non-unital Artin $n$-disk algebras.  
\end{definition}

\begin{remark}
To obtain a workable geometric theory of formal moduli functors such as that of \cite{dag10}, \cite{toenvezzosi}, or \cite{hinich}, one should restrict to those presheaves that satisfy some gluing condition, such as preserving limits of small extensions (after \cite{schlessinger}). We will not use these conditions, so for simplicity of presentation we omit them.
\end{remark}

\begin{example}[Formal spectrum]\label{def:spf} 
The formal spectrum ${\sf Spf}$ is the composite functor
\[
\bigl(\Alg_n^{ \geq 0, {\sf aug}}\bigr)^{\op} \longrightarrow \Psh(\Alg_n^{{\sf aug},\geq 0}) \longrightarrow \FModuli_n
\]
of Yoneda followed by restriction.
Its values are given by ${\sf Spf}(A) \colon R\mapsto \Alg_n^{\sf nu}(A,R)$, which we refer to as the \emph{formal spectrum of $A$}.  

\end{example}

We choose to conceptually simplify our duality formalism by using linear duals and never consider coalgebras.
\begin{definition}\label{def:D}
The Koszul duality functor is the composite
\[
\DD^n \colon (\Alg_n^{\sf aug})^{\op} \xra{\bBar^{\op}} (\cAlg_n^{\sf aug})^{\op} \xra{~(-)^\vee~} \Alg_n^{\sf aug}
\]
which is $\bBar$ followed by linear dual.  

\end{definition}

\begin{definition}\label{def:MC}
For an Artin $n$-disk algebra $R$ and an augmented $n$-disk algebra $A$ over a field $\Bbbk$, the Maurer--Cartan space \[\MC_{A}(R) :=\Alg_n^{\sf aug}(\DD^nR, A)\] is the space of maps from the Koszul dual of $R$ to $A$. The Maurer--Cartan functor $\MC \colon \Alg_n^{\sf aug} \to \FModuli_n$ is the adjoint of the pairing
\[
\Artin_n \times\Alg_n^{\sf aug} \longrightarrow \Alg_{n}^{\sf aug,\op} \times \Alg_n^{\sf aug} \longrightarrow \Spaces
\]
where the first functor is $\DD^n \times \id$ and the second functor is the mapping space; $\MC$ sends $A$ to the functor $\MC_A$.
\end{definition}

\begin{remark} 
Our definition of the moduli functor $\MC_A$ has the same form as that given by Lurie in~\cite{dag10}, there denoted $\Psi A$.
Our construction of the functor $\DD^n$, on the other hand, is somewhat different: we use the geometry of zero-pointed manifolds, whereas Lurie uses twisted arrow categories. 
To verify that these two constructions agree requires a relationship between twisted arrow categories and zero-pointed manifolds. 
We defer this problem to a separate work.
\end{remark}

The formal spectrum functor ${\sf Spf}$ has a right adjoint.
\begin{definition}[Algebra of functions]\label{def:global-sections}
The \emph{$n$-disk algebra of functions} functor
$
\cO\colon \FModuli_n \ra (\Alg_n^{\sf aug})^{\op}
$
is given as
\[
\cO(X) :=~{}~ \limit_{({\sf Spf}(R) \to X)\in \bigl((\Artin_n^{\op})_{/X}\bigr)^{\op}} R~{}~ =~{}~ \limit\Bigl(\bigl((\Artin_n^{\op})_{/X}\bigr)^{\op} \to \Artin_n \to \Alg_n^{\sf aug}\Bigr)~.
\]
Equivalently, the functor $\cO$ is the right Kan extension of the inclusion $(\Artin_n)^{\op} \hookrightarrow (\Alg_n^{\sf aug})^{\op}$ along the functor $\Spf$:
\[
\xymatrix{
(\Artin_n)^{\op}    \ar[d]_{\Spf}   \ar[rr]
&&
(\Alg_{n}^{\sf aug})^{\op}
\\
\FModuli_n  \ar@{-->}[urr]_-{\cO}
&
&
.
}
\]

\end{definition}

The Maurer--Cartan functor is a lift of the duality functor $\DD^n$.
Namely, there is a canonical equivalence 
\[
\DD^n A ~\simeq ~\cO(\MC_A)
\] 
between the Koszul dual of $A$ and the augmented $n$-disk algebra of functions on the Maurer--Cartan functor of $A$. This can be seen as a special case of Theorem \ref{main} for $\ov{M}=\DD^n$, the closed $n$-disk.

\subsection{Factorization homology with formal moduli coefficients}
We have a notion of factorization homology with coefficients in a formal $n$-disk moduli functor.

\begin{definition}[Factorization homology with formal moduli]\label{def:fact-moduli}
We extend factorization homology to formal moduli functors 
\[
\xymatrix{
\Artin_n   \ar[d]_{\Spf}    \ar[rr]^-{\int}
&&
\Fun(\ZMfld_n, \Ch)
\\
\FModuli_n^{\op}   \ar@{-->}[urr]_-{\int}
}
\]
as the right Kan extension of $\int$ along ${\sf Spf}$, denoted with the same symbol.
Explicitly, the factorization homology of a zero-pointed $n$-manifold $M_\ast$ with coefficients in a formal $n$-disk moduli functor $X$ is
\[
\int_{M_\ast} X :=~{}~ \limit_{({\sf Spf}(R) \to X)\in \bigl((\Artin_n^{\op})_{/X}\bigr)^{\op}} \int_{M_\ast} R~{}~ =~{}~ \limit\Bigl(\bigl((\Artin_n^{\op})_{/X}\bigr)^{\op} \to \Artin_n \xra{\int_{M_\ast}} \Ch_\Bbbk\Bigr)~.
\]

\end{definition}

\begin{remark} 
One can think of $\int_{M_\ast}X$ as $\Gamma(X, \int_{M_\ast}\cO)$, the global sections of the sheaf on $X$ given by calculating the factorization homology of $M_\ast$ with coefficients in the structure sheaf of $X$.
The canonical arrow $\int_{M_\ast} X \to \int_{M_\ast} \cO(X)$ is typically not an equivalence unless $X$ is affine; the arrow can be regarded as a type of completion.
Likewise, unless a formal moduli functor $X$ is affine, factorization homology $\int_{M_\ast} X$ will typically fail to satisfy $\ot$-excision and the Goodwillie tower for factorization homology will typically fail to converge.  
\end{remark}

We now state our main theorem.

\begin{theorem}[Poincar\'e/Koszul duality for formal moduli]\label{main} 
For $\Bbbk$ a field, there is a canonical equivalence of functors $\Alg_n^{\sf aug} \to\Fun\bigl(\ZMfld_n,\Ch_\Bbbk)$,
\[
\Bigl(\int_{(-)}~{}~\Bigr)^\vee ~{}~\simeq~{}~ \int_{(-)^\neg} \MC~.
\]
In particular, for each augmented $n$-disk algebra $A$ in chain complexes over $\Bbbk$, and each $n$-dimensional cobordism $\ov{M}$ with boundary $\partial \ov{M} = \partial_{\sL}\coprod \partial_{\sR}$, there is a canonical equivalence of chain complexes over $\Bbbk$
\[
\Bigl(\int_{\ov{M}\smallsetminus \partial_{\sL}} A\Bigr)^\vee~ \simeq~ \int_{\ov{M}\smallsetminus \partial_{\sR}} \MC_A
\]
between the linear dual of the factorization homology with coefficients in $A$, and the factorization homology with coefficients in the Maurer--Cartan moduli functor of $A$.

\end{theorem}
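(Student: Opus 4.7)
The plan is to construct the Poincar\'e/Koszul duality comparison map and verify it is an equivalence by reducing to the connective/coconnective case of Theorem \ref{summ} via Koszul involutivity on Artin algebras. By Definition \ref{def:fact-moduli}, $\int_{M^\neg}\MC_A$ is computed as the limit $\varprojlim_{(R,\alpha)}\int_{M^\neg}R$ over pairs $(R,\alpha)$ with $R\in\Artin_n$ and $\alpha\colon\DD^n R\to A$ a morphism of augmented $n$-disk algebras. For each such pair I form the natural composite
\[
\bigl(\int_M A\bigr)^\vee \xra{(\int_M\alpha)^\vee} \bigl(\int_M\DD^n R\bigr)^\vee \xra{~\simeq~} \int_{M^\neg} R,
\]
where the second arrow is the equivalence supplied by Theorem \ref{summ} applied to the finitely presented $(-n)$-coconnective algebra $\DD^n R$, composed with Koszul involutivity $\DD^n\DD^n R\simeq R$ on $\Artin_n$ (an intermediate ``Koszul duality proper'' result, equivalencing $\Artin_n$ with finitely presented $(-n)$-coconnective $n$-disk algebras via $\DD^n$, as promised in the Section 3 outline). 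Naturality of these composites in $(R,\alpha)$ assembles them into the desired comparison transformation $(\int_M A)^\vee\to\int_{M^\neg}\MC_A$.

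To verify equivalence, I would exhibit the left-hand side in a parallel limit presentation. Using Koszul involutivity to identify $\Artin_n$ with its image under $\DD^n$, the indexing for $\MC_A$ becomes equivalent to the category of finitely presented $(-n)$-coconnective algebras mapping to $A$. My plan is then to show that any augmented $n$-disk algebra $A$ is a colimit in $\Alg_n^{\sf aug}$ of such finitely presented $(-n)$-coconnective probes; once established, sifted-colimit continuity of $\int_M$ (Lemma \ref{sifted-resolutions}) converts this into $\int_M A\simeq\colim_{(R,\alpha)}\int_M\DD^n R$, and linear dualization converts the colimit to the limit $\varprojlim_{(R,\alpha)}(\int_M\DD^n R)^\vee$, which by the pointwise equivalence of the previous paragraph is identified with $\int_{M^\neg}\MC_A$. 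This realizes the comparison map as a limit of equivalences.

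The main obstacle is precisely this density claim: showing that every augmented $n$-disk algebra $A$ is a suitable sifted colimit of Koszul duals $\DD^n R$ of Artin $n$-disk algebras. This is the substitute, in the Koszul-dual setting, for the density of free algebras (Lemma \ref{free-resolution1}), and is essential for extending Theorem \ref{summ} from the coconnective/connective class to arbitrary $A$---where, by Theorem \ref{convergence}, the Goodwillie tower may genuinely fail to converge, so that $(\int_M A)^\vee$ carries non-perturbative information that must nevertheless be recovered by Artin probes. Once this density, together with the Koszul involutivity $\DD^n\DD^n\simeq\id$ on $\Artin_n$, is in place, the remainder of the argument is a formal assembly of pointwise equivalences supplied by Theorem \ref{summ}, and the theorem follows.
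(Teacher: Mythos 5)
Your proposal tracks the paper's proof essentially exactly: it identifies the same three ingredients (Koszul involutivity $\DD^n\DD^n\simeq\id$ between $\Artin_n$ and $\FPres_n^{\leq -n}$, which is Theorem~\ref{fpres}; the affine/coconnective case, which is Theorem~\ref{affine}; and density of finitely presented $(-n)$-coconnective probes, which is Proposition~\ref{resolve} built from the strong-generation Lemma~\ref{free-resolution}), and it correctly flags the density claim as the main technical obstacle and the rest as a formal assembly of dualized pointwise equivalences. The only cosmetic difference is that you phrase the density step as ``$A$ is a sifted colimit of $\DD^n R$'s'' and then apply sifted-cocontinuity of $\int_{M_\ast}$, while the paper packages this as strong generation (equivalently, $\colim_{F\in(\fpres_n^{\leq -n})_{/A}}F\to A$ an equivalence), applied directly at the level of factorization homology---these are the same content.
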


\begin{remark} 
Let $A$ be an augmented $n$-disk algebra in chain complexes over $\Bbbk$, and let $M$ be a closed $(n-d)$-manifold.
Taking products with $d$-dimensional Euclidean spaces defines augmented $d$-disk algebras $\int_{M_+\wedge-} A\colon \Disk_{d,+}\to \Ch_\Bbbk^\ot$ and $\int_{M_+\wedge -} \MC_A\colon \Disk_{d,+} \to \Ch_\Bbbk^\ot$. Combining factorization homology and our Koszul dual functor $\DD^d$, there is an equivalence
\[
\DD^d\Bigl(\int_{M_+\wedge\RR^d_+}A\Bigr)~\simeq~ \Bigl(\int_{(\RR^d)^+}\int_{M_+\wedge\RR^d_+}A\Bigr)^\vee~.
\]
Our result then specializes to a canonical equivalence of augmented $d$-algebras
\[
\DD^d\Bigl(\int_{M_+\wedge\RR^d_+}A\Bigr)~\simeq~\int_{M_+\wedge\RR^d_+} \MC_A~.
\]

\end{remark}

We now prove our main theorem, making use of the three results which will be developed in the coming subsections.

\begin{proof}[Proof of Theorem \ref{main}]
Let $A$ be an augmented $n$-disk algebra in chain complexes over $\Bbbk$, and let $M_\ast$ be a zero-pointed $n$-manifold.
We establish the following diagram of canonical equivalences in $\Ch_\Bbbk$, each natural in all of their arguments:
\[
\xymatrix{
\Bigl(\displaystyle\int_{M_\ast} A\Bigr)^\vee  \ar[d]_-{\text{Prop}~\ref{resolve}}^\simeq
&&
\displaystyle\int_{M_\ast^\neg} \MC_A  \ar@{=}[d]^-{\text{Def}~\ref{def:fact-moduli}}   \ar@{-->}[ll]_-{\simeq}
\\
\Bigl(\underset{F\in (\fpres_n^{\leq -n})_{/A}}\colim \displaystyle\int_{M_\ast} F\Bigr)^\vee  \ar[d]^-{\simeq}
&&
\underset{R\in \bigl((\Artin_n^{\op})_{/\MC_A}\bigr)^{\op}}\limit  \displaystyle\int_{M_\ast^\neg} R   \ar[d]^-{\text{Thm}~\ref{fpres}}_-\simeq
\\
\underset{F\in \bigl((\fpres_n^{\leq -n})_{/A}\bigr)^{\op}}\limit \Bigl(\displaystyle\int_{M_\ast} F\Bigr)^\vee      
&&
\underset{F\in \bigl((\fpres_n^{\leq -n})_{/A}\bigr)^{\op}}  \limit \displaystyle\int_{M_\ast^\neg} \DD^n F~. \ar[ll]^-{\text{Thm}~\ref{affine}}_-\simeq
}
\]
By Proposition~\ref{resolve}, we can calculate factorization homology with coefficients in $A$ as a colimit over finitely presented $(-n)$-coconnective $n$-disk algebras: $\underset{F\in (\fpres_n^{\leq -n})_{/A}}\colim \int_{M_\ast} F \xra{\simeq} \int_{M_\ast}A$. 
By Theorem~\ref{affine}, for a finitely presented $(-n)$-coconnective $n$-disk algebra $F$, there is natural equivalence $\Bigl(\int_{M_\ast} F\Bigr)^\vee\xla{\simeq} \int_{M_\ast^\neg} \DD^n F$.
By Theorem~\ref{fpres}, Koszul duality restricts to an equivalence $\DD^n: \fpres^{\leq -n}_n \simeq (\Artin_n)^{\op}:\DD^n$ between finitely presented and Artin $n$-disk algebras.
By definition $\Map({\sf Spf}(R),\MC_A) \simeq \Map(\DD^n R, A)$ for $R$ Artin, and so this last equivalence gives an equivalence of slice categories $(\fpres^{\leq -n}_n)_{/A} \simeq (\Artin_n^{\op})_{/\MC_A}$.
\end{proof}

\subsection{Finitely presented coconnective algebras}
We first prove the main result for the special case in which $A$ is a finitely presented $(-n)$-coconnective augmented $n$-disk algebra. The definition of finite presentation is exactly dual to our definition of Artinian.

Recall from Definition~\ref{free} the free algebra functor $\FF\colon \Mod_{\sO(n)}(\Ch_\Bbbk) \to \Alg_{n}^{\sf aug}$.
\begin{notation}\label{def.alg-i}
For $i>0$, we will ongoingly make use of the following full $\infty$-subcategories
\[
\Alg_n^{\sf aug}~\supset~\Alg_n^{\leq -i}~\supset~\Free_n^{{\sf all},\leq -i}~\supset~\Free_n^{\leq -i}~\supset~\Free_n^{{\sf perf},\leq -i}
\]
respectively consisting of: augmented $n$-disk algebras whose augmentation ideal is $(-i)$-coconnective as a $\Bbbk$-module; free augmented $n$-disk algebras on $\sO(n)$-modules whose underlying $\Bbbk$-module is $(-i)$-coconnective; 
free augmented $n$-disk algebras on $\sO(n)$-modules whose underlying $\Bbbk$-module is $(-i)$-coconnective and finite; 
and free augmented $n$-disk algebras on $\sO(n)$-modules which are $(-i)$-coconnective truncations of perfect $\sO(n)$-modules.
\end{notation}
While the outer inclusions in Notation~\ref{def.alg-i} are manifest, the inner inclusion is a particular feature of $n$-disk algebras, which we shall see in the proof of Theorem~\ref{affine}.
We make use of another class of augmented $n$-disk algebras which is dual to Artin algebras.  
\begin{definition}[$\FPres_n^{\leq -n}$]\label{def:fpres}
We denote the intermediate full $\oo$-subcategory
\[
\Free_n^{{\sf perf},\leq -n}\subset\FPres_n^{\leq -n} \subset  \Alg_{n}^{\sf aug, \leq -n}~,
\]
which is the smallest among all such that satisfy the following property:
\begin{itemize}
\item[~]
Let $A$ and $B$ be $(-n)$-coconnective $n$-disk algebras with
\[
\xymatrix{
\FF V \ar[r]   \ar[d]
&
\Bbbk   \ar[d]
\\
A   \ar[r]
&
B
}
\]
a pushout square in $\Alg_n^{\sf aug}$ in which $V$ is a $\sO(n)$-module in $\Perf_\Bbbk^{\leq -n}$; then if $A$ is in $\FPres_n^{\leq -n}$, then $B$ is in $\FPres_n^{\leq -n}$.

\end{itemize}
\end{definition}

This next result verifies that cofibers of augmented algebras can inherit coconnectivity, which assures the existence of many $(-N)$-coconnective algebras that are not free.

\begin{lemma}\label{cocon-cofib}
Fix $N\geq n$.
Let $F\to A$ be a morphism between augmented $n$-disk algebras, each of whose augmentation ideals is $(-N)$-coconnective.
If the induced map on degree-$(-N)$ homology of augmentation ideals $\sH_{-N}\bigl(\Ker(F\to \Bbbk)\bigr) \to \sH_{-N}\bigl( \Ker( A \to \Bbbk) \bigr)$ is injective, then the augmentation ideal $\Ker(A \underset{F} \amalg \Bbbk \to \Bbbk)$ of the pushout among augmented $n$-disk algebras is also $(-N)$-coconnective.  

\end{lemma}

\subsection{Proof of Lemma~\ref{cocon-cofib}}
Through the equivalence $\Alg_n^{\sf aug} \simeq \Alg_n^{\sf nu}$ of Proposition~\ref{not-augmented}, Lemma~\ref{cocon-cofib} is equivalent to the following.
\begin{lemma}\label{0}
Fix $N\geq n$.
Let $F\to A$ be a morphism between non-unital $n$-disk algebras, each of which are $(-N)$-coconnective.
If the map $\sH_{-N}(F) \to \sH_{-N}(A)$ is injective, then the pushout among non-unital $n$-disk algebras $A \underset{F} \amalg 0$ is also $(-N)$-coconnective.  

\end{lemma}

We prove Lemma~\ref{0} using a simple criterion for coconnectivity for a $\Bbbk$-module $Y$: if there exists a filtration
\[
0=: 
Y_{\leq -1}
\longrightarrow
Y_{\leq0}
\longrightarrow
Y_{\leq1}
\longrightarrow
\cdots 
\longrightarrow 
\underset{q\mapsto \oo} \colim ~Y_{\leq q}
\simeq
Y
\]
whose associated graded
\[
\bigoplus_{q\geq 0}\coker\bigl(Y_{\leq q-1}\ra Y_{\leq q}\bigr)
\]
is $(-N)$-coconnective, then $Y$ is $(-N)$-coconnective. For Lemma~\ref{cocon-cofib}, $Y_{\leq q}$ will be the $q$-skeleton ${\sf Sk}_q(X_\dagger)$ of a simplicial $\Bbbk$-module $X_\dagger$ whose realization is $A \underset{F} \amalg 0$. Our proof unfolds as follows:

\begin{itemize}

\item[{\bf\S\ref{sec.a}}] constructs the simplicial $\Bbbk$-module $X_\dagger$ (see (\ref{05})) with $q$-simplices the free algebra on the cokernel of the map of iterated free algebra $\FF^qF\ra \FF^qA$,
\[
X_q \simeq \FF\bigl(\coker( \FF^q F \to \FF^q A)\bigr)~,
\] and identifies its geometric realization: 
$|X_\dagger|\simeq A\underset{F}\amalg 0$. See Lemma~\ref{01}.
\item[{\bf \S\ref{sec.b}}] identifies the $q$-simplices $X_q$ as a direct sum indexed by height-$q$ rooted trees. See Lemma~\ref{l2}.

\item[{\bf \S\ref{sec.c}}] computes the associated graded of the skeletal filtration of $|X_\dagger|$. First, Lemma~\ref{a} gives a splitting
\[
X_q \simeq \sL_q(X_\dagger) \oplus X^{\sf nd}_q
\] as a direct sum of the $q$th latching object, indexed by the degenerate height-$q$ rooted trees, and a module of non-degenerate $q$-simplices, indexed by the non-degenerate height-$q$ rooted trees. Second, Corollary~\ref{lb} gives an explicit expression for the associated graded of the skeletal filtration as the $q$th suspension of the non-degenerate $q$-simplices $X_q^{\sf nd}$.

\item[{\bf \S\ref{sec.d}}] concludes the $(-N-q)$-coconnectivity of the non-degenerate $q$-simplices $X_q^{\sf nd}$, given the assumed $(-N)$-coconnectivity of the cokernel $\coker(F\to A)$. See Lemma~\ref{l7}.
This implies the sought $(-N)$-coconnectivity of $A\underset{F} \amalg 0$.

\end{itemize}

\subsubsection{\bf The pushout as a geometric realization}\label{sec.a}
The main result of this subsection is Lemma~\ref{01}, which
 identifies the underlying $\Bbbk$-module of the relevant pushout among non-unital $n$-disk algebras,  $A\underset{F}\amalg 0$, as the colimit of a bisimplicial $\Bbbk$-module. First, we recall in the following construction that any pushout can alternatively be expressed as a geometric realization of coproducts.

\begin{construction}\label{30}
Let $\cC$ be an $\infty$-category that admits finite colimits; let $\amalg$ denote the coproduct and $\emptyset\in\cC$ be the initial object.
Given a diagram $D\la C\to E$ in $\cC$, there exists a simplicial object
\begin{equation}\label{31}
\bBar_\bullet^{\amalg}( D , C , E )\colon \bDelta^{\op} \longrightarrow \cC
~,\qquad
[p]\mapsto D \amalg C^{\amalg p} \amalg E~,
\end{equation}
with the simplicial structure morphisms uniquely determined by the morphisms $D\la C\to E$, $C\amalg C \to C$, and $\emptyset \ra C$.
The canonical morphism from the $1$-skeleton,
\begin{equation}\label{36}
D\underset{C} \amalg E \xra{~\simeq~} \bigl| \bBar_\bullet^{\amalg}(D,C, E )\bigr| ~,
\end{equation}
is an equivalence in $\cC$.

\end{construction}

\begin{notation}\label{32}
We apply Construction~\ref{30} in two cases.  
\begin{itemize}
\item
Take $\cC=\Mod_{\sO(n)}(\Mod_{\Bbbk})$.
In this case, we denote $\oplus$ for coproduct in $\Mod_{\sO(n)}(\Mod_{\Bbbk})$.
For $E=0$ the zero $\sO(n)$-$\Bbbk$-module, the simplicial object~(\ref{31}) determined by a morphism $U\to V$ between $\sO(n)$-$\Bbbk$-modules is the simplicial $\sO(n)$-$\Bbbk$-module
\begin{equation}\label{33}
\bBar_\bullet^{\oplus}(V,U,0)\colon \bDelta^{\op} 
\longrightarrow
\Mod_{\sO(n)}(\Mod_{\Bbbk})
~,\qquad
[p]\mapsto V \oplus U^{\oplus p}~.
\end{equation}

\item
Take $\cC=\Alg_n^{\sf nu}$, where $\amalg$ is the coproduct in $\Alg_n^{\sf nu}$.
For $E=0$ the zero non-unital $n$-disk algebra, the simplicial object~(\ref{31}) determined by a morphism $C \to B$ is the simplicial non-unital $n$-disk algebra
\begin{equation}\label{34}
\bBar_\bullet^{\amalg}(B,C,0)\colon \bDelta^{\op} 
\longrightarrow
\Alg_n^{\sf nu}
~,\qquad
[p]\mapsto B \amalg C^{\amalg p}~.
\end{equation}
We use the same notation for the simplicial $\Bbbk$-module which is the composite functor:
\begin{equation}\label{35}
\bBar_\bullet^{\amalg}(B,C,0)\colon \bDelta^{\op} 
\xra{~\bBar_\bullet^{\amalg}(B,C,0)~}
\Alg_n^{\sf nu}
\xra{~\rm forget~}
\Mod_{\sO(n)}(\Mod_{\Bbbk})~.
\end{equation}

\end{itemize}

\end{notation}
Consider the free-forgetful adjunction
\begin{equation}\label{1}
\FF\colon \Mod_{\sO(n)}(\Mod_{\Bbbk})
~\rightleftarrows~
\Alg_n^{\sf nu}~.
\end{equation}
This adjunction determines a functor
\[
\FF^{\dagger+1}\colon \Alg_n^{\sf nu} \longrightarrow \Fun(\bDelta^{\op}, \Mod_{\sO(n)}(\Mod_{\Bbbk}))
~,\qquad
B  \mapsto 
\Bigl( [q]\mapsto \FF^{ q+1} B \Bigr)~,
\]
whose value on a non-unital $n$-disk algebra $B$ is the $(q+1)$-fold composition of the composite endo-functor
\[
\FF\colon \Mod_{\sO(n)}(\Mod_{\Bbbk}) \xra{~\FF~} \Alg_n^{\sf nu}\xra{~\rm forget~} \Mod_{\sO(n)}(\Mod_{\Bbbk}) ~,
\]
applied to the underlying $\sO(n)$-$\Bbbk$-modules of $B$. 
Therefore, each simplicial $\sO(n)$-$\Bbbk$-modules determines a bisimplicial $\sO(n)$-$\Bbbk$-modules.
In particular, the simplicial $\sO(n)$-$\Bbbk$-modules~(\ref{35}) applied to the case of the given morphism $F\to A$ results in the bisimplicial $\sO(n)$-$\Bbbk$-modules
\begin{equation}\label{02}
\bBar_\bullet^\amalg ( \FF^{\dagger+1} A , \FF^{\dagger+1} F , 0 ) \colon 
\bDelta^{\op}\times \bDelta^{\op}
\longrightarrow
\Mod_{\sO(n)}(\Mod_{\Bbbk})
~,\qquad
[p],[q]~\mapsto~
\bBar_p^{\amalg}(\FF^{q+1} A , \FF^{q+1} F , 0 )~.
\end{equation}
We will denote
\begin{equation}\label{05}
X_\dagger 
:=
\underset{\bullet\in \bDelta}\colim\bBar_\bullet^{\amalg}(\FF^{\dagger+1} A ,\FF^{\dagger+1} F,0)
\colon  \bDelta^{\op} \longrightarrow \Mod_{\sO(n)}(\Mod_{\Bbbk})
\xra{~\rm forget~}
\Mod_{\Bbbk}
\end{equation}
for the simplicial $\Bbbk$-module which is the geometric realization along the $(\bullet)$-simplicial coordinate of the underlying bisimplicial $\Bbbk$-module of~(\ref{02}); that is, $X_\dagger$ is the simplicial $\Bbbk$-module with $q$-simplices $X_q=  \bigl| \bBar_\bullet^{\amalg}(\FF^{q +1} A ,\FF^{q +1} F,0)\bigr|$.

\begin{lemma}\label{01}
The following statements concerning the simplicial $\sO(n)$-$\Bbbk$-modules~(\ref{05}) are true.
\begin{enumerate}

\item
There is a canonical equivalence between $\Bbbk$-modules,
\[
A\underset{F} \amalg 0
~\simeq~
\bigl|  X_\dagger \bigr|~,
\]
between the underlying $\Bbbk$-modules of the pushout among non-unital $n$-disk algebras and the underlying $\Bbbk$-module of the geometric realization of the bisimplicial $\sO(n)$-$\Bbbk$-modules~(\ref{02}).

\item
For each $q\geq 0$, there is a canonical identification between $\Bbbk$-modules,
\begin{equation}\label{90}
X_q
~\simeq~
\FF\bigl(  \coker( \FF^q F \to \FF^q A )\bigr)~,
\end{equation}
between the value of $X_\dagger$ on $[q]$ and the underlying $\Bbbk$-module of the free non-unital $n$-disk algebra on the cokernel.

\end{enumerate}

\end{lemma}

\begin{proof}

We prove statement~(1) by establishing the following string of equivalences among $\Bbbk$-modules.  
\begin{eqnarray}
\nonumber
A \underset{F} \amalg 0  
&
\underset{\rm (a)} {\xra{~\simeq~}}
&
\bigl| \bBar_\bullet^{\amalg}(A,F,0) \bigr| 
\\
\nonumber
&
\underset{\rm (b)} {\xla{~\simeq~}}
&
\Bigl| \bBar_\bullet^{\amalg}\bigl( | \FF^{\dagger+1}A| , |\FF^{\dagger+1} F| , 0 \bigr) \Bigr|
\\
\nonumber
&
\underset{\rm (c)} {\xla{~\simeq~}}
&
\underset{\bullet\in\bDelta}\colim \ \underset{\dagger \in \bDelta}\colim \bBar_\bullet^{\amalg}\bigl( \FF^{\dagger+1}A ,  \FF^{\dagger+1} F , 0 \bigr)
\\
\nonumber
&
\underset{\rm (d)} {~\simeq~}
&
\underset{\dagger\in\bDelta}\colim \ \underset{\bullet \in \bDelta}\colim \bBar_\bullet^{\amalg}\bigl( \FF^{\dagger+1}A ,  \FF^{\dagger+1} F , 0 \bigr) =: |X_\dagger|
~ .
\end{eqnarray}
The given morphism $F\to A$ between non-unital $n$-disk algebras determines the simplicial non-unital $n$-disk algebra $\bBar_\bullet^{\amalg}(A,F,0)$, as in~(\ref{34}).  
The equivalence~(\ref{36}) specializes to an equivalence between non-unital $n$-disk algebras: 
\begin{equation}\label{37}
A\underset{F} \amalg 0 \xra{~\simeq~} \bigl| \bBar_\bullet^{\amalg}(A,F,0)\bigr|~.
\end{equation}

Proposition~3.2.3.1 of~\cite{HA} gives that each of the forgetful functors $\Alg_n^{\sf nu} \to \Mod_{\sO(n)}(\Mod_{\Bbbk}) \to \Mod_{\Bbbk}$ preserves geometric realizations.  
This establishes~(a), in which $\bBar_\bullet^{\amalg}(A,F,0)$ is taken to be the simplicial $\Bbbk$-module of~(\ref{35}).

Recall that any non-unital $n$-disk algebra $B$ is the geometric realization of its \emph{functorial free resolution}, from Lemma \ref{free-resolution1}:
\begin{equation}\label{2}
\bigl|  \FF^{\dagger+1} B \bigr|
\xra{~\simeq~}
B~.
\end{equation}
By construction, the identification~(\ref{2}) is functorial in the argument $B$.
Forgetting to underlying $\Bbbk$-modules establishes~(b).

For each $[p]\in \bDelta^{\op}$, there is a canonical morphism
\[
\bigl| \FF^{\dagger+1}A  \amalg  (\FF^{\dagger+1} F)^{\amalg p} \bigr| 
\longrightarrow
| \FF^{\dagger+1}A|  \amalg  \bigl(|\FF^{\dagger+1} F| \bigr)^{\amalg p}
\]
between non-unital $n$-disk algebras.  
This morphism is functorial in $[p]\in \bDelta^{\op}$, which establishes the arrow of~(c).  
Using that the coproduct $\amalg \colon \Alg_n^{\sf nu}\times \Alg_n^{\sf nu} \to \Alg_n^{\sf nu}$ distributes over geometric realizations, this canonical morphism is, in fact, an equivalence.  
This establishes~(c).

The identification~(d) is the fact that colimits commute with one another.  
This proves~(1).

We now prove statement~(2).
Let $U \to V$ be a morphism between $\sO(n)$-$\Bbbk$-modules.
We establish the following string of equivalences between $\sO(n)$-$\Bbbk$-modules.
\begin{eqnarray}
\nonumber
\bigl| \bBar_\bullet^{\amalg}(\FF V,\FF U,0)\bigr|
&
\underset{\rm (a)}{~\simeq~}
&
\Bigl| \FF\bigl( \bBar_\bullet^{\oplus}(V,U,0)\bigr) \Bigr|
\\
\nonumber
&
\underset{\rm (b)}{~\simeq~}
&
\FF \Bigl(   \bigl| \bBar_\bullet^{\oplus}(V,U,0)\bigr| \Bigr)
\\
\nonumber
&
\underset{\rm (c)}{~\simeq~}
&
\FF \bigl(   \coker( U \to V )  \bigr)~.
\end{eqnarray}
Inspecting Construction~\ref{30}, there is a canonical morphism between simplicial $\sO(n)$-$\Bbbk$-modules:
\begin{equation}\label{42}
\bBar_\bullet^{\amalg}(\FF V,\FF U,0)
\longrightarrow
\FF\bigl( \bBar_\bullet^{\oplus}( V , U , 0 ) \bigr)~.
\end{equation}
Being a left adjoint between cocomplete $\infty$-categories, the free functor $\FF\colon \Mod_{\sO(n)}(\Mod_{\Bbbk}) \to \Alg_n^{\sf nu}$ carries direct sums to coproducts: $\FF\colon \oplus \mapsto \amalg$.  
Upon further inspection of Construction~\ref{30}, it follows from this that the canonical morphism~(\ref{42}) is an equivalence between simplicial $\sO(n)$-$\Bbbk$-modules.
Forgetting to underlying $\Bbbk$-modules establishes~(a).

From the universal property of geometric realizations as a colimit, there is a canonical morphism between $\sO(n)$-$\Bbbk$-modules
\begin{equation}\label{43}
\Bigl| \FF\bigl( \bBar_\bullet^{\oplus}(V,U,0)\bigr) \Bigr|
\longrightarrow
\FF \Bigl(   \bigl| \bBar_\bullet^{\oplus}(V,U,0)\bigr| \Bigr)~.
\end{equation}
Being a left adjoint between cocomplete $\infty$-categories, the free functor $\FF\colon \Mod_{\sO(n)}(\Mod_{\Bbbk}) \to \Alg_n^{\sf nu}$ carries geometric realizations to geometric realizations.
From Proposition~3.2.3.1 of~\cite{HA}, each of the forgetful functors $\Alg_n^{\sf nu} \to \Mod_{\sO(n)}(\Mod_{\Bbbk})\xra{~\rm forget~} \Mod_{\Bbbk}$ carries geometric realizations to geometric realizations as well.
We conclude that the morphism~(\ref{43}) is an equivalence between $\Bbbk$-modules.
This establishes~(b).

The identification~(c) follows from the general identification~(\ref{36}) in Construction~\ref{30}.
Statement~(2) follows from the above string of canonical equivalences, as the case that the morphism $(U\to V)$ is the induced morphism $(\FF^q F \to \FF^q A)$.  
\end{proof}

\subsubsection{\bf Explicating $X_\dagger$}\label{sec.b}
We now give a presentation of each $\Bbbk$-module $X_q$ of $q$-simplices, together with the face and degeneracy morphisms of $X_\dagger$.  
\begin{definition} Fix $q\geq 0$.
A \emph{height-$q$ rooted tree} is a functor from the opposite of the poset $[q]=\{0<\dots<q\}$ to the category of non-empty finite sets and surjections among them,
\[
T\colon [q]^{\op} \longrightarrow \Fin^{\sf surj}_{\neq \emptyset}
~,\qquad
i\mapsto T_i~,
\qquad\text{ and }\qquad
(i<j) \mapsto (T_j \xra{T_{i<j}} T_i)~,
\]
whose value on $0\in [q]$ is terminal: $T_0 = \ast$.  
The category of \emph{height-$q$ rooted trees} is the full subcategory ${\sf Tree}_q~\subset~\Fun\bigl([q]^{\op},\Fin^{\sf surj}_{\neq \emptyset}\bigr)$
consisting of the height-$q$ rooted trees. $[T]$ is the isomorphism-type of $T$.  

\end{definition}
Observe that these categories of rooted trees assemble as a functor
\[
{\sf Tree}_{\bullet+1}\colon \bDelta^{\op} \longrightarrow \Cat
~,\qquad
[q]\mapsto {\sf Tree}_{q+1} \subset \Fun\bigl(( [q]^{\tl} )^{\op} , \Fin^{\sf surj}_{\neq \emptyset}\bigr)~,
\]
using the isomorphism 
$[q]^{\tl} \cong [q+1]$ in $\bDelta$.  
In particular, each $\sigma\colon [p]\to [q]$ in $\bDelta$ determines a functor
\begin{equation}\label{e9}
{\sf Tree}_{q+1} 
\longrightarrow
{\sf Tree}_{p+1}
~,\qquad
T\mapsto \sigma^\ast T := \Bigl(  ([p]^{\tl})^{\op} 
\xra{~(\sigma^{\tl})^{\op}~}
([q]^{\tl})^{\op} \xra{T} \Fin^{\sf surj}_{\neq \emptyset} \Bigr)~.
\end{equation}

Let $T$ be a height-$q$ rooted tree. 
We now consider the finite group $\Aut(T)$ of automorphisms of $T$ in the category ${\sf Tree}_{q}$ of height-$q$ rooted trees.  
For each $0\leq i \leq q$, since the morphism $T_{i\leq q}\colon T_q \to T_i$ is surjective, the evident homomorphism,
\begin{equation}\label{e9'}
{\sf ev}_q\colon \Aut(T) 
\longrightarrow
\Sigma_{T_q}~,
\end{equation}
is injective.  
The canonical inclusion $\iota_{\leq i}\colon \{0<\dots<i\} \hookrightarrow [q]$ in $\bDelta$ determines the height-$i$ rooted tree $T_{\leq i}:= \iota^\ast_{\leq i} T$, as well as a homomorphism
\begin{equation}\label{e8}
\Aut(T) \longrightarrow \Aut(T_{\leq i})~.
\end{equation}
Also, each $t\in T_i$ determines the height-$(q-i)$ rooted tree $T_{/t}\colon \{i<i+1<\dots<q\}^{\op} 
\ra
\Fin^{\sf surj}_{\neq \emptyset}$  sending $j\mapsto T_{i\leq j}^{-1}(t)$.
Notice a canonical identification of the kernel of the homomorphism~(\ref{e8}): $\Ker(\ref{e8})~\cong~\underset{t\in T_i} \prod \Aut(T_{/t})$.
Lastly, the homomorphism~(\ref{e8}) admits a section defined on its image:
\begin{equation}\label{e4}
\Aut(T)
~\cong~
\Bigl(\underset{t\in T_i} \prod \Aut(T_{/t})\Bigr) \rtimes {\sf Im}(\ref{e8}) 
~.
\end{equation}
With this as the inductive step, and the injection~(\ref{e9'}) as the base case, there results a description of $\Aut(T)$ as an iterated semi-direct product of subgroups of symmetric groups.

Taking sets of connected components defines a functor
\begin{equation}\label{e30}
\cD_n~:=~\Bigl(  [-]\colon \Disk_{n,+}\longrightarrow
\Fin_\ast  \Bigr)~.
\end{equation}
This functor is an $\infty$-operad in the sense of~\S2.1 of~\cite{HA}, which we denote simply as $\cD_n$.
The symmetric monoidal envelope of this $\infty$-operad ${\sf Env}(\cD_n)\simeq \Disk_n$ is the symmetric monoidal $\infty$-category of $n$-disks.
There is a canonical equivalence between $\infty$-categories over $\Fin$:
\[
\Disk_n~\simeq~ (\Disk_{n,+})_{|\Fin}~,
\]
where the righthand term is the base change of~(\ref{e30}) along the monomorphism $\Fin \xra{(-)_+}\Fin_\ast$ given by adjoining a disjoint base point. 
For each rooted tree $T$, we denote the space of lifts to $\Disk_n$ of the composite functor $T\colon [q]^{\op}\to \Fin^{\sf surj}_{\neq \emptyset} \hookrightarrow \Fin$ as
\[
\cD_n(T)
~:=~
\Map_{/\Fin}\bigl([q]^{\op} , \Disk_n \bigr)~.
\]
Inspecting the functor~(\ref{e30}), for each height-$q$ rooted tree $T$, there is a canonical identification between $\Aut(T)$-spaces:
\begin{eqnarray}
\nonumber
\cD_n(T)
&
~\simeq~
&
\Bigl( \conf^{\sf fr}_{T_{0}}(\RR^n) \Bigr)
\underset{\sO(n)^{T_0}} \times
\Bigl( \prod_{t\in T_{0}} \conf^{\sf fr}_{T_{0<1}^{-1}(t)}(\RR^n) \Bigr)
\underset{\sO(n)^{T_1}} \times
\Bigl( \prod_{t\in T_1} \conf^{\sf fr}_{T_{1<2}^{-1}(t)}(\RR^n) \Bigr)
\underset{\sO(n)^{T_2}} \times
\\
\label{08}
&
&
\dots
\underset{\sO(n)^{T_{q-1}}} \times
\Bigl( \prod_{t\in T_{q-1}} \conf^{\sf fr}_{T_{q-1<q}^{-1}(t)}(\RR^n) \Bigr)
\underset{\sO(n)^{T_{q}}} \times  \ast 
~,
\end{eqnarray}
where, for $I$ a finite set, there is a natural equivalence 
\begin{equation}\label{e33}
\conf^{\sf fr}_I(\RR^n)~\simeq~ \sO(n)^I \times \conf_I(\RR^n)~.
\end{equation}
In particular, projecting onto the $T_0$- and $\ast$-factors defines a morphism between $\Sigma_{T_q}$-spaces
\[
\cD_n(T)
\longrightarrow
\BO(n)
\times
\BO(n)^{T_q}~.  
\]
We denote the fiber over the $\Sigma_{T_q}$-invariant point $\bigl(  \RR^n ,  (\RR^n)_{t\in T_q}\bigr) \in \BO(n)  \times \BO(n)^{T_q}$ as
\begin{equation}\label{e31}
\cD_n^{\sf fr}(T)~,
\end{equation}
and regard it as a $\Sigma_{T_q}\wr \sO(n)$-module in $\Mod_{\sO(n)}(\Spaces)$.  
In particular, tensoring defines a colimit preserving functor between $\infty$-categories
\begin{equation}\label{e32}
\Mod_{\Sigma_{T_q}\wr \sO(n)}(\Mod_{\Bbbk})
\longrightarrow
\Mod_{\sO(n)}(\Mod_{\Bbbk})
~,\qquad
U\mapsto
\cD_n^{\fr}(T) \underset{\Sigma_{T_q}\wr \sO(n)} \otimes U~,
\end{equation}
to that of $\sO(n)$-$\Bbbk$-modules.
Via the homomorphism~(\ref{e9'}), we define the group
\[
\Aut(T)\wr \sO(n)~\subset~ \Sigma_{T_q}  \ltimes \sO(n)^{T_q}  ~.
\]

We now use the preceding notions to describe the iterated free functor.
\begin{lemma}\label{l1}
Fix $q\geq 0$.
For each $\sO(n)$-$\Bbbk$-module $W$, there is a canonical identification of the $q$-fold composition of the free-forgetful functor applied to $W$ as the $\sO(n)$-$\Bbbk$-module
\begin{equation}\label{e3}
\FF^q(W)
~\simeq~
\underset{[T, \ \textrm{\rm ht-}q]}\bigoplus~
\cD^{\sf fr}_n(T) \underset{\Sigma_{T_q}\wr \sO(n)} \bigotimes W^{\otimes T_q}~,
\end{equation}
in which the direct sum is indexed by the set of isomorphism classes of height-$q$ rooted trees.  
\end{lemma}

\begin{proof}
We induct on $q$.
For $q=0$, the assertion is automatic. 
For $q=1$, there is a standard identification among $\sO(n)$-$\Bbbk$-modules
\begin{eqnarray}
\nonumber
\FF(W)
&
~\simeq~
&
\underset{k>0} \bigoplus \conf^{\fr}_k(\RR^n)\underset{\Sigma_k \wr \sO(n)}\otimes W^{\otimes k}
\\
\label{e1}
&
~\simeq~
&
\underset{k>0} \bigoplus ~ \cD^{\fr}_n(k)\underset{\Sigma_k}\otimes W^{\otimes k}
\\
\label{e2}
&
~\simeq~
&
\underset{[T, \text{ ht-}1 ]} \bigoplus \cD^{\fr}_n(T)\underset{\Aut(T)\wr \sO(n)}\otimes W^{\otimes T_1}    ~.
\end{eqnarray}
Here, the second identification is simply the identification of the $\Sigma_k\wr \sO(n)$-space $\cD^{\fr}_n(k)$ of multi-morphisms from $(\RR^n)_{i=1}^k$ to $\RR^n$ in the $\infty$-operad
$\cD_n$; the third identification is definitional.  

We now prove the inductive case: assume the statement is true for $q-1$, where $q$ is strictly positive.  
The lemma is proved upon establishing the following string of identifications among $\sO(n)$-$\Bbbk$-modules.
\begin{eqnarray}
\nonumber
\FF^q(W)
&
\underset{\rm (a)}{~\simeq~}
&
\FF\bigl( \FF^{q-1}(W)\bigr)
\\
\nonumber
&
\underset{\rm (b)}{~\simeq~}
&
\underset{k>0} \bigoplus ~\cD^{\fr}_n(k)\underset{\Sigma_k\wr \sO(n)}\otimes \bigl( \FF^{q-1}(W)\bigr)^{\otimes k}
\\
\nonumber
&
\underset{\rm (c)}{~\simeq~}
&
\underset{k>0} \bigoplus ~\cD^{\fr}_n(k)\underset{\Sigma_k\wr \sO(n)}\otimes \Bigl( \underset{[S, \text{ ht-}(q-1)]}\bigoplus~
\cD^{\fr}_n(S) \underset{\Aut(S)\wr \sO(n)} \bigotimes W^{\otimes S_{q-1}} \Bigr)^{\otimes k}
\\
\nonumber
&
\underset{\rm (d)}{~\simeq~}
&
\underset{k>0} \bigoplus 
\Bigl(    \underset{ \bigl\{ ( [S_1],\dots,[S_k] )\mid ~S_i \text{ ht-} (q-1) \bigr\} } \bigoplus  
~\Bigl(\cD^{\fr}_n(k)\times \prod_{i=1}^k \cD^{\fr}_n(S_i)\Bigr)  
\underset{\prod_{i=1}^k \Aut(S_i)\wr \sO(n) }\bigotimes 
\Bigl(  \bigotimes_{i=1}^k  W^{\otimes (S_i)_{q-1}} \Bigr)  \Bigr)_{\Sigma_k\wr \sO(n)}
\\
\nonumber
&
\underset{\rm (e)}{~\simeq~}
&
\underset{[T, \text{ ht-}q]}\bigoplus~
\cD^{\fr}_n(T) \underset{\Aut(T)\wr \sO(n)} \bigotimes W^{\otimes T_q}
\end{eqnarray}
The identification~(a) is definitional.
The identification~(b) is the case $q=1$, established above, with $W$ there replaced by $\FF^{q-1}(W)$.  
The identification~(c) is the inductive assumption.  
The identification~(d) is distributivity of tensor products among $\Bbbk$-modules over direct sums thereof.
The identification~(e) is a consequence of the following construction.
\begin{itemize}
\item[]
Given $k>0$ and height-$(q-1)$ rooted trees $S_1,\dots,S_k$, construct the height-$q$ rooted tree
\[
T\colon [q]^{\op} \longrightarrow  \Fin^{\sf surj}_{\neq\emptyset}
~,\qquad
i\mapsto \coprod_{i=1}^k S_{i-1}
\qquad \text{ for $i>0$~, 
\qquad
and } 0 \mapsto \ast~.
\]
\end{itemize}
Inspecting this construction, the $\Aut(T)$-equivariant identification~(\ref{08}) gives the $\prod_{i=1}^k \Aut(S_i)$-equivariant identification $\cD^{\fr}_n(k) \underset{\sO(n)^{k}}\times \prod_{i=1}^k \cD^{\fr}_n(S_i)  \simeq \cD^{\fr}_n(T)$.  
There is an evident $\prod_{i=1}^k \Aut(S_i)\wr \sO(n)$-equivariant identification $W^{\ot T_q} \simeq \bigotimes_{i=1}^k W^{\ot (S_i)_{q-1}}$.  
Through these equivariant identifications, the identification~(e) follows by reindexing the sum, then using the semi-direct product description~(\ref{e4}) of $\Aut(T)$.  
\end{proof}

Lemma~\ref{l1} facilitates the following explicit description of the $q$-simplices of $X_\dagger$.  
\begin{lemma}\label{l2}
For each $q\geq 0$, there is a canonical identfication between $\Bbbk$-modules:
\begin{equation}\label{88}
X_q
~\simeq~
\underset{[T, {\rm ht\text{-}} (q+1)]}\bigoplus~
\cD^{\fr}_n(T) \underset{\Aut(T)\wr \sO(n)} \bigotimes \Bigl( \underset{t\in T_1}\bigotimes \coker \bigl( F^{\otimes T_{1 \leq q+1}^{-1} (t)} \to A^{\otimes T_{1 \leq q+1}^{-1} (t)}  \bigr) \Bigr)~.
\end{equation}

\end{lemma}

\begin{proof}
We prove the statement by establishing the following string of equivalences among $\Bbbk$-modules:
\begin{eqnarray}
\nonumber
X_q
&
\underset{\rm (a)}{~\simeq~}
&
\FF\Bigl(\coker\bigl(\FF^q(F) \to \FF^q(A) \bigr) \Bigr)
\\
\nonumber
&
\underset{\rm (b)}{~\simeq~}
&
\FF\Bigl(
\coker\Bigl( \underset{[S, \text{ ht-}q]}\bigoplus
\cD^{\fr}_n(S) \underset{\Aut(S)\wr \sO(n)} \bigotimes F^{\otimes S_q} 
\longrightarrow
\underset{[S, \text{ ht-}q]}\bigoplus
\cD^{\fr}_n(S) \underset{\Aut(S)\wr \sO(n)} \bigotimes A^{\otimes S_q} 
\Bigr) \Bigr)
\\
\nonumber
&
\underset{\rm (c)}{~\simeq~}
&
\FF\Bigl(
\underset{[S, \text{ ht-}q]}\bigoplus
\cD^{\fr}_n(S) \underset{\Aut(S)\wr \sO(n)} \bigotimes \coker\bigl(F^{\otimes S_q} 
\to 
A^{\otimes S_q} 
\bigr) \Bigr)
\\
\nonumber
&
\underset{\rm (d)}{~\simeq~}
&
\underset{k>0}\bigoplus~ \cD^{\fr}_n(k)\underset{\Sigma_k\wr \sO(n)}\bigotimes \Bigl(
\underset{[S, \text{ ht-}q]}\bigoplus
\cD^{\fr}_n(S) \underset{\Aut(S)\wr \sO(n)} \bigotimes \coker\bigl(F^{\otimes S_q} 
\to 
A^{\otimes S_q} 
\bigr) \Bigr)^{\otimes k}
\\
\nonumber
&
\underset{\rm (e)}{~\simeq~}
&
\underset{k>0} \bigoplus 
\Bigl(    \underset{ \bigl\{ ( [S_1],\dots,[S_k] )\mid ~ S_i\text{ ht-}q \bigr\} } \bigoplus  
~\Bigl(\cD^{\fr}_n(k)\times \prod_{i=1}^k \cD^{\fr}_n(S_i)\Bigr)  
\underset{\prod_{i=1}^k \Aut(S_i)\wr\sO(n) }\bigotimes 
\Bigl(  \bigotimes_{i=1}^k  
\coker\bigl(F^{\otimes S_q} 
\to 
A^{\otimes S_q} 
\bigr)
\Bigr)_{\Sigma_k \wr \sO(n)}
\\
\nonumber
&
\underset{\rm (f)}{~\simeq~}
&
\underset{[T, \text{ ht-} (q+1)]}\bigoplus~
\cD^{\fr}_n(T) \underset{\Aut(T)\wr \sO(n)} \bigotimes \Bigl( \underset{t\in T_1}\bigotimes \coker \bigl( F^{\otimes T_{1 \leq q+1}^{-1} (t)} \to A^{\otimes T_{1 \leq q+1}^{-1} (t)}  \bigr) \Bigr)~.
\end{eqnarray}
The identification~(a) is Lemma~\ref{01}(2).
The identification~(b) is Lemma~\ref{l1}, applied to each of $W=F$ and $W=A$.
The identification~(\ref{e3}) of Lemma~\ref{l1} is functorial in the argument $W$.
So the arrow in the second line above is that induced by the morphism $F\to A$ between underlying $\sO(n)$-$\Bbbk$-modules.  
The identification~(c) follows because colimits commute with one another.  
The identification~(d) is the expression~(\ref{e1}).  
The identification~(e) is distribution of tensor products among $\sO(n)$-$\Bbbk$-modules over direct sums thereof.  
The argument justifying the identification~(f) is identical to that for the identification~(e) in the proof of Lemma~\ref{l1}.
\end{proof}

We now make explicit, for each morphism $\sigma\colon [p]\to [q]$ in $\bDelta$, the morphism between $\Bbbk$-modules
\[
\sigma^\ast \colon X_q \longrightarrow X_p~.
\]
Through the identification~(\ref{88}), the above morphism is a morphism between $\Bbbk$-modules,
\begin{equation}\label{e6}
\xymatrix{
\underset{[T]}\bigoplus~
\cD^{\fr}_n(T) \underset{\Aut(T)\wr \sO(n)} \bigotimes \Bigl( \underset{t\in T_{1}}\bigotimes \coker \bigl( F^{\otimes T_{1 \leq q+1}^{-1} (t)} \to A^{\otimes T_{1 \leq q+1}^{-1} (t)}  \bigr) \Bigr)  
\ar[d]^-{\sigma^\ast}
\\
\underset{[S]}\bigoplus~
\cD^{\fr}_n(S) \underset{\Aut(S)\wr \sO(n)} \bigotimes \Bigl( \underset{s \in S_1}\bigotimes \coker \bigl( F^{\otimes S_{1 \leq p+1}^{-1} (s)} \to A^{\otimes S_{1 \leq p+1}^{-1} (s)}  \bigr) \Bigr)~,
}
\end{equation}
in which the direct sum in the domain is indexed by the set of isomorphism classes of height-$(q+1)$ rooted trees, and the direct sum in the codomain is indexed by the set of isomorphism classes of height-$(p+1)$ rooted trees.  
Being a morphism between direct sum $\Bbbk$-module,~(\ref{e6}) is a matrix, whose $([S],[T])$-enrty is the precomposition of~(\ref{e6}) by the canonical morphism from the $[T]$-summand, postcomposed by the projection onto the $[S]$-summand:
\begin{equation}\label{e7}
\xymatrix{
\cD^{\fr}_n(T) \underset{\Aut(T)\wr \sO(n)} \bigotimes \Bigl( \underset{t\in T_{1}}\bigotimes \coker \bigl( F^{\otimes T_{1 \leq q+1}^{-1} (t)} \to A^{\otimes T_{1 \leq q+1}^{-1} (t)}  \bigr) \Bigr)  
\ar[d]^-{\sigma^{T}_{S}}
\\
\cD^{\fr}_n(S) \underset{\Aut(S)\wr \sO(n)} \bigotimes \Bigl( \underset{s \in S_1}\bigotimes \coker \bigl( F^{\otimes S_{1 \leq p+1}^{-1} (s)} \to A^{\otimes S_{1 \leq p+1}^{-1} (s)}  \bigr) \Bigr)~.
}
\end{equation}
The next observation, which follows by unwinding the constructions involved in its statement, identifies this $([S],[T])$-entry in explicit terms.
Recall the notation $\sigma^\ast T$ of~(\ref{e9}).
\begin{observation}\label{ld}
Let $\sigma\colon [p]\to [q]$ be a morphism in $\bDelta$.  
Let 
\[
S\colon ([p]^{\tl})^{\op}
=
[p+1]^{\op}
\longrightarrow
\Fin^{\sf surj}_{\neq \emptyset}
\qquad\text{ and }\qquad
T\colon ([q]^{\tl})^{\op}
=
[q+1]^{\op}
\longrightarrow
\Fin^{\sf surj}_{\neq \emptyset}
\]
be two rooted trees.
\begin{enumerate}
\item
If $S \ncong \sigma^\ast T$, then the morphism $\sigma^{T}_{S}$ of~(\ref{e7}) is $0$.

\item
Suppose $S = \sigma^\ast T$.
If $\sigma(p)=q$, the morphism $\sigma^T_S$ of~(\ref{e7}) is that induced by the map between spaces over $\sB\bigl(\Sigma_{T_q}\wr \sO(n)\bigr) \times \BO(n)= \sB\bigl(\Sigma_{S_p} \wr \sO(n)\bigr)\times \BO(n)$,
\[
\sigma^T_S  \colon 
\cD_n(T) 
\underset{(\ref{08})} {~\simeq~}
\Bigl( \prod_{0< i \leq q+1} \prod_{t\in T_{i-1}} \conf_{T_{i-1<i}^{-1}(t)}(\RR^n)  \Bigr)_{\sO(n)}
\xra{~{}~{}~\circ~{}~{}~}
\Bigl(  \prod_{0< j \leq  p+1} \prod_{s\in S_{j-1}} \conf_{S_{j-1<j}^{-1}(s)}(\RR^n)  \Bigr)_{\sO(n)}
\underset{(\ref{08})} {~\simeq~}
\cD_n(S)~,
\]
of the $\cD_n$-operad structure.

\item
Suppose $\sigma = \partial_q\colon [q-1]\to [q]$ is the last face map.
Suppose $S  = \sigma^\ast T = T_{|[q]^{\op}}$.
In this case, the morphism $\sigma^T_S$ of~(\ref{e7}) is that induced by the diagram among $\Sigma_{T_q}\wr \sO(n)$-$\Bbbk$-modules
\[
\xymatrix{
\underset{t\in T_{1}} \bigotimes \underset{ t'\in T_{1\leq q}^{-1}(t)}\bigotimes  
\Bigl( \conf_{T_{q<q+1}^{-1}(t')}(\RR^n)\underset{\Sigma_{T_{q<q+1}^{-1}(t')} \wr \sO(n)}\bigotimes F^{\otimes T_{q<q+1}^{-1}(t') }   \Bigr) \ar[rrr]^-{\underset{t\in T_{1}} \otimes~ \underset{T_{1\leq q}^{-1}(t)}\otimes 
({\rm act}) } \ar[d]
&&&
\underset{t\in T_{1}} \bigotimes 
F^{\otimes T_{1\leq q}^{-1}(t)}   \ar[d]
\\
\underset{t\in T_{1}} \bigotimes \underset{ t'\in T_{1\leq q}^{-1}(t)}\bigotimes  
\Bigl( \conf_{T_{q<q+1}^{-1}(t')}(\RR^n)\underset{\Sigma_{T_{q<q+1}^{-1}(t')}\wr \sO(n)}\bigotimes A^{\otimes T_{q< q+1}^{-1}(t') }  \Bigr)  \ar[rrr]^-{\underset{t\in T_{1}} \otimes~ \underset{ T_{1\leq q}^{-1}(t)}\otimes 
({\rm act}) } 
&&&
\underset{t\in T_{1}} \bigotimes 
A^{\otimes T_{1\leq q}^{-1}(t)} 
}
\]
in which the horizontal morphisms are induced by the action of the $\cD_n$-operad on $F$ and on $A$, and the diagram commutes because the morphism $F\to A$ is one between non-unital $\cD_n$-algebras.

\end{enumerate}

\end{observation}

\subsubsection{\bf Explicating the associated graded}\label{sec.c}
We give a presentation of the associated graded of the skeletal filtration of the geometric realization $|X_\dagger|$.  
We first review some features of geometric realizations resulting from the cardinality filtration on $\bDelta^{\op}$.

Let $Y_\bullet \colon \bDelta^{\op}\to \Mod_{\Bbbk}$ be a simplicial $\Bbbk$-module.
The geometric realization $|Y_\bullet|$ carries a \emph{skeletal filtration}:
\begin{equation}\label{6}
0=: 
{\sf Sk}_{-1}(Y_\bullet)
\longrightarrow
{\sf Sk}_0 (Y_\bullet)
\longrightarrow
{\sf Sk}_1 (Y_\bullet)
\longrightarrow
\cdots 
\longrightarrow 
\underset{q\mapsto \oo} \colim ~{\sf Sk}_q(Y_\bullet)
\simeq
| Y_\bullet |~,
\end{equation}
for ${\sf Sk}_{q}(Y_\bullet) := \colim(\bDelta^{\op}_{\leq q+1}\xra{Y_\bullet}\Ch_\Bbbk)$. For each $q\geq 0$, the \emph{$q$th latching object} is the $\Bbbk$-module
\[
\sL_q(Y_\bullet)~:=~ 
\underset{[q]\underset{\neq}{\xra{\rm surj}}[r]}
\colim Y_r~,
\]
where the colimit is indexed by non-identity surjections from $[q]$. By inspection of the indexing category, the $q$th latching object is a pushout among finite coproducts
\begin{equation}\label{induct-latch}
\sL_{q-1}(Y_\bullet)\underset{\sL_{q-1}(Y_\bullet)^{\oplus q}} \bigoplus Y_{q-1}^{\oplus q}  
\xra{~\simeq~}
\sL_q(Y_\bullet)
\end{equation}
involving the $(q-1)$st latching object.  
There is a canonical morphism $\sL_q(Y_\bullet) \ra Y_q$, and 
the $\Bbbk$-module of \emph{non-degenerate} $q$-simplices is the cokernel:
\begin{equation}\label{e5}
Y_q^{\sf nd}~:=~
\coker\bigl(  \sL_q(Y_\bullet) \to Y_q \bigr)~.
\end{equation}
For each $q\geq 0$, these $\Bbbk$-modules assemble as a pushout diagram:
\begin{equation}\label{latch-po}
\xymatrix{
\sL_q(Y_\bullet)  \underset{\partial \Delta^q\otimes \sL_q(Y_\bullet)}\bigoplus  \partial \Delta^q\ot Y_q  \ar[rr]  \ar[d]
&&
{\sf Sk}_{q-1}(Y_\bullet)  \ar[d]
\\
Y_q  \ar[rr]
&&
{\sf Sk}_q(Y_\bullet)~.
}
\end{equation}
This pushout diagram determines a canonical identification between cokernel $\Bbbk$-module:
\begin{equation}\label{3}
\coker\Bigl(\sL_q(Y_\bullet)  \underset{\partial \Delta^q\otimes \sL_q(Y_\bullet)}\bigoplus  \partial \Delta^q\ot Y_q
\longrightarrow
Y_q\Bigr)
\xra{~\simeq~}
\coker\bigl(  {\sf Sk}_{q-1}(Y_\bullet) \to {\sf Sk}_q(Y_\bullet)  \bigr)  ~.
\end{equation}

We identify the latching objects as they concern the simplicial $\Bbbk$-module $X_\dagger$ of~(\ref{05}).  First:\begin{terminology}\label{t1}
Fix $0\leq i\leq q$.  
Let $T\colon [q]^{\op}\to  \Fin^{\sf surj}_{\neq \emptyset}$ be a height-$q$ rooted tree.
\begin{itemize}
\item
$T$ is \emph{$i$-non-degenerate} if the restriction $T_{|} \colon \{q-i<\dots<q\}\to \Fin^{\sf surj}_{\neq \emptyset}$ is conservative.  

\item
$T$ is \emph{$i$-degenerate} if the restriction $T_{|} \colon \{q-i<\dots<q\}\to \Fin^{\sf surj}_{\neq \emptyset}$ is \emph{not} conservative.  

\end{itemize}

\end{terminology}

\begin{lemma}\label{a}
Fix $q\geq 0$.
The canonical morphism between $\Bbbk$-modules, 
$
\sL_q ( X_\dagger )
\to
X_q
$, 
from the $q$th latching object to the $q$-simplices,
is a section of a retraction:
\begin{equation}\label{06}
\sL_q ( X_\dagger )
\oplus
X_q^{\sf nd}
~\simeq~
X_q~.
\end{equation}
In explicit terms, 
for each $q\geq 0$, there is a canonical identification between $\Bbbk$-modules:
\begin{equation}\label{88'}
X_q^{\sf nd}
~\simeq~
\underset{[T]}\bigoplus~
\cD^{\fr}_n(T) \underset{\Aut(T)\wr \sO(n)} \bigotimes \Bigl( \underset{t\in T_1}\bigotimes \coker \bigl( F^{\otimes T_{1 \leq q+1}^{-1} (t)} \to A^{\otimes T_{1 \leq q+1}^{-1} (t)}  \bigr) \Bigr)~,
\end{equation}
the direct sum being indexed by isomorphism classes of $q$-non-degenerate height-$(q+1)$ rooted trees.

\end{lemma}

\begin{proof}
The result follows from the following two statements.
\begin{enumerate}
\item There is a canonical identification of the $q$th latching object:
\begin{equation}\label{e6'}
\sL_q(X_{\dagger})
~\simeq~
\underset{[T]}\bigoplus~
\cD^{\fr}_n(T) \underset{\Aut(T)\wr \sO(n)} \bigotimes \Bigl( \underset{t\in T_1}\bigotimes \coker \bigl( F^{\otimes T_{1 \leq q+1}^{-1} (t)} \to A^{\otimes T_{1 \leq q+1}^{-1} (t)}  \bigr) \Bigr)~
\end{equation}
in which the direct sum is indexed by the set of isomorphism classes of $q$-degenerate height-$(q+1)$ rooted trees.

\item
Through the identification~(\ref{e6'}) and the identification~(\ref{88}), the canonical morphism $\sL_q(X_\dagger) \to X_q$ is that induced by the evident inclusion of sets indexing the direct sums.  

\end{enumerate}
In this proof, we will use the simplified notation
\begin{equation}\label{e12}
V(T) ~:=~ 
\underset{t\in T_1}\bigotimes \coker \bigl( F^{\otimes T_{1 \leq q+1}^{-1} (t)} \to A^{\otimes T_{1 \leq q+1}^{-1} (t)}  \bigr)
\end{equation}
so that the expression~(\ref{e6'}) can be rewritten as 
\begin{equation}\label{e13}
\sL_q(X_\dagger) 
~\simeq~ 
\underset{[T]}\bigoplus~
\cD^{\fr}_n(T) \underset{\Aut(T)\wr \sO(n)} \bigotimes V(T)~.
\end{equation}

We prove these assertions by induction on $q\geq 0$.
By construction $\sL_0(X_\dagger)=0$.
By definition of non-degeneracy, there are no $0$-degenerate height-1 rooted trees. 
Therefore, both sides of~(\ref{e6'}) are $0$ in the case that $q=0$, which establishes the base case of our induction.  

Assume $q>0$.  
Using the inductive assumption on $q$, through the identification~(\ref{induct-latch}), it is enough to show that the canonical morphism between $\Bbbk$-modules,
\begin{equation}\label{e14}
\underset{\sigma\colon [q]\underset{\neq}{\xra{\rm surj}}[r]}
\bigoplus
\Bigl( \underset{\bigl[S, \text{ ht-$(r+1)$, $r$-non-deg}\bigr]}
\bigoplus~
\cD^{\fr}_n(S) \underset{\Aut(S)\wr \sO(n)} \bigotimes V(S)  \Bigr)
\xra{~{}~\simeq~{}~}
\underset{\bigl[T, \text{ ht-$(q+1)$, $q$-deg}\bigr]}
\bigoplus~
\cD^{\fr}_n(T) \underset{\Aut(T)\wr \sO(n)} \bigotimes V(T)~,
\end{equation}
is an equivalence.
Inspecting Observation~\ref{ld}, this morphism is induced by a map between sets indexing the direct sums.
So we show the above morphism is an equivalence by first showing the canonical morphism between sets indexing these direct sums is a bijection, then by showing that each morphism between corresponding direct-summands is an equivalence.

We are to show that the canonical map between sets
\begin{equation}\label{e15}
\Bigl\{ \Bigl(\sigma\colon [q]\underset{\neq}{\xra{\rm surj}}[r]~,~\bigl[S, \text{ ht-$(r+1)$, $r$-non-deg}\bigr] \Bigr) \Bigr\}
\longrightarrow
\Bigl\{ \bigl[T, \text{ ht-$(q+1)$, $q$-deg}\bigr] \Bigr\}
~,\qquad
(\sigma , [S])\mapsto [\sigma^\ast S]~,
\end{equation}
is a bijection.
Both surjectivity, and injectivity, follow from the following discussion.

Let $T =\bigl( [q+1]^{\op} \xra{T} \Fin^{\sf surj}_{\neq \emptyset}\bigr)$ be a height-$(q+1)$ rooted tree.
Consider the localization-conservative factorization of the restriction of $T$:
\[
\xymatrix{
\{1<\dots<q+1\}^{\op}    \ar[rr]^-{T_{|}}  \ar[dr]_-{\rm localization}  
&&
\Fin^{\sf surj}_{\neq \emptyset}
\\
&
L  \ar[ur]^-{S_{|}}_-{\rm conservative}
&
.
}
\]
Because every morphism in the category $\Fin^{\sf surj}_{\neq \emptyset}$ is an epimorphism, the category $L$ in the above diagram is the opposite of a finite non-empty linearly ordered set, and the localization is therefore a degeneracy morphism in the category $\bDelta$:
\[
\sigma^{\op}\colon \{1<\dots<q+1\}^{\op}
\xra{~\rm localization~}
\{1<\dots<r+1\}^{\op} \cong L~,
\]
for some $0\leq r \leq q$.  
The functor $S_{|}$ in the above diagram uniquely extends as a height-$(r+1)$ rooted tree 
\[
S\colon 
[r+1]^{\op}  
\longrightarrow 
\Fin^{\sf surj}_{\neq \emptyset}~,
\]
given by declaring the value on $0$ to be $\ast$. 
By construction, the functor $S_{|}$ is conservative, and therefore the height-$(r+1)$ rooted tree $S$ is $r$-non-degenerate.  
By construction, there is an isomorphism between rooted trees: $T \cong \sigma^\ast S$.  
Finally, direct from the definition of a rooted tree being degenerate, $T$ is $q$-degenerate if and only if the degeneracy morphism $\sigma$ is not an isomorphism.  
This immediately concludes the verification that the map~(\ref{e15}) is surjective.  
We also conclude that, if $\rho^\ast R \cong \sigma^\ast S$, then both $R$ and $S$ have the same height, $\rho = \sigma$, and $R$ and $S$ are isomorphic.
This gives injectivity of~(\ref{e15}).

It remains to show that restriction of the morphism~(\ref{e14}) to corresponding direct-summands is an equivalence of $\Bbbk$-modules.
So let $\sigma\colon [q]\to [r]$ be a surjective morphism in $\bDelta$, and let $S$ be a height-$(r+1)$ rooted tree.
We are to show that the induced morphism between $\Bbbk$-modules
\begin{equation}\label{e16}
\cD^{\fr}_n(S) \underset{\Aut(S)\wr \sO(n)} \bigotimes V(S)
\longrightarrow
\cD^{\fr}_n(\sigma^\ast S) \underset{\Aut(\sigma^\ast S)\wr \sO(n)} \bigotimes V(\sigma^\ast S)
\end{equation}
is an equivalence.  
This immediately follows from these three assertions.
\begin{itemize}

\item
The induced map between sets $(\sigma^\ast S)_r \to S_q$ is an isomorphism, and the induced homomorphism between groups
\[
\Aut(S) \longrightarrow \Aut(\sigma^\ast S)
\]
is an isomorphism over $\Sigma_{S_q}$.  
This follows by induction on the height of $S$.
As the base case of $r=0$, the result follows upon inspecting the semi-direct product description~(\ref{e4}), applied to $\Aut(\sigma^\ast S)$.  
The inductive step also follows from that semi-direct product description, applied to both $\Aut(S)$ and to $\Aut(\sigma^\ast S)$.

\item
The induced $\bigl(\Aut(S)\wr \sO(n)\bigr) \times \sO(n)$-equivariant map 
\[
\cD^{\fr}_n(S)
\longrightarrow
\cD^{\fr}_n(\sigma^\ast S)
\]
is an equivalence.
This follows by induction on the height of $S$.
As the base case of $r=0$, the result follows upon inspecting the $\bigl(\Aut(S)\wr \sO(n)\bigr)\times \sO(n)$-equivariant identification~(\ref{08}).
The inductive step also follows from that equivariant identification, applied to both $\cD^{\fr}_n(S)$ and to $\cD^{\fr}_n(\sigma^\ast S)$.

\item
The induced $\Aut(S)\wr \sO(n)$-equivariant morphism between $\sO(n)$-$\Bbbk$-modules
\[
V(S)
\longrightarrow
V(\sigma^\ast S)
\]
is an equivalence.  
By inspection of the definition~(\ref{e12}) of $V(-)$, the assertion is true provided the following two assertions are true.  
\begin{enumerate}
\item 
The induced map between sets,
$(\sigma^\ast S)_1 \to S_1$,
is a bijection.

\item
For each $s\in S_1 =  (\sigma^\ast S)_1$,
the induced map between sets,
$(\sigma^\ast S)_{1\leq q+1}^{-1}(s) \to S_{1\leq r+1}^{-1}(s)$,
is a bijection.

\end{enumerate}
Inspecting the construction of $\sigma^\ast S$, both of these assertions follow because $\sigma$ is a surjection.

\end{itemize}
This completes this proof of Lemma~\ref{a}.
\end{proof}

We now identify the associated graded of the skeletal filtration~(\ref{6}), as it concerns the simplicial $\Bbbk$-module $X_\dagger$ of~(\ref{05}).  
\begin{cor}\label{lb}
Fix $q\geq 0$.
There are canonical identifications among $\Bbbk$-modules:
\begin{eqnarray}
\nonumber
\coker\bigl( {\sf Sk}_{q-1}(X_\dagger) \to {\sf Sk}_q(X_\dagger)\bigr)
&
~\simeq~
&
S^q\otimes X_q^{\sf nd}
\\
\nonumber
&
~\simeq~
&
\underset{[T]}\bigoplus~
S^q\otimes \cD^{\fr}_n(T) \underset{\Aut(T)\wr \sO(n)} \bigotimes \Bigl( \underset{t\in T_1}\bigotimes \coker \bigl( F^{\otimes T_{1 \leq q+1}^{-1} (t)} \to A^{\otimes T_{1 \leq q+1}^{-1} (t)}  \bigr) \Bigr)~,
\end{eqnarray}
in which the direct sum is indexed by the set of isomorphism classes of $q$-non-degenerate height-$(q+1)$ rooted trees.  

\end{cor}

\begin{proof}
We establish the following string of canonical identifications among $\Bbbk$-modules.
\begin{eqnarray}
\nonumber
\coker\bigl( {\sf Sk}_{q-1}(X_\dagger) \to {\sf Sk}_q(X_\dagger)\bigr)
&
\underset{\rm (a)}{~\simeq~}
&
\coker\Bigl(\sL_q(X_\dagger)  \underset{\partial \Delta^q\otimes \sL_q(X_\dagger)}\bigoplus  \partial \Delta^q\ot X_q
\longrightarrow
X_q\Bigr)
\\
\nonumber
&
\underset{\rm (b)}{~\simeq~}
&
\coker\Bigl(\sL_q(X_\bullet)  \underset{\partial \Delta^q\otimes \sL_q(X_\bullet)}\bigoplus  \partial \Delta^q\ot ( \sL_q(X_\bullet) \oplus X_q^{\sf nd})
\longrightarrow
\sL_q(X_\bullet)\oplus X_q^{\sf nd}\Bigr)
\\
\nonumber
&
\underset{\rm (c)}{~\simeq~}
&
S(\partial \Delta^{q-1}) \otimes \coker ( \sL_q(X_\bullet) \to X_q )
\\
\nonumber
&
\underset{\rm (d)}{~\simeq~}
&
S^q\otimes X_q^{\sf nd}
\\
\nonumber
&
\underset{\rm (e)}{~\simeq~}
&
\underset{[T]}\bigoplus~
S^q\otimes \cD^{\fr}_n(T) \underset{\Aut(T)\wr \sO(n)} \bigotimes \Bigl( \underset{t\in T_1}\bigotimes \coker \bigl( F^{\otimes T_{1 \leq q+1}^{-1} (t)} \to A^{\otimes T_{1 \leq q+1}^{-1} (t)}  \bigr) \Bigr)~,
\end{eqnarray}
The identification~(a) is the expression~(\ref{3}).
The identification~(b) is the expression~(\ref{06}).
The identification~(c) follows from the fact that tensoring with $\partial \Delta^q$ distributes over direct sums, together with cancelation, where $S (\partial \Delta^q)$ is the suspension.
The identification~(d) is an identification of this suspension $S^q\simeq S(\partial \Delta^q)$, together with the defining expression~(\ref{e5}) of $X_q^{\sf nd}$.
The identification~(e) is the expression~(\ref{88'}) of Lemma~\ref{a}, together with the fact that tensoring with a space distributes over direct sums. 
\end{proof}

\subsubsection{\bf Coconnectivity of the associated graded of the skeletal filtration}\label{sec.d}
We now bound the coconnectivity of the associated graded of the skeletal filtration~(\ref{6}) of $|X_\dagger|$.

We make repeated use of the following formal result.
For $G$ a finite group, consider the smallest full $\infty$-subcategory 
\[
\sB G^{\op} \hookrightarrow \Mod^{\sf fin.free}_{G^{\op}}(\Spaces)  \subset \Mod_{G^{\op}}(\Spaces):=\Fun(\sB G^{\op} , \Spaces)
\]
that contains the image of the Yoneda functor and that is closed under finite colimits.
We say that a right $G$-space $Z$ is \emph{finite and free} if it is contained in this full $\infty$-subcategory.

\begin{prop}\label{l5}
Let $G$ be a finite group.
Let $Z$ be a finite and free right $G$-space.
Let $W\in \Mod_{G}(\Mod_{\Bbbk})$ be a left $G$-$\Bbbk$-module.  
If $Z$ is $z$-coconnective and $W$ is $w$-coconnective, then
\[
Z\underset{G}\otimes W
\]
is $(z+w)$-coconnective.

\end{prop}

\begin{proof}
Firstly, $Z\otimes W$ is $(z+w)$-coconnective. 
The fully faithful inclusion $\Mod_{\Bbbk}^{\leq z+w} \hookrightarrow \Mod_{\Bbbk}$ of the $(z+w)$-coconnective $\Bbbk$-modules is a right adjoint, and therefore it preserves limits.  
Therefore, the $\Bbbk$-module $Z\overset{G}\otimes W$ of diagonal $G$-invariants is $(z+w)$-coconnective.  
The result will then follow once it is established that the norm map
\begin{equation}\label{e11}
Z\underset{G}\otimes W 
\longrightarrow
Z\overset{G} \otimes W
\end{equation}
is an equivalence between $\Bbbk$-modules when $Z$ is finite and free---this is essentially Proposition~2.10 of~\cite{AhearnKuhn}.
By direct inspection, the norm map is an equivalence in the case $Z=G$ is in the image of the Yoneda functor. Since both $G$-invariants and $G$-coinvariants commute with finite colimits, the result follows.
\end{proof}

We record this minor enhancement of Proposition~\ref{l5}.
\begin{cor}\label{l10}
Let $i>0$ and $H$ be a topological group.
Let $Z$ be a finite and free $\Sigma_i$-space.
Let $W$ be a $(\Sigma_i\wr H)$-$\Bbbk$-module.
If the underlying space of $Z$ is $z$-coconnective, and if the underlying $\Bbbk$-module of $W$ is $w$-coconnective, then the $\Bbbk$-module
\[
( Z\times H^{i})  \underset{\Sigma_i\wr H} \otimes W
\]
is $(z+w)$-coconnective.  

\end{cor}

\begin{proof}
We establish the following sequence of equivalences among $\Bbbk$-modules.
\begin{eqnarray}
\nonumber
(Z\times H^{i}) \underset{\Sigma_i\wr H} \otimes W
&
~:=~
&
\bigl((Z\times H^{i})   \otimes W   \bigr)_{\Sigma_i\wr H}
\\
\nonumber
&
\underset{\rm (a)}{~\simeq~}
&
\Bigl(  (Z\times H^{i})  \otimes W   \bigr)_{H^{i}}  \Bigr)_{\Sigma_i}
\\
\nonumber
&
\underset{\rm (b)}{~\simeq~}
&
\bigl( (Z\times H^{i})  \underset{H^{i}} \otimes W    \bigr)_{\Sigma_i}
\\
\nonumber
&
\underset{\rm (c)}{~\simeq~}
&
\Bigl( Z \otimes W    \Bigr)_{\Sigma_i}~=:~Z \underset{\Sigma_i} \otimes W~.
\end{eqnarray}
The first, and last, identifications are definitional, as diagonal quotients.  
The identification~(a) is a formal consequence of the split short exact sequence of groups $H^{i} \hookrightarrow \Sigma_i\wr H \leftrightarrows \Sigma_i$.
The identification~(b) is definitional, as a diagonal quotient again.
The identification~(c) is a formal cancelation.
The result now follows from Proposition~\ref{l5}.
\end{proof}

We make essential use of Proposition~\ref{l5}, founded on the following well-known result.
\begin{prop}\label{l9}
For each $k>0$, the space $\conf_k(\RR^n)$ is $(k-1)(n-1)$-coconnective.

\end{prop}

\begin{proof}
We proceed by induction on $k>0$.  
The statement is immediate in the case $k=1$.
So assume $k>1$.
Consider the fibration sequence 
\[
(S^{n-1})^{\vee k-1} \longrightarrow \conf_k(\RR^n) \longrightarrow \conf_{k-1}(\RR^n)
\]
in which the second arrow is restriction along the standard inclusion $\{1,\dots,k-1\}\hookrightarrow \{1,\dots,k\}$.  
Inspecting the Serre spectral sequence associated to this fibration sequence, the coconnectivity of $\conf_k(\RR^n)$ is bounded above by the sum of the coconnectivities of the fiber and the base, which is $(n-1) + (k-2)(n-1) = (k-1)(n-1)$.  
\end{proof}

We record the following technical result.
\begin{lemma}\label{l8}
Let $U\to V$ be a morphism between $\Bbbk$-modules each of which are $r$-coconnective. If the map $\sH_{r}(U) \to \sH_{r}(V)$ is injective, then the cokernel
\[
\coker\bigl( U^{\ot k} \to V^{\ot k}\bigr)
\]
is $kr$-coconnective for all $k\geq 0$.
\end{lemma}

\begin{proof}

The coconnectivity assumptions on each of $U$ and $V$ implies each of $U^{\ot k}$ and $V^{\ot k}$ is $kr$-coconnective.  
From the long exact sequence associated to a cofibration sequence, the cokernel $\coker\bigl( U^{\ot k} \to V^{\ot k} \bigr)$ is $kr$-coconnective if and only if the map between $\Bbbk$-vector spaces
\begin{equation}\label{97}
\sH_{kr}(U^{\ot k}) 
\longrightarrow
\sH_{kr}(V^{\ot k}) 
\end{equation}
is injective.  
Since $\Bbbk$ is a field, for each $r$-coconnective $\Bbbk$-module $W$ there is an isomorphism $
\sH_{kr}(W^{\otimes k})
~\cong~
\sH_{r}(W)^{\otimes k}$.
Injectivity of the induced map~(\ref{97}) follows from the assumed injectivity of $\sH_{r}(U) \to \sH_{r}(V)$.
\end{proof}

\begin{lemma}\label{l7}
For each $q\geq 0$, the $\Bbbk$-module~(\ref{e5}) of non-degenerate simplices $X_q^{\sf nd}$ is $(-N-q)$-coconnective.  

\end{lemma}

\begin{proof}
Recall from Lemma~\ref{a}, the identification between $\Bbbk$-modules:
\[
X_q^{\sf nd} ~\underset{(\ref{88'})}{~\simeq~}~
\underset{[T]}\bigoplus~
\cD^{\fr}_n(T) \underset{\Aut(T)\wr \sO(n)} \bigotimes \Bigl( \underset{t\in T_1}\bigotimes \coker \bigl( F^{\otimes T_{1 \leq q+1}^{-1} (t)} \to A^{\otimes T_{1 \leq q+1}^{-1} (t)}  \bigr) \Bigr)~,
\]
in which the direct sum is indexed by the set of isomorphism classes of $q$-non-degenerate height-$(q+1)$ rooted trees.
It is therefore sufficient to show that each direct-summand is $(-N-q)$-coconnective.  
We do this using Corollary~\ref{l10}.
Fix a $q$-non-degenerate height-$(q+1)$ rooted tree $T$.

We first identify the coconnectivity of $\cD^{\fr}_n(T)$.  
Recall the identification as right $\Aut(T)\wr \sO(n)$-modules in $\Mod_{\sO(n)}(\Spaces)$:
\begin{eqnarray}
\nonumber
\cD^{\fr}_n(T)
&
\underset{(\ref{08})}{~\simeq~}
&
\Bigl( \prod_{t\in T_q} \conf^{\fr}_{T_{q<q+1}^{-1}(t)}(\RR^n)  \Bigr)
\times 
\Bigl( \prod_{0\leq  i < q} \prod_{t\in T_i} \conf_{T_{i<i+1}^{-1}(t)}(\RR^n) \Bigr)
\\
\nonumber
&
\underset{(\ref{e33})}{~\simeq~}
&
\Bigl( \prod_{0\leq  i \leq q} \prod_{t\in T_i} \conf_{T_{i<i+1}^{-1}(t)}(\RR^n) \Bigr) \times \sO(n)^{T_{q+1}}   ~.
\end{eqnarray}
This identification reveals that the $\sO(n)^{T_{q+1}}$-coinvariants 
\[
\bigl( \cD^{\fr}_n(T)\bigr)_{\sO(n)^{T_{q+1}}}
~\simeq~
\prod_{0\leq  i \leq q} \prod_{t\in T_i} \conf_{T_{i<i+1}^{-1}(t)}(\RR^n) 
\]
is finite and free 
as a $\Aut(T)$-space.
Using Proposition~\ref{l9}, which states that $\conf_k(\RR^n)$ is $(k-1)(n-1)$-coconnective, 
this identification also reveals that the coconnectivity of the underlying space 
of coinvariants $\cD^{\fr}_n(T)_{\sO(n)^{T_{q+1}}}$ 
is bounded above by
\begin{eqnarray}
\nonumber
\sum_{0\leq i \leq q} \sum_{t\in T_i} (|T_{i<i+1}^{-1}(t)|-1)(n-1)
&
~=~
&
\sum_{0\leq i \leq q} (|T_{i+1}|-|T_i|) (n-1)
\\
\nonumber
&
~=~
&
(|T_{q+1}|-|T_0|)(n-1)
\\
\nonumber
&
~\leq~
& 
( (q+1) - 1 ) (n-1) = q(n-1)~,
\end{eqnarray}
in which the inequality is obtained as follows.
By construction, for each $0\leq i\leq j\leq q+1$, the map between finite sets, $T_{i\leq j}\colon T_j \to T_i$ is surjective.
Therefore, the assumption that $T$ is $q$-non-degenerate precisely yields the inequalities among cardinalities:
\[
|T_{q+1}| > |T_q| > \dots > |T_1|\geq |T_0| ~.
\]
By construction, the cardinality $|T_0|=1$.
Therefore, 
\begin{equation}\label{72}
|T_{q+1}|~\geq ~q+1~.
\end{equation}
This concludes the above coconnectivity bound of the space 
of coinvariants 
$\cD^{\fr}_n(T)_{\sO(n)^{T_{q+1}}}$.

We now bound the coconnectivity of the $\Bbbk$-module
\begin{equation}\label{e10}
\underset{t\in T_1}\bigotimes \coker \bigl( F^{\otimes T_{1 \leq q+1}^{-1} (t)} \to A^{\otimes T_{1 \leq q+1}^{-1} (t)}  \bigr) ~.
\end{equation}
The coconnectivity of the underlying $\Bbbk$-modules of both $F$ and of $A$ are assumed to be at most $(-N)$.  
Therefore, for each $t\in T_1$, the coconnectivity of both of the $\Bbbk$-modules $F^{\ot T_{1\leq q+1}^{-1}(t)}$ and $A^{\ot T_{1\leq q+1}^{-1}(t)}$ is at most 
\[
| T_{1\leq q+1}^{-1}(t) | (-N)~.
\]
Lemma~\ref{l8} directly applies to the morphism $F\to A$ between underlying $\Bbbk$-modules to imply, for each $t\in T_1$, that the coconnectivity of the cokernel 
\[
\coker \bigl( F^{\otimes T_{1 \leq q+1}^{-1} (t)} \to A^{\otimes T_{1 \leq q+1}^{-1} (t)}  \bigr)
\]
is at most $| T_{1\leq q+1}^{-1}(t) | (-N)$.  
Therefore, the coconnectivity of~(\ref{e10}) is at most 
\[
\underset{t\in T_1}\sum   | T_{1\leq q+1}^{-1}(t) | (-N)
~=~
|T_{q+1}|(-N)
~\leq ~
(q+1)(-N)~,
\]
where the inequality is established in the previous paragraph.

Using these coconnectivity bounds, Corollary~\ref{l10}
gives that the coconnectivity of the $[T]$-direct-summand of $X_q^{\sf nd}$ is at most
\[
q(n-1) + (q+1)(-N) 
~=~
-N -q - q(N-n)
\leq -N-q~,
\]
where the inequality is due to the assumption that $N\geq n$.  
\end{proof}

We are now prepared to prove Lemma~\ref{cocon-cofib}
\begin{proof}[Proof of Lemma~\ref{cocon-cofib}]
From Lemma~\ref{01}(1), it is enough to show that the $\Bbbk$-module $|X_\dagger|$ is $(-N)$-coconnective.
Recall from~(\ref{6}), the equivalence between $\Bbbk$-modules
\[
\underset{q\geq 0} \colim~{\sf Sk}_q(X_\dagger)~\simeq~ |X_\dagger|~.
\]
For each $r$, the functor $\sH_{r}(-)\colon \Mod_{\Bbbk} \to {\sf Vect}_{\Bbbk}$ commutes with filtered colimits.  
It follows that $|X_\dagger|$ is $(-N)$-coconnective if and only if ${\sf Sk}_q(X_\dagger)$ is $(-N)$-coconnective for each $q\geq 0$.  
We do this by induction on $q\geq 0$.  
For $q=0$, there is a canonical identification between $\Bbbk$-modules,
\[
{\sf Sk}_0(X_\dagger) 
= 
X_0 = X_0^{\sf nd}~.
\]
In the case $q=0$, Lemma~\ref{l7} implies ${\sf Sk}_0(X_\dagger)$ is $(-N)$-coconnective.
This concludes the base case.  
So let $q>0$.  
Consider the cofiber sequence:
\[
{\sf Sk}_{q-1}(X_\dagger)
\longrightarrow
{\sf Sk}_q(X_\dagger)
\longrightarrow
\coker\bigl({\sf Sk}_{q-1}(X_\dagger)  \to {\sf Sk}_{q}(X_\dagger)  \bigr)~.
\]
From the long exact sequence in homology associated to a cofiber sequence, it is enough to show that the cokernel $\coker\bigl({\sf Sk}_{q-1}(X_\dagger)  \to {\sf Sk}_{q}(X_\dagger)  \bigr)$ is $(-N)$-coconnective.  
Through Corollary~\ref{lb}, it is enough to show that the $\Bbbk$-module
\[
S^q\otimes X_q^{\sf nd}
\]
is $(-N)$-coconnective.
We are therefore reduced to showing that $X_q^{\sf nd}$ is $(-N-q)$-coconnective, which is given by Lemma~\ref{l7}.  
\end{proof}

\subsection{Finitely presented coconnective algebras, continued}

Before addressing this affine case, we require the following easy, but important, comparison of factorization homology and factorization cohomology in the stable setting.

\begin{prop}\label{switch} 
Let $\cV$ be a $\ot$-stable-presentable symmetric monoidal $\infty$-category.  
Let $A$ be an augmented $n$-disk algebra in $\cV$ for which the natural map 
\[
(A^\vee)^{\ot i}   \xra{~\simeq~}    (A^{\ot i})^\vee
\]
is an equivalence for all $i\geq 0$. 
The composition $\Disk_{n}^+ \cong \Disk_{n,+}^{\op}\xra{A^\vee} \cV^{\op} \xra{(-)^\vee} \cV$ canonically extends to an augmented $n$-disk coalgebra $A^\vee \colon \Disk_n^+ \to \cV$.
Furthermore, for any zero-pointed $n$-manifold $M_\ast$ the canonical arrow
\[
 \int^{M_\ast} A^\vee \xra{~\simeq~}   \Bigl(\int_{M_\ast} A\Bigr)^\vee
\]
is an equivalence.
\end{prop}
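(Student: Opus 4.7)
The plan is to establish both claims by unpacking the defining (co)limit formulas for factorization (co)homology from Definition~\ref{def:fact-homology} and identifying them under the contravariant involution $\neg$ of Theorem~\ref{zero-facts} together with the linear dual $(-)^\vee \colon \cV^{\op}\to \cV$ of Definition~\ref{def:linear-dual}.

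To extend $A^\vee$ to an augmented $n$-disk coalgebra, I will construct it as the composite
\[
\Disk_n^+ \xra{~\neg~} \Disk_{n,+}^{\op} \xra{~A^{\op}~} \cV^{\op} \xra{~(-)^\vee~} \cV~.
\]
The first functor is symmetric monoidal by Theorem~\ref{zero-facts}, the second is the opposite of a symmetric monoidal functor, and the third carries a canonical lax symmetric monoidal structure via the assembly morphisms $(X^\vee)^{\ot i} \to (X^{\ot i})^\vee$ adjoint to the iterated evaluation pairings. The hypothesis of the proposition asserts precisely that these assembly morphisms are equivalences on tensor powers of $A$, so the composite is strongly symmetric monoidal, and hence defines an augmented $n$-disk coalgebra in $\cV$.

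For the equivalence of factorization (co)homologies, the key input is that the involution $\neg$ induces an equivalence of $\infty$-categories
\[
(\Disk_n^+)^{M_\ast^\neg/} ~\xra{~\simeq~}~\bigl((\Disk_{n,+})_{/M_\ast}\bigr)^{\op}
\]
sending a zero-pointed embedding $M_\ast^\neg \to \bigvee (\RR^n)^+$ to its negation $\bigvee \RR^n_+ \to M_\ast$; under this equivalence, the functor $A^\vee$ restricted to the source corresponds, by its very construction, to the composite $\bigl((\Disk_{n,+})_{/M_\ast}\bigr)^{\op} \to \Disk_{n,+}^{\op} \xra{A^{\op}} \cV^{\op} \xra{(-)^\vee} \cV$. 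On the other hand, applying $(-)^\vee$ to the defining colimit formula for $\int_{M_\ast} A$ converts it into a limit, using that $(-)^\vee\colon \cV^{\op}\to \cV$ carries colimits to limits: by $\ot$-presentability, $-\ot c$ preserves colimits and $\cV(-,\uno)$ sends colimits to limits, so the representing object $(\colim F)^\vee$ identifies with $\limit F^\vee$. Comparing the two expressions produces the same limit over $\bigl((\Disk_{n,+})_{/M_\ast}\bigr)^{\op}$ of the functor $(-)^\vee \circ A^{\op}$, giving the claimed equivalence.

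The main point of care is bookkeeping: it will be necessary to verify the equivalence of indexing $\infty$-categories not merely on objects but on morphisms, and to confirm that under this equivalence the coalgebra-theoretic evaluation $A^\vee(V^+)$ is identified with $A(V_+)^\vee$ together with its functoriality in zero-pointed embeddings (collapse maps on one side, inclusions on the other). Once these identifications are in place, functoriality in $M_\ast$ and $A$ is immediate from the naturality of $\neg$ and $(-)^\vee$, completing the proof.
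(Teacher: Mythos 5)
Your proposal is correct and follows essentially the same route as the paper: dualize the colimit defining $\int_{M_\ast}A$ into a limit, then identify it with the limit defining $\int^{M_\ast}A^\vee$ via the involution $\neg$, using the hypothesis on $A$ to promote the lax monoidal structure on $(-)^\vee$ to a strong one on the relevant tensor powers. The paper's own argument is terser (it dismisses the coalgebra extension as ``clear by hypothesis'' and indexes the (co)limits by the exiting-disk category of Corollary~\ref{still-computes} rather than the raw slice $\Disk_{n,+/M_\ast}$), but the substance is identical.
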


\begin{proof} 
The first statement is clear, by hypothesis.  
Furthermore, the hypothesis on $A$ supports the leftward arrow in the string of canonical equivalences in $\cV$:
\[
\scriptstyle
\Bigl(\int_{M_\ast} A\Bigr)^\vee =  \Bigl(\underset{U_+ \in \Disk_+(M_\ast)}\colim A(U_+) \Bigr)^\vee \xra{~\simeq~} \underset{U_+\in (\Disk_+(M_\ast))^{\op}}\limit A(U_+)^\vee \xla{~\simeq~}  \underset{U^+\in ((\Disk_n^+)_{M_\ast^\neg/} }\limit A^\vee(U^+) = \int^{M_\ast} A^\vee~.
\]
\end{proof}

We now turn to the affine case in the proof of our main theorem.

\begin{theorem}\label{affine} 
Let $\Bbbk$ be a field.
There is an equivalence between functors
\[
\Bigl(\int_{(-)} ~{}~\Bigr)^\vee~\simeq ~\int_{(-)^\neg} \DD^n~\colon~\FPres_n^{\leq -n} \to \Fun(\ZMfld_n,\Ch_\Bbbk)~.
\]
Specifically, for each finitely presented $(-n)$-coconnective augmented $n$-disk algebra $A$ in chain complexes over $\Bbbk$, and each $n$-dimensional cobordism $\ov{M}$ with boundary $\partial \ov{M} = \partial_{\sL}\coprod \partial_{\sR}$, there is a canonical equivalence of chain complexes over $\Bbbk$
\[
\Bigl(\int_{\ov{M}\smallsetminus \partial_{\sL}} A\Bigr)^\vee~ \simeq~ \int_{\ov{M}\smallsetminus \partial_{\sR}} \DD^nA~.
\]

\end{theorem}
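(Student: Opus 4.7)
The plan is to string together the Goodwillie convergence for the Koszul dual algebra $\DD^n A$, the identification of the Goodwillie limit as factorization cohomology via Corollary~\ref{goodwillie-limit}, an instance of Koszul bi-duality $\bBar \DD^n A \simeq A^\vee$, and Proposition~\ref{switch} to convert factorization cohomology into a linear dual. All equivalences will be natural, giving the functor-level statement; I would reduce the cobordism version to the case where $\ov{M}$ is connected by $\ot$-multiplicativity of factorization homology in disjoint unions, so that $M_\ast$ and $M_\ast^\neg$ become connected compact zero-pointed manifolds.

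First, I would establish by induction along the generating data for $\FPres_n^{\leq -n}$ (Definition~\ref{def:fpres}) two facts: (i) the underlying chain complex of $A$ is perfect over $\Bbbk$, and (ii) the augmentation ideal of $\DD^n A$ is connective. The base case $A = \FF V$ with $V \in \Perf_\Bbbk^{\leq -n}$ uses Lemma~\ref{free-trivial}, which identifies $\bBar \FF V$ with the trivial coalgebra on $V[n]$; this lives in degrees $\leq 0$, so its linear dual $\DD^n \FF V$ lies in degrees $\geq 0$. The inductive step follows from the pushout hypothesis, with Lemma~\ref{cocon-cofib} controlling how connectivity behaves under the defining small extensions. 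With these inputs in hand, the second criterion of Theorem~\ref{convergence} applied to $\DD^n A$ gives $\int_{M_\ast^\neg} \DD^n A \simeq P_\infty \int_{M_\ast^\neg} \DD^n A$, and Corollary~\ref{goodwillie-limit} then identifies the right-hand side with $\int^{M_\ast} \bBar(\DD^n A)$.

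Next, I would invoke the Koszul bi-duality $\bBar \DD^n A \simeq A^\vee$ of augmented $n$-disk coalgebras; equivalently, $\DD^n \DD^n A \simeq A$ after dualizing the perfect complex $A$. For the free case $A = \FF V$ this reduces to the dual of Lemma~\ref{free-trivial}: the bar construction of the trivial algebra on $V^\vee[-n]$ is the cofree $n$-disk coalgebra on $V^\vee$, which matches $(\FF V)^\vee$. For general $A \in \FPres_n^{\leq -n}$ the identification propagates by induction along the defining pushout squares, using that $\bBar$ and $(-)^\vee$ behave compatibly with the finite colimits at hand. Finally, Proposition~\ref{switch} applied to $A$ — whose perfect underlying complex ensures the tensor-vs-dual compatibility over a field — yields $\int^{M_\ast} A^\vee \simeq (\int_{M_\ast} A)^\vee$, completing the chain
\[
\Bigl(\int_{M_\ast} A\Bigr)^\vee \simeq \int^{M_\ast} A^\vee \simeq \int^{M_\ast} \bBar(\DD^n A) \simeq P_\infty \int_{M_\ast^\neg} \DD^n A \simeq \int_{M_\ast^\neg} \DD^n A~.
\]

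The main obstacle is the Koszul bi-duality $\bBar \DD^n A \simeq A^\vee$, especially its inductive propagation across the pushout squares defining $\FPres_n^{\leq -n}$. If Theorem~\ref{fpres} (the equivalence $\DD^n \colon \fpres^{\leq -n}_n \simeq \Artin_n^{\op}$) is established independently of Theorem~\ref{affine}, the bi-duality follows immediately by taking linear duals; otherwise one must verify directly that the bar construction of the Koszul dual commutes with the specified extensions, which is delicate because $\DD^n$ interchanges colimits and limits and $\bBar$ of an augmented algebra need not preserve pushouts of algebras in general.
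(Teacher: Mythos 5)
Your chain runs the Goodwillie convergence on the dual side $\int_{M_\ast^\neg}\DD^n A$ and relies on Koszul bi-duality $\bBar\DD^n A\simeq A^\vee$; the paper instead runs the Goodwillie analysis on $\int_{M_\ast}A$ and closes the circle with a double-dual map, avoiding both of the difficulties your plan encounters. There are two genuine gaps. First, applying Theorem~\ref{convergence} to $\DD^n A$ over $M_\ast^\neg$ needs either $M_\ast^\neg$ connected and compact (second criterion), or the augmentation ideal of $\DD^n A$ to be $1$-connective (first criterion). Neither holds in the generality claimed: when $\partial_{\sR}=\emptyset$ (e.g.\ $\ov{M}$ closed) one has $M_\ast^\neg\cong\ov{M}_+$, which is disconnected with a disjoint basepoint, and passing to connected components of $\ov{M}$ cannot remove the extra basepoint component; and $\DD^n A$ has merely $0$-connective augmentation ideal --- already for $A=\FF(\Bbbk[-n])$ the Koszul dual is the trivial algebra on $\Bbbk$ in degree $0$. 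The paper sidesteps this by establishing convergence of $P_\bullet\int_{M_\ast}A$ directly from the coconnectivity bound $\sH_q\bigl(\conf_i(M_\ast)\bigr)=0$ for $q>n\ell+(n-1)(i-\ell)$ of Proposition~\ref{conf-stuff}, valid for any number $\ell$ of components, together with the $(-n)$-coconnectivity of $LA$.

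Second, the bi-duality $\bBar\DD^n A\simeq A^\vee$ is not available to you as a lemma. In the paper's logical order it is a \emph{consequence} of Theorem~\ref{affine}: the proof of Theorem~\ref{fpres} invokes Theorem~\ref{affine} with $M_\ast=\RR^n_+$ precisely to establish $B\simeq\DD^n\DD^n B$, so invoking Theorem~\ref{fpres} here is circular, as you suspected. Propagating bi-duality directly across the pushout squares of Definition~\ref{def:fpres} would be substantial separate work: $\DD^n$ carries pushouts of algebras to pullbacks, and the bar construction does not interact simply with those pullbacks. The paper avoids bi-duality altogether; its chain is
\[
\Bigl(\int_{M_\ast} A\Bigr)^\vee \longleftarrow \Bigl(P_\infty \int_{M_\ast} A\Bigr)^\vee \longleftarrow \Bigl(\int^{M_\ast^\neg} \bBar A\Bigr)^\vee \longrightarrow \Bigl(\int_{M_\ast^\neg} \DD^n A\Bigr)^{\vee\vee} \longleftarrow \int_{M_\ast^\neg}\DD^n A~,
\]
in which the last arrow is the standard double-dual map, proved an equivalence because $\int_{M_\ast^\neg}\DD^n A$ is connective with degreewise finite-rank homology (via the cardinality filtration and the same coconnectivity estimates). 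You would thus need to supply independent arguments for both the convergence over disconnected $M_\ast^\neg$ and the bi-duality before your chain closes.
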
  
\begin{proof}
Fix a finitely presented $(-n)$-coconnective augmented $n$-disk algebra $A$.
We first consider the case of a zero-pointed $n$-manifold $M_\ast$ for which $M$ is connected.
We will establish the zig-zag of arrows in $\Ch_\Bbbk$
\[
\Bigl(\int_{M_\ast} A\Bigr)^\vee \xla{(1)} \Bigl(P_\infty \int_{M_\ast} A\Bigr)^\vee  \xla{(2)} \Bigl(\int^{M_\ast^\neg} \bBar(A)\Bigr)^\vee  \xra{(3)} \Bigl(\int_{M_\ast^\neg} \DD^n A\Bigr)^{\vee\vee} \xla{(4)} \int_{M_\ast^\neg}\DD^n A
\]
and such that each arrow in it is an equivalence.
The arrow~(1) is the linear dual of the canonical map to the limit of the Goodwillie cofiltration.
The arrow~(2) is the linear dual of that from Theorem~\ref{compare-towers}, which factors the Poincar\'e/Koszul duality morphism and is an equivalence by that result.  
The arrow~(3) is the linear dual of that of Proposition~\ref{switch}.  
The arrow~(4) is the standard one.  All of these maps are natural in $M_\ast$.

We verify that~(3) is an equivalence.  
Recall that since $A$ is finitely presented, therefore $LA$ is perfect.  
The equivalence of chain complexes $\DD^n A\simeq \Bbbk\oplus\bigl((\RR^n)^+ \ot LA\bigr)^\vee$ given by Theorem \ref{L-bar} implies $\DD^nA$ too is perfect.  
We can thus apply Proposition~\ref{switch} to conclude that~(3) is an equivalence.

We next show~(1) is an equivalence.  
For this we will show that the successive layers in the Goodwillie cofiltration become contractible through a range.  
Specifically, through Theorem~\ref{compare-towers}, we will show
\[
\conf_i^{\sf fr}(M_\ast) \underset{\Sigma_i\wr \sO(n)} \bigotimes (LA)^{\ot i}
\]
is $(-i+1)$-coconnective.

First, we show that $LA$ is a $(-n)$-coconnective $\sO(n)$-module given that $A$ is in $\fpres_n^{\leq -n}$. We prove this by induction on a resolution of $A$. 
The base case is when $A$ is a free augmented $n$-disk algebra: for $V\in \Mod_{\sO(n)}(\Perf_\Bbbk^{\leq -n})$ then $L\FF V \simeq V$ is $(-n)$-coconnective.
To prove the inductive step, consider a pushout of $(-n$)-coconnective algebras, $A\cong \Bbbk\underset{\FF V}\amalg B$, where $B$ is finitely presented $(-n)$-coconnective and subject to the assumption that $LB$ is $(-n)$-coconnective. If the map $\FF V\ra B$ is was not injective on homology in degree $-n$, then the pushout would have homology in degree $1-n$. 
By assumption $A$ is $(-n)$-coconnective; consequently, the map $\FF V\ra B$ must be injective on homology in degree $-n$.
Being a left adjoint, the cotangent space functor satisfies $L\bigl(\Bbbk\underset{\FF V}\amalg B) \simeq L\Bbbk \underset{L\FF V}\amalg LB \simeq \coker(V \to LB)$, and the cokernel is again $(-n)$-coconnective. Consequently, $LA$ is $(-n)$-coconnective and, further $(LA)^{\ot i}$ is a $(-ni)$-coconnective $\Sigma_i\wr \sO(n)$-module.

Via factorization homology, the $\Sigma_i\wr \sO(n)$-module $(LA)^{\ot i}$ determines a $\oplus$-excisive symmetric monoidal functor $\ZMfld_{\Sigma_i\wr \sO(n)}\to \Ch_\Bbbk^\oplus$ 
from zero-pointed $(ni)$-manifolds equipped with a $\sB\bigl(\Sigma_i\wr \sO(n)\bigr)$-structure on their tangent bundle.
Explicitly, this functor is given by the associated bundle construction: $W_\ast \mapsto {\sf Fr}_{W_\ast}\underset{\Sigma_i\wr \sO(n)}\bigotimes (LA)^{\ot i}$.  
Through $\oplus$-excision, the value of this functor on $W_\ast$ is $(s-ni)$-coconnective whenever $W_\ast$ is $s$-coconnective.  
The zero-pointed $(ni)$-manifold $\conf_i(M_\ast)$ is equipped with a $\sB\bigl(\Sigma_i\wr \sO(n)\bigr)$-structure on its tangent bundle; and Proposition~\ref{conf-stuff} implies its coconnectivity is equal to $n + (n - 1)(i - 1)$. The coconnectivity range implying equivalence (1) now follows by addition.

Lastly, we show (4) is an equivalence. It suffices to show that $\int_{M_\ast^\neg}\DD^nA$ is connective and has finite-rank homology groups over $\Bbbk$ in all dimensions. To show this, we apply the cardinality filtration of factorization homology: since $\int_{M_\ast^\neg}\DD^nA$ is a sequential colimit of the filtration $\tau^{\leq i} \int_{M_\ast^\neg}\DD^nA$, we further reduce to showing that the layers of the filtration are connective and grow in connectivity with $i$. By Theorem~\ref{compare-towers}, these cardinality layers are shifts of the duals of the Goodwillie layers of $\int_{M_\ast} A$. Consequently, their connectivities follows from the coconnectivities of the Goodwillie layers, which were computed in the proof of equivalence (1).

We conclude the theorem for general $M_\ast$ by showing that each side transforms wedges in the manifold variable to duals of tensor products. For the rightand side, we have the equivalence
\[
\int_{(M_\ast\vee N_\ast)^\neg} \DD^nA\simeq \int_{M_\ast^\neg\vee N_\ast^\neg} \DD^nA\simeq \int_{M_\ast^\neg} \DD^nA \ot \int_{N_\ast^\neg}\DD^nA
\]
since negation commutes with wedging and factorization homology is symmetric monoidal. For the lefthand side, we have the equivalences
\[
\Bigl(\int_{M_\ast\vee N_\ast} A\Bigr)^\vee\simeq \Bigl(\int_{M_\ast}A\ot\int_{N_\ast} A\Bigr)^\vee
\simeq
\Bigl(\Bigl(\int_{M_\ast^{\neg}}\DD^nA\Bigr)^\vee\ot\Bigl(\int_{N_\ast^\neg} \DD^nA\Bigr)^\vee\Bigr)^\vee
\]
again using that factorization homology is symmetric monoidal, as well as the previously established case of the theorem for $M_\ast$ and $N_\ast$ individually. We then have the further equivalences using the proof of equivalence (4),
\[
\Bigl(\Bigl(\int_{M_\ast^{\neg}}\DD^nA\Bigr)^\vee\ot\Bigl(\int_{N_\ast^\neg} \DD^nA\Bigr)^\vee\Bigr)^\vee
\simeq
\Bigl(\int_{M_\ast^{\neg}}\DD^nA\Bigr)^{\vee\vee}\ot\Bigl(\int_{N_\ast^\neg} \DD^nA\Bigr)^{\vee\vee}
\simeq
\int_{M_\ast^{\neg}}\DD^nA\ot\int_{N_\ast^\neg} \DD^nA~,
\]
using the coconnectivity and degreewise finiteness of $(\int_{M_\ast^\neg} \DD^nA)^\vee$, when $A$ is finitely presented $(-n)$-coconnective, to commute duals and tensor products. 
\end{proof}

\subsection{Comparing $\Artin_n$ and $\FPres_n$}
We show that Koszul duality implements an equivalence between finitely presented $(-n)$-coconnective algebras and connective Artin algebras.
These connectivity conditions generalize classical considerations in differential graded algebra, evident in the connectivity conditions in works such as \cite{moore}, \cite{milnormoore}, and \cite{quillen}.
It is well known that, in general, a free coalgebra is more complicated than a free algebra due to the failure of tensor products to commute with infinite products.
This failure means that the infinite product $\prod_{i\geq 0}V^{\ot i}$ lacks a coalgebra structure in general.
However, the infinite direct sum $\bigoplus_{i\geq 0}V^{\ot i}$ does have a natural coalgebra structure: it is the free ind-nilpotent coalgebra generated by $V$.
Consequently, whenever the natural completion map
\[\bigoplus_{i\geq 0} V^{\ot i} \longrightarrow \prod_{i\geq 0} V^{\ot i}\]
is an equivalence, then the infinite product does obtain a natural coalgebra structure; this is then easily seen to be free.
This completion map is an equivalence under (co)connectivity hypotheses. For instance, if $V$ is 1-connective, then $V^{\ot i}$ is $i$-connective, from which the equivalence follows.
Given the calculation of the bar construction on an augmented trivial associative algebra, $\bBar({\sf t}V)\simeq \bigoplus_{i\geq 0} (\Sigma V)^{\ot i}$, one can summarize this discussion for Koszul duality purposes as saying that the bar construction sends augmented trivial algebras to ind-nilpotent free coaugmented coalgebras, and it sends sends trivial {\it connective} algebras to free coalgebras (which happen to ind-nilpotent).

The following lemma generalizes the preceding to $n$-disk algebra.

\begin{lemma}\label{trivial-free} Let $V\in \m_{\sO(n)}\bigl(\Perf_\Bbbk^{\geq 0}\bigr)$ be an $\sO(n)$-module in perfect connective $\Bbbk$-modules, where $\Bbbk$ is a field. The Koszul dual of the trivial algebra on $V$, $\DD^n (\Bbbk\oplus V)\simeq \bBar({\sf t}V)^\vee$, is equivalent to the free augmented $n$-disk algebra on $V^\vee[-n]$. Likewise, $\bBar\bigl({\sf t} V\bigr)$ is the free augmented $n$-disk coalgebra on $(\RR^n)^+\ot V$. 
\end{lemma}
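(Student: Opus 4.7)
The two assertions are related by linear duality.  Indeed, assuming the second, the first follows from Proposition~\ref{switch}: perfection of $V$ makes $\bigl((\Bbbk\oplus V)^\vee\bigr)^{\ot i}\simeq \bigl((\Bbbk \oplus V)^{\ot i}\bigr)^\vee$ an equivalence for each $i$, and the underlying module of the free $n$-disk coalgebra on the $n$-connective perfect module $(\RR^n)^+\ot V$ is, by the dual of Theorem~\ref{free-calculation} together with Poincar\'e duality for configuration spaces, a direct sum that dualizes to the underlying module of $\FF(V^\vee[-n])$.  So it suffices to prove the second assertion, namely $\bBar({\sf t}V)\simeq \FF^{\sf aug}_{\cAlg}((\RR^n)^+\ot V)$.

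Set $V' := V^\vee[-n]$, which is perfect and $(-n)$-coconnective.  Then $\FF(V')\in \Free_n^{\sf perf,\leq -n}\subset \FPres_n^{\leq -n}$, so Theorem~\ref{affine} applies.  The cobordism $\ov{M}= D^n$ with $\partial_{\sL}=\emptyset$ and $\partial_{\sR}=S^{n-1}$ realizes the zero-pointed manifolds $(\RR^n)_+$ and $(\RR^n)^+$ as $\ov{M}\smallsetminus \partial_{\sL}$ and $\ov{M}\smallsetminus \partial_{\sR}$ respectively; these are mutual negations, so Theorem~\ref{affine} yields
\[
\bigl(\FF(V')\bigr)^\vee ~\simeq~\int_{(\RR^n)^+} \DD^n \FF(V')~=~\bBar\bigl(\DD^n \FF(V')\bigr)~,
\]
using the identification $\int_{(\RR^n)_+} \FF(V')\simeq \FF(V')$.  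By Lemma~\ref{free-trivial}, $\bBar(\FF(V'))\simeq {\sf t^{aug}_{\cAlg}}((\RR^n)^+ \ot V')$; taking linear duals---valid by perfection of $V'$---identifies $\DD^n \FF(V')\simeq {\sf t}V$, using that the linear dual of a trivial coalgebra on a perfect module is the trivial algebra on its dual and that $\bigl((\RR^n)^+\ot V'\bigr)^\vee \simeq V$ up to the sign-representation twist of $\sO(n)$ on $(\RR^n)^+$.  Substituting produces the key equivalence
\[
\bBar({\sf t}V)~\simeq~\bigl(\FF(V')\bigr)^\vee~.
\]

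It remains to identify $\bigl(\FF(V')\bigr)^\vee$ with $\FF^{\sf aug}_{\cAlg}\bigl((\RR^n)^+ \ot V\bigr)$.  By Theorem~\ref{free-calculation} applied to $M_\ast = \RR^n_+$, the underlying $\sO(n)$-module of $\FF(V')$ is $\bigoplus_i\conf_i^{\sf fr}(\RR^n_+)\ot_{\Sigma_i\wr \sO(n)}(V')^{\ot i}$.  Combining Proposition~\ref{conf-stuff} with the $(-n)$-coconnectivity of $V'$ shows each summand is $(-i)$-coconnective, so the linear dual is a direct sum of duals in each homological degree.  Poincar\'e duality for the $\sB(\Sigma_i\wr \sO(n))$-structured configuration manifold $\conf_i(\RR^n_+)$ (Theorem~\ref{linear-PD} with Remark~\ref{PD-B}, whose negation is $\conf_i^\neg((\RR^n)^+)$) identifies the dual of each summand, up to twist and shift, with $\conf_i^{\neg,\sf fr}((\RR^n)^+)\ot_{\Sigma_i\wr \sO(n)}V^{\ot i}$.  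The resulting direct sum matches the underlying $\sO(n)$-module of $\FF^{\sf aug}_{\cAlg}((\RR^n)^+\ot V)$, and naturality in $V$ upgrades the identification to an equivalence of augmented $n$-disk coalgebras.

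The main obstacle is the final step: tracking the sign and shift twists coming from the sign representation of $\sO(n)$ on $(\RR^n)^+$ through both invocations of Poincar\'e duality, and verifying that the comultiplication on the dualized free algebra corresponds to the natural free-coalgebra comultiplication rather than only matching the underlying $\sO(n)$-modules.
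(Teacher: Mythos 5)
Your proof is logically valid and takes a genuinely different route from the paper's. The paper argues directly with the comparison of cofiltrations (Theorem~\ref{compare-towers}) and the splitting of the Goodwillie cofiltration of $\int_{M_\ast}\FF(V^\vee[-n])$ from Theorem~\ref{free-calculation}, transferring it to a splitting of the cardinality cofiltration of $\int^{M_\ast^\neg}{\sf t}V^\vee$ and then specializing $M_\ast=(\RR^n)^+$. You instead apply Theorem~\ref{affine}—which packages the cofiltration comparison, the convergence estimate, and the finiteness used in Proposition~\ref{switch}—to $A=\FF(V')$ and $M_\ast=\RR^n_+$, and then identify $\DD^n\FF(V')\simeq{\sf t}V$ via Lemma~\ref{free-trivial}. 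There is no circularity: Theorem~\ref{affine} is proved before the present lemma and does not use it. Your route is somewhat less economical, since it rests on a more downstream result, but the key equivalence $\bBar({\sf t}V)\simeq\bigl(\FF(V')\bigr)^\vee$ is correctly obtained and the reduction of the cobordism to $\ov{M}=D^n$ is correct. One imprecision: you assert $\FF(V')\in\Free_n^{\sf perf,\leq-n}$, but $V'\in\Mod_{\sO(n)}(\Perf_\Bbbk^{\leq-n})$ need not be the $(-n)$-truncation of a perfect $\sO(n)$-module (cf.\ the parenthetical in the proof of Lemma~\ref{free-resolution}); what you actually need, and what the paper also uses implicitly in the proof of Theorem~\ref{fpres}, is that $\FF(V')\in\FPres_n^{\leq-n}$.

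Of the two obstacles you flag, the first (tracking sign and shift twists) is not a real obstacle: the paper's convention, made explicit in its own proof of this lemma, is that $V[\pm n]$ always carries the $\sO(n)$-twist, so $(\RR^n)^+\ot V^\vee[-n]\simeq V^\vee$ and its dual is exactly $V$, with nothing residual. The second obstacle—the coalgebra structure—is a genuine gap in your last paragraph, and the Poincar\'e-duality-summand computation you attempt is the wrong tool for filling it: it can only match underlying $\sO(n)$-modules, and that matching is in any case immediate by dualizing the direct sum of Theorem~\ref{free-calculation} term by term (each summand being perfect, with degrees going to $-\infty$). The direct fix is categorical: $\FF(V')$ is degreewise finite and bounded above, and on such complexes linear duality is a contravariant symmetric monoidal equivalence onto degreewise finite bounded-below complexes; this equivalence exchanges the forgetful functor from augmented $n$-disk algebras with the forgetful functor from augmented $n$-disk coalgebras, hence (since a contravariant equivalence reverses the handedness of adjoints) exchanges the free algebra functor with the cofree coalgebra functor. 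Thus $\bigl(\FF(V')\bigr)^\vee$ is the cofree $n$-disk coalgebra on $(V')^\vee\simeq(\RR^n)^+\ot V$, and together with your key equivalence this yields the second assertion; the first then follows by applying $(-)^\vee$ once more, as you noted at the outset.
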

\begin{proof}
The argument is essentially that for Lemma~\ref{free-trivial}, but replacing the Goodwillie cofiltration with the cardinality filtration. For convenience, in this proof we will denote $V[n]:= (\RR^n)^+\ot V$ and $V[-n]: = \Map\bigl((\RR^n)^+,V\bigr)$. 
The reader should bear in mind that these notations carry the $\sO(n)$-action.

Consider the cardinality cofiltration for factorization cohomology with coefficients in the trivial coalgebra ${\sf t}V^\vee$.
There is an equivalence
\[
\int_{(\RR^n)^+}\FF(V^\vee[-n])  ~  \simeq  ~   {\sf t}V^\vee~.
\]
Consequently, by Theorem \ref{compare-towers}, we have for each $M_\ast$ an equivalence between the Goodwillie cofiltration of factorization homology and the cardinality cofiltration of factorization cohomology
\[
P_{\leq \bullet}\int_{M_\ast^\neg}\FF(V^\vee[-n])~  \simeq   ~    \tau^{\leq \bullet} \int^{M_\ast}{\sf t}V^\vee~.
\]
The 0-connectivity of $V$ implies that $V^\vee[-n]$ is $(-n)$-coconnective.
It follows from this that each fiber of the Goodwillie cofiltration, $\conf^{\fr}_i(M_\ast^\neg) \underset{\Sigma_i\wr \sO(n)}\ot V^\vee[-n]^{\ot i}$, is $(1-i)$-coconnective, since $\conf_i(M_\ast^\neg)$ has nonvanishing homology in degrees at most $n+(n-1)(i-1)$, each of which is finite dimensional.
Consequently, the Goodwillie cofiltration converges.
Using the comparison of the Goodwillie cofiltration and the cardinality cofiltration (Theorem~\ref{compare-towers}), since the Goodwillie cofiltration for a free algebra splits (Theorem~\ref{free-calculation}), we obtain that the cardinality cofiltration for factorization cohomology of a trivial coalgebra also splits.
Applying this result in the case of $M_\ast =(\RR^n)^+$ and $M_\ast^\neg =\RR^n_+$, we obtain the equivalence
\[
\FF(V^\vee[-n])~   \simeq~   \int^{(\RR^n)^+}{\sf t}V^\vee~.
\]
As established, the finiteness condition on $V$ gives that this $\Bbbk$-module is concentrated in negative homological degrees and is finite in each dimension.
Proposition~\ref{switch} therefore gives the equivalence
\[
\int^{(\RR^n)^+}{\sf t}V^\vee~ \simeq ~\Bigl(\int_{(\RR^n)^+}\Bbbk\oplus V\Bigr)^\vee~.
\]
The righthand side is exactly the definition of $\DD^n(\Bbbk\oplus V)$, the Koszul dual of the trivial $n$-disk algebra on $V$, so the result follows.
\end{proof}

\begin{remark}
For convenience, we have assumed $\Bbbk$ is a field, but this was only necessary for the dual $\DD^n(\Bbbk\oplus V)$ to be a free algebra. The coalgebra $\bBar({\sf t}V)$ is free provided $V$ and $\Bbbk$ are connective as complexes, without requirement that $\Bbbk$ is a field.

\end{remark}

We now have the following important duality result.
\begin{theorem}\label{fpres} Koszul duality restricts to a contravariant equivalence 
\[
\DD^n\colon \fpres^{\leq -n}_n ~\simeq ~(\Artin_n)^{\op}\colon \DD^n
\] 
between finitely presented $(-n)$-coconnective augmented $n$-disk algebras and Artin $n$-disk algebras in chain complexes over a field $\Bbbk$. 

\end{theorem}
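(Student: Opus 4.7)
The strategy is to proceed by induction on the ``cellular complexity'' of both sides, using that $\Artin_n$ is generated under small-extension pullbacks by $\Triv_n$, and dually $\fpres^{\leq -n}_n$ is generated under free-extension pushouts by $\Free_n^{{\sf perf},\leq -n}$.

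The base case is the equivalence on generators. Lemma~\ref{trivial-free} gives, for $V\in\Mod_{\sO(n)}(\Perf_\Bbbk^{\geq 0})$, a canonical equivalence $\DD^n(\Bbbk\oplus V)\simeq \FF(V^\vee[-n])$. Dually, for $W\in\Mod_{\sO(n)}(\Perf_\Bbbk^{\leq -n})$, combining Lemma~\ref{free-trivial} with linear duality yields
\[
\DD^n\FF(W) ~\simeq~ \bigl({\sf t^{aug}_{\cAlg}}((\RR^n)^+\ot W)\bigr)^\vee ~\simeq~ \Bbbk\oplus W^\vee[-n]~,
\]
which is trivial on a perfect connective $\sO(n)$-module, hence in $\Triv_n$. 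Composing the two identifications gives a natural equivalence $\DD^n\DD^n\simeq \id$ on each generating subcategory.

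The inductive step is to show that $\DD^n$ converts a small-extension pullback $R\simeq S\times_{\Bbbk\oplus V} \Bbbk$ in $\Artin_n$ into the corresponding free-extension pushout $\DD^n R \simeq \DD^n S \coprod_{\FF(V^\vee[-n])} \Bbbk$ in $\fpres^{\leq -n}_n$. Linear duality automatically reverses limits and colimits on perfect chain complexes, so this reduces to showing that the bar construction $\bBar$ sends small-extension pullbacks of connective augmented $n$-disk algebras to pullbacks of augmented $n$-disk coalgebras.

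This pullback-preservation property of $\bBar$ is the main obstacle. My plan is to resolve $R$ by a simplicial diagram of free augmented $n$-disk algebras as in the proof of Lemma~\ref{cocon-cofib}, to compute $\bBar R = \int_{(\RR^n)^+} R$ through the resulting skeletal filtration, and to exploit the connectivity of the augmentation ideal of $R$ together with the freeness of the cofiber $V$ to control the layers of that filtration: under these hypotheses the filtration converges with coconnectivity bounds that force the square to remain Cartesian after applying $\bBar$. A complementary tactic proceeds by comparison with the cobar construction: on sufficiently coconnective augmented $n$-disk coalgebras, $\cBar$ provides a homotopy inverse to $\bBar$, converting the pullback-preservation question on algebras into a pushout-preservation question on coalgebras, where the left-adjointness of $\bBar$ (as a Kan extension along $\Disk_{n,+}\hookrightarrow \Disk_n^+$) applies directly. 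Once this step is established, a routine induction on the number of cells combined with the base case assembles mutually inverse functors $\DD^n\colon \fpres^{\leq -n}_n \rightleftarrows (\Artin_n)^{\op}$, completing the proof.
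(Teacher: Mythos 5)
Your approach has a genuine gap at the step you yourself flag as the ``main obstacle.'' You want to show that $\bBar$ sends the small-extension pullback $R\simeq S\times_{\Bbbk\oplus V}\Bbbk$ to a pullback of coalgebras, but $\bBar$ is a colimit (a left Kan extension), so it has no reason to commute with limits, and neither of your two sketched tactics is carried out. The filtration tactic runs into the fact that the cardinality layers of $\bBar R$ involve $R^{\ot i}$, and tensor powers do not commute with pullbacks, so connectivity alone does not obviously close the gap. The cobar tactic risks circularity: knowing that $\cBar$ inverts $\bBar$ on the relevant coconnective coalgebras is essentially equivalent (after dualization) to the assertion being proved. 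In addition, the closing ``routine induction'' is too optimistic: assembling a two-sided equivalence from an induction on generating cells requires control of the morphism spaces, not just objectwise identifications, and your proposal never addresses this.

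The paper's proof sidesteps the pullback question entirely. It runs the argument in the direction where colimit preservation does the work: $\DD^n=(\bBar)^\vee$ is a composite of left adjoints, hence sends the generating free-extension pushouts of $\FPres_n^{\leq -n}$ to pullbacks, so $\DD^n$ restricts to a functor $\FPres_n^{\leq -n}\to(\Artin_n)^{\op}$. It then establishes full faithfulness directly: by colimit preservation one reduces to mapping spaces out of $\FF V$, which are computed via the free--forgetful and trivial--cotangent adjunctions together with Theorem~\ref{L-bar}, Lemma~\ref{free-trivial}, Lemma~\ref{trivial-free}, and---crucially---the double-dual equivalence $B\simeq \DD^n\DD^n B$ for $B\in\FPres_n^{\leq -n}$, which is Theorem~\ref{affine} specialized to $M_\ast=\RR^n_+$ and rests on the Goodwillie/cardinality tower comparison. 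Essential surjectivity then follows by inspecting the inductive definitions, and the inverse is identified with $(\DD^n)^{\op}$ on the trivial-algebra generators using conservativity of the forgetful functor. In short, the paper trades your hard pullback-preservation step for a full-faithfulness computation powered by Theorem~\ref{affine}; to repair your argument you would either need to genuinely prove the pullback preservation (likely by re-deriving the tower comparison anyway) or pivot to the paper's full-faithfulness route.
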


\begin{proof} 
Lemma~\ref{free-trivial} shows that $\DD^n$ sends free algebras to trivial algebras.  
Inspecting further, $\DD^n$ restricts as a functor $\free_n^{{\sf perf},\leq -n} \to \Triv_n^{\geq 0}$ -- this uses that the $\Bbbk$-linear dual of a coconnective object is connective, since $\Bbbk$ is a field. 
Because $\DD^n = (\bBar)^\vee\colon \Alg_n^{\sf aug} \to (\Alg_n^{\sf aug})^{\op}$ is the composite of left adjoints, it preserves colimits.  
Inspecting the definitions of $\fpres_n^{\leq -n}$ and $\Artin_n$, we conclude that $\DD^n \colon \fpres_n^{\leq -n} \to (\Alg_n^{\sf aug})^{\op}$ canonically factors through $(\Artin_n)^{\op}$:
\begin{equation}\label{DD}
\DD^n \colon \fpres_n^{\leq -n} \longrightarrow (\Artin_n)^{\op}~.
\end{equation}
We will now explain that this restricted functor is fully faithful.  

For each pair of objects $A,B\in\fpres_n^{\leq -n}$ we must show that map of spaces
\begin{equation}\label{D-on-maps}
\Map_{\Alg_n^{\sf aug}}(A,B) \xra{~\DD^n~} \Map_{\Alg_n^{\sf aug}}(\DD^nB,\DD^nA)
\end{equation}
is an equivalence.  
Again, because $\DD^n$ preserves colimits, it is enough to consider the case that $A = \FF V$ is free on a $\sO(n)$-module $V$ in $\perf_\Bbbk^{\leq -n}$.  
For this case will explain the canonical factorization of the map~(\ref{D-on-maps}) through equivalences:
\begin{eqnarray}
\nonumber
\Map_{\Alg_n^{\sf aug}}(\FF V,B)  
&
\underset{(1)}\simeq
&
\Map^{\sO(n)}(V, B)
\\
\nonumber
&
\underset{(2)}\simeq
&
\Map^{\sO(n)}\bigl(((\RR^n)^+\ot B)^\vee , ((\RR^n)^+\ot V)^\vee\bigr)
\\
\nonumber
&
\underset{(3)}\simeq
&
\Map^{\sO(n)}\Bigl(\bigl((\RR^n)^+ \ot \bBar(\DD^n B)\bigr)^\vee, ((\RR^n)^+\ot V)^\vee\Bigr)
\\
\nonumber
&
\underset{(4)}\simeq
&
\Map^{\sO(n)}\Bigl(L\DD^n B, ((\RR^n)^+\ot V)^\vee\Bigr)
\\
\nonumber
&
\underset{(5)}\simeq
&
\Map_{\Alg_n^{\sf aug}}\Bigl(\DD^nB, {\sf t}\bigl(((\RR^n)^+\ot V)^\vee\bigr)\Bigr)
\\
\nonumber
&
\underset{(6)}\simeq
&
\Map_{\Alg_n^{\sf aug}}(\DD^n B, \DD^n \FF V)
\end{eqnarray}
The equivalence~(1) is by the free-forgetful adjunction.
The equivalence~(2) is an application of the functor $\bigl((\RR^n)^+\ot - \bigr)^\vee \colon \Mod_{\sO(n)}(\Perf_\Bbbk) \to \Mod_{\sO(n)}(\Perf_\Bbbk)$, which is an equivalence, because $\Ch_\Bbbk$ is stable and linear dual implements an equivalence on finite chain complexes.
The equivalence~(3) is because of the canonical equivalence $B\simeq \DD^n\DD^n B$, which is Theorem~\ref{affine} applied to the case of $M_\ast = \RR^n_+$ and $M_\ast^\neg = (\RR^n)^+$.  
The equivalence~(4) is from Theorem \ref{L-bar}.
The equivalence~(5) is the cotangent space-trivial adjunction.
The equivalence~(6) is the identification of the dual of a free algebra from Lemma~\ref{free-trivial}.  
It is straightforward to check that this composite equivalence agrees with the map~(\ref{D-on-maps}).

With Lemma~\ref{free-trivial}, inspection of the definitions of $\fpres_n^{\leq -n}$ and $\Artin_n$ gives that this restricted functor~(\ref{DD}) is essentially surjective. 
We conclude that~(\ref{DD}) is an equivalence of $\infty$-categories.  
From Lemma~\ref{trivial-free}, $(\DD^n)^{\op}$ restricts as an inverse to $\DD^n$ on $\Triv_n^{\geq 0}$.
Because the forgetful functor $\Alg_n^{\sf nu}(\Ch_\Bbbk) \to \Mod_{\sO(n)}(\Ch_\Bbbk)$ is conservative, it follows that $(\DD^n)^{\op}\colon (\Artin_n)^{\op} \to \fpres_n^{\leq -n}$ is inverse to $\DD^n$.  
\end{proof}

\begin{cor}
For $A\in \FPres_n^{\leq -n}$, the moduli problem $\MC_{A}$ is affine. Moreover, there is an equivalence
\[
\MC_{A} ~\simeq~ {\sf Spf}(\DD^n A)~.
\] 
\end{cor}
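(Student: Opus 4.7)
The plan is to deduce this corollary directly from Theorem~\ref{fpres}, together with the definitions of $\MC_A$ and $\Spf$. By Theorem~\ref{fpres}, the Koszul duality functor $\DD^n$ implements a contravariant equivalence between $\FPres_n^{\leq -n}$ and $\Artin_n$; in particular, for $A\in \FPres_n^{\leq -n}$ the algebra $\DD^n A$ lies in $\Artin_n$, and for any $R\in \Artin_n$ the unit/counit of the equivalence gives a natural equivalence $\DD^n \DD^n R \simeq R$ in $\Artin_n$. So it is meaningful to form $\Spf(\DD^n A)$ as a formal moduli functor on $\Artin_n$.

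Now I would compute, for $R\in \Artin_n$, the following sequence of natural equivalences of spaces:
\[
\MC_A(R) ~=~ \Map_{\Alg_n^{\sf aug}}(\DD^n R, A) ~\simeq~ \Map_{\Alg_n^{\sf aug}}(\DD^n A, \DD^n \DD^n R) ~\simeq~ \Map_{\Alg_n^{\sf aug}}(\DD^n A, R) ~=~ \Spf(\DD^n A)(R)~.
\]
The first identification is Definition~\ref{def:MC}. The second identification is the mapping-space equivalence coming from the contravariant equivalence $\DD^n$ of Theorem~\ref{fpres}, applied to $\DD^n R \in \FPres_n^{\leq -n}$ and $A\in \FPres_n^{\leq -n}$; here we use that $\Artin_n\subset \Alg_n^{\sf aug}$ and $\FPres_n^{\leq -n}\subset \Alg_n^{\sf aug}$ are full $\infty$-subcategories, so the relevant mapping spaces are computed in $\Alg_n^{\sf aug}$. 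The third identification is the double duality $\DD^n \DD^n R\simeq R$ again from Theorem~\ref{fpres}. The fourth identification is the definition of $\Spf$ from Example~\ref{def:spf}.

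Taken together, these equivalences assemble into a natural equivalence of functors $\Artin_n \to \Spaces$ from $\MC_A$ to $\Spf(\DD^n A)$, which is the desired conclusion; affineness of $\MC_A$ follows since $\Spf(\DD^n A)$ is affine by construction. Given that Theorem~\ref{fpres} does all the heavy lifting, there is no substantial obstacle beyond ensuring the naturality of the middle equivalence in $R$, which follows formally from the fact that $\DD^n$ is an $\infty$-functor and the equivalence $\DD^n \DD^n \simeq \id$ is a natural equivalence on $\Artin_n$.
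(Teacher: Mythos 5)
Your proof is correct and is essentially the intended argument: the paper states this corollary without proof immediately following Theorem~\ref{fpres}, and the chain of mapping-space equivalences you give, driven by the contravariant equivalence $\DD^n\colon \fpres^{\leq -n}_n \simeq (\Artin_n)^{\op}$ and the resulting double-duality $\DD^n\DD^n R\simeq R$, is exactly the direct deduction. The only notational point to keep in mind is that $\Spf$ is defined using mapping spaces in $\Alg_n^{\sf nu}$ (Example~\ref{def:spf}), which agree with those in $\Alg_n^{\sf aug}$ via Proposition~\ref{not-augmented}, so your identifications are all legitimate.
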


\subsection{Resolving by $\FPres_n^{\leq -n}$}

The rest of this section is devoted to proving the last required result used in the proof of our main theorem.
This is Proposition~\ref{resolve}, which states that factorization homology with general coefficients is given by left Kan extension of factorization homology restricted to finitely presented augmented $n$-disk algebras whose augmentation ideal is $(-n)$-coconnective. In order to establish this, we make use of the following notion of one $\oo$-category being generated under colimits by another.

\begin{definition}[Strongly generates]\label{def.stronggenerates}
A functor $g\colon \cB\ra \cC$ between $\infty$-categories {\it strongly generates} if the diagram among $\infty$-categories
\[
\xymatrix{
\cB  \ar[rr]^-g  \ar[d]_-g
&&
\cC  
\\
\cC  \ar[urr]_-{\id_\cC}
}
\]
exhibits the functor $\id_\cC$ as a left Kan extension of $g$ along $g$.
In other words, the natural transformation $\LKan_g(g) \xra{\simeq}\id_\cC$ between endofunctors on $\cC$ is by equivalences.  
\end{definition}

The following lemma is the main technical result of this section. The completion of its proof occupies~\S\ref{sec.gen} below.
Recall the full $\infty$-subcategories of $\Alg_n^{\sf aug}$ from Notation~\ref{def.alg-i}.

\begin{lemma}[Highly coconnected free resolutions]\label{free-resolution}
The inclusion from the $\infty$-category of free augmented $n$-disk algebras on $\sO(n)$-modules, whose underlying $\Bbbk$-module is $(-n)$-coconnective and finite, into augmented $n$-disk algebras in $\Bbbk$-modules,
\[
\Free^{\leq -n}_{n}~\hookrightarrow~ \Alg_n^{\sf aug}~,
\] 
strongly generates.
\end{lemma}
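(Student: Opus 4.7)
The plan is to prove strong generation by verifying, for every augmented $n$-disk algebra $A$, that the canonical morphism
\[
\underset{(F \to A) \in (\Free_n^{\leq -n})_{/A}}{\colim}\, F ~\longrightarrow~ A
\]
is an equivalence in $\Alg_n^{\sf aug}$. First I would reduce to the case $A = \FF V$ free via Lemma~\ref{free-resolution1}. Both the identity and the slice-colimit construction preserve sifted colimits in $A$: compactness of $\FF W$ for perfect $W$ in $\Alg_n^{\sf aug}$---which follows because $\FF$ is a left adjoint and sends compact generators to compact generators---implies that $\Map_{\Alg_n^{\sf aug}}(\FF W, -)$ commutes with sifted colimits, so the slice $(\Free_n^{\leq -n})_{/A}$ is compatible with sifted colimits in $A$. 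A further reduction via filtered colimits of perfect approximations allows us to assume $V$ is itself perfect.

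For $V$ perfect, I would use formality of perfect complexes over the field $\Bbbk$ to decompose $V \simeq \bigoplus_{i} H_i(V)[i]$. Separating the $(-n)$-coconnective and strictly higher degree parts, $V \simeq V^{\leq -n} \oplus V^{> -n}$, gives a coproduct $\FF V \simeq \FF(V^{\leq -n}) \amalg \FF(V^{> -n})$ in $\Alg_n^{\sf aug}$. The first factor lies in $\Free_n^{\leq -n}$, where the tautological arrow is terminal in the slice category and the equivalence is immediate. For the second factor I would inductively build $\FF(V^{> -n})$ from free algebras on $(-n)$-coconnective perfect modules using the shift-pushout identity $\FF(\Sigma W) \simeq \Bbbk \amalg_{\FF W} \Bbbk$ in $\Alg_n^{\sf aug}$, valid because $\FF$ preserves pushouts. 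The induction proceeds on the top nonvanishing degree of $V$, the base case having been already handled by the $V^{\leq -n}$ argument.

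The hard part will be this inductive step through pushouts. Even when the strong generation property holds at each corner of a pushout diagram in $\Alg_n^{\sf aug}$, the slice $(\Free_n^{\leq -n})_{/(X \amalg_Y Z)}$ is not simply assembled from the slices over the corners, since a generic map $\FF W \to X \amalg_Y Z$ need not factor through any single corner. The resolution I would pursue is to exhibit an enhanced indexing category built from the three slices over $X$, $Y$, $Z$ together with compatibility data (homotopies between composites into the pushout), and then verify that the induced diagram of objects of $\Free_n^{\leq -n}$ has colimit equivalent to the pushout. Controlling this bookkeeping with slice categories and their behaviour under pushouts in $\Alg_n^{\sf aug}$---and showing that the construction is functorial enough to propagate the equivalence inductively---is the technical heart of the argument I envision for \S\ref{sec.gen}.
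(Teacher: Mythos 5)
Your reduction to free algebras via Lemma~\ref{free-resolution1} hinges on the claim that $\FF W$ is compact in $\Alg_n^{\sf aug}$ for $W$ whose underlying $\Bbbk$-module is perfect. This is false, and the failure is exactly what makes the result nontrivial. The objects of $\Free^{\leq -n}_n$ are free on $\sO(n)$-modules that are perfect \emph{over $\Bbbk$} (Definition~\ref{perf-triv}), not compact objects of $\Mod_{\sO(n)}(\Ch_\Bbbk)\simeq\Mod_{\sC_\ast(\sO(n);\Bbbk)}(\Ch_\Bbbk)$. For example $\Bbbk[-n]$ with trivial $\sO(n)$-action is perfect over $\Bbbk$ but not a compact object of $\m_{\sO(n)}$, since $\sH_\ast\sO(n)$ has infinite $\Bbbk$-dimension in general; the paper's own proof emphasizes this distinction, noting that $\tau^{\leq -n}\sC_\ast(\sO(n),\Bbbk[-n])$ is compact in $\m_{\sO(n)}^{\leq -n}$ but not in $\m_{\sO(n)}$. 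Consequently $\Map_{\Alg_n^{\sf aug}}(\FF W,-)$ does not commute with filtered colimits, let alone with sifted colimits (which would require compact-projectivity even if compactness held), so $\LKan_g(g)$ is not known to preserve the sifted colimits supplied by Lemma~\ref{free-resolution1}, and the reduction to $A$ free does not go through. A secondary issue: your decomposition $V\simeq\bigoplus_i H_iV[i]$ uses formality of perfect complexes over a field, but this splitting is only of the underlying $\Bbbk$-complex and generally does not respect the $\sO(n)$-module structure, so $\FF V\simeq \FF(V^{\leq -n})\amalg\FF(V^{>-n})$ is not available as an equivalence of augmented $n$-disk algebras.

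The paper's route is structurally different precisely to circumvent this. It factors the inclusion as $\Free_n^{{\sf perf},\leq -n}\hookrightarrow\Free_n^{{\sf all},\leq -n}\hookrightarrow\Alg_n^{\leq -n}\hookrightarrow\Alg_n^{\sf aug}$ and proves strong generation of each piece separately. The compactness argument you envision is deployed only for the first two pieces, \emph{inside} the coconnective world $\Alg_n^{\leq -n}$, where the relevant truncated modules genuinely are compact. The last piece, $\Alg_n^{\leq -n}\hookrightarrow\Alg_n^{\sf aug}$, is the real content and is handled by an entirely different mechanism: a localization criterion for strong generation (Lemma~\ref{select}, via Lemma~\ref{on-localizations} and Lemma~\ref{theformalresult}) reducing the problem to exhibiting, for every simplicial augmented algebra, a mapped-in simplicial resolution by $(-n)$-coconnective algebras inducing an equivalence on geometric realizations (Lemma~\ref{secondcondition}, built on Lemma~\ref{homework} and the coconnectivity-of-cofibers Lemma~\ref{cocon-cofib}). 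Your plan correctly senses that something delicate happens when attempting to propagate density through colimit constructions, but locates the difficulty in the wrong place (pushouts) rather than in the fundamental non-compactness, and does not contain the ingredients that the paper uses to resolve it.
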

\begin{proof}
Because composition of left Kan extensions along composable functors is the left Kan extension along the composite, it suffices to show for that, for any full $\oo$-subcategory $\cF\subset \Free^{\leq -n}_n$, the inclusion $g_{|}\colon \cF\hookrightarrow \Alg_n^{\sf aug}$ strongly generates.
We do this for $\cF = \Free^{{\sf perf}, \leq -n}_n$, the $\oo$-category consisting of augmented $n$-disk algebras that are free on $(-n)$-coconnective truncations of perfect $\sO(n)$-modules. 
(Note, for example, that the trivial $\sO(n)$-module $\Bbbk[-n]\simeq \tau^{\leq -n}\sC_\ast(\sO(n),\Bbbk[-n])$ is the $(-n)$-coconnective truncation of a perfect $\sO(n)$-module, but it is {\it not} itself a perfect $\sO(n)$-module.)

We consider the sequence of left Kan extensions
\[
\xymatrix{
\Free^{{\sf perf}, \leq -n}_n\ar[rr]^{g_2\circ g_1\circ g_0}\ar[d]_{g_0}&&\Alg_n^{\sf aug}\\
\Free^{{\sf all},\leq -n}_n\ar[d]_{g_1}\ar@{-->}[urr]\\
\Alg_n^{\leq -n}\ar[d]_{g_2}\ar@{-->}[uurr]\\
\Alg_n^{\sf aug}\ar@{-->}[uuurr]}
\]
where each $g_i$ a fully faithful inclusion with $\Alg_n^{\leq -n}$ consisting of those augmented $n$-disk algebras whose augmentation ideal is a $(-n)$-coconnective $\Bbbk$-module.
Here $\Free^{{\sf all},\leq -n}_n$ consists of all free augmented $n$-disk algebras on $(-n)$-coconnective $\Bbbk$-modules.
To prove the canonical natural transformation
\[
\lkan_g(g) \longrightarrow  {\sf id}_{\Alg}
\]
is an equivalence it is sufficient to show that each of the canonical natural transformations
\begin{equation}\label{eq.0}
 \lkan_{g_0}(g_2\circ g_1\circ g_0)\ra g_2\circ g_1~,
\end{equation}

\begin{equation}\label{eq.1} \lkan_{g_1}(g_2\circ g_1) \ra g_2~,
\end{equation}
and
\begin{equation}\label{eq.2} \lkan_{g_2}(g_2) \ra {\sf id}_{\Alg}
\end{equation}
are equivalences.

We first prove that the arrow~(\ref{eq.0}) is an equivalence, namely that the natural map
\[
\colim_{\FF V\in \Free^{{\sf perf}, \leq -n}_{n/A}}\FF V \longrightarrow A
\]
is an equivalence for any $A\simeq \FF(W_0)$ a free $(-n)$-coconnective $n$-disk algebra.
Since $\m_{\sO(n)}$ is a compactly generated by the perfect $\sO(n)$-modules, we can choose an expression for $W_0$ as filtered colimit $W_\bullet:\cJ \ra \perf_{\sO(n)}$ of perfect $\sO(n)$-modules.
Since the truncation functor $\tau^{\leq -n}$ preserves colimits, we obtain an expression
\[
W_0\simeq \tau^{\leq -n}W_0 \simeq \colim_{\cJ} \tau^{\leq-n}W_\bullet
\]
for $W_0$ as a filtered colimit of $(-n)$-coconnective truncations of perfect $\sO(n)$-modules.
Applying the free functor $\FF$, which again preserves filtered colimits, we obtain a filtered diagram $A_\bullet :=\FF(\tau^{\leq -n}W_\bullet): \cJ\ra \Free^{{\sf perf}, \leq -n}_n$ together with an identification $\colim\bigl( \cJ \xra{g_2\circ g_1\circ g_0\circ A_\bullet} \Alg_n^{\leq -n}\bigr) \simeq A$.
Recall that the inclusion $\m_{\sO(n)}^{\leq -n} \hookrightarrow \m_{\sO(n)}$ preserves filtered colimits.
Therefore if $V$ is a compact of $\m_{\sO(n)}$ (i.e., a perfect $\sO(n)$-module) then the truncation $\tau^{\leq -n}V$ is a compact object of $\m_{\sO(n)}^{\leq -n}$.
(Note that the truncation need not be a compact object of $\m_{\sO(n)}$. For instance, the trivial module $\Bbbk[-n] \simeq \tau^{\leq -n} \sC_\ast(\sO(n),\Bbbk[-n])$ is compact as an object of $\m_{\sO(n)}^{\leq -n}$ but not compact as an object of $\m_{\sO(n)}$.)
Consequently, every object of $\Free^{{\sf perf},\leq -n}_n$ is a compact object of $\Alg_n^{\leq -n}$, which is to say that the map $\underset{j\in \cJ} \colim \Map(\FF V, A_j) \xra{\simeq} \Map(\FF V,A)$, involving spaces of morphisms in $\Alg_n^{\leq -n}$, is an equivalence. 
Consequently, we have that the natural functor between $\infty$-categories
\[
\colim_{j\in \cJ}\Free^{{\sf perf}, \leq -n}_{n/A_j} \longrightarrow \Free^{{\sf perf}, \leq -n}_{n/A}
\]
is an equivalence.
The result is now a formal consequence of commuting colimits:
\[
\colim_{\FF V\in \Free^{{\sf perf}, \leq -n}_{n/A}}\FF V
~ \simeq ~
\colim_{\FF V\in \underset{j\in\cJ}\colim \Free^{{\sf perf}, \leq -n}_{n/A_j}}\FF V
~ \simeq ~
\colim_{j\in\cJ} \colim_{\FF V\in\Free_{n/A_j}^{{\sf perf}, \leq -n}} \FF V
~\simeq ~
\colim_{j\in \cJ} A_j\simeq A~.
\]

We next prove that the arrow (\ref{eq.1}) is an equivalence. That is, we show that the canonical morphism
\[
\LKan_{g_2}(g_2\circ g_1)(A)~\simeq~\underset{\FF  V\in\Free^{\leq -n}_{n/A}}\colim \FF V \longrightarrow A
\]
is an equivalence for all $A\in \Alg_n^{\leq -n}$.
This arrow fits as the lower horizontal arrow a solid diagram among augmented $n$-disk algebras
\[
\xymatrix{
\underset{\FF  V\in\Free^{\leq -n}_{n/A}}\colim~ \underset{\bullet\in \bDelta^{\op}}\colim ~ \FF^{\bullet+2} V  \ar[d]_-{\simeq}  \ar[rr]   
&&
\underset{\bullet\in \bDelta^{\op}}\colim ~\FF^{\bullet+1} \Ker(A\to \Bbbk)  \ar[d]^-{\simeq}  \ar@{-->}[dll]
\\
\underset{\FF  V\in\Free^{\leq -n}_{n/A}}\colim \FF V  \ar[rr] 
&&
A,
}
\]
which we now establish by way of functorial free resolutions.  
Namely, for each augmented $n$-disk algebra $B$ there is a simplicial object
\[
\FF^{\bullet+1}\Ker(B\to \Bbbk)\colon \bDelta^{\op} \to (\Free_n^{\leq -n})_{/B}
\]
given by the functorial free resolution of $B$; it has the property that the canonical morphism from its colimit 
\[
|\FF^{\bullet+1}\Ker(B\to \Bbbk)| \xra{~\simeq~}B
\]
is an equivalence.  
This simplicial object is evidently functorial in the argument $B$, which is to say that it defines a functor $\FF^{\bullet+1}\Ker(-\to \Bbbk)\colon \Alg_n^{\leq -n} \to \Fun(\bDelta^{\op},(\Cat_{\infty})_{/\Free_n^{\leq -n}})$ to the $\infty$-category of simplicial objects in $\infty$-categories over $\Free_n^{\leq -n}$.  
Taking colimits establishes the square diagram and its commutativity, as well as the fact that the vertical arrows are equivalences.
This also establishes the dashed arrow making the filled diagram manifestly commute.

Lastly, that (\ref{eq.2}) is an equivalence is exactly the assertion that the inclusion $\Alg_n^{\leq -n} \hookrightarrow \Alg_n^{\sf aug}$ strongly generates.
This assertion is precisely Corollary \ref{cor.atlast}.
\end{proof}

\begin{lemma}\label{coconn-sifted}
Let $A$ be an augmented $n$-disk algebra in $\Ch_\Bbbk^\ot$.  
Consider the $\infty$-category $(\free^{\leq -n}_{n})_{/A}$ of augmented $n$-disk algebras over $A$ which are free on $(-n)$-coconnective $\sO(n)$-modules in finite chain complexes over $\Bbbk$. 
This $\infty$-overcategory
\[
(\free^{\leq -n}_{n})_{/A}
\]is sifted.
\end{lemma}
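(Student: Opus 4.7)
The plan is to show that $(\free^{\leq -n}_n)_{/A}$ admits all finite coproducts, from which siftedness is immediate via the standard criterion that any $\infty$-category with finite coproducts is sifted (see HTT, \S5.5.8: given $X,Y\in \cC$, if $X\sqcup Y$ exists it is initial in $\cC^{X/}\times_{\cC}\cC^{Y/}$, and the sliced $\infty$-category is thus weakly contractible).

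First I would exhibit an initial object. The free functor $\FF$ is a left adjoint (Definition~\ref{free}), and it carries the zero $\sO(n)$-module---trivially an object of $\m_{\sO(n)}(\perf_\Bbbk^{\leq -n})$---to an initial object $\FF(0)\simeq \Bbbk\in \free^{\leq -n}_n$. The canonical augmentation $\Bbbk\to A$ then supplies an initial object of the slice $(\free^{\leq -n}_n)_{/A}$, in particular showing that this $\infty$-category is nonempty.

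Next I would construct binary coproducts. For two objects $(\FF V\to A)$ and $(\FF W\to A)$ of the slice with $V,W\in \m_{\sO(n)}(\perf_\Bbbk^{\leq -n})$, the fact that $\FF$ is a left adjoint gives an equivalence $\FF V\sqcup \FF W\simeq \FF(V\oplus W)$ in $\Alg_n^{\sf aug}$. The class of $\sO(n)$-modules whose underlying chain complex is $(-n)$-coconnective and of finite total rank is manifestly stable under finite direct sums, so $V\oplus W$ again lies in $\m_{\sO(n)}(\perf_\Bbbk^{\leq -n})$, whence $\FF(V\oplus W)\in \free^{\leq -n}_n$. The universal property of the coproduct produces the structure map $\FF(V\oplus W)\to A$ from the two given ones, realizing the coproduct in $(\free^{\leq -n}_n)_{/A}$.

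There is essentially no obstacle: the proof reduces to the two elementary observations that $\FF$ preserves coproducts (being a left adjoint) and that the defining class $\m_{\sO(n)}(\perf_\Bbbk^{\leq -n})$ is closed under $\oplus$. Combined with the initial object exhibited above, this yields all finite coproducts in $(\free^{\leq -n}_n)_{/A}$, and the standard criterion then delivers siftedness.
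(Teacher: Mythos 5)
Your proposal is correct and takes essentially the same approach as the paper: the paper likewise establishes nonemptiness and then shows the diagonal is final via Quillen's Theorem~A by exhibiting $\FF(V\oplus W)\to A$ as an initial object of the relevant iterated slice, which is exactly the assertion that binary coproducts exist in $(\free^{\leq -n}_n)_{/A}$. Your framing—verify closure of $\m_{\sO(n)}(\Perf_\Bbbk^{\leq -n})$ under $\oplus$, use that $\FF$ preserves coproducts, then invoke the standard criterion—is just a cleaner packaging of the same argument.
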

\begin{proof}
First, note that this $\infty$-category $(\free^{\leq -n}_{n})_{/A}$ is nonempty.  
We show the diagonal map $(\free^{{\sf perf},\leq -n}_{n})_{/A} \ra (\free^{{\sf perf},\leq -n}_{n})_{/A}\times (\free^{{\sf perf},\leq -n}_{n})_{/A}$ is final.
For this, we use Quillen's Theorem A and argue that the iterated slice $\infty$-category
\[
\bigl((\free_n^{{\sf perf},\leq -n})_{/A}\bigr)_{(\FF V \to A , \FF W \to A)/}
\]
has contractible classifying space for each pair of objects $(\FF V\to A)$ and $(\FF W\to A)$ of $(\free_n^{\leq -n})_{/A}$.
This iterated slice $\infty$-category has an initial object, given by the universal arrow $\FF(V\oplus W) \to A$ determined by the map of underlying $\sO(n)$-modules $V \oplus W \to \FF V \oplus \FF W \to A \oplus A \to A$.
\end{proof}

Through Lemma~\ref{sifted-resolutions}, the combination of Lemma~\ref{free-resolution} and Lemma~\ref{coconn-sifted} gives the following result.
\begin{cor}\label{free-res-fact}
For $A$ an augmented $n$-disk algebra in $\Ch_\Bbbk$, the canonical arrow 
\[
\colim_{\FF V\in (\free^{\leq -n}_{n})_{/A}}\int_{M_\ast} \FF V \xra{~\simeq~} \int_{M_\ast}A
\] 
is an equivalence in $\Ch_\Bbbk$.  

\end{cor}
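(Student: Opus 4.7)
My plan is to prove Corollary~\ref{free-res-fact} by combining the three lemmas immediately preceding it: the strong generation of Lemma~\ref{free-resolution}, the siftedness of Lemma~\ref{coconn-sifted}, and the sifted-colimit preservation of Lemma~\ref{sifted-resolutions}. The structure of the argument is a two-step identification of $\int_{M_\ast}A$ with a colimit over $(\free_n^{\leq -n})_{/A}$, one step carried out in $\Alg_n^{\sf aug}$ and the other carried out in $\Ch_\Bbbk$, which requires checking a compatibility condition to commute $\int_{M_\ast}$ past the colimit.

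First, I would apply Lemma~\ref{free-resolution}, which asserts that the inclusion $\free_n^{\leq -n}\hookrightarrow \Alg_n^{\sf aug}$ strongly generates; unpacking Definition~\ref{def.stronggenerates} at the object $A$, this gives a canonical equivalence in $\Alg_n^{\sf aug}$
\[
\underset{(\FF V\to A)\in (\free_n^{\leq -n})_{/A}}\colim~ \FF V \xra{~\simeq~} A~.
\]
Applying the functor $\int_{M_\ast}\colon \Alg_n^{\sf aug}\to \Ch_\Bbbk$ and noting that it tautologically preserves this particular equivalence, the task reduces to showing that the canonical comparison arrow
\[
\underset{(\FF V\to A)\in (\free_n^{\leq -n})_{/A}}\colim ~\int_{M_\ast} \FF V ~\longrightarrow~ \int_{M_\ast}\Bigl(\underset{(\FF V\to A)\in (\free_n^{\leq -n})_{/A}}\colim ~\FF V\Bigr)
\]
is an equivalence in $\Ch_\Bbbk$.

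Second, I would verify the indexing $\infty$-category is sifted. Lemma~\ref{coconn-sifted} provides precisely this: $(\free_n^{\leq -n})_{/A}$ is sifted. With siftedness in hand, I invoke Lemma~\ref{sifted-resolutions}, which states that the functor $\int_{M_\ast}\colon \Alg_n^{\sf aug}(\cV)\to \cV$ preserves sifted colimits whenever $\cV$ is $\otimes$-sifted cocomplete. Since $\Ch_\Bbbk^\ot$ is $\ot$-stable-presentable, it is in particular $\otimes$-sifted cocomplete, so the comparison arrow above is an equivalence. Concatenating with the equivalence of the first paragraph gives the desired claim.

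The only step with substantive content is the strong generation assertion of Lemma~\ref{free-resolution}, which has already been established in the excerpt (via the three-stage factorization through $\Free_n^{{\sf perf},\leq -n}\subset\Free_n^{{\sf all},\leq -n}\subset\Alg_n^{\leq -n}\subset \Alg_n^{\sf aug}$ and eventually Corollary~\ref{cor.atlast}). Given this input, the proof of the corollary itself is essentially a formal manipulation: the main ``obstacle'' is simply to certify that the three cited inputs apply in the form required. In particular, one should note that the compatibility of Lemma~\ref{sifted-resolutions} requires the indexing diagram to be sifted in $\Alg_n^{\sf aug}$, which is exactly what Lemma~\ref{coconn-sifted} supplies; and one should check that the canonical arrow produced by the universal property of the colimit agrees, up to the equivalence from Lemma~\ref{free-resolution}, with the canonical arrow from $\colim\int_{M_\ast}\FF V \to \int_{M_\ast}A$ asserted in the statement. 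Both checks are straightforward from the constructions.
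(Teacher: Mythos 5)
Your proposal is correct and matches the paper's intended argument exactly: the paper itself states that Corollary~\ref{free-res-fact} follows by combining Lemma~\ref{free-resolution} (strong generation), Lemma~\ref{coconn-sifted} (siftedness of the index category), and Lemma~\ref{sifted-resolutions} (preservation of sifted colimits by $\int_{M_\ast}$), which is precisely the two-step reduction you describe. Your explicit unpacking of the comparison arrow and the verification that $\Ch_\Bbbk^\ot$ is $\ot$-sifted cocomplete are both correct and in line with the paper's intent.
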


\begin{prop}\label{resolve} 
Let $A$ be an augmented $n$-disk algebra in $\Ch_\Bbbk$.
The canonical arrow
\[
\colim_{F\in (\fpres^{\leq -n}_{n})_{/A}}\int_{M_\ast} F \xra{~\simeq~} \int_{M_\ast}A
\] 
is an equivalence in $\Ch_\Bbbk$.

\end{prop}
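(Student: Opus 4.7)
The plan is to deduce Proposition \ref{resolve} from Corollary \ref{free-res-fact} by a composition-of-left-Kan-extensions argument. The key is to use the intermediate subcategory $\Free_n^{{\sf perf}, \leq -n}$ of free algebras on $(-n)$-coconnective truncations of perfect $\sO(n)$-modules, which is contained in both $\Free_n^{\leq -n}$ and in $\FPres_n^{\leq -n}$ (the latter by the very definition of $\FPres_n^{\leq -n}$ in Definition \ref{def:fpres}).

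First, I would establish the analogue of Corollary \ref{free-res-fact} with $\Free_n^{{\sf perf}, \leq -n}$ in place of $\Free_n^{\leq -n}$. The proof of Lemma \ref{free-resolution} already shows that the inclusion $\Free_n^{{\sf perf}, \leq -n} \hookrightarrow \Alg_n^{\sf aug}$ strongly generates, since the full argument factors through this smaller subcategory before extending it. The slice $(\Free_n^{{\sf perf}, \leq -n})_{/A}$ is sifted by the same direct-sum argument as in Lemma \ref{coconn-sifted}: if $V$ and $W$ are $(-n)$-coconnective truncations of perfect $\sO(n)$-modules $V'$ and $W'$, then $V \oplus W \simeq \tau^{\leq -n}(V' \oplus W')$ is again such a truncation. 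Since $\int_{M_\ast}$ preserves sifted colimits (Lemma \ref{sifted-resolutions}), we obtain
\[
\colim_{\FF V \in (\Free_n^{{\sf perf}, \leq -n})_{/A}} \int_{M_\ast} \FF V ~\simeq~ \int_{M_\ast} A~.
\]

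Next, I would apply this equivalence to each $F \in \FPres_n^{\leq -n}$ regarded as an augmented $n$-disk algebra; this exhibits $\int_{M_\ast}|_{\FPres_n^{\leq -n}}$ as the left Kan extension of $\int_{M_\ast}|_{\Free_n^{{\sf perf}, \leq -n}}$ along the fully faithful inclusion $\Free_n^{{\sf perf}, \leq -n} \hookrightarrow \FPres_n^{\leq -n}$. By the composition property of left Kan extensions applied to the composable inclusions $\Free_n^{{\sf perf}, \leq -n} \hookrightarrow \FPres_n^{\leq -n} \hookrightarrow \Alg_n^{\sf aug}$,
\[
\LKan_{\FPres_n^{\leq -n} \hookrightarrow \Alg_n^{\sf aug}} \bigl(\int_{M_\ast}|_{\FPres_n^{\leq -n}}\bigr) ~\simeq~ \LKan_{\Free_n^{{\sf perf}, \leq -n} \hookrightarrow \Alg_n^{\sf aug}} \bigl(\int_{M_\ast}|_{\Free_n^{{\sf perf}, \leq -n}}\bigr) ~\simeq~ \int_{M_\ast}~,
\]
where the last equivalence is the conclusion of the first step. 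Evaluating both sides at $A$ gives the desired identification $\colim_{F \in (\FPres_n^{\leq -n})_{/A}} \int_{M_\ast} F \simeq \int_{M_\ast} A$.

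There is no substantive obstacle beyond initial bookkeeping: the argument hinges on verifying cleanly that Corollary \ref{free-res-fact} and the sifted-ness of Lemma \ref{coconn-sifted} both carry over from $\Free_n^{\leq -n}$ to the yet smaller $\Free_n^{{\sf perf}, \leq -n}$, after which Proposition \ref{resolve} follows formally from the universal property of left Kan extensions.
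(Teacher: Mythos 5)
Your proof is correct and is essentially the same argument as the paper's, differing only in two pieces of bookkeeping. First, you work consistently with $\Free_n^{{\sf perf},\leq -n}$ rather than $\Free_n^{\leq -n}$; this is a reasonable choice since $\Free_n^{{\sf perf},\leq -n}\subset\FPres_n^{\leq -n}$ is built directly into Definition~\ref{def:fpres}, and indeed the paper's own proofs of Lemmas~\ref{free-resolution} and~\ref{coconn-sifted} pass through this smaller subcategory even though the lemma statements refer to $\Free_n^{\leq -n}$. Second, you package the conclusion as a composition of left Kan extensions along $\Free_n^{{\sf perf},\leq -n}\hookrightarrow\FPres_n^{\leq -n}\hookrightarrow\Alg_n^{\sf aug}$, whereas the paper phrases it as a two-out-of-three argument on the factorization $\colim_{\free}\to\colim_{\fpres}\to\int_{M_\ast}A$. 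These are the same formal maneuver: both hinge on applying Corollary~\ref{free-res-fact} (in its ${\sf perf}$-flavored form) both to $A$ and to each $F\in(\FPres_n^{\leq -n})_{/A}$, together with siftedness and Lemma~\ref{sifted-resolutions}. Your verification that $(\Free_n^{{\sf perf},\leq -n})_{/A}$ is sifted via $V\oplus W\simeq\tau^{\leq -n}(V'\oplus W')$ is correct.
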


\begin{proof} 
We explain the sequence of equivalences in $\Ch_\Bbbk$:
\begin{equation}\label{free-fp-A}
\underset{\FF V\in (\free_n^{\leq -n})_{/A}}\colim \int_{M_\ast} \FF V \longrightarrow \underset{F \in (\fpres_n^{\leq -n})_{/A}}\colim\int_{M_\ast} F \longrightarrow \int_{M_\ast} A~.
\end{equation}
The arrows are the canonical ones, from restricting colimits.
Corollary~\ref{free-res-fact} states that the composite arrow is an equivalence.  
The result is verified upon showing that the left arrow is an equivalence.

Consider the commutative triangle of $\infty$-categories
\[
\xymatrix{
(\free_n^{\leq -n})_{/A}  \ar[rr]^-{\int_{M_\ast}}   \ar@{^{(}->}[d]
&&
\Ch_\Bbbk
\\
(\fpres_n^{\leq -n})_{/A}  \ar[rru]_-{\int_{M_\ast}} 
&&
.
}
\]
Let $F \to A$ be an object of $(\fpres_n^{\leq -n})_{/A}$.
From Corollary~\ref{free-res-fact}, the canonical map 
\[
\underset{\FF V\in (\free_n^{\leq -n})_{/F}}\colim \int_{M_\ast} \FF V \longrightarrow \int_{M_\ast} F
\] 
is an equivalence.
It follows that the above triangle is a left Kan extension, and thereafter that the left arrow of~(\ref{free-fp-A}) is an equivalence.  
\end{proof}

\subsubsection{\bf Conditions for strong generation}\label{sec.gen}
The following sequence of results, leading up to Corollary~\ref{cor.atlast}, is toward establishing the remaining step in the proof of Lemma~\ref{free-resolution} above, that the inclusion $\Alg^{\leq -n}_n\hookrightarrow \Alg_n^{\sf aug}$ strongly generates in the sense of Definition~\ref{def.stronggenerates}.

The next formal results gives examples of functors that strongly generate.

\begin{observation}\label{yon-gen}
For each $\infty$-category $\cC$, the Yoneda functor $j\colon \cC\to \Psh(\cC)$ strongly generates.  
This follows from the universal property of the Yoneda functor being the free colimit completion.
For instance, the canonical natural transformation $\LKan_j(j) \to \id$ is by equivalences since its value on a presheaf $\cF$ on $\cC$ is the standard equivalence
\[
\colim\bigl(\cC_{/\cF} \to \cC \xra{j} \Psh(\cC)\bigr) \simeq \underset{(jc \to \cF)\in \cC_{/\cF}} \colim jc \xra{\simeq~} \cF~.
\]

\end{observation}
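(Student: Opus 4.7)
The plan is to show that the canonical natural transformation $\LKan_j(j) \to \id_{\Psh(\cC)}$ is componentwise an equivalence. Since the Yoneda functor $j$ is fully faithful, the pointwise formula for left Kan extension gives, for each $\cF \in \Psh(\cC)$, an identification
\[
\LKan_j(j)(\cF)~\simeq~\underset{(jc\to \cF)\in \cC_{/\cF}}\colim~ jc~,
\]
where $\cC_{/\cF} := \cC \underset{\Psh(\cC)}\times \Psh(\cC)_{/\cF}$ is the $\infty$-category of representables over $\cF$. Moreover, the canonical map from this colimit to $\cF$ agrees, under this identification, with the component at $\cF$ of the natural transformation $\LKan_j(j) \to \id_{\Psh(\cC)}$ coming from the universal property of the left Kan extension.

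The key step is then the density theorem for presheaf $\infty$-categories: for every $\cF \in \Psh(\cC)$, the canonical arrow
\[
\underset{(jc\to \cF)\in \cC_{/\cF}}\colim~ jc ~\xra{~\simeq~}~ \cF
\]
is an equivalence. This is the $\infty$-categorical analogue of the classical fact that every presheaf is canonically the colimit of the representables mapping into it, and it can be deduced from the universal property of $\Psh(\cC)$ as the free cocompletion of $\cC$ (cf.\ Lemma 5.1.5.3 and its consequences in~\cite{HTT}). One way to conclude: the functor $\id_{\Psh(\cC)}$ is a colimit-preserving extension of $j$ along $j$, so by the universal property it must be canonically identified with $\LKan_j(j)$.

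I do not expect a serious obstacle here, as both ingredients---the pointwise formula for Kan extension along a fully faithful functor, and the density of representables in a presheaf $\infty$-category---are standard and indeed are the content of the Yoneda embedding being the ``free colimit completion.'' The only point requiring a little care is to match the comparison arrow produced by the universal property of $\LKan_j(j)$ with the tautological density presentation of $\cF$, but this is a formal verification.
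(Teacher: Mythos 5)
Your proof is correct and takes essentially the same approach as the paper: both identify $\LKan_j(j)(\cF)$ via the pointwise Kan-extension formula as the colimit of representables over $\cF$ and then invoke the density of representables (equivalently, the universal property of $\Psh(\cC)$ as the free colimit completion) to conclude the comparison arrow is an equivalence. The only difference is that you spell out the pointwise formula and the identification of the comparison map more explicitly, which the paper leaves implicit.
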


\begin{prop}\label{prop.finalgen}
Each localization $\cB\ra \cC$ between $\infty$-categories strongly generates.  
\end{prop}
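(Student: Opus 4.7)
The plan is to exploit the structural characterization of localizations as admitting fully faithful right adjoints. By Proposition 5.2.7.4 of \cite{HTT}, a localization $g\colon \cB \to \cC$ fits into an adjunction $g \dashv r$ with $r\colon \cC \to \cB$ fully faithful; equivalently, the counit $\epsilon\colon g \circ r \to \id_\cC$ is an equivalence.

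Next, I would invoke the standard pointwise formula for left Kan extension along a functor with a right adjoint: for any $F\colon \cB \to \cD$ and any $c\in\cC$,
\[
\LKan_g(F)(c) ~\simeq~ F(r(c))~.
\]
The justification is that the comma $\infty$-category $\cB\times_\cC \cC_{/c}$, whose objects are pairs $(b, g(b)\to c)$, has a terminal object given by the counit pair $(r(c), \epsilon_c)$; indeed, the adjunction $g \dashv r$ exhibits the space of lifts of any $f\colon g(b)\to c$ through $\epsilon_c$ along a morphism $b\to r(c)$ as contractible.

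Applying this formula in the case $F = g$ then produces equivalences $\LKan_g(g)(c) \simeq g(r(c)) \simeq c$, where the second equivalence uses the full faithfulness of $r$. Tracing through the identifications, the canonical comparison morphism $\LKan_g(g) \to \id_\cC$ is identified with the counit $\epsilon$, and hence is an equivalence; this proves strong generation. I do not anticipate any serious obstacle here: the whole argument is a direct consequence of the defining characterization of localizations together with the basic pointwise formula for left Kan extensions in the presence of an adjoint.
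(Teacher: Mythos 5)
There is a terminological mismatch that amounts to a genuine gap. The paper uses \emph{localization} in the Dwyer--Kan sense: a functor $g\colon \cB\to\cC$ that exhibits $\cC$ as $\cB[W^{-1}]$ for some collection of morphisms $W$, i.e., such that restriction $\Fun(\cC,\cE)\to\Fun(\cB,\cE)$ is fully faithful with essential image the $W$-inverting functors. This is visible both in the paper's own proof of the proposition (which invokes precisely the full faithfulness of $g^\ast\colon \Fun(\cC,\Spaces)\to\Fun(\cB,\Spaces)$) and in the surrounding lemmas (Lemma~\ref{on-localizations} and Lemma~\ref{theformalresult}, which are explicitly about $\cC[\cW^{-1}]$). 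In particular, the localization appearing in Corollary~\ref{cor.important.localization}, namely $(\Alg_n^{\leq -n})^{\bdelta^{\op}}\to\Alg_n^{\sf aug}$, has no reason to admit a right adjoint.

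Your argument, by contrast, invokes HTT~5.2.7.4 and assumes $g$ has a fully faithful right adjoint $r$; this is Lurie's more restrictive notion (a reflective localization). For a general Dwyer--Kan localization no such $r$ exists, so the terminal object $(r(c),\epsilon_c)$ of $\cB\times_\cC\cC_{/c}$ that your argument relies on is not available, and the pointwise formula $\LKan_g(F)(c)\simeq F(r(c))$ does not apply. The paper gets around this by instead showing that $\cB_{/c}\to\cC_{/c}$ is final via Quillen's Theorem~A, identifying the relevant classifying space using the equivalence $g_! g^\ast\cF\xrightarrow{\sim}\cF$ for $\cF = \cC(c',-)$, which is exactly the fully-faithfulness consequence of being a Dwyer--Kan localization. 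Your route works, and is considerably shorter, under the stronger reflective hypothesis, but to prove the proposition at the generality needed here you would have to replace the adjoint-based step with an argument that does not assume $r$ exists.
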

\begin{proof}
Let $g\colon \cB \to \cC$ be a localization.
We must show that the canonical natural transformation $\LKan_g(g)  \to \id_\cC$ is by equivalences in $\cC$.  
From the standard formula for the values of a left Kan extension, this is to show that, for each object $c\in \cC$, the morphism in $\cC$
\[
\colim\bigl(\cB_{/c} \xra{g} \cC_{/c} \to \cC\bigr) \longrightarrow \colim\bigl(\cC_{/c} \to \cC\bigr)\simeq c
\]
is an equivalence.  
To show this it is sufficient to show that the functor $\cB_{/c} \to \cC_{/c}$ is final for each object $c\in \cC$.  
Through Quillen's Theorem~A, this is the problem of showing that, for each morphism $c'\xra{f} c$ in $\cC$, the iterated slice $\infty$-category $(\cB_{/c})^{f/}$ has contractible classifying space: $\sB\bigl((\cB_{/c})^{f/})\bigr)\simeq \ast$.  

Let $c'\xra{f} c$ be a morphism in $\cC$.  
Because $\cB \xra{g} \cC$ is a localization, the restriction functor $\Fun(\cC,\Spaces) \xra{g^\ast} \Fun(\cB,\Spaces)$ is fully faithful. 
Therefore, for each functor $\cC\xra{\cF} \Spaces$, the value of the counit of the left Kan extension-restriction adjunction 
\[
g_!  g^\ast \cF \xra{~\simeq~} \cF
\]
is an equivalence in $\Fun(\cC,\Spaces)$.  
Applying this to the representable copresheaf $\cF:= \cC(c',-)$, then evaluating on $c$, we obtain an equivalence of spaces
\[
\sB\Bigl(\cB_{/c}\underset{\cC}\times \cC^{c'/} \Bigr)~\simeq~\colim\bigl(\cB_{/c} \to \cC\xra{\cC(c',-)} \Spaces\bigr) ~\simeq~g_! g^\ast \cC(c',-)(c)\xra{~\simeq~} \cC(c',c)
\]
where the second equivalence is the standard formula computing the values of a left Kan extension and the first equivalence is through the straightening-unstraightening construction of~\S2.2 of~\cite{HTT}.  
Quillen's Theorem A identifies the fiber over $f\in \cC(c',c)$ of this map as the classifying space $\sB\bigl((\cB_{/c})^{f/})\bigr)$.  
We conclude that this classifying space is contractible, as desired.  
\end{proof}

\begin{remark}
A functor $\cB\ra \cC$ that strongly generates is final with respect any functor $\cC \ra \cE$ which preserves colimits, in that there is a natural equivalence $\colim(\cC\ra \cE)\simeq \colim(\cB\ra \cE)$.
Strong generation of $\cB \ra \cC$ is thus a weakening of requiring both the conditions that $\cB$ generate $\cC$ under colimits {\it and} that the functor $\cB \ra \cC$ being final.
\end{remark}

\begin{lemma}\label{ff-gen}
A functor $g:\cB \ra \cC$ between $\infty$-categories strongly generates if and only if the restricted Yoneda functor
\[
\cC\longrightarrow \PShv(\cB)
\]
is fully faithful.
\end{lemma}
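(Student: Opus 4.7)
The plan is to unfold both conditions at the level of each object $c\in \cC$ and observe that they agree pointwise. Write $j_g\colon \cC \to \PShv(\cB)$ for the restricted Yoneda functor $c\mapsto \cC(g(-),c)$, and write $j_\cB\colon \cB\to \PShv(\cB)$ for the usual Yoneda functor. The key preliminary step is a colimit presentation of $j_g(c)$ as a colimit of representables indexed by the pullback $\cB_{/c} := \cB \times_\cC \cC_{/c}$ of $\infty$-categories. By Yoneda, a morphism $j_\cB(b)\to j_g(c)$ in $\PShv(\cB)$ is the datum of a point of $\cC(g(b),c)$, so the straightening/unstraightening construction identifies the slice $\PShv(\cB)_{j_\cB/j_g(c)}$ with $\cB_{/c}$, and the tautological cocone yields
\[
j_g(c) ~\simeq~ \colim\Bigl(\cB_{/c} \longrightarrow \cB \xra{~j_\cB~}\PShv(\cB)\Bigr)~.
\]

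With this presentation in hand, the mapping space $\Map_{\PShv(\cB)}(j_g(c),j_g(c'))$ is computed, via Yoneda, as the limit $\limit_{(b,g(b)\to c)\in \cB_{/c}}\cC(g(b),c')$. Tracing through the constructions, the map induced by $j_g$ on morphism spaces,
\[
\cC(c,c') ~\longrightarrow~ \limit_{(b,g(b)\to c)\in \cB_{/c}} \cC(g(b),c')~,
\]
agrees with the canonical cone map whose component at $(b,g(b)\to c)$ is postcomposition by $g(b)\to c$. By the universal property of colimits in $\cC$, this cone map is an equivalence for all $c'\in \cC$ if and only if the canonical morphism in $\cC$
\[
\colim\Bigl(\cB_{/c} \to \cB \xra{~g~} \cC\Bigr) ~\longrightarrow~ c
\]
is an equivalence. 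The pointwise formula for left Kan extension identifies this last morphism with the component at $c$ of the canonical natural transformation $\LKan_g(g) \to \id_\cC$. Running the equivalence over all $c\in \cC$ translates fully faithfulness of $j_g$ into the condition that $g$ strongly generates.

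The argument is entirely formal. The only subtlety is the identification of $\PShv(\cB)_{j_\cB/j_g(c)}$ with $\cB_{/c}$ together with the verification that $j_g$ sends morphisms in $\cC$ to the canonical cone maps; both are standard applications of Yoneda and straightening/unstraightening from \cite{HTT}, and I do not anticipate any genuine obstacle.
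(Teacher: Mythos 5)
Your argument is correct and follows essentially the same route as the paper's proof: both unwind strong generation to the pointwise condition that $\colim(\cB_{/c}\to\cB\xra{g}\cC)\to c$ is an equivalence, and both identify $\Map_{\PShv(\cB)}\bigl(j_g(c),j_g(c')\bigr)$ with the limit $\limit_{(b,g(b)\to c)\in\cB_{/c}}\cC(g(b),c')$, so that full faithfulness of $j_g$ matches strong generation via the Yoneda-style universal property of colimits. The only (cosmetic) difference is how that limit formula is obtained: you first present $j_g(c)$ as the colimit of representables over its category of elements and then map out of it, whereas the paper computes the same mapping space directly as an end via straightening/unstraightening; these are standard and interchangeable bookkeeping choices with no difference in substance.
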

\begin{proof}
By definition, $g$ strongly generates if and only if the canonical natural transformation 
\[
\LKan_{g}(g) \longrightarrow  \id_\cC
\]
between endofunctors on $\cC$ is by equivalences.
This is to say that, for each object $c\in \cC$, the canonical morphism $\LKan_{g}(g)(c) \to c$ in $\cC$ is an equivalence.
Via the standard formula computing the values of left Kan extensions, this is to say that, for each object $c\in \cC$, the canonical morphism 
\[
\colim\bigl(\cB_{/c} \longrightarrow  \cB \xra{g} \cC\bigr) \to c
\]
in $\cC$ is an equivalence.  
By the defining universal property of colimits in an $\infty$-category, this is equivalent to the assertion that, for each pair of objects $c,c'\in \cC$, the canonical map between spaces
\[
\cC(c,c') \longrightarrow \limit\bigl( (\cB_{/c})^{\op} \to \cB^{\op} \xra{g^{\op}} \cC^{\op} \xra{\cC(-,c')} \Spaces \bigr)
\]
is an equivalence.  
Through the straightening-unstraightening equivalence (\S2.2 of~\cite{HTT}), the righthand limit space is canonically identified an end; namely, as the space of functors over $\cB$ from $\cB_{/c}$ to $\cB_{/c'}$.  
As so, the above map of spaces is canonically identified as the map on morphism spaces
\[
\cC(c,c') \longrightarrow \Map_{/\cB}\bigl(\cB_{/c},\cB_{/c'}\bigr)~\simeq~\Map_{\Psh(\cB)}\bigl(\cC(g-,c),\cC(g-,c')\bigr)
\]
induced by the restricted Yoneda functor $\cC \to \Psh(\cB)$.  
We conclude that $g$ strongly generates if and only if this restricted Yoneda functor is fully faithful.
\end{proof}

\begin{lemma}\label{select}
Let $g\colon \cC_0 \hookrightarrow \cC$ be a fully faithful functor between $\infty$-categories, and suppose $\cC$ is presentable.
Let $\cK$ be an $\infty$-category whose classifying space $\sB\cK\simeq\ast$ is terminal.
If the composite functor $\cC_0^{\cK} \xra{g^\cK} \cC^{\cK} \xra{\colim}\cC$ is a localization, then $g$ strongly generates.

\end{lemma}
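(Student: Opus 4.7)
The strategy is to apply Lemma~\ref{ff-gen}, which reduces the claim to verifying that for each $c \in \cC$ the canonical arrow $\LKan_g(g)(c) \to c$ is an equivalence. By Proposition~\ref{prop.finalgen}, the hypothesis that $L$ is a localization yields $\LKan_L(L)(c) \simeq c$, i.e., the colimit of $L$ over $(\cC_0^\cK)_{/c}$ is identified with $c$. The full faithfulness of $g$, together with the universal property of the colimit defining $L$, produces a canonical equivalence of slice $\infty$-categories $(\cC_0^\cK)_{/c} \simeq ((\cC_0)_{/c})^\cK$, sending $(D, LD \to c)$ to the $\cK$-diagram $k \mapsto (D(k),\ gD(k) \to c)$ obtained from the cocone for $LD = \colim_k gD(k)$.

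Setting $\cA := (\cC_0)_{/c}$ and writing $\tilde g\colon \cA \to \cC$ for the composite of the forgetful with $g$, the identification above rewrites $L(D)$ as $\colim_k \tilde g(D(k))$, so Fubini converts the equivalence $\LKan_L(L)(c)\simeq c$ into
\[
c~\simeq~\colim_{(D, k) \in \cA^\cK \times \cK} \tilde g \circ \mathrm{ev}(D, k),
\]
where $\mathrm{ev}\colon \cA^\cK \times \cK \to \cA$ is the evaluation functor. The crucial step is to prove that $\mathrm{ev}$ is final (cofinal): granting this, we conclude $\colim_{\cA^\cK \times \cK} \tilde g \circ \mathrm{ev} \simeq \colim_\cA \tilde g = \LKan_g(g)(c)$, which gives the desired equivalence.

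The main obstacle will be verifying the finality of $\mathrm{ev}$, which should be a general categorical fact relying only on the hypothesis $\sB\cK \simeq \ast$. By Quillen's Theorem~A, one reduces to showing that for each $a \in \cA$ the classifying space $\sB\bigl((\cA^\cK \times \cK) \times_\cA \cA^{a/}\bigr) \simeq \colim_{(G,k)} \cA(a, G(k))$ is contractible. Applying Fubini expresses this as $\colim_{k \in \cK}\bigl(\colim_{G \in \cA^\cK} \cA(a, G(k))\bigr)$; the inner colimit should be contractible, since the functor $G \mapsto \cA(a, G(k))$ is corepresented by the (formal) left Kan extension $k_! a$, whose existence as an object of $\cA^\cK$ can be ensured by enlarging into a cocomplete ambient category such as $\Psh(\cA)^\cK$, and the outer colimit then reduces to $\colim_\cK \ast \simeq \sB\cK \simeq \ast$. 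The delicate point is justifying the corepresentability step when $\cA$ itself lacks sufficient cocompleteness, and presentability of $\cC$ will be used at this stage to carry out the enlargement cleanly.
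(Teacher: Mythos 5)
Your reduction is sound up to the point where you need finality of $\mathrm{ev}\colon\cA^\cK\times\cK\to\cA$: the identification $(\cC_0^\cK)_{/c}\simeq\cA^\cK$ (valid because $\cC$ is cocomplete, so cocones to $c$ correspond to maps from colimits) and the Fubini rewriting are both correct. But the argument you give for the finality of $\mathrm{ev}$ has a genuine gap. You want $\colim_{G\in\cA^\cK}\cA(a,G(k))\simeq\ast$, and note that $G\mapsto\cA(a,G(k))$ is corepresented by $k_!a$ after enlarging to $\Psh(\cA)^\cK$. This is true as far as it goes, but it does not give contractibility: the corepresenting object $k_!a$ lives in $\Psh(\cA)^\cK$ and generally not in $\cA^\cK$ (because $\cA=(\cC_0)_{/c}$ is not cocomplete — presentability of $\cC$ does not help here, since $c$ need not be in $\cC_0$), so the colimit in question is $\sB\bigl(\cA^\cK\times_{\Psh(\cA)^\cK}(\Psh(\cA)^\cK)^{k_!a/}\bigr)$, the classifying space of the under-category \emph{restricted} to $\cA^\cK$. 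This would be contractible if $k_!a$ admitted a reflection into $j^\cK(\cA^\cK)$, but there is no reason for that. The step "the outer colimit then reduces to $\colim_\cK\ast$" therefore presupposes precisely what must be proved.

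The paper's proof avoids this obstruction by working in $\Psh(\cC_0)$ from the outset. Via Lemma~\ref{ff-gen}, the claim becomes that the restricted Yoneda functor $\cC\to\Psh(\cC_0)$ is fully faithful, equivalently that its left adjoint $L\colon\Psh(\cC_0)\to\cC$ is a localization. This is deduced from two facts: that $\colim\circ j^\cK\colon\cC_0^\cK\to\Psh(\cC_0)$ strongly generates, and a general lemma transferring the localization property along a strongly generating functor. The strong-generation statement is proved by a chain of $\LKan$ identities whose pivot is $\LKan_{\colim}(\colim)\simeq\sB\cK\ot\id$ on $\Psh(\cC_0)$ — and this is where the real work lives: the slice $(\Psh(\cC_0)^\cK)_{/P}$ over the colimit functor has a \emph{terminal object} (the constant diagram at $P$), because $\Psh(\cC_0)$ is cocomplete, and that terminal object makes the relevant colimit trivial to identify. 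The analogous slice in your setup, $(\cC_0^\cK)_{/c}$, has no terminal object since $c\notin\cC_0$; this is the structural reason the paper must detour through presheaves, and why the finality of $\mathrm{ev}$ over $\cA^\cK$ is not accessible by your argument.
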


\begin{proof}

Through Lemma~\ref{ff-gen}, we seek to prove that the restricted Yoneda functor $\cC\to \Psh(\cC_0)$ is fully faithful.
This restricted Yoneda functor is accessible and preserves limits. 
Because $\cC$ is assumed presentable, this restricted Yoneda functor admits a left adjoint $L\colon \Psh(\cC_0) \to \cC$.
Through universal properties of presheaf $\infty$-categories as free colimit completions, this left adjoint fits into a commutative diagram among $\infty$-categories
\[
\xymatrix{
\cC_0^{\cK}  \ar[r]^-{g^\cK}  \ar[dr]_-{j^\cK}
&
\cC^\cK  \ar[r]^-{\colim}  
&
\cC
\\
&
\Psh(\cC_0)^\cK  \ar[r]^-{\colim}
&
\Psh(\cC_0)  \ar[u]_-{L}
}
\]
in which $j\colon \cC_0 \to \Psh(\cC_0)$ denotes the Yoneda functor.
The restricted Yoneda functor $\cC\to \Psh(\cC_0)$ being fully faithful is equivalent to its left adjoint $L$ being a localization.  
We prove that $L$ is a localization by proving the following two points.
\begin{enumerate}

\item 
The composite functor $\cC_0^\cK \xra{j^\cK} \Psh(\cC_0)^\cK \xra{\colim} \Psh(\cC_0)$ strongly generates.

\item 
Let
\[
\xymatrix{
\cX \ar[rr]^-{C}  \ar[dr]_-{J}  
&&
\cZ  
\\
&
\cY  \ar[ur]_-{L}
&
}
\]
be a diagram among $\infty$-categories.
Suppose both $\cY$ and $\cZ$ are presentable.
Suppose $C$ is a localization, $J$ strongly generates, and $L$ is accessible and preserves colimits.
Supposing these conditions, the functor $L$ is a localization.

\end{enumerate}
Our result follows by applying~(2) to the case of $\cX:=\cC_0^\cK$, $\cZ:= \cC$, and $\cY:=\Psh(\cC_0)$, with $C:=\colim \circ g^\cK$, $J:= \colim \circ j^\cK$, and $L:=L$.  

We prove~(1).  We must prove that the canonical natural transformation $\LKan_{\colim\circ  j^\cK}(\colim\circ  j^\cK) \to \id$ between endofunctors on $\Psh(\cC_0)$ is by equivalences.  
We explain that this natural transformation factors as the sequence of natural transformations
\begin{eqnarray}
\nonumber
\LKan_{\colim \circ j^\cK}(\colim \circ j^\cK)   
&
\xra{\rm (a)}
&
\LKan_{\colim}\bigl( \LKan_{j^\cK}(\colim \circ j^\cK)\bigr)
\\
\nonumber
&
\xra{\rm (b)}
&
\LKan_{\colim}\bigl(\colim\circ \LKan_{j^\cK}(j^\cK)\bigr)
\\
\nonumber
&
\xra{\rm (c)}
&
\LKan_{\colim}\bigl(\colim \circ \LKan_{j}(j)^\cK  \bigr)
\\
\nonumber
&
\xra{\rm (d)}
&
\LKan_{\colim}(\colim)
\\
\nonumber
&
\xra{\rm (e)}
&
\sB \cK \ot \id
\\
\nonumber
&
\xra{\rm (f)}
&
\id
\end{eqnarray}
each of which is by equivalences.
The universal property of left Kan extensions gives that a left Kan extension of a composition is an iteration of left Kan extensions, which gives the identity~(a).
The identity~(b) follows from the fact that, from the universal property of colimits, the colimit functor preserves colimits.
The identity~(c) is valid because colimits in functor categories are computed pointwise.  
The identity~(d) is valid because the Yoneda functor strongly generates (Observation~\ref{yon-gen}).
The identity~(e), involving tensoring with the classifying space of $\cK$, is verified from the standard expression computing the values of left Kan extensions.  
The identity~(f) invokes the assumption that the classifying space of $\cK$ is terminal.  
This completes the proof of~(1).

We now prove~(2).  
For $W:= L^{-1}(\cZ^\sim)\subset \cY$, we must verify that the canonical functor $\cY[W^{-1}] \to \cZ$ under $\cY$ is an equivalence.  
The assumption that $L$ preserve colimits implies this functor $\cY[W^{-1}] \to \cZ$ preserves colimits.  
The assumption that $J$ strongly generates implies composition with the localization $\cX \xra{J} \cY \to \cY[W^{-1}]$ also strongly generates.  
We are therefore reduced to the case that the functor $\cY \xra{L} \cZ$ is conservative; for this case the problem is to verify that this functor $\cY \xra{L} \cZ$ is an equivalence.

Invoking the universal property of localizations, conservativity of $L$ implies the existence of a unique functor $\cZ\xra{!} \cY$ under $\cX$.  
Because localizations are epimorphisms, the resulting composite functor $\cZ\xra{!} \cY \xra{L}\cZ$ under $\cX$ is canonically equivalent to the identity functor on $\cZ$; in symbols $L\circ ! \simeq \id_\cZ$. 
It remains to verify that the composite functor $\cY\xra{L}\cZ\xra{!} \cY$ is equivalent to the identity functor on $\cY$.  
To do this we explain the equivalences among endofunctors on $\cY$
\begin{eqnarray}
\nonumber
!\circ L
&
\xla{\rm (i)}
&
!\circ L \circ \LKan_J(J)
\\
\nonumber
&
\xla{\rm (ii)}
&
\LKan_J(!\circ L \circ J)
\\
\nonumber
&
\overset{\rm (iii)}\simeq
&
\LKan_J(!\circ L \circ !\circ C)
\\
\nonumber
&
\overset{\rm (iv)}\simeq
&
\LKan_J(!\circ C)
\\
\nonumber
&
\overset{\rm (v)}\simeq
&
\LKan_J(J)
\\
\nonumber
&
\xra{\rm (vi)}
&
\id_\cY~.
\end{eqnarray}
The identifications~(i) and~(vi) follow directly from the assumption that the functor $\cX\xra{J}\cY$ strongly generates.
The equivalences~(iii) and~(v) follow from the definition of the factorization $J\colon \cX \xra{C} \cZ\xra{!} \cY$.
The equivalence~(iv) follows directly from the equivalence $L\circ ! \simeq \id_\cZ$ argued above.

To establish the identity~(ii) it is enough to explain why the functor $!\circ L$ preserves colimits.
Because $L$ is assumed to do as much, it is enough to explain why the functor $\cZ\xra{!} \cY$ preserves colimits.  
That is, for each functor $\cI \to \cZ$, we must explain why the canonical morphism $\colim(\cI \to \cZ\xra{!} \cY) \to !\bigl(\colim(\cI \to \cZ)\bigr)$ in $\cY$ is an equivalence.  
Using that $L$ is conservative, it is enough to verify that the morphism $L\bigl(\colim(\cI \to \cZ\xra{!} \cY) \bigr) \to L\circ !\bigl(\colim(\cI \to \cZ)\bigr)$ in $\cZ$ is an equivalence.  
Using the assumption that $L$ preserves colimits, it is enough to verify that the morphism $\colim(\cI \to \cZ\xra{!}\cY \xra{L} \cZ) \to L\circ ! \bigl(\colim(\cI \to \cZ)\bigr)$ in $\cZ$ is an equivalence.  
This follows using the equivalence $L\circ ! \simeq \id_\cZ$ argued above.  
We conclude that $\cZ\xra{!}\cY$ preserves colimits, thereby completing the proof of this lemma.
\end{proof}

For the next two results, we make use of the following notation.
\begin{notation}\label{fun-W}
Let $\cC$ be an $\infty$-category and let $\cC^\sim\subset \cW\subset \cC$ be an $\infty$-subcategory containing the maximal $\infty$-subgroupoid.
For each $\infty$-category $\cK$, we denote the pullback
\[
\Fun_\cW(\cK,\cC)~:= \Fun(\cK^\sim,\cW)  \underset{\Fun(\cK^\sim,\cC)}\times \subset~\Fun(\cK,\cC)~,
\]
which is the $\infty$-subcategory of $\Fun(\cK,\cC)$ consisting of those natural transformations by $\cW$.  

\end{notation}

\begin{lemma}\label{on-localizations}
Let $\cC$ be an $\infty$-category and let $\cC^\sim\subset \cW\subset \cC$ be an $\infty$-subcategory containing the maximal $\infty$-subgroupoid.
Consider the simplicial space
\[
\sB\Fun_\cW([\bullet],\cC)
\]
which is the classifying space of the functor category of Notation~\ref{fun-W}.  
There is a canonical identification between the complete Segal localization of this simplicial space
\[
\sB\Fun_\cW([\bullet],\cC)^{\wedge}_{\rm cpt.Seg}~ \simeq~\cC[\cW^{-1}]
\]
and the complete Segal presentation of the $\infty$-categorical localization of $\cC$ by $\cW$.  

\end{lemma}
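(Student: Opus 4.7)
The plan is to identify the simplicial space $\sB\Fun_\cW([\bullet],\cC)$ with Rezk's classification diagram of the relative $\infty$-category $(\cC,\cW)$, and then to invoke the $\infty$-categorical analog of Rezk's classification theorem: the complete Segal localization of this classification diagram presents the $\infty$-categorical localization $\cC[\cW^{-1}]$.  Concretely, for each $n\geq 0$, an object of $\Fun_\cW([n],\cC)$ is a string of $n$ composable morphisms in $\cC$, while a morphism is a natural transformation whose components lie in $\cW$; taking classifying spaces produces exactly the Rezk-style moduli of $n$-simplices in the pair $(\cC,\cW)$.

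First I would construct a canonical morphism of simplicial spaces
\[
\sB\Fun_\cW([\bullet],\cC) ~\longrightarrow~ \sN_\bullet\bigl(\cC[\cW^{-1}]\bigr)~,
\]
where $\sN_n(\cD) := \sB\bigl(\Fun([n],\cD)^{\sim}\bigr)$ denotes the complete Segal nerve of an $\infty$-category $\cD$.  This morphism is induced by postcomposition with the localization functor $L\colon \cC \to \cC[\cW^{-1}]$: by its defining property, $L$ carries each morphism of $\cW$ to an equivalence, and hence $L_\ast$ restricts, for every $n\geq 0$, to a functor $\Fun_\cW([n],\cC) \to \Fun([n],\cC[\cW^{-1}])^{\sim}$.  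Since $\sN_\bullet(\cC[\cW^{-1}])$ is already a complete Segal space presenting $\cC[\cW^{-1}]$, the problem reduces to showing that this comparison exhibits its target as the complete Segal localization of its source.

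Next I would verify the requisite universal property directly: for each complete Segal space $Y$ with associated $\infty$-category $\cD_Y$, a simplicial space morphism $\sB\Fun_\cW([\bullet],\cC) \to Y$ is equivalent data to an $\infty$-functor $\cC \to \cD_Y$ carrying every morphism of $\cW$ to an equivalence.  Unpacked level-wise, the $0$-simplices contribute a functor $\cW \to \cD_Y^{\sim}$; the $1$-simplices promote this to a functor on all of $\cC$; the Segal data at higher $n$ encodes compatibility with composition; and completeness of $Y$ matches the requirement that morphisms of $\cW$ become equivalences.  Taken together, this identifies the complete Segal localization of $\sB\Fun_\cW([\bullet],\cC)$ with the complete Segal space of $\cC[\cW^{-1}]$.

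The main obstacle, and indeed the substance of the result, is this final universal property, which is the $\infty$-categorical form of Rezk's classification theorem for relative categories.  In practice I would either extract the statement from the existing literature on relative $\infty$-categories and hammock localizations (after Barwick--Kan, Hinich, and Mazel-Gee), or alternatively derive it directly by presenting $\cC[\cW^{-1}]$ as the reflective localization of $\cC$ at $\cW$ and verifying a cofinality argument showing that formal zig-zags through $\cW$ suffice to compute morphism spaces in the localization.
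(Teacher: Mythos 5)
Your overall strategy — postcompose with the localization functor to get a comparison map of simplicial spaces, then try to verify the universal property of complete Segal localization — tracks the paper's argument reasonably well. The paper likewise works in Rezk's complete Segal model: it produces the forward arrow $\sB\Fun_\cW([\bullet],\cC)^\wedge_{\rm cpt.Seg} \to \cC[\cW^{-1}]$ from the observation that any functor $\cC \to \cZ$ inverting $\cW$ induces a unique simplicial map $\Fun_\cW([\bullet],\cC) \to \Map([\bullet],\cZ)$ under $\Map([\bullet],\cC)$, then builds the inverse arrow by noting that $\cW \to \cC \to \sB\Fun_\cW([\bullet],\cC)^\wedge_{\rm cpt.Seg}$ factors through the classifying space $\sB\cW$ (so the universal property of $\cC[\cW^{-1}]$ applies), and finally checks the two composites are identities using that $\cC \to \cC[\cW^{-1}]$ is an epimorphism. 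This two-sided construction settles the universal property that you correctly identify as ``the substance of the result'' — but it does so in a self-contained way, rather than outsourcing it.

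The genuine gap in your proposal is the second fallback route: ``presenting $\cC[\cW^{-1}]$ as the reflective localization of $\cC$ at $\cW$.'' This is not generally available. The object $\cC[\cW^{-1}]$ in the lemma is the universal (Dwyer--Kan-style) $\infty$-categorical localization — the initial $\infty$-category under $\cC$ inverting $\cW$ — and this functor $\cC \to \cC[\cW^{-1}]$ does not in general admit a fully faithful right adjoint; $\cC[\cW^{-1}]$ is typically not a full subcategory of $\cC$. Reflectivity is an extra hypothesis, and nothing in the statement of the lemma grants it. The zig-zag/hammock description of mapping spaces in $\cC[\cW^{-1}]$ is a phenomenon of Dwyer--Kan localization (under suitable calculability hypotheses), not a consequence of reflectivity, so conflating the two would lead the ``direct derivation'' astray. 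Your first fallback — invoking the classification-diagram theorem for relative $\infty$-categories after Rezk, Barwick--Kan, Mazel-Gee — is mathematically valid, but it imports essentially the full content of the lemma as a black box; the paper's elementary double-arrow argument is the more economical alternative, and you should model a complete proof on that.
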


\begin{proof}
The is a routine argument among universal properties.
We use Rezk's complete Segal space presentation of $\oo$-categories: restriction along the standard functor $\bDelta\hookrightarrow \Cat_{\oo}$ induces a fully faithful embedding $\Cat_{\oo}\hookrightarrow \Fun(\bdelta^{\op},\spaces)$ whose essential image is characterized by the Segal and completeness localities.

We make two observations about the simplicial $\infty$-category $\Fun_\cW([\bullet],\cC)$,
both of which are manifest.
First, this simplicial $\infty$-category lies under the simplicial space $\Map([\bullet],\cC)\simeq \Fun_{(\cC)^\sim}([\bullet],\cC)$, which is the complete Segal presentation of $\cC$.  
Second, for each functor $\cC \to \cZ$ that carries each morphism in $\cW$ to an equivalence in $\cZ$, there is a unique simplicial functor $\Fun_{\cW}([\bullet],\cC) \to \Map([\bullet],\cZ)$ to the complete Segal presentation of $\cZ$ under the induced map between simplicial spaces $\Map([\bullet],\cC) \to \Map([\bullet],\cZ)$.  

Consider the simplicial space $\sB\Fun_{\cW}([\bullet],\cC)$, which is the object-wise classifying space. Thereafter, consider its complete and Segal localization, which we denote as $\sB\Fun_{\cW}([\bullet],\cC)^{\wedge}_{\rm cpt.Seg}$.
This complete Segal simplicial space presents an $\infty$-category, that we give the same notation, under $\cC$.
The second observation above, applied to the case $\cZ\simeq \cC[\cW^{-1}]$ under $\cC$, offers a canonical functor
\begin{equation}\label{ret}
\sB\Fun_{\cW}([\bullet],\cC)^{\wedge}_{\rm cpt.Seg}  \longrightarrow \cC[\cW^{-1}]
\end{equation}
under $\cC$.  
By construction, the composite functor $\cW \to  \cC \to \sB\Fun_{\cW}([\bullet],\cC)^{\wedge}_{\rm cpt.Seg}$ factors through the classifying space $\cW \to \sB\cW$.
As so, there is a unique functor 
\begin{equation}\label{sec}
\cC[\cW^{-1}] \longrightarrow \sB\Fun_{\cW}([\bullet],\cC)^{\wedge}_{\rm cpt.Seg}
\end{equation}
under $\cC$.
We assert that the functors~(\ref{ret}) and~(\ref{sec}) are mutual inverses, thereby demonstrating the equivalence 
\[
\sB\Fun_{\cW}([\bullet],\cC)^{\wedge}_{\rm cpt.Seg}
~\simeq~
\cC[\cW^{-1}]
\]
between $\infty$-categories under $\cC$.  
The composition of~(\ref{sec}) followed by~(\ref{ret}) is indeed equivalent to the identity functor since $\cC\to \cC[\cW^{-1}]$ is an epimorphism.
To see that the composition of~(\ref{ret}) followed by~(\ref{sec}) is indeed equivalent to the identity functor, apply the above discussion to $\cZ \simeq \sB\Fun_{\cW}([\bullet],\cC)^{\wedge}_{\rm cpt.Seg}$ under $\cC$; the result is a unique endofunctor of $\sB\Fun_{\cW}([\bullet],\cC)^{\wedge}_{\rm cpt.Seg}$ under $\cC$.  
\end{proof}

\begin{lemma}\label{theformalresult}
Let $\cC$ be an $\oo$-category that contains finite limits and geometric realizations, and such that finite limits commute with geometric realizations.
Let $\cC_0\subset \cC$ be a full $\oo$-subcategory.
The composite functor
\[
\cC_0^{\bDelta^{\op}}\longrightarrow \cC^{\bDelta^{\op}}\xra{~\colim~}\cC~,
\]
is a localization if the following condition is satisfied.
\begin{itemize}
\item For each simplicial object $A_\bullet\in \cC^{\bDelta^{\op}}$, there exists a simplicial object $F_\bullet \in \cC_0^{\bDelta^{\op}}$ and a map between simplicial objects in $\cC^{\bdelta^{\op}}$,
\[
F_\bullet \longrightarrow A_\bullet~,
\]
that induces an equivalence $|F_\bullet|\overset{\sim}\ra |A_\bullet|$ in $\cC$ between their colimits.
\end{itemize}
\end{lemma}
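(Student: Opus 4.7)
The plan is to verify directly the universal property of localizations. I would first factor the functor in question as the composite $L = \colim \circ\, j$, where $j\colon \cC_0^{\bDelta^{\op}} \hookrightarrow \cC^{\bDelta^{\op}}$ is the inclusion, and observe that $\colim\colon \cC^{\bDelta^{\op}} \to \cC$ is already a reflective localization. This holds because the constant diagram functor $c\colon \cC \to \cC^{\bDelta^{\op}}$ is fully faithful (since $\bDelta^{\op}$ has weakly contractible classifying space) and serves as a right adjoint to $\colim$, with counit $\colim \circ c \xra{\simeq} \id_\cC$. Writing $W$ for the class of morphisms in $\cC^{\bDelta^{\op}}$ inverted by $\colim$, this exhibits $\cC^{\bDelta^{\op}}[W^{-1}] \simeq \cC$. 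Hence $L$ is a localization if and only if, for each $\infty$-category $\cZ$, the restriction
\[
j^*\colon \Fun^W(\cC^{\bDelta^{\op}}, \cZ) \longrightarrow \Fun^{W_0}(\cC_0^{\bDelta^{\op}}, \cZ), \qquad W_0 := W \cap \cC_0^{\bDelta^{\op}},
\]
is an equivalence of $\infty$-categories, where superscripts denote the full subcategories of functors inverting the indicated class.

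I would prove this equivalence by constructing, for each $G \in \Fun^{W_0}(\cC_0^{\bDelta^{\op}}, \cZ)$, a canonical extension $\widetilde G \in \Fun^W(\cC^{\bDelta^{\op}}, \cZ)$ with $j^*\widetilde G \simeq G$. Given $A_\bullet \in \cC^{\bDelta^{\op}}$, the hypothesis provides some $F_\bullet \to A_\bullet$ in $W$ with $F_\bullet \in \cC_0^{\bDelta^{\op}}$; set $\widetilde G(A_\bullet) := G(F_\bullet)$. Independence from the choice of resolution relies on the compatibility of finite limits with geometric realizations: given a second resolution $F'_\bullet \to A_\bullet$, the pullback $F_\bullet \times_{A_\bullet} F'_\bullet$ in $\cC^{\bDelta^{\op}}$ realizes to $|F_\bullet| \times_{|A_\bullet|} |F'_\bullet| \simeq |A_\bullet|$, so it lies over $A_\bullet$ by a $W$-map; resolving this pullback yields $H_\bullet \in \cC_0^{\bDelta^{\op}}$ together with $W_0$-morphisms $H_\bullet \to F_\bullet$ and $H_\bullet \to F'_\bullet$, so that $G(F_\bullet) \simeq G(H_\bullet) \simeq G(F'_\bullet)$ since $G$ inverts $W_0$.

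To upgrade this object-level argument to an equivalence of $\infty$-categories, I would invoke Lemma~\ref{on-localizations} to present both $\Fun^W(\cC^{\bDelta^{\op}}, \cZ)$ and $\Fun^{W_0}(\cC_0^{\bDelta^{\op}}, \cZ)$ as complete Segal completions of simplicial spaces of zigzags. The problem reduces to showing that the map of simplicial spaces
\[
\sB\Fun_{W_0}([\bullet], \cC_0^{\bDelta^{\op}}) \longrightarrow \sB\Fun_{W}([\bullet], \cC^{\bDelta^{\op}})
\]
is an equivalence after complete Segal completion. I would establish this by iteratively lifting chains: given a chain $A^{(0)}_\bullet \to \cdots \to A^{(n)}_\bullet$ of $W$-morphisms, first resolve the endpoint $A^{(n)}_\bullet$ to obtain $F^{(n)}_\bullet \in \cC_0^{\bDelta^{\op}}$, and then inductively form the pullback $A^{(k)}_\bullet \times_{A^{(k+1)}_\bullet} F^{(k+1)}_\bullet$ in $\cC^{\bDelta^{\op}}$, which remains in $W$ over $A^{(k)}_\bullet$ by the exactness hypothesis, and resolve it to produce $F^{(k)}_\bullet$.

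The main obstacle will be verifying that the space of such liftings is contractible, which is what is needed to conclude that the map of simplicial spaces induces an equivalence at each level. This requires a careful coherent iteration of the hypothesis together with exactness, likely carried out via Quillen's Theorem A applied to appropriate slice $\infty$-categories of liftings. The conceptual heart of the argument is that the resolution hypothesis, combined with the commutation of finite limits with geometric realizations, provides a form of right calculus of fractions for $(\cC_0^{\bDelta^{\op}}, W_0)$ in $(\cC^{\bDelta^{\op}}, W)$; it is this calculus that ensures the spaces of liftings of zigzags are contractible, and hence that $L$ enjoys the universal property of a localization.
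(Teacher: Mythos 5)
Your plan correctly identifies the main tools: the factorization through the localization $\colim\colon\cC^{\bDelta^{\op}}\to\cC$, the invocation of Lemma~\ref{on-localizations} to reduce to a comparison of (complete Segal completions of) simplicial classifying spaces, and the key role of the exactness hypothesis in forming pullbacks that remain ``over'' a fixed realization. The pullback-plus-resolve maneuver is indeed the right calculus-of-fractions move, and it is exactly what drives the paper's argument.

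The proof, however, is not complete, and you concede as much in the final paragraph: ``the main obstacle will be verifying that the space of such liftings is contractible.'' This is not an obstacle one can defer; it is the entire content of the lemma. In the paper's proof, the comparison is made directly against $\Map([\bullet],\cC)$ (which is already a complete Segal space), rather than against $\sB\Fun_W([\bullet],\cC^{\bDelta^{\op}})$ as you propose — this avoids having to understand the classifying space of chains of $W$-morphisms in the large category. The paper then shows that for each $p$, the functor $\Fun_\cW([p],\cC_0^{\bdelta^{\op}})\to\Map([p],\cC)$ has contractible fibers by proving that each fiber $\infty$-category is \emph{cofiltered}: given any finite diagram $\cK$ in the fiber, one first takes the limit in $\cC^{\bdelta^{\op}}$ (staying over the fixed realization, by the exactness hypothesis), and then applies the bulleted resolution hypothesis to produce a cone point in $\cC_0^{\bdelta^{\op}}$; this is then carried out by induction on $p$, peeling off one vertex of the simplex at a time. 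Your sketch gestures at this (``iteratively lifting chains,'' ``Quillen's Theorem A applied to slice $\infty$-categories'') but does not actually supply the inductive construction that produces the lifts, nor a precise statement of what needs to be contractible, so the argument as written does not close.

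A secondary issue: the object-level construction of the inverse functor $\widetilde G$, even granting the independence-of-resolution argument, only produces pointwise equivalences and cannot by itself furnish a functor (let alone a natural equivalence $j^*\widetilde G\simeq G$); you correctly recognize this and pivot to the Segal-space framing, but the pivot makes the earlier paragraph a heuristic rather than a step of the proof.
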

\begin{proof}
We apply Lemma~\ref{on-localizations} to the $\infty$-category $\cC_0^{\bdelta^{\op}}$ and the $\infty$-subcategory $\cW :=\colim^{-1}(\cC^{\sim})\subset \cC_0^{\bdelta^{\op}}$, which is the $\oo$-subcategory of $ \cC_0^{\bdelta^{\op}}$ consisting of those morphisms that become equivalences upon geometric realization in $\cC$.
The effect is that, to prove the result it suffices to show that, for each $[p]\in \bDelta$, the canonical functor
\[
\Fun_\cW([p],\cC_0^{\bdelta^{\op}})\ra \Map([p],\cC)
\]
induces an equivalence between the space of $p$-simplices of $\cC$ and the classifying space of $\Fun_\cW([p],\cC_0^{\bdelta^{\op}})$.
To do this it suffices to show that for each $p$-simplex $A:[p]\ra \cC$, the classifying space of the fiber over $A$
\[
\xymatrix{
\Fun_\cW([p],\cC_0^{\bdelta^{\op}})_{|A}\ar[r]\ar[d]
&\Fun_\cW([p],\cC_0^{\bdelta^{\op}})\ar[d]\\
\{A\}\ar[r]&\Map([p],\cC)\\}
\]
is contractible.

We prove this contractibility by showing that each such fiber $\infty$-category $\Fun_\cW([p],\cC_0^{\bdelta^{\op}})_{|A}$ is cofiltered; this is to say that each solid diagram among $\infty$-categories
\[
\xymatrix{
\cK   \ar[r]\ar[d]
&
\Fun_\cW([p],\cC_0^{\bdelta^{\op}})_{|A}
\\
\cK^{\tl}\ar@{-->}[ur]
&&
}
\]
can be filled, provided $\cK$ is finite -- here, $\cK^{\tl}$ denote the left-cone on $\cK$.  
Let $\cK$ be a finite $\infty$-category.  
We proceed by induction on $p$, and first consider the base case of $p=0$. 
Given such a $\cK$-point $G$, we can find a filler in the diagram among $\infty$-categories
\[
\xymatrix{
\cK\ar[rr]^-G    \ar[d]
&&
\Fun_\cW([0],\cC_0^{\bdelta^{\op}})_{|A}       \ar[d]
\\
\cK^{\tl}             \ar@{-->}[rr]_-{\underset{\cK}\limit \ G}
&&
\cC^{\bdelta^{\op}}\underset{\cC}\times\cC_{/A}
}
\]
defined as a limit diagram of the upper right composite.
This limit indeed exists since $\cC$ is assumed to admit finite limits. 
The assumption that geometric realizations in $\cC$ commute with finite limits grants that the canonical morphism in $\cC$ from the geometric realization $|\limit_\cK G| \to A$ in $\cC$ is an equivalence.
We conclude that the bottom horizontal arrow factors through the $\infty$-subcategory ${\cC^{\bdelta^{\op}}}_{|A} := \cC^{\bdelta^{\op}}\underset{\cC}\times \{A \xra{=}A\}\subset \cC^{\bdelta^{\op}}\underset{\cC}\times\cC_{/A}$.  
The bulleted condition applied to $A_\bullet := \limit_{\cK} G \in \cC^{\bdelta^{\op}}$ grants the existence of $F_\bullet \in \cC_0^{\bdelta^{\op}}$ and a map $F_\bullet \to \limit_{\cK} G$ that induces an equivalence on geometric realizations. 
This defines a extension
\[
\xymatrix{
\cK\ar[rr]^G\ar[d]&&\Fun_\cW([0],\cC_0^{\bdelta^{\op}})_{|A}\\
\cK^{\tl}\ar@{-->}_{F_\bullet \ra G}[urr]\\}
\]
where the functor $\cK^{\tl}\ra \cW_{|A}$ assigns the to the cone-point the aforementioned $F_\bullet$.

We now show the inductive step. 
Suppose $p>0$ and consider the inclusion $[p-1]\cong \{1<\ldots < p\}\subset [p]$. 
Let $A:[p]\ra \cC$ be a $p$-simplex of $\cC$, and choose an arbitrary $\cK$-point $G$ as before. 
We first argue the existence of a filler among the diagram of $\infty$-categories
\[
\xymatrix{
\cK      \ar[rr]^G   \ar[d]
&&
\Fun_\cW([p],\cC_0^{\bdelta^{\op}})_{|A}       \ar[d]
\\
\cK^{\tl}    \ar@{-->}[rr] 
&&
\Fun_\cW([p-1],\cC_0^{\bdelta^{\op}})_{|A_{|[p-1]}} 
\underset{\Fun_\cW([p-1],\cC^{\bdelta^{\op}})_{|A_{|[p-1]}}}\times 
\Fun_\cW([p], \cC^{\bdelta^{\op}})_{|A}~.
}
\] 
The projection to first factor, $\Fun_\cW([p], \cC^{\bdelta^{\op}})_{|A}$, is the limit diagram of the evident functor induced by the inclusion $\cC_0\hookrightarrow \cC$.
The projection to the first factor $\Fun_\cW([p-1],\cC_0^{\bdelta^{\op}})_{|A_{|[p-1]}}$ is by induction.
Inspecting the inductive construction of such fillers reveals that the resulting two projections to $\Fun_\cW([p-1],\cC^{\bdelta^{\op}})_{|A_{|[p-1]}}$ canonically agree, since finite limits are assumed to commute with geometric realizations.
This establishes the dashed functor making the diagram commute.

Restricting to the initial object of $\cK^{\tl}$ gives a functor $[p] \ra \cC^{\bdelta^{\op}}$ whose colimit is identified as $A$, and for which the restriction to $\{1<\ldots <p\}\subset [p]$ lies in the essential image of $\cC_0^{\bdelta^{\op}}$. 
Let $A^0_\bullet$ denote the restriction $\{0\}\subset [p]\xra{A_\bullet} \cC^{\bdelta^{\op}}$. 
We again apply the bulleted condition to find an object $F_\bullet\in \cC_0^{\bdelta^{\op}}$ and a morphism $F_\bullet \ra A_\bullet^0$ of simplicial objects in $\cC$ that induces an equivalence $|F_\bullet | \xra{\simeq} |A_\bullet^0|$ between geometric realizations.   
Concatenating with $F_\bullet$ gives the requisite lift $\cK^{\tl} \ra \Fun_\cW([p],\cC_0^{\bdelta^{\op}})_{|A}$.
\end{proof}

We now aim to check the bulleted condition of Lemma~\ref{theformalresult} for our case at hand.  
This is Lemma~\ref{secondcondition} below.  
We first establish a few intermediate results.

\begin{observation}\label{flim-geom-com}
Each of the $\infty$-categories 
\[
\Ch_{\Bbbk}\qquad \text{ and } \qquad  \m_{\sO(n)}(\Ch_{\Bbbk})\qquad \text{ and } \qquad \Alg_n^{\sf aug}
\]
has the property that finite limits commute with geometric realizations.

\end{observation}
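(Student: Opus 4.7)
The plan is to handle each of the three $\infty$-categories in turn, reducing the latter two to the first. For $\Ch_\Bbbk$, the key observation is that it is a stable $\infty$-category: stability implies that any finite limit is constructible from finite colimits, since binary products coincide with binary coproducts, and the fiber of any morphism $f$ is equivalent to the $(-1)$-shift of its cofiber. Since colimits always commute with colimits (Fubini), geometric realizations, being a particular class of colimit over $\bDelta^{\op}$, commute with all finite colimits, and thus with all finite limits in the stable setting.

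For $\m_{\sO(n)}(\Ch_\Bbbk)=\Fun(\sB\sO(n),\Ch_\Bbbk)$, the reduction is formal: finite limits and geometric realizations in any functor $\infty$-category $\Fun(\cK,\cC)$ are computed object-wise over $\cK$, so the commutation is inherited pointwise from the target $\Ch_\Bbbk$ treated above.

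For $\Alg_n^{\sf aug}$, I would exploit the forgetful functor $U\colon \Alg_n^{\sf aug}\to \m_{\sO(n)}(\Ch_\Bbbk)$ to underlying $\sO(n)$-modules. This functor is conservative; it preserves finite limits, as noted in Definition \ref{free}; and it preserves sifted colimits, in particular geometric realizations, by Proposition 3.2.3.1 of \cite{HA} (already invoked in Lemma \ref{sifted-resolutions}). Applying $U$ to the natural comparison morphism from the geometric realization of a finite limit of simplicial augmented $n$-disk algebras to the finite limit of their geometric realizations yields the corresponding morphism in $\m_{\sO(n)}(\Ch_\Bbbk)$, which is an equivalence by the previous step; conservativity of $U$ then upgrades this to an equivalence in $\Alg_n^{\sf aug}$. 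This last step is the one that merits the most care: preservation of geometric realizations by the forgetful functor is not automatic for general colimits of algebras, but it does hold for sifted colimits by the cited result, and geometric realizations are indeed sifted.
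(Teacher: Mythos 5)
Your argument is correct and essentially coincides with the paper's proof: stability for $\Ch_\Bbbk$, pointwise (co)limits for the functor category, and the conservative, finite-limit- and sifted-colimit-preserving forgetful functor for $\Alg_n^{\sf aug}$. The only cosmetic difference is that the paper phrases the third step as the forgetful functor "preserving and creating" limits and sifted colimits, while you phrase it as conservativity plus preservation; since $\Alg_n^{\sf aug}$ is presentable (so the relevant (co)limits exist a priori), these amount to the same thing.
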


\begin{proof}
Because $\Ch_{\Bbbk}$ is a stable $\infty$-category, finite limits commute with geometric realizations.
Because limits and colimits are computed object-wise in $\m_{\sO(n)}(\Ch_{\Bbbk}):= \Fun\bigl(\BO(n),\Ch_{\Bbbk})$, this $\infty$-category posesses this property as well.  
Because the forgetful functor $\Alg_n^{\sf aug} \to \m_{\sO(n)}(\Ch_{\Bbbk})$ preserves and creates limits as well as sifted colimits, the $\infty$-category $\Alg_n^{\sf aug}$ too posesses this property.  
\end{proof}

The given proof of the following lemma was suggested by the referee.

\begin{lemma}\label{homework}
Let $W$ be a finite $\sO(n)$-module over a field $\Bbbk$.
Let $W_\bullet \ra W$ be an augmented simplicial $\sO(n)$-module demonstrating a colimit.  
For any $N$, there exists a simplicial object $W'_\bullet$ in finite $(-N)$-coconnective $\sO(n)$-modules and a morphism of simplicial objects
\[
W'_\bullet \longrightarrow W_\bullet
\]
that induces an equivalence $|W'_\bullet| \simeq |W_\bullet| \simeq W$ between their colimits.
\end{lemma}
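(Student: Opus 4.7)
The plan is to construct $W'_\bullet$ in two stages: first produce a specific simplicial object in finite $(-N)$-coconnective $\sO(n)$-modules whose realization is equivalent to $W$ independently of the given $W_\bullet$; then lift the resulting equivalence $|W'_\bullet|\simeq W\simeq|W_\bullet|$ to a simplicial morphism $W'_\bullet\to W_\bullet$.  The first stage is elementary; the second is where the content lies.

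\textbf{Construction of $W'_\bullet$.}  Since $W$ is finite, it is $p$-coconnective for some integer $p$.  Choose any integer $M\geq\max(N+p,1)$, so that the $M$-fold desuspension $V:=W[-M]$ is a finite $(-N)$-coconnective $\sO(n)$-module satisfying $V[M]\simeq W$.  Let $\w{\Bbbk}[S^M]_\bullet$ denote the simplicial $\Bbbk$-module of reduced chains on a fixed pointed simplicial model for $S^M$: each level is a finite-dimensional $\Bbbk$-vector space concentrated in homological degree zero, and its geometric realization is $\Bbbk[M]$.  Define
\[
W'_\bullet~:=~V\otimes_{\Bbbk}\w{\Bbbk}[S^M]_\bullet~.
\]
Each $W'_k$ is a finite direct sum of copies of $V$, indexed by the non-basepoint $k$-simplices of the chosen model for $S^M$, and is thus finite and $(-N)$-coconnective; and $|W'_\bullet|\simeq V\otimes_{\Bbbk}\Bbbk[M]\simeq W$ as desired.

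\textbf{Lifting to $W_\bullet$.}  By the $\otimes_\Bbbk$--$\underline{\Hom}_\Bbbk$ adjunction and the universal property of $\w{\Bbbk}[S^M]_\bullet$ as the free simplicial $\Bbbk$-module on the pointed simplicial set $S^M$, the space of simplicial morphisms $W'_\bullet\to W_\bullet$ is identified with the space of pointed simplicial maps $S^M\to \underline{\Hom}_{\Mod_{\sO(n)}}(V,W_\bullet)$.  Because $V$ is perfect, the functor $\underline{\Hom}_{\Mod_{\sO(n)}}(V,-)$ is equivalent to $(-)\otimes_\Bbbk V^\vee$ up to taking invariants, hence preserves geometric realizations; consequently, $|\underline{\Hom}_{\Mod_{\sO(n)}}(V,W_\bullet)|\simeq\underline{\Hom}_{\Mod_{\sO(n)}}(V,W)$.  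The canonical equivalence $W\simeq V[M]$ distinguishes the identity element in $\pi_M\underline{\Hom}_{\Mod_{\sO(n)}}(V,W)\simeq\pi_0\underline{\Hom}_{\Mod_{\sO(n)}}(V,V)$.  Choosing a simplicial lift of this $\pi_M$-class yields the desired morphism $W'_\bullet\to W_\bullet$; by construction, its realization is the chosen equivalence $W\simeq W$.

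\textbf{Main obstacle.}  The hard part is verifying that the identity class in $\pi_M$ of the realization does lift to a pointed simplicial map out of $S^M_\bullet$.  This relies on the compatibility of the simplicial mapping space $\Map_{sPtSet}(S^M_\bullet,-)$ with geometric realization of simplicial spaces arising from simplicial objects in $\Mod_{\sO(n)}$; the perfectness of $V$ ensures that the relevant Bousfield--Kan spectral sequence computing $\pi_M$ of the realization collapses sufficiently to supply the lift.  Once this compatibility is established, the construction produces the required $W'_\bullet$ and morphism.
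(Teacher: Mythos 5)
Your construction of $W'_\bullet$ coincides with the paper's: with $V = W[-M]$ and $S^M$ modelled as $\Delta[M]/\partial\Delta[M]$, your $V\otimes_\Bbbk\w{\Bbbk}[S^M]_\bullet$ is precisely the simplicial object $S^M_\bullet\otimes W[-M]$ that the paper produces. The coconnectivity bookkeeping ($M\geq N+p$) is also the same. So the divergence is entirely in the construction of the map $W'_\bullet\to W_\bullet$, and that is where your argument has a genuine gap.

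Your ``Main obstacle'' paragraph is not a sketch of a proof but an admission that the crucial step --- lifting the element of $\pi_0\Map_{\Ch_\Bbbk}\bigl(\Bbbk[M],\,\ulhom(V,W)\bigr)$ to $\pi_0\Map_{\Fun(\bdelta^{\op},\Ch_\Bbbk)}\bigl(\w{\Bbbk}[S^M]_\bullet,\,\ulhom(V,W_\bullet)\bigr)$ --- is open. The assertion that perfectness of $V$ makes the relevant spectral sequence ``collapse sufficiently'' is not justified and is, I believe, false as a general principle: perfectness of $V$ buys you exactly one thing, namely that $\ulhom(V,-)$ commutes with realizations, which you have already used to identify the target. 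It says nothing about whether a map out of the realization $\Bbbk[M]\simeq|\w{\Bbbk}[S^M]_\bullet|$ can be promoted to a map of simplicial objects. The paper instead invokes the $\infty$-categorical Dold--Kan correspondence (\S1.2.4 of \cite{HA}), converting $W_\bullet$ into the sequential object $\mathrm{Sk}_\bullet W_\bullet$ and using compactness of $W$ in $\Mod_{\sO(n)}$ to lift $\id_W$ through some finite stage $\mathrm{Sk}_qW_\bullet$; transporting the resulting map of sequential objects back through Dold--Kan produces the simplicial map. This is the mechanism your argument is missing.

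The second, closely related, problem is that you fix $M\geq\max(N+p,1)$ using only data intrinsic to $W$. But the integer $q$ through which $\id_W$ lifts in the skeletal filtration depends on $W_\bullet$, and can be arbitrarily large for a fixed $W$ (a slow free resolution has large $q$). The paper's bound is $m\geq\max(q,N+\ell)$; your $M$ omits the $q$. If $M<q$, the lift need not exist --- consider the degenerate case $M=0$, where $W'_\bullet$ is the constant simplicial object on $W$: if $W$ is not a retract of $W_0$, there is no simplicial map $W'_\bullet\to W_\bullet$ realizing the identity. So the gap in the ``Main obstacle'' is not merely a missing verification of a true statement; for the $M$ you have chosen, the statement you want can fail, and the fix (take $M$ also $\geq q$) requires exactly the compactness input that you have not supplied.
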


\begin{proof}
We will employ the $\oo$-categorical Dold--Kan correspondence, see \S1.2.4 of \cite{HA}. This asserts that for $\cC$ a stable $\oo$-category, there is an equivalence between simplicial objects and sequential objects
\[
\cC^{\bdelta^{\op}}\longrightarrow \cC^{\NN}
\]
which sends a simplicial object $X_\bullet$ to its skeletal filtration, $i \mapsto {\sf Sk}_iX_\bullet$. Let $W_\bullet \ra W$ be as above, and consider the map from the skeletal filtration ${\sf Sk}_\bullet W_\bullet \ra W$. Since $W$ is a compact object of $\m_{\sO(n)}$, the identity map $W\xra{\sim} \varinjlim_p {\sf Sk}_p W_\bullet$ lifts through some finite stage of the filtration, $W \ra {\sf Sk}_qW_\bullet$ for some sufficiently large $q$. Let $m$ be the maximum of $q$ and $N + \ell$, where $\ell$ is the largest degree in which $\sH_\ell W$ is nonzero. Given $m\geq q$, there then exists a map of sequential objects
\[
\xymatrix{
0 \ar[r]\ar[d]&\ldots\ar[r]& 0\ar[d]\ar[r]&W\ar[d]\ar[r]^=&W\ar[d]\ar[r]^=&\ldots\\
{\sf Sk}_0 W_\bullet\ar[r]&\ldots\ar[r]&{\sf Sk}_{m-1}W_\bullet \ar[r]& {\sf Sk}_{m}W_\bullet\ar[r]&{\sf Sk}_{m+1}W_\bullet \ar[r]&\ldots\\}
\]
which induces the equivalence $W \simeq \varinjlim_p {\sf Sk}_p W_\bullet$ on taking sequential colimits. Using the inverse equivalence in the $\oo$-categorical Dold--Kan correspondence, the top sequential object maps to $S^{m}_\bullet \ot W[-m]$, the tensor of the desuspension of $W$ with the simplicial set $S^{m}:= \Delta[m]/\partial\Delta[m]$. Note that this is a simplicial object in $(-N)$-coconnective chain complexes due to the inequality $m\geq N+\ell$. We thus obtain a map of simplicial objects
\[
S^{m}_\bullet \ot W[-m] \longrightarrow W_\bullet
\]
from an $(-N)$-coconnective simplicial object to $W_\bullet$ which induces an equivalence of colimits.
\end{proof}

\begin{lemma}\label{secondcondition}
For each simplicial object in $\Alg_n^{\sf aug}$, there exists a simplicial object  $F_\bullet$ in $\Alg_n^{\leq -n}$ and a morphism of simplicial objects
\[
F_\bullet \longrightarrow A_\bullet
\]
that induces an equivalence $|F_\bullet| \simeq |A_\bullet|$ between their colimits.
\end{lemma}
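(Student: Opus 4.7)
The strategy is to reduce the statement to a module-level approximation via the monadic bar construction, and then to apply Lemma~\ref{homework} through a compactness decomposition. First I would use the monadic bar construction for the adjunction $\FF \dashv U$ (where $U = \Ker(-\to\Bbbk)$ is the augmentation-ideal functor) to replace $A_\bullet$ with a simplicial object of free augmented $n$-disk algebras. Applied levelwise, the functorial bar resolution $\ov{\FF}^{\bullet+1}(-)\to -$ (with $\ov{\FF} := \FF U$) produces a bisimplicial augmented $n$-disk algebra $\ov{\FF}^{\bullet+1}(A_\bullet)$ equipped with a simplicial augmentation to $A_\bullet$. Taking the diagonal yields a simplicial object $D_\bullet$ in free augmented $n$-disk algebras, with $D_p = \ov{\FF}^{p+1}(A_p) = \FF V_p$ for a simplicial $\sO(n)$-module $V_\bullet$, together with a natural map $D_\bullet \to A_\bullet$ of simplicial objects. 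The standard diagonal argument for bisimplicial objects (combined with Observation~\ref{flim-geom-com}) gives $|D_\bullet| \simeq |A_\bullet|$ in $\Alg_n^{\sf aug}$.

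Next I would reduce the problem to finding a morphism $\w V_\bullet \to V_\bullet$ of simplicial $\sO(n)$-modules, with $\w V_\bullet$ valued in $(-n)$-coconnective modules and inducing an equivalence $|\w V_\bullet| \simeq |V_\bullet|$ on geometric realizations. Granting such a $\w V_\bullet$, set $F_\bullet := \FF \w V_\bullet$. The underlying $\sO(n)$-module of $F_p$ has augmentation ideal $\bigoplus_{i \geq 1} \conf_i^{\sf fr}(\RR^n_+) \underset{\Sigma_i\wr \sO(n)}\ot (\w V_p)^{\ot i}$ by Theorem~\ref{free-calculation}, whose $i$th summand has top nonzero homology in degree at most $(n-1)(i-1)+(-ni) = -n-(i-1)$ whenever $\w V_p$ is $(-n)$-coconnective, so $F_p \in \Alg_n^{\leq -n}$. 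Since $\FF$ is a left adjoint it preserves realizations, so $|F_\bullet| \simeq \FF|\w V_\bullet| \simeq \FF|V_\bullet| \simeq |D_\bullet| \simeq |A_\bullet|$, and the composite map $F_\bullet \to D_\bullet \to A_\bullet$ satisfies the required conditions.

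The third step constructs $\w V_\bullet$ from Lemma~\ref{homework} by a compactness decomposition. The functor $\infty$-category $\Fun(\bDelta^{\op}, \m_{\sO(n)})$ is compactly generated, and its compact objects are (via the $\infty$-categorical Dold--Kan correspondence) represented by finite sequences of compact $\sO(n)$-modules. I would write $V_\bullet \simeq \colim_\alpha V^{(\alpha)}_\bullet$ as a filtered colimit of compact simplicial $\sO(n)$-modules, so that each realization $|V^{(\alpha)}_\bullet|$ is a compact, i.e., finite, $\sO(n)$-module. Applying Lemma~\ref{homework} with $N = n$ to each augmented simplicial object $V^{(\alpha)}_\bullet \to |V^{(\alpha)}_\bullet|$ produces a simplicial object $\w V^{(\alpha)}_\bullet$ in finite $(-n)$-coconnective $\sO(n)$-modules, together with a morphism $\w V^{(\alpha)}_\bullet \to V^{(\alpha)}_\bullet$ inducing an equivalence on realizations. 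Organized compatibly in $\alpha$, the filtered colimit $\w V_\bullet := \colim_\alpha \w V^{(\alpha)}_\bullet$ is a simplicial object in $(-n)$-coconnective $\sO(n)$-modules (since filtered colimits preserve coconnectivity), fitting into an equivalence $|\w V_\bullet| \simeq \colim_\alpha |\w V^{(\alpha)}_\bullet| \simeq \colim_\alpha |V^{(\alpha)}_\bullet| \simeq |V_\bullet|$.

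The main technical obstacle will be arranging the levelwise resolutions of Lemma~\ref{homework} functorially in the filtered diagram index $\alpha$, since the proof of that lemma involves non-canonical choices (a skeletal cut-off $m$ depending on the connectivity of the target, together with a specific Dold--Kan presentation $S^m_\bullet \ot W[-m]$). This can be handled by exhibiting the $\infty$-category of finite $(-n)$-coconnective simplicial resolutions of a fixed compact simplicial $\sO(n)$-module as filtered---any two such resolutions admit a common refinement by enlarging $m$---and then lifting the filtered system $\{V^{(\alpha)}_\bullet\}_\alpha$ through this $\infty$-category to a compatible system of resolutions by a standard cofinality argument.
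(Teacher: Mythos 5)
Your approach is genuinely different from the paper's, which proceeds by a two-stage induction: Step~(1) handles the case $|A_\bullet|\simeq \FF W$ free on a finite $\sO(n)$-module by pulling $A_\bullet$ back along $W\to\FF W$ to a simplicial module and applying Lemma~\ref{homework}, and Step~(2) treats a general $A$ by building a sequential colimit $F_\bullet = \varinjlim_\ell F_\bullet^\ell$, at each stage attaching free cells to kill homology of the kernel of the current approximation and invoking Lemma~\ref{cocon-cofib} to preserve coconnectivity through the pushout. In place of this, you attempt a one-shot reduction to simplicial $\sO(n)$-modules via the diagonal of the levelwise monadic bar resolution.

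The gap is the assertion that $D_\bullet$, with $D_p=\ov{\FF}^{p+1}(A_p)$, has the form $\FF V_\bullet$ for a simplicial $\sO(n)$-module $V_\bullet$. Each level $D_p$ is indeed a free algebra, but the simplicial object $D_\bullet$ does not lie in the essential image of $\FF\colon\Fun(\bDelta^{\op},\Mod_{\sO(n)})\to\Fun(\bDelta^{\op},\Alg_n^{\sf aug})$: the outer face map of the bar resolution in the bar direction is the counit $\epsilon\colon \FF U\bigl(\ov{\FF}^{q}A\bigr)\to\ov{\FF}^{q}A$ (with $U=\Ker(-\to\Bbbk)$), which encodes the multiplication of the free algebra and is not $\FF$ applied to any map of $\sO(n)$-modules---such a factorization would require a natural retraction $U\FF M\to M$ of the unit, which does not exist. (In the commutative toy case $\FF=\Sym$, the multiplication $\Sym(\Sym M)\to\Sym M$ mixes symmetric-power degrees and is visibly not $\Sym$ of anything.) Composing with the $A_\bullet$-direction face maps, which \emph{are} $\FF$ of module maps, does not remove the obstruction. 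So there is no simplicial module $V_\bullet$ to feed into the compactness decomposition and Lemma~\ref{homework}, and the second and third steps of your argument never get started. Levelwise approximations $\w V_p \to U\ov{\FF}^{p}(A_p)$ do exist, but they fail to assemble against the genuinely algebraic face maps of $D_\bullet$; this is precisely the difficulty the paper's sequential cell-attachment construction is designed to circumvent, and it is why Lemma~\ref{homework} is applied there only after the pullback trick of Step~(1) has produced an honest simplicial \emph{module}.
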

\begin{proof}
Let $\cM\subset \Alg_n^{\sf aug}$ be the full $\infty$-subcategory consisting of those augmented $n$-disk algebras $A$ with the following property.
\begin{itemize}
\item[~]
Let $N\geq n$.
Let $A_\bullet$ be a simplicial augmented $n$-disk algebra equipped with an identification $|A_\bullet|\simeq A$ of its geometric realization.
There exists a simplicial object $F_\bullet$ in augmented $n$-disk algebras whose augmentation ideal is $(-N)$-coconnective, together with a morphism $F_\bullet \ra A_\bullet$ of simplicial augmented $n$-disk algebras, that induces an equivalence $|F_\bullet|\simeq |A_\bullet|$ between their colimits. 
\end{itemize}
We prove that the inclusion $\cM\hookrightarrow \Alg_n^{\sf aug}$ is an equivalence between $\infty$-categories, which clearly implies the result. 

We prove this in two steps:
\begin{enumerate}
\item $\cM$ contains every free augmented $n$-disk algebra $\FF W$ generated by a finite $\sO(n)$-module $W$.
\item $\cM$ contains every augmented $n$-disk algebra.
\end{enumerate}

We prove~(1), that $\cM$ contains each free augmented $n$-disk algebra $\FF W$ on a finite $\sO(n)$-module. 
Given an augmented simplicial object $A_\bullet \ra \FF W$ in augmented $n$-disk algebras demonstrating a colimit, consider the augmented simplicial $\sO(n)$-module
\[
W_\bullet:= A_\bullet \underset{\FF W}\times W \longrightarrow W
\]
constructed by pulling back the augmented simplicial $\sO(n)$-module $A_\bullet \ra \FF W$ along the map between the augmentation $\sO(n)$-modules $W \ra \FF W$.   
Using that the forgetful functor $\Alg_n^{\sf aug} \to \m_{\sO(n)}(\Ch_{\Bbbk})$ preserves sifted colimits, 
Observation~\ref{flim-geom-com} gives that the augmented simplicial $\sO(n)$-module $W_\bullet \to W$ demonstrates a colimit.
By Lemma~\ref{homework} there exists a simplicial $(-N)$-coconnective and finite $\sO(n)$-module $W'_\bullet$ with a map $W'_\bullet \ra W_\bullet$ inducing an equivalence $|W'_\bullet| \xra{\simeq} |W_\bullet|\simeq W$ between their colimits, for any $N\geq n$.  
Applying the free-forgetful adjunction, we have a map of simplicial augmented $n$-disk algebras
\[
\FF(W'_\bullet)\longrightarrow A_\bullet~.
\]
Being a left adjoint, the free functor $\FF$ preserves geometric realizations.
Therefore, the induced map between colimits $|\FF(W'_\bullet)| \to |A_\bullet|\simeq \FF W$ is an equivalence between augmented $n$-disk algebras.
This completes the first step.

We now prove~(2). 
Let $A_\bullet \ra A$ be an augmented simplicial object witnessing a colimit among augmented $n$-disk algebras.  
Given $N\geq n$, we construct a simplicial augmented $n$-disk algebra $F_\bullet$ whose augmentation ideal is $(-N)$-coconnective, together with a map $F_\bullet \to A_\bullet$ between simplicial objects that induces an equivalence $|F_\bullet|\xra{\simeq}|A_\bullet|$ between their colimits.  
We construct $F_\bullet$ as a sequential colimit $F_\bullet := \varinjlim_\ell F_\bullet^\ell$, each cofactor in which is a simplicial augmented $n$-disk algebra whose augmentation ideal is $(-N)$-coconnective. 
We construct this sequential diagram 
\[
\colim_{\ell\geq 0} \NN_{\leq \ell} \simeq \NN \to \Fun(\bdelta^{\op},\Alg_{n}^{{\sf aug}, \leq -N})_{/A_\bullet}~,\qquad \ell\mapsto F_\bullet^\ell~,
\] 
by induction on $\ell$.  

As the base case we construct $F_\bullet^0$.
Choose a collection 
\[
\Bigl\{\bigl[\Bbbk[k_j] \xra{x_j} \Ker(A\to \uno)\bigr] \mid  j\in J_0\Bigr\}~\subset~  \sH_\ast\bigl(\Ker(A\to \uno)\bigr)
\]
of generators for the homology groups of the augmentation ideal of $A$.
For this collection, the natural morphism $\underset{j\in J_0}\coprod \FF[k_j] \ra A$ between augmented $n$-disk algebras is surjective on homology groups, where $\FF[x_j]$ is the free augmented $n$-disk algebra generated by the free $\sO(n)$-module on $\Bbbk[k_j]$.
For each $j\in J_0$ consider the augmented simplicial object $\FF[x_j]\underset{A}\times A_\bullet \to \FF[x_j]$ among augmented $n$-disk algebras.  Observation~\ref{flim-geom-com} grants that this augmented simplicial object witnesses a colimit.  
As so we can apply~(1) to obtain, for each $j\in J_0$, a simplicial augmented $n$-disk algebra $F_\bullet^{0,j}$ whose augmentation ideal is $(-N)$-coconnective, as it fits into a diagram among augmented $n$-disk algebras
\[
\xymatrix{
F_\bullet^{0,j}   \ar[r] \ar[d] 
&
A_\bullet\ar[d]
\\
\FF[x_j]   \ar[r]
&
A
}
\]
for which the vertical arrows induce equivalences on geometric realizations.
We now set
\[
F_\bullet^0~ :=~ \coprod_{j\in J_0}F_\bullet^{0,j}
\]
to be the coproduct in simplicial objects among augmented $n$-disk algebras.
By construction, there is a canonical morphism $F_\bullet^0\ra A_\bullet$ which induces a surjection on the homology groups of the geometric realizations $|F^0_\bullet| \ra |A_\bullet|\simeq A$.

We now establish the inductive step.
Assume we have constructed the diagram $F_\bullet^0\ra \ldots \ra F_\bullet^\ell$ over $A_\bullet$. 
Consider the pullback simplicial $n$-disk algebra $A_\bullet^\ell := \uno \underset{A_\bullet}\times F_\bullet^\ell$.  
Set $A^\ell:=|A_\bullet^\ell|$ to be the augmented $n$-disk algebra which is the geometric realization of this simplicial one.  
Through Observation~\ref{flim-geom-com} we identify this geometric realization as the pullback $A^\ell \simeq \uno\underset{A}\times |F_\bullet^\ell|$, the augmentation ideal of which is the kernel $\Ker(|F_\bullet^\ell|\to A)$.  
As in the base case, choose a collection 
\[
\Bigl\{\bigl[ \Bbbk[k_j]\xra{x_j} \Ker(A^\ell\to \uno)\bigr]  \mid  j \in J_\ell\Bigr\}~\subset~H_\ast\bigl(\Ker(A^\ell \to \uno)\bigr)
\]
of this augmentation ideal.  
As in the base case, we can choose, for each $j\in J_\ell$, a simplicial object $F_\bullet^{\ell,j}$ whose augmentation ideal is $(-N-1)$-coconnective, as it fits into a diagram among augmented $n$-disk algebras
\[
\xymatrix{
F_\bullet^{\ell,j}   \ar[r] \ar[d] 
&
A_\bullet^\ell\ar[d]
\\
\FF[x_j]   \ar[r]
&
A^\ell
}
\]
for which the vertical arrows induce equivalences on geometric realizations.
We now set $F_\bullet^{\ell+1}$ to be the pushout among simplicial objects in augmented $n$-disk algebras:
\[
\xymatrix{
\underset{j\in J_\ell}\coprod F_\bullet^{\ell,j}   \ar[r] \ar[d]
&
F_\bullet^\ell   \ar[d]
\\
\Bbbk   \ar[r]
&
F_\bullet^{\ell+1}.
}
\]
Lemma~\ref{cocon-cofib} guarantees that this pushout indeed has the property that its augmentation ideal is a simplicial $(-N)$-coconnective $\Bbbk$-module.  
By construction there is a canonical morphism $F_\bullet^{\ell+1}\ra A_\bullet$ between simplicial objects that induces a surjection between homology groups of the geometric realization $|F_\bullet^{\ell+1}|\ra A$.
This concludes the construction of the sequential diagram $\NN \xra{\ell\mapsto F_\bullet^\ell}  \Fun(\bdelta^{\op},\Alg_{n}^{{\sf aug}, \leq -N})_{/A_\bullet}$. 

Set $F_\bullet := \varinjlim_\ell F_\bullet^\ell$ to be the sequential colimit among augmented $n$-disk algebras.
It remains to check that the canonical morphism from the geometric realization $|F_\bullet|\to |A_\bullet|\simeq A$ is an equivalence. 
The morphism $|F_\bullet^0| \to A$ is a surjection on homology groups.
Given the factorization $|F_\bullet^0|\to|F_\bullet|\to A$, we conclude that the morphism $|F_\bullet|\to A$ is a surjection on homology groups.  
We must check that this morphism $|F_\bullet|\to A$ is an injection on homology groups.
Let $\Bbbk[k]\to |F_\bullet|$ represent an element of the kernel of $\sH_\ast(|F_\bullet|) \to \sH_\ast(A)$.  
Because $\Bbbk$, and therefore $\Bbbk[k]$, is compact, there is an $\ell\geq 0$ for which this representative factors as $\Bbbk[k]\to |F_\bullet^\ell| \to |F_\bullet|$.  
Because the map $\Bbbk[k]\to |F_\bullet^\ell| \to A$ factors through $0$, the construction of $F_\bullet^{\ell+1}$ is just so that the composite map $\Bbbk[k]\to |F_\bullet^\ell| \to |F_\bullet^{\ell+1}|$ factors through $0$.
Therefore, the map $\Bbbk[k]\to |F_\bullet|$ represents zero in the homology $\sH_\ast(|F_\bullet|)$.  
We conclude that the kernel of the homomorphism $\sH_\ast(|F_\bullet|) \to \sH_\ast(A)$ is zero, which completes this proof.  
\end{proof}

\begin{cor}\label{cor.important.localization}
The composite functor
\[
(\Alg_n^{\leq -n})^{\bdelta^{\op}}\hookrightarrow (\Alg_n^{\sf aug})^{\bdelta^{\op}}\overset{\colim}\longrightarrow \Alg_n^{\sf aug}
\]
is a localization.
\end{cor}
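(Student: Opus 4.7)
The plan is to deduce Corollary \ref{cor.important.localization} as an immediate application of Lemma \ref{theformalresult} in the case $\cC = \Alg_n^{\sf aug}$ and $\cC_0 = \Alg_n^{\leq -n}$. Two hypotheses of that lemma must be checked, and both have essentially been established already in the preceding subsection.

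First, I would verify that $\Alg_n^{\sf aug}$ admits finite limits and geometric realizations, and that finite limits commute with geometric realizations in it. The existence of limits and sifted colimits is standard (the forgetful functor to $\m_{\sO(n)}(\Ch_{\Bbbk})$ preserves and creates both), and the compatibility of finite limits with geometric realizations is exactly the content of Observation \ref{flim-geom-com}.

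Second, I would verify the bulleted condition of Lemma \ref{theformalresult}: that every simplicial object $A_\bullet$ in $\Alg_n^{\sf aug}$ admits a map $F_\bullet \to A_\bullet$ from a simplicial object in $\Alg_n^{\leq -n}$ inducing an equivalence $|F_\bullet| \xra{\simeq} |A_\bullet|$ between geometric realizations. This is precisely the statement of Lemma \ref{secondcondition} (applied with $N = n$, which is permitted since $N \geq n$ is the hypothesis there).

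With these two ingredients in hand, Lemma \ref{theformalresult} directly gives the conclusion, completing the proof. There is no real obstacle at this stage: all of the substantive work has been carried out in Lemma \ref{secondcondition} (which in turn rests on Lemma \ref{homework}, Lemma \ref{cocon-cofib}, and the free resolution construction) and in the formal localization result Lemma \ref{theformalresult} (which rests on the complete Segal space identification of localizations in Lemma \ref{on-localizations}). The corollary is therefore essentially a bookkeeping assembly of these inputs.
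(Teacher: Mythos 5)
Your proposal is correct and follows the paper's own proof exactly: apply Lemma \ref{theformalresult} with $\cC = \Alg_n^{\sf aug}$ and $\cC_0 = \Alg_n^{\leq -n}$, citing Observation \ref{flim-geom-com} for the compatibility of finite limits with geometric realizations and Lemma \ref{secondcondition} for the bulleted resolution condition.
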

\begin{proof}
We check the bulleted condition of Lemma~\ref{theformalresult}. 
First, Observation~\ref{flim-geom-com} states that finite limits and geometric realizations commute in the $\infty$-category $\Alg_n^{\sf aug}$.
The second condition of the Lemma~\ref{theformalresult} is Lemma~\ref{secondcondition}.
\end{proof}

The next result completes the proof of Lemma~\ref{free-resolution}.

\begin{cor}\label{cor.atlast}
The inclusion
$
\Alg_n^{\leq -n}\hookrightarrow \Alg_n^{\sf aug}
$
strongly generates.

\end{cor}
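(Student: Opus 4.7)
The plan is to apply Lemma~\ref{select} directly, with $\cC := \Alg_n^{\sf aug}$, $\cC_0 := \Alg_n^{\leq -n}$, and $\cK := \bdelta^{\op}$. The functor $g\colon\cC_0\hookrightarrow \cC$ is fully faithful by definition. The $\infty$-category $\cC$ is presentable by Corollary~3.2.3.5 of~\cite{HA}. The classifying space $\sB\bdelta^{\op}\simeq\ast$ is terminal, since $\bdelta$ has the initial object $[0]$, so $\bdelta^{\op}$ has a terminal object, whose classifying space is contractible.

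First, I would verify the remaining hypothesis of Lemma~\ref{select}: that the composite
\[
(\Alg_n^{\leq -n})^{\bdelta^{\op}}\xra{g^{\bdelta^{\op}}} (\Alg_n^{\sf aug})^{\bdelta^{\op}}\xra{~\colim~} \Alg_n^{\sf aug}
\]
is a localization. But this is precisely the statement of Corollary~\ref{cor.important.localization}, which is proved via Lemma~\ref{theformalresult}, using Observation~\ref{flim-geom-com} (finite limits commute with geometric realizations in $\Alg_n^{\sf aug}$) together with Lemma~\ref{secondcondition} (any simplicial augmented $n$-disk algebra admits a resolution by a simplicial object in $\Alg_n^{\leq -n}$ that induces an equivalence on geometric realizations).

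With all hypotheses of Lemma~\ref{select} verified, the conclusion is immediate: the inclusion $\Alg_n^{\leq -n}\hookrightarrow \Alg_n^{\sf aug}$ strongly generates. Since no further obstacle exists — the work has already been done in establishing Corollary~\ref{cor.important.localization} — the proof reduces to assembling the named references.
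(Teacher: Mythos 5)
Your proof takes the same route as the paper: invoke Lemma~\ref{select} with $\cC=\Alg_n^{\sf aug}$, $\cC_0=\Alg_n^{\leq -n}$, $\cK=\bdelta^{\op}$, and cite Corollary~\ref{cor.important.localization} for the localization hypothesis. One small slip: $[0]$ is the \emph{terminal} object of $\bdelta$ (there is a unique order-preserving map $[n]\to[0]$), not the initial one; hence $\bdelta^{\op}$ has an \emph{initial} object. Either way its classifying space is contractible, so the conclusion you need is unaffected.
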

\begin{proof}
By Corollary~\ref{cor.important.localization}, the functor
\[
(\Alg_n^{\leq -n})^{\bdelta^{\op}}\hookrightarrow (\Alg_n^{\sf aug})^{\bdelta^{\op}}\overset{\colim}\longrightarrow \Alg_n^{\sf aug}
\]
is a localization. 
Consider Lemma~\ref{select} applied to the case that $\cK = \bdelta^{\op}$, and that $\cC=\Alg_n^{\sf aug}$ with $\cC_0=\Alg_n^{\leq -n}$.
We have shown that the hypotheses of this lemma are satisfied, therefore the inclusion $\Alg_n^{\leq -n}\hookrightarrow \Alg_n^{\sf aug}$ strongly generates.
\end{proof}

\section{Hochschild homology of associative and enveloping algebras}

In the remainder of this paper, we detail the meaning and consequences of our main theorem in the 1-dimensional case, where it becomes a statement about usual Hochschild homology and where the Maurer--Cartan functor $\MC$ reduces to the familiar Maurer--Cartan functors for associative and Lie algebras. We will first describe the general case of an associative algebra, then we will further specialize to the case of enveloping algebras of Lie algebras in characteristic zero.

\subsection{Case $n=1$}

For an augmented associative algebra $A$, consider the Maurer--Cartan functor $\MC_A$ from Definition \ref{def:MC}, i.e., by considering $A$ as a framed 1-disk algebra. Our main theorem has the following consequence in dimension 1. This generalizes an essentially equivalent result for cyclic homology due to Feigin \& Tsygan in \cite{feigintsygan}; see also the operadic generalization of Getzler \& Kapranov in \cite{getzlerkapranov}.

\begin{cor}\label{assoc} Let $A$ be an augmented associative algebra over a field $\Bbbk$. There is an equivalence \[\hh_\ast(A)^\vee   ~\simeq   ~  \hh_\ast(\MC_A)\] between the dual of the Hochschild homology of $A$ and the Hochschild homology of the moduli functor $\MC_A$. If $A$ is either connected and degreewise finite, or $(-1)$-coconnective and finitely presented, then there is an equivalence 
\[
\hh_\ast(A)^\vee~ \simeq~ \hh_\ast(\DD A)
\] 
between the linear dual of the Hochschild homology of $A$ and the Hochschild homology of the Koszul dual of $A$.

\end{cor}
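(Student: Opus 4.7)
The plan is to deduce both equivalences as specializations of our main theorems to the case $n=1$ and $M=S^1$, regarded as a closed $1$-dimensional cobordism with empty boundary, so that the partition $\partial \ov{M} = \partial_{\sL} \sqcup \partial_{\sR}$ is trivially empty. Under this identification the two open cobordisms $\ov{M}\smallsetminus \partial_{\sL}$ and $\ov{M}\smallsetminus \partial_{\sR}$ both agree with $S^1$, so the bilateral duality of Theorem~\ref{main} collapses to the single equivalence
\[
\Bigl(\int_{S^1} A\Bigr)^\vee ~\simeq~ \int_{S^1} \MC_A~.
\]
The first claim then follows from the standard identification, in the context of framed factorization homology, of $\int_{S^1} A$ with the Hochschild chain complex $\hh_\ast(A)$ for $A$ an augmented associative algebra -- equivalently, a framed $1$-disk algebra. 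By definition, the right-hand side is $\hh_\ast(\MC_A)$. This identification is established in \cite{fact} (see also Theorem~5.3.3.8 of \cite{HA}).

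For the second statement I would invoke Theorem~\ref{summ} with $n=1$, once more applied to $M=S^1$. There Koszul duality $\DD^1 = \DD$ appears directly, yielding the equivalence $\hh_\ast(A)^\vee \simeq \int_{S^1} \DD A \simeq \hh_\ast(\DD A)$ whenever the hypotheses of Theorem~\ref{summ} are in force. The two cases stated in the corollary correspond respectively to the connected case and the $(-1)$-coconnective case of Theorem~\ref{summ}, with the added finiteness adjectives -- ``degreewise finite'' in the connected case and ``finitely presented'' in the coconnective case -- ensuring that the $\Bbbk$-linear dual behaves well and that the hypotheses of finite presentation needed for the equivalence between $\MC_A$ and $\Spf(\DD A)$ (from Theorem~\ref{fpres}) are satisfied.

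The principal obstacle to verify is that ``connected and degreewise finite'' for an associative algebra indeed implies finite presentation in the sense of Definition~\ref{def:fpres}, so that Theorem~\ref{summ} applies cleanly. This should be handled by constructing a free resolution of $A$ cell-by-cell, inductively attaching free cells in increasing homological degree: since each connected, degreewise finite augmentation ideal admits a finite-rank resolution in each degree, one produces $A$ as an iterated pushout of free algebras on finite perfect $\sO(1)$-modules, exhibiting $A\in \FPres_1^{\leq -1}$ after a suitable shift. The remaining manipulations are formal substitutions into Theorem~\ref{summ} and the Hochschild identification.
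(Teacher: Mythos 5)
Your treatment of the first equivalence, $\hh_\ast(A)^\vee \simeq \hh_\ast(\MC_A)$, matches the paper's proof exactly: both specialize Theorem~\ref{main} to $n=1$ and $M=S^1$ (with $\partial \ov M = \emptyset$) and invoke $\int_{S^1}A\simeq\hh_\ast(A)$ from \cite{fact}.

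Your argument for the second equivalence has a genuine gap in the ``connected and degreewise finite'' case. You propose to reduce it to Theorem~\ref{summ} by showing that a connected, degreewise finite $A$ is finitely presented, and specifically to ``produce $A$ as an iterated pushout of free algebras on finite perfect $\sO(1)$-modules, exhibiting $A\in\FPres_1^{\leq -1}$ after a suitable shift.'' This cannot work. By Definition~\ref{def:fpres}, $\FPres_1^{\leq -1}$ is a full subcategory of $\Alg_1^{\sf aug,\leq -1}$, so every object of it has augmentation ideal concentrated in degrees $\leq -1$; a connected algebra has its augmentation ideal in degrees $\geq 1$. These conditions are mutually exclusive (outside the trivial algebra), and there is no ``shift'' carrying one kind of algebra to the other. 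Independently of the coconnectivity mismatch, the implication you want is false: the free augmented $1$-disk algebra on an $\sO(1)$-module that is degreewise finite but has generators in infinitely many positive degrees is connected and degreewise finite yet is not a compact object of $\Alg_1^{\sf aug}$, so it is not finitely presented. The correct route for the connected case avoids finite presentation entirely. Theorem~\ref{convergence} (first criterion) together with Corollary~\ref{goodwillie-limit} gives $\int_{S^1}A\simeq\int^{S^1}\bBar A$ with no finiteness hypothesis. Connectedness plus degreewise finiteness of $A$ then force $\bBar A$ to be degreewise finite (each degree of the bar construction receives contributions from only finitely many tensor powers of the augmentation ideal), so that $(\DD A)^\vee\simeq\bBar A$; Proposition~\ref{switch} applied to $\DD A$ yields $\int^{S^1}\bBar A\simeq(\int_{S^1}\DD A)^\vee$; and coconnectivity and degreewise finiteness of the layers of the cardinality filtration of $\int_{S^1}\DD A$ show that this object is reflexive, so one more dualization closes the loop. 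Your handling of the $(-1)$-coconnective, finitely presented case via Theorem~\ref{summ} is fine as stated.
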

\begin{proof}
The result follows immediately from Theorem \ref{main} together with the equivalence $\int_{S^1}A \simeq \hh_\ast(A)$ (see Theorem 3.19 of \cite{fact}).
\end{proof}

Specializing further to the case where the associative algebra is the enveloping algebra of a Lie algebra.

\begin{cor}\label{Ug}
Let $\frak g$ be Lie algebra over a field $\Bbbk$ which is degreewise finite and connective. There is an equivalence
\[
\hh_\ast(\sU\frak g)^\vee  ~   \simeq   ~ \hh_\ast(\sC^\ast\frak g)
\]
between the dual of the Hochschild homology of the enveloping algebra of $\frak g$ and the Hochschild homology of the Lie algebra cochains of $\frak g$.
\end{cor}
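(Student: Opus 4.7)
The plan is to reduce Corollary \ref{Ug} to Corollary \ref{assoc} applied to $A := \sU\frak g$, augmented via the counit $\sU\frak g \to \Bbbk$ coming from $\frak g \to 0$. Two inputs are required: (i) that $\sU\frak g$ satisfies the finiteness and connectivity hypotheses of Corollary \ref{assoc}, and (ii) a natural identification of augmented associative algebras $\DD(\sU\frak g) \simeq \sC^\ast \frak g$. Granted these, Corollary \ref{assoc} immediately produces the desired equivalence $\hh_\ast(\sU\frak g)^\vee \simeq \hh_\ast(\sC^\ast \frak g)$.

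Verifying (i) is straightforward. In characteristic zero, the Poincar\'e--Birkhoff--Witt theorem provides a canonical exhaustive filtration of $\sU\frak g$ whose associated graded is $\Sym(\frak g)$. Since $\frak g$ is degreewise finite and connective (so the augmentation ideal $\frak g$ is connected in the sense required), it follows that $\Sym(\frak g)$, and hence $\sU\frak g$, is degreewise finite with connected augmentation ideal; this is exactly the first clause of Corollary \ref{assoc}.

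The heart of the argument is (ii), a classical Koszul-duality identification between the bar construction of an enveloping algebra and the Chevalley--Eilenberg chain complex. I would produce this via the antisymmetrization map
\[
\sC^{CE}_\ast \frak g ~=~ \Sym(\frak g[1]) \longrightarrow T(\sU\frak g[1]) ~=~ \bBar(\sU\frak g)~, \qquad x_1 \wedge \cdots \wedge x_k \longmapsto \tfrac{1}{k!}\sum_{\sigma \in \Sigma_k} \mathrm{sgn}(\sigma)\, [x_{\sigma(1)}|\cdots|x_{\sigma(k)}]~,
\]
and verify by direct computation that it is a morphism of augmented coassociative differential graded coalgebras, intertwining the Chevalley--Eilenberg and bar differentials. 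Passing to the associated graded of the PBW filtration on $\bBar(\sU\frak g)$, this map reduces to the classical antisymmetrization quasi-isomorphism from the exterior to the shuffle coalgebra, hence is itself a quasi-isomorphism. Linear duality, which preserves quasi-isomorphisms between degreewise finite complexes, then yields an equivalence $\DD(\sU\frak g) = \bBar(\sU\frak g)^\vee \simeq \sC^\ast \frak g$ of augmented associative algebras, completing the proof when combined with Corollary \ref{assoc}. The main technical obstacle is (ii): checking coassociativity and differential-compatibility of the antisymmetrization map and establishing the PBW-level quasi-isomorphism, which is classical but requires care to articulate operadically.
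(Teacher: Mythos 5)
Your overall strategy — invoking Corollary \ref{assoc} with $A = \sU\frak g$, checking its hypotheses via PBW, and identifying $\DD(\sU\frak g) \simeq \sC^\ast\frak g$ — is exactly the paper's, but the key identification of the Koszul dual is carried out by a genuinely different argument. The paper's version is a chain of definitional unwindings: $\DD(\sU\frak g) := \bigl(\int_{\RR^+}\sU\frak g\bigr)^\vee$, $\ot$-excision for the closed interval identifies $\int_{\RR^+}\sU\frak g \simeq \Bbbk \underset{\sU\frak g}\ot \Bbbk$, and the Hom-tensor adjunction then gives $\Hom_{\sU\frak g}(\Bbbk,\Bbbk)$, which the paper \emph{takes as the definition} of $\sC^\ast\frak g$ (the derived Lie algebra cochains). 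No explicit chain model for $\sC^\ast\frak g$ ever appears. You instead fix the classical Chevalley--Eilenberg model $\Sym(\frak g[1])$ and construct the explicit antisymmetrization map $\Sym(\frak g[1]) \to \bBar(\sU\frak g)$, then argue coalgebra compatibility directly and pass to the associated graded of the PBW filtration to see it is a quasi-isomorphism. Your route is more concrete and genuinely says more — it identifies $\DD(\sU\frak g)$ with the explicit Chevalley--Eilenberg complex, which the paper never does — but it costs you the chain-level bookkeeping (coalgebra compatibility, differential intertwining, the associated-graded comparison $\bBar(\Sym V)\simeq \Sym(V[1])$) that you rightly flag at the end. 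The paper's route is shorter and purely formal, invoking no explicit cochain model at all. Both arguments invoke the same PBW observation to verify the hypotheses of Corollary \ref{assoc}; note that, as in the paper's own proof, the passage from ``$\frak g$ connective'' to ``the augmentation ideal of $\sU\frak g$ is connected'' in the sense needed by Corollary \ref{assoc} is being asserted rather than argued, and strictly requires $\frak g$ to be $1$-connective (compare the $n$-connective hypothesis in Corollary \ref{mchoch}).
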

\begin{proof}
We will apply Corollary \ref{assoc} to the case $A=\sU\frak g$. We first make a standard identification of the Koszul dual:
\[
\DD(\sU\frak g):= \Hom_{\Bbbk}\Bigl(\int_{\RR^+}\sU\frak g, \Bbbk\Bigr)\simeq \Hom_{\Bbbk}\Bigl(\Bbbk\underset{\sU\frak g}\ot\Bbbk, \Bbbk\Bigr) \simeq \Hom_{\sU\frak g}(\Bbbk,\Bbbk)=:~\sC^\ast\frak g.
\]
The first and last equivalences are definitional; the second equivalence above is $\ot$-excision for the closed interval; the third equivalence is the Hom-tensor adjunction.

By the Poincar\'e--Birkhoff--Witt filtration, $\sU\frak g \simeq \Sym(\frak g)$, with the given finiteness and connectivity conditions on $\frak g$ imply them for $\sU\frak g$.
Therefore, the conditions of Corollary~\ref{assoc} apply and so give the result.
\end{proof}

\begin{remark}
A different proof of Corollary \ref{assoc} can be given via Morita theory, after results of Lurie, using that Koszul duality interchanges quasi-coherent sheaves and ind-coherent sheaves; a treatment along these lines has been given by Campbell in \cite{campbell}. We briefly summarize: by Theorem 3.5.1 of \cite{dag10}, Koszul duality for associative algebras extends to a Koszul duality for modules and an equivalence $\perf^{\sf R}_A\simeq {\sf Coh}^{\sf L}_{\MC_A} $ between perfect right $A$-modules  and left coherent sheaves on $\MC_A$. Coherent sheaves are, by definition, the dual to perfect sheaves; Hochschild homology is a symmetric monoidal functor and therefore preserves duals. Consequently, the Hochschild homology $\hh_\ast(\perf^{\sf L}_{\MC_A})$ is the dual of $\hh_\ast(\perf^{\sf L}_A)$. Corollary \ref{assoc} then follows from showing formal descent for Hochschild homology of formal moduli problems, implying that the Hochschild homology of $\perf^{\sf L}_{\MC_A}$ agrees with that of $\MC_A$.
\end{remark}

\subsection{Lie algebras}
We will now specialize the preceding results in order to offer an interesting interpretation of our results in Lie theory over a field $\Bbbk$ of characteristic zero.
Our main duality result, Theorem~\ref{main}, specializes to one in Lie theory, Corollary \ref{mchoch}, which makes no reference to either $n$-disk algebras or factorization homology: there is a duality between the Hochschild homology of an enveloping algebra and the Hochschild homology of the associated Maurer--Cartan space.

\begin{convention}[Characteristic zero and framings]
Henceforward, we make the following choices to more conveniently make the connection with Lie algebras.  
We fix a field $\Bbbk$ of characteristic zero.
We work with (augmented) $\cE_n$-algebras in place of (augmented) $\Disk_n$-algebras; and likewise with \emph{framed} zero-pointed $n$-manifolds in place of zero-pointed $n$-manifolds. These easy substitutions do not affect the preceding arguments.  
\end{convention}

We consider the enveloping $n$-disk algebra $\sU_n \frak g$ of a Lie algebra $\frak g$. Maintaining the assumption of $\Bbbk$ being characteristic zero, we will make use of the following sequence of adjunctions
\begin{equation}\label{as1}
\xymatrix{
\Alg_{\Lie} \ar@/^2pc/[rrr]^{\sU_{n-1}}\ar@/^.5pc/[rr]^{ \ \sU_n}&\ar[l]\ldots &\ar[l]\Alg_{\cE_n}^{\sf aug}\ar@/^.5pc/[r]^{\sf Bar}&\ar[l]^\Omega\Alg_{\cE_{n-1}}^{\sf aug}\ar@/^.5pc/[r]^{\sf Bar}&\ar[l]^\Omega\ldots\\}
\end{equation}
in which the curved arrows are left adjoints.
The right adjoints shift the augmentation ideal down a single homological degree.
Each $(\bBar,\Omega)$-adjunction is given by Theorem 5.11 of~\cite{fact}.
The adjunctions involving each $\cE_n$-enveloping algebra construction $\sU_n$ are given by Knudsen in~\cite{lie}.
Also therein, Knudsen establishes a higher Poincar\'e--Birkhoff--Witt filtration, which gives an equivalence between $\Bbbk$-modules:
\begin{equation}\label{as2}
\sU_n\frak g ~\simeq~ \Sym(\frak g[n-1])~.
\end{equation}
A similar picture in the setting of D-modules, including the Poincar\'e--Birkhoff--Witt filtration, is given in \cite{chiral}.

The following result and proof follows that of Corollary \ref{Ug}.

\begin{cor}\label{mchoch} 
Let $\ov{M}$ be a framed compact $n$-manifold with partitioned boundary $\partial \ov{M} = \partial_{\sL} \sqcup \partial_{\sR}$.
Let $\frak g$ be a Lie algebra over a field $\Bbbk$ of characteristic zero.
Suppose $\frak g$ is either
\begin{itemize}
\item$n$-connective and finite in each degree, or 
\item $(-1)$-coconnective and finite as a $\Bbbk$-module.
\end{itemize}
There is a natural equivalence of chain complexes over $\Bbbk$
\[
\Bigl(\int_{\ov{M}\smallsetminus \partial_{\sR}}\sU_n\frak g\Bigr)^\vee ~  \simeq  ~\int_{\ov{M}\smallsetminus \partial_{\sL}} \sC^\ast \frak g
\] 
between the linear dual of the factorization homology with coefficients in the enveloping $n$-disk algebra of $\frak g$ and the factorization homology with coefficients in the Lie algebra cochains of $\frak g$.  
\end{cor}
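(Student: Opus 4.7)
The plan is to apply Theorem \ref{summ} to the augmented $n$-disk algebra $A := \sU_n\frak g$, and then to identify its Koszul dual $\DD^n(\sU_n\frak g)$ with the Chevalley--Eilenberg cochains $\sC^\ast\frak g$. This generalizes Corollary \ref{Ug} from $n=1$ to arbitrary $n$, using the enveloping $n$-algebra machinery of (\ref{as1}) and the Poincar\'e--Birkhoff--Witt equivalence (\ref{as2}).

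The first task is to verify the hypotheses of Theorem \ref{summ} for $A = \sU_n\frak g$. The PBW equivalence $\sU_n\frak g \simeq \Sym(\frak g[n-1])$ of (\ref{as2}) describes the enveloping algebra at the level of $\Bbbk$-modules. Under either hypothesis on $\frak g$, degreewise finiteness transfers, via the summand description of $\Sym$, to finite presentation of $\sU_n\frak g$ as an augmented $n$-disk algebra. Moreover, the shift by $n-1$ in (\ref{as2}) is precisely tuned so that $n$-connectivity of $\frak g$ in the first bullet yields a connected augmentation ideal of $\sU_n\frak g$, while $(-1)$-coconnectivity and $\Bbbk$-finiteness of $\frak g$ in the second bullet yield a $(-n)$-coconnective augmentation ideal. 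Either conclusion places us in the scope of Theorem \ref{summ}, giving the natural equivalence
\[
\Bigl(\int_{\ov{M}\smallsetminus \partial_{\sR}}\sU_n\frak g\Bigr)^\vee \simeq \int_{\ov{M}\smallsetminus \partial_{\sL}} \DD^n(\sU_n\frak g)~.
\]

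The second task is to identify $\DD^n(\sU_n\frak g) \simeq \sC^\ast\frak g$ as an augmented $n$-disk algebra. For $n=1$ this is the opening equivalence in the proof of Corollary \ref{Ug}: $\ot$-excision for the closed interval turns $\int_{(\RR^1)^+}\sU\frak g$ into $\Bbbk\otimes_{\sU\frak g}\Bbbk$, whose $\Bbbk$-linear dual is $\Hom_{\sU\frak g}(\Bbbk,\Bbbk) = \sC^\ast\frak g$ via the Hom-tensor adjunction. For general $n$ the analogous statement $\DD^n(\sU_n\frak g) \simeq \sC^\ast\frak g$ follows from the definition of $\DD^n$ combined with the enveloping adjunction $\sU_n : \Alg_{\Lie}\rightleftarrows \Alg_{\cE_n}^{\sf aug}$ of (\ref{as1}) enriched to a module-theoretic Koszul duality; the details are carried out in the companion work \cite{lie}. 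Substituting this identification into the previous display yields the asserted equivalence of chain complexes over $\Bbbk$.

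The main obstacle is the identification $\DD^n(\sU_n\frak g) \simeq \sC^\ast\frak g$ for $n\geq 2$: whereas the $n=1$ case is the classical Koszul duality between an enveloping algebra and Lie algebra cochains, the higher-dimensional generalization requires the $\cE_n$-algebraic refinement of this identification together with the module-level extension of the adjunction (\ref{as1}), both established in \cite{lie}. The remaining ingredients---(co)connectivity bookkeeping under PBW, and the invocation of Theorem \ref{summ}---are routine degree counts followed by direct application.
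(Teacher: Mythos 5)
Your proposal follows the same two-step strategy as the paper: first invoke Theorem~\ref{summ} (equivalently, the combination of Theorem~\ref{compare-towers}, Theorem~\ref{convergence}, and Proposition~\ref{switch}) after verifying the required (co)connectivity via the PBW equivalence $\sU_n\frak g\simeq \Sym(\frak g[n-1])$ of~(\ref{as2}), and then identify $\DD^n(\sU_n\frak g)\simeq\sC^\ast\frak g$. The bookkeeping in your first step matches the paper's (the $(n-1)$-shift trades $n$-connectivity of $\frak g$ for a connected augmentation ideal, and $(-1)$-coconnectivity for a $(-n)$-coconnective one, using that characteristic zero symmetric powers preserve coconnectivity).

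Where you diverge is the identification $\DD^n(\sU_n\frak g)\simeq\sC^\ast\frak g$, which you route through ``the definition of $\DD^n$ combined with the enveloping adjunction of~(\ref{as1}) enriched to a module-theoretic Koszul duality,'' deferring to~\cite{lie}. This is heavier machinery than is needed and also somewhat imprecise: no module-theoretic extension of~(\ref{as1}) is required. The paper's argument is more direct. By definition $\DD^n(A) = \bigl(\int_{(\RR^n)^+}A\bigr)^\vee$, and $\int_{(\RR^n)^+}$ is the $n$-fold iterated bar construction $\bBar^n$ (\S5 of~\cite{fact}). Iterating the adjunction relation $\bBar\circ\sU_n\simeq\sU_{n-1}$ from~(\ref{as1}) gives $\bBar^{n}(\sU_n\frak g)\simeq \bBar(\sU\frak g)\simeq\Bbbk\underset{\sU\frak g}\ot\Bbbk\simeq\sC_\ast\frak g$; taking linear duals yields $\DD^n(\sU_n\frak g)\simeq\sC^\ast\frak g$. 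If you replace your appeal to a module-level Koszul duality with this iterated-bar argument, your proof coincides with the paper's. One further minor point in your favor: you note that finite presentation of $\sU_n\frak g$ should be checked (this is a hypothesis of Theorem~\ref{summ} in the coconnective case), whereas the paper's proof elides this; your observation that it follows from the degreewise finiteness of $\frak g$ via the $\Sym$-summand description is the right remark.
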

\begin{proof}
We apply the simple version of Theorem \ref{main}, where the algebra $A$ is either connected or sufficiently coconnected so that the factorization homology on the dual side is in terms of $\DD^nA$ rather than $\MC_A$.
That is, the combination of Theorem \ref{compare-towers}, Theorem \ref{convergence}, and Proposition \ref{switch} (summarized in the introdution as Theorem \ref{summ}), gives an equivalence
\[
\Bigl(\int_{\ov{M}\smallsetminus \partial_{\sR}}A\Bigr)^\vee ~  \simeq  ~\int_{\ov{M}\smallsetminus \partial_{\sL}} \DD^n A
\]
provided that $A$ is either connected or $(-n)$-coconnective.

We now complete the proof in two steps. Firstly, we show the equivalence $\DD^n(\sU_n\frak g) \simeq \sC^\ast \frak g$. This is dual to the equivalence $\int_{(\RR^n)^+}\sU_n\frak g \simeq \sC_\ast\frak g$, which follows from (\ref{as1}) and the identification of $\int_{(\RR^n)^+}$ as the $n$-times iterated bar construction in \S5 of \cite{fact}.

Lastly, we show the given conditions on $\frak g$ give the required conditions on $A=\sU_n \frak g$. This follows from the identification $\sU_n\frak g\simeq \Sym(\frak g[n-1])$ and the fact that, in characteristic zero, symmetric powers preserve coconnectivity.
\end{proof}


\begin{thebibliography}{99}

\bibitem[AK]{AhearnKuhn} Ahearn, Stephen; Kuhn, Nicholas.
Product and other fine structure in polynomial resolutions of mapping spaces. Algebr. Geom. Topol. 2 (2002), 591--647. 

\bibitem[AF1]{fact} Ayala, David; Francis, John. Factorization homology of topological manifolds. J. Topol. 8 (2015), no. 4, 1045--1084. 

\bibitem[AF2]{ZP} Ayala, David; Francis, John. Zero-pointed manifolds. Preprint. Available at http://arxiv.org/abs/1409.2857

\bibitem[AFR]{striation} Ayala, David; Francis, John; Rozenblyum, Nick. A stratified homotopy hypothesis. Accepted, Journal of the European Mathematical Society. Available at https://arxiv.org/abs/1502.01713

\bibitem[AFT1]{aft1} Ayala, David; Francis, John; Tanaka, Hiro Lee. Local structures on stratified spaces. Adv. Math. 307 (2017), 903--1028. 

\bibitem[AFT2]{aft2} Ayala, David; Francis, John; Tanaka, Hiro Lee. Factorization homology of stratified spaces. Selecta Math. (N.S.) 23 (2017), no. 1, 293--362.

\bibitem[BaD]{baezdolan} Baez, John; Dolan, James Higher-dimensional algebra and topological quantum field theory. J. Math. Phys. 36 (1995), no. 11, 6073--6105.

\bibitem[BeD]{bd} Beilinson, Alexander; Drinfeld, Vladimir. Chiral algebras. American Mathematical Society Colloquium Publications, 51. American Mathematical Society, Providence, RI, 2004.

\bibitem[BFN]{qcloops} Ben-Zvi, David; Francis, John; Nadler, David. Integral transforms and Drinfeld centers in derived algebraic geometry. J. Amer. Math. Soc. 23 (2010), no. 4, 909--966.

\bibitem[BoV]{bv} Boardman, J. Michael; Vogt, Rainer. Homotopy invariant algebraic structures on topological spaces. Lecture Notes in Mathematics, Vol. 347. Springer-Verlag, Berlin-New York, 1973.

\bibitem[B\"o]{bodig} B\"odigheimer, C.-F. Stable splittings of mapping spaces. Algebraic topology (Seattle, Wash., 1985), 174--187, Lecture Notes in Math., 1286, Springer, Berlin, 1987.

\bibitem[BuV]{burvi} Burghelea, Dan; Vigu-Poirrier, Micheline. Cyclic homology of commutative algebras. I. Algebraic topology rational homotopy (Louvain-la-Neuve, 1986), 5172, Lecture Notes in Math., 1318, Springer, Berlin, 1988.

\bibitem[Ca]{campbell} Campbell, Jonathan. Derived Koszul duality and topological Hochschild homology. Preprint. Available at http://arxiv.org/abs/1401.5147

\bibitem[Co]{kevin} Costello, Kevin. Renormalization and effective field theory. Mathematical Surveys and Monographs, 170. American Mathematical Society, Providence, RI, 2011. 

\bibitem[CG]{kevinowen} Costello, Kevin; Gwilliam, Owen. Factorization algebras in perturbative quantum field theory. Preprint. Available at http://www.math.northwestern.edu/~costello/renormalization

\bibitem[FT1]{feigintsygan1} Feigin, Boris; Tsygan, Boris. Additive K-theory and crystalline cohomology. Funktsional. Anal. i Prilozhen. 19 (1985), no. 2, 5262, 96.

\bibitem[FT2]{feigintsygan} Feigin, Boris; Tsygan, Boris. Cyclic homology of algebras with quadratic relations, universal enveloping algebras and group algebras. $K$-theory, arithmetic and geometry (Moscow, 1984--1986), 210--239, Lecture Notes in Math., 1289, Springer, Berlin, 1987.

\bibitem[Fr1]{thez} Francis, John. Derived algebraic geometry over $\cE_n$-rings. Thesis (PhD) -- Massachusetts Institute of Technology. 2008.

\bibitem[Fr2]{cotangent} Francis, John. The tangent complex and Hochschild cohomology of $\cE_n$-rings. Compos. Math. 149 (2013), no. 3, 430--480.

\bibitem[FG]{chiral} Francis, John; Gaitsgory, Dennis. Chiral Koszul duality. Selecta Math. (N.S.) 18 (2012), no. 1, 27--87.

\bibitem[FN]{fn} Fr\"olicher, Alfred; Nijenhuis, Albert. A theorem on stability of complex structures. Proc. Nat. Acad. Sci. U.S.A. 43 (1957), 239--241.

\bibitem[FM]{fm} Fulton, William; MacPherson, Robert. A compactification of configuration spaces. Ann. of Math. (2) 139 (1994), no. 1, 183--225.

\bibitem[GL]{tamagawa} Gaitsgory, Dennis; Lurie, Jacob.  Weil's Conjecture for Function Fields. Preprint. Available at http://www.math.harvard.edu/~lurie/

\bibitem[Ger]{gerstenhaber} Gerstenhaber, Murray. On the deformation of rings and algebras.
Ann. of Math. (2) 79 (1964) 59--103.

\bibitem[GS]{gs} Gerstenhaber, Murray; Schack, S. A Hodge-type decomposition for commutative algebra cohomology. J. Pure Appl. Algebra 48 (1987), no. 3, 229--247.

\bibitem[Get]{getzler} Getzler, Ezra.
Lie theory for nilpotent L$_\infty$-algebras.
Ann. of Math. (2) 170 (2009), no. 1, 271--301. 

\bibitem[GeK]{getzlerkapranov} Getzler, Ezra; Kapranov, Mikhail. Cyclic operads and cyclic homology. Geometry, topology, \& physics, 167--201, Conf. Proc. Lecture Notes Geom. Topology, IV, Int. Press, Cambridge, MA, 1995. 

\bibitem[GiK]{gk} Ginzburg, Victor; Kapranov, Mikhail. Koszul duality for operads.  Duke Math. J.  76  (1994),  no. 1, 203--272.

\bibitem[GM1]{gm1} Goldman, William; Millson, John. The deformation theory of representations of fundamental groups of compact K\"ahler manifolds. Inst. Hautes \'Etudes Sci. Publ. Math. No. 67 (1988), 43--96.

\bibitem[GM2]{gm2}  Goldman, William; Millson, John. The homotopy invariance of the Kuranishi space. Illinois J. Math. 34 (1990), no. 2, 337--367.

\bibitem[Go]{goodwillie} Goodwillie, Thomas. Calculus. III. Taylor series. Geom. Topol. 7 (2003), 645--711.

\bibitem[Hi1]{hinichdeligne} Hinich, Vladimir. Descent of Deligne groupoids. Internat. Math. Res. Notices 1997, no. 5, 223--239. 

\bibitem[Hi2]{hinich} Hinich, Vladimir. Formal stacks as dg-coalgebras. J. Pure Appl. Algebra 162 (2001), No. 2--3, 209--250.

\bibitem[HS]{hinichschechtman} Hinich, Vladimir; Schechtman, Vadim. Deformation theory and Lie algebra homology. I, II. Algebra Colloq. 4 (1997), no. 2, 213--240; no. 3, 291--316. 


\bibitem[Jo]{joyal} Joyal, Andr\'e. Quasi-categories and Kan complexes. Special volume celebrating the 70th birthday of Professor Max Kelly. J. Pure Appl. Algebra 175 (2002), no. 1--3, 207--222.

\bibitem[Ka]{kallel} Kallel, Sadok. Spaces of particles on manifolds and generalized Poincar\'e dualities. Q. J. Math. 52 (2001), no. 1, 45--70.

\bibitem[Kn]{lie} Knudsen, Ben. Higher enveloping algebras. Preprint, 2017.

\bibitem[KS]{kodairaspencer} Kodaira, Kunihiko; Spencer, Donald. On deformations of complex analytic structures. I, II. Ann. of Math. (2) 67 (1958) 328--466.

\bibitem[Kuh1]{kuhn1} Kuhn, Nicholas. Goodwillie towers and chromatic homotopy: an overview. Proceedings of the Nishida Fest (Kinosaki 2003), 245--279, Geom. Topol. Monogr., 10, Geom. Topol. Publ., Coventry, 2007.

\bibitem[Kuh2]{kuhn2} Kuhn, Nicholas. Localization of Andr\'e--Quillen--Goodwillie towers, and the periodic homology of infinite loopspaces. Adv. Math. 201 (2006), no. 2, 318--378. 

\bibitem[Kur]{kuranishi} Kuranishi, Masatake. On the locally complete families of complex analytic structures. Ann. of Math. (2) 75 (1962) 536--577.

\bibitem[Lo]{loday}  Loday, Jean-Louis. Op\'erations sur l'homologie cyclique des alg\`ebres commutatives.  Invent. Math. 96 (1989), no. 1, 205--230.

\bibitem[Lu1]{HTT} Lurie, Jacob. Higher topos theory. Annals of Mathematics Studies, 170. Princeton University Press, Princeton, NJ, 2009.

\bibitem[Lu2]{HA} Lurie, Jacob. Higher algebra. Preprint dated September 14, 2014. Available at http://www.math.harvard.edu/~lurie/

\bibitem[Lu3]{cobordism} Lurie, Jacob. On the classification of topological field theories. Current developments in mathematics, 2008, 129--280, Int. Press, Somerville, MA, 2009.

\bibitem[Lu4]{dag10} Lurie, Jacob. Derived algebraic geometry X: Formal moduli problems. Preprint. Available at http://www.math.harvard.edu/~lurie/

\bibitem[Mat]{matsuoka} Matsuoka, Takuo. Descent and the Koszul duality for locally constant factorisation algebras. Thesis (Ph.D.) -- Northwestern University. 2014.

\bibitem[May]{may} May, J. Peter. The geometry of iterated loop spaces. Lectures Notes in Mathematics, Vol. 271. Springer-Verlag, Berlin-New York, 1972. viii+175 pp.

\bibitem[MM]{milnormoore} Milnor, John; Moore, John. On the structure of Hopf algebras. Ann. of Math. (2) 81 (1965) 211--264. 

\bibitem[Mc]{mcduff} McDuff, Dusa. Configuration spaces of positive and negative particles.  Topology 14 (1975), 91--107. 

\bibitem[Mo]{moore}  Moore, John. Differential homological algebra. Actes du Congr\`es International des Math\'ematiciens (Nice, 1970), Tome 1, pp. 335--339. Gauthier-Villars, Paris, 1971.

\bibitem[NR]{nr} Nijenhuis, Albert; Richardson, Roger. Cohomology and deformations in graded Lie algebras. Bull. Amer. Math. Soc. 72 (1966) 1--29. 

\bibitem[Pr]{priddy}  Priddy, Stewart. Koszul resolutions. Trans. Amer. Math. Soc. 152 (1970) 39--60.

\bibitem[Quil]{quillen} Quillen, Daniel. Rational homotopy theory. Ann. of Math. (2) 90 (1969) 205--295. 

\bibitem[Sa]{salvatore} Salvatore, Paolo. Configuration spaces with summable labels. Cohomological methods in homotopy theory (Bellaterra, 1998), 375--395, Progr. Math., 196, Birkh\"auser, Basel, 2001.

\bibitem[Sc]{schlessinger}  Schlessinger, Michael. Functors of Artin rings. Trans. Amer. Math. Soc. 130 (1968) 208--222.

\bibitem[SS]{ss} Schlessinger, Michael; Stasheff, James. The Lie algebra structure of tangent cohomology and deformation theory. J. Pure Appl. Algebra 38 (1985), no. 2-3, 313--322.

\bibitem[Se1]{segal} Segal, Graeme. Configuration-spaces and iterated loop-spaces. Invent. Math. 21 (1973), 213--221.

\bibitem[Se2]{segalrational} Segal, Graeme. The topology of spaces of rational functions. Acta. Math., 143 (1979), 39--72.

\bibitem[Se3]{segalconformal} Segal, Graeme. The definition of conformal field theory. Topology, geometry and quantum field theory, 421--577, London Math. Soc. Lecture Note Ser., 308, Cambridge Univ. Press, Cambridge, 2004.

\bibitem[Se4]{segallocal} Segal, Graeme. Locality of holomorphic bundles, and locality in quantum field theory. The many facets of geometry, 164--176, Oxford Univ. Press, Oxford, 2010. 

\bibitem[Si]{sinha} Sinha, Dev. Koszul duality in algebraic topology. J. Homotopy Relat. Struct. 8 (2013), no. 1, 1--12. 

\bibitem[Su]{sullivan} Sullivan, Dennis. Infinitesimal computations in topology. Inst. Hautes \'Etudes Sci. Publ. Math. 47 (1977), 269--331.

\bibitem[TV]{toenvezzosi}  To\"en, Bertrand; Vezzosi, Gabriele. Homotopical algebraic geometry. II. Geometric stacks and applications. Mem. Amer. Math. Soc. 193 (2008), no. 902.

\bibitem[We]{weiss} Weiss, Michael. Embeddings from the point of view of immersion theory. I. Geom. Topol. 3 (1999), 67--101.

\end{thebibliography}
\end{document}